\newcommand{\theoremname}{Theorem}%
\newcommand{\lemmaname}{Lemma}%
\newcommand{\definitionname}{Definition}%
\newcommand{\examplename}{Example}%
\newcommand{\examplesname}{Examples}%
\newcommand{\problemname}{Problem}%
\newcommand{\propertyname}{Property}%
\newcommand{\conjecturename}{Conjecture}%
\newcommand{\assumptionname}{Assumption}%
\newcommand{\corollaryname}{Corollary}%
\newcommand{\propositionname}{Proposition}%
\newcommand{\claimname}{Claim}%
\newcommand{\remarkname}{Remark}%
\newcommand{\algorithname}{Algorithm}%
\newcommand{\questionname}{Question}%
\newcommand{\notationname}{Notation}%
\newtheorem{theorem}[subsection]{\protect\theoremname}
\newtheorem{lemma}[subsection]{\protect\lemmaname}
\newtheorem{corollary}[subsection]{\protect\corollaryname}
\newtheorem{definition}[subsection]{\protect\definitionname}
\newtheorem{subtheorem}[subsubsection]{\protect\theoremname}
\newtheorem{sublemma}[subsubsection]{\protect\lemmaname}
\newtheorem{subcorollary}[subsubsection]{\protect\corollaryname}
\newtheorem{subexample}[subsubsection]{\protect\examplename}
\newtheorem{subremark}[subsubsection]{\protect\remarkname}
\newtheorem{subdefinition}[subsubsection]{\protect\definitionname}
\newenvironment{smallproof}[1]
    {\footnotesize\begin{proof}[#1]}
    {\end{proof}}
\newcommand\testshape{family=\f@family; series=\f@series; shape=\f@shape.}
\def\myemphInternal#1{\if n\f@shape%
	\begingroup\itshape #1\endgroup\/%
	\else\begingroup\sffamily #1\endgroup%
	\fi}
\def\myemph{\futurelet\testchar\MaybeOptArgmyemph}
\def\MaybeOptArgmyemph{\ifx[\testchar \let\next\OptArgmyemph
	\else \let\next\NoOptArgmyemph \fi \next}
\def\OptArgmyemph[#1]#2{\index{#1}\myemphInternal{#2}}
\def\NoOptArgmyemph#1{\myemphInternal{#1}}
\newcommand\bC{{\mathbb C}}
\newcommand\bN{{\mathbb N}}
\newcommand\bR{{\mathbb R}}
\newcommand\bZ{{\mathbb Z}}
\newcommand\FF{{\mathcal F}}
\newcommand\eps{\varepsilon}
\newcommand\id{\mathrm{id}}          % identity map
\newcommand\Int{\mathrm{Int}}        % interior
\newcommand\ab{\mathrm{ab}}          % abelianization  
\newcommand\wrm[1]{\mathop{\wr}\limits_{#1}} %{\mathop{\wr}\limits_{#1}} %{\wr_{#1}} %
\newcommand\nwr[3]{#1\wr_{#2}{#3}}   % noneffective wreath product
\newcommand\ewr[2]{#1\wr{#2}}        % effective wreath product
\newcommand\ptnum[1]{|#1|}
\newcommand\MOD[2]{#1\bmod#2}
\newcommand\AxCrPt{{\text{\rm(L)}}}
\newcommand\AxBd{{\text{\rm(B)}}}
\newcommand\Aman{A}
\newcommand\Bman{B}
\newcommand\Cman{C}
\newcommand\Kman{K}
\newcommand\Mman{M}
\newcommand\Nman{N}
\newcommand\Pman{P}
\newcommand\Qman{Q}
\newcommand\Uman{U}
\newcommand\Vman{V}
\newcommand\Wman{W}
\newcommand\Xman{X}
\newcommand\Yman{Y}
\newcommand\Zman{Z}
\newcommand\Circle{S^1}            % Circle
\newcommand\UInt{[0,1]}
\newcommand\Disk{D^2}              % 2-disk
\newcommand\Cylinder{A} %{S^1\times I}  % Cylinder
\newcommand\Sphere{S^2}            % 2-sphere
\newcommand\Torus{T^2}             % 2-torus
\newcommand\PrjPlane{\bR{P}^2}    % Projetive plane
\newcommand\KleinBottle{K}       % Klein bottle
\newcommand\Cyli[1]{\Cylinder_{#1}}
\newcommand\GL{\mathrm{GL}}
\newcommand\Orb{\mathcal{O}}        % orbit
\newcommand\Stab{\mathcal{S}}       % stabilizer
\newcommand\Diff{\mathcal{D}}       % diffeomorphisms
\newcommand\Homeo{\mathcal{H}}      % homeomorphisms
\newcommand\Aut{\mathrm{Aut}}       % automorphisms
\newcommand\DiffId{\Diff_{\id}}     % diffeomorphisms
\newcommand\StabId{\Stab_{\id}}     % stabilizer
\newcommand\Cinfty{\mathcal{C}^{\infty}}
\newcommand\Cr[1]{\mathcal{C}^{#1}}
\newcommand\Ci[2]{\mathcal{C}^{\infty}(#1,#2)}               % space of C^\infty maps
\newcommand\Cid[2]{\mathcal{C}_{\partial}^{\infty}(#1,#2)}   % space of C^\infty maps taking constant values of the boundary
\newcommand\func{f}
\newcommand\gfunc{g}
\newcommand\dif{h}
\newcommand\DiffIdM{\DiffId(\Mman)}
\newcommand\DiffMX{\Diff(\Mman, \Xman)}
\newcommand\DiffIdMX{\DiffId(\Mman, \Xman)}
\newcommand\Stabilizer[1]{\Stab(#1)}             % stabilizer of function #1
\newcommand\StabilizerPlus[1]{\Stab^{+}(#1)}     % stabilizer of function #1 preserving orientation
\newcommand\StabilizerId[1]{\StabId(#1)}         % identity path component of the stabilizer of #1
\newcommand\StabilizerIsotId[1]{\Stab'(#1)}      % elements of stabilizer of #1 isotopic to id
\newcommand\Orbit[1]{\Orb(#1)}                   % orbit of a function #1
\newcommand\OrbitPathComp[2]{\Orb_{#2}(#1)}      % path component of the orbit #1 containing #2
\newcommand\SingularSet[1]{\Sigma_{#1}}             % the set of singular (critical) points of #1
\newcommand\AutKRGraphStab[1]{\mathbf{G}}             % group of automorphisms of Kronrod-Reeb graph induced by diffeomorphisms from the
\newcommand\fSing{\SingularSet{\func}}                % the set of critical points of f
\newcommand\regN[1]{R_{#1}}
\newcommand\canN[1]{N_{#1}}
\newcommand\crLev{K}
\newcommand\regNK{R_{K}}
\newcommand\LStab{\mathcal{L}}
\newcommand\LStabPl{\LStab^{+}}
\newcommand\FolStab{\Delta} %{{\color{red}\mathcal{L}}}
\newcommand\FolStabilizer[1]{\FolStab(#1)}
\newcommand\FolStabilizerIsotId[1]{\FolStab'(#1)}
\newcommand\GKR{\mathcal{G}}
\newcommand\GrpKR[1]{\GKR(#1)}
\newcommand\GrpKRIsotId[1]{\GKR'(#1)}
\newcommand\GH{\Gamma}
\newcommand\GHomIsotId[1]{\GH'(#1)}
\newcommand\hXman{{\Xman}^{\partial}}
\newcommand\monoArrow{\lhook\joinrel\rightarrow}
\newcommand\xmonoArrow[1]{\lhook\joinrel\xrightarrow{~#1~}}
\newcommand\epiArrow{\rightarrow\!\!\!\!\!\to}
\newcommand\xepiArrow[1]{\xrightarrow{#1}\!\!\!\!\!\to}
\newcommand\mprod{\mathop{\Pi}} %{\mathop{\times}}
\newcommand\myprod{\mathop{\prod}}
\def\cnt{k}
\newcommand\jIncl{j}
\newcommand\jInclZ{\jIncl_0}
\newcommand\KRGraphf{\Gamma_{\func}}
\newcommand\EKRGraphf{\widehat{\Gamma}_{\func}}
\newcommand\PF[1]{\widehat{#1}}
\newcommand\funcSeq{\mathbf{b}}
\newcommand\seqStab[1]{\funcSeq(#1)}
\newcommand\seqStabIsotId[1]{\funcSeq'(#1)}
\newcommand\DSG{{\funcSeq'}\!} %{\mathcal{B}}%{\mathsf{\Delta{SG}}}
\newcommand\sDSG[1]{\DSG\left(#1\right)}
\newcommand\seqZ[1]{\mathbf{z}_{#1}}
\newcommand\seqWrm[2]{#1\wr\seqZ{#2}}% {#1 \wr \seqZ{#2}}
\newcommand\seqWrmn[3]{#1\wr\seqZ{#2,#3}^2}% {#1 \wr \seqZ{#2}}
\newcommand\seqTriv{\seqZ{0}} %{\mathbf{t}}
\newcommand\aSeq{\mathbf{q}}
\newcommand\uSeq{\mathbf{u}}
\newcommand\vSeq{\mathbf{v}}
\newcommand\kSeq{\mathbf{k}}
\newcommand\lSeq{\mathbf{l}}
\newcommand\dclstwo[1]{#1_{2}}
\newcommand\ccZ{\mathcal{Z}}  %   
\newcommand\ccB{\mathcal{B}}  % 
\newcommand\ccP{\mathcal{P}}  % 
\newcommand\ssZBP{\mathcal{ZBP}}  % 
\newcommand\clsBt{\dclstwo{\ccB}}
\newcommand\clsGt{\dclstwo{\ccP}}
\newcommand\gssZBP{\widetilde{\ccZ\ccB\ccP}}
\newcommand\ssZBtPt  {\dclstwo{\ssZBP}}
\newcommand\FSPTorusT[1]{\mathcal{F}^{\Psi}(\Torus,#1)}
\newcommand\FSPTorusO[1]{\mathcal{F}^{O}(\Torus,#1)}
\newcommand\MrsTorusT[1]{\mathcal{M}^{\Psi}(\Torus,#1)}
\newcommand\MrsTorusO[1]{\mathcal{M}^{O}(\Torus,#1)}
\newcommand\lfrm{\Phi}
\newcommand\zfrm[1]{\lfrm(#1)}
\newcommand\bZman{\mathbf{Z}}
\newcommand\bZmanX{\bZman^{fix}}
\newcommand\bZmanY{\bZman^{reg}}
\newcommand\XFix{\Xman}
\newcommand\XFixA{\XFix_0}
\newcommand\XFixB{\XFix_1}
\newcommand\hb{\eta}
\newcommand\fc{\rho}
\newcommand\xA{A} %{{\color{red}A}_{#1}}
\newcommand\xB{B} %{{\color{red}B}_{#1}}
\newcommand\xC{C} %{{\color{red}C}_{#1}}
\newcommand\kA{K} %{{\color{red}A}_{#1}}
\newcommand\kB{L} %{{\color{red}B}_{#1}}
\newcommand\kC{M} %{{\color{red}C}_{#1}}
\newcommand\zA[1]{K_{#1}}
\newcommand\zB[1]{L_{#1}}
\newcommand\zC[1]{M_{#1}}
\newcommand\xv[1]{b_{#1}}
\newcommand\OrbfX{\Orbit{\func,\Xman}}        % orbit
\newcommand\OrbffX{\OrbitPathComp{\func,\Xman}{\func}}   % orbit
\newcommand\FSP[2]{\mathcal{F}(#1,#2)}
\newcommand\Mrs[2]{\mathcal{M}(#1,#2)}
\newcommand\MrsSmp[2]{\mathcal{M}^{smp}(#1,#2)}
\newcommand\MrsGen[2]{\mathcal{M}^{gen}(#1,#2)}
\newcommand\FSPE[3]{\mathcal{F}_{#1}(#2,#3)}
\newcommand\MrsE[3]{\mathcal{M}_{#1}(#2,#3)}
\newcommand\MrsSmpE[3]{\mathcal{M}^{smp}_{#1}(#2,#3)}
\newcommand\MrsGenE[3]{\mathcal{M}^{gen}_{#1}(#2,#3)}
\newcommand\CStab[1]{\mathbf{S}_{#1}}
\newcommand\CGrp[1]{\mathbf{G}_{#1}}
\newcommand\BSeq[1]{\mathbf{\bm{\Delta}SG}_{#1}}
\newcommand\CStabFE[3]       {\CStab{\FSPE{#1}{#2}{#3}}}
\newcommand\CStabMrsE[3]     {\CStab{\MrsE{#1}{#2}{#3}}}    %  {\mathbf{S}_{\mathcal{M}_{#1}}(#2,#3)}
\newcommand\CStabMrsSmpE[3]  {\CStab{\MrsSmpE{#1}{#2}{#3}}} %     {\mathbf{S}_{\mathcal{M}_{#1}^{smp}}(#2,#3)}
\newcommand\CStabMrsGen[3]   {\CStab{\MrsGenE{#1}{#2}{#3}}} %     {\mathbf{S}_{\mathcal{M}_{#1}^{gen}}(#2,#3)}
\newcommand\CGrpKRFE[3]       {\CGrp{\FSPE{#1}{#2}{#3}}}
\newcommand\CGrpKRMrsE[3]     {\CGrp{\MrsE{#1}{#2}{#3}}}    %  {\mathbf{S}_{\mathcal{M}_{#1}}(#2,#3)}
\newcommand\CGrpKRMrsSmpE[3]  {\CGrp{\MrsSmpE{#1}{#2}{#3}}} %     {\mathbf{S}_{\mathcal{M}_{#1}^{smp}}(#2,#3)}
\newcommand\CGrpKRMrsGen[3]   {\CGrp{\MrsGenE{#1}{#2}{#3}}} %     {\mathbf{S}_{\mathcal{M}_{#1}^{gen}}(#2,#3)}
\newcommand\BSeqFE[3]       {\BSeq{\FSPE{#1}{#2}{#3}}}
\newcommand\BSeqMrsE[3]     {\BSeq{\MrsE{#1}{#2}{#3}}}    %  {\mathbf{S}_{\mathcal{M}_{#1}}(#2,#3)}
\newcommand\BSeqMrsSmpE[3]  {\BSeq{\MrsSmpE{#1}{#2}{#3}}} %     {\mathbf{S}_{\mathcal{M}_{#1}^{smp}}(#2,#3)}
\newcommand\BSeqMrsGen[3]   {\BSeq{\MrsGenE{#1}{#2}{#3}}} %     {\mathbf{S}_{\mathcal{M}_{#1}^{gen}}(#2,#3)}
\newcommand\ZZI{\mathcal{ZZI}}
\newcommand\torus[1]{\mathbb{T}^{#1}}
\newcommand\vn{n}
\newcommand\pz{z}
\newcommand\px{x}
\newcommand\pt{t}
\newcommand\ps{s}
\newcommand\py{y}
\newcommand\prj{p}
\newcommand\skl[2]{#1^{(#2)}}
\newcommand\homeq{\simeq}
\newcommand\isom{\cong}
\newcommand\sdo{\mathbf{O}}
\newcommand\sdoseq[1]{\sdo\left(#1\right)}
\newcommand\pnt{\bullet}
\newcommand\whomeq{\stackrel{\text{\sf\tiny{w}}}{\simeq}}
\newcommand\XSman{\Vman} %{\color{red}\Vman}}
\newcommand\dtw[1]{\tau_{#1}}
\newcommand\idtw[2]{[\dtw{#1}]_{#2}}
\newcommand\pj{\mathsf{p}}
\newcommand*{\Resize}[2]{\resizebox{#1}{!}{$#2$}}
\newcommand*{\MResize}[2]{\text{\Resize{#1}{#2}}}
\author{Sergiy Maksymenko}
\email{maks@imath.kiev.ua}
\address{Institute of Mathematics of NAS of Ukraine, Te\-re\-shchenkivska st. 3, Kyiv, 01004 Ukraine}
\keywords{Morse function, surface}
\subjclass[2000]{37J05, % General theory, relations with symplectic geometry and topology
57S05, % Topological properties of groups of homeomorphisms or diffeomorphisms
58B05 % Homotopy and topological questions
}
\title{Deformations of functions on surfaces}
\begin{document}

\begin{abstract}
The paper contains a review on recent progress in the deformational properties of smooth maps from compact surfaces $M$ to a one-dimensional manifold $P$.
It covers description of homotopy types of stabilizers and orbits of a large class of smooth functions on surfaces obtained by the author, E.~Kudryavtseva, B.~Feshchenko, I.~Kuznietsova, Yu.~Soroka, A.~Kravchenko.
We also present here a new direct proof of the fact that for generic Morse maps the connected components their orbits are homotopy equivalent to finite products of circles.
\end{abstract}

\maketitle

\section{Introduction}\label{sect:intro}
Let $\Mman$ be a compact connected surface and $\Pman$ be either the real line $\bR$ or the circle $S^1$.
For a closed subset $\Xman\subset\Mman$ denote by $\DiffMX$ the group of all smooth ($\Cinfty$) diffeomorphisms of $\Mman$ fixed on $\Xman$.
This group acts from the right on the space $\Ci{\Mman}{\Pman}$ by the following rule: if $\dif\in\DiffMX$ and $\func\in \Ci{\Mman}{\Pman}$, then the result of the action of $\dif$ on $\func$ is the composition map $\func\circ\dif:\Mman\to\Pman$.
For $\func\in \Ci{\Mman}{\Pman}$ let $\fSing$ be the set of its critical points, and
\begin{align*}
\Stabilizer{\func,\Xman} &= \{\dif \in \DiffMX \mid \func \circ \dif = \func \}, \\
\OrbfX &= \{\func \circ \dif \mid \dif \in \DiffMX \}
\end{align*}
be respectively the \textit{stabilizer} and the \textit{orbit} of $\func$ under that action.
It will be convenient to say that elements of $\Stabilizer{\func,\Xman}$ \myemph{preserve} $\func$.
Let also
\[
	\StabilizerIsotId{\func,\Xman} = \Stabilizer{\func} \cap \DiffId(\Mman,\Xman)
\]
be the subgroup of $\Stabilizer{\func}$ consisting of isotopic to the identity diffeomorphisms relatively to $\Xman$, though such an isotopy is not required to preserve $\func$.
Endow these spaces with $C^{\infty}$ topologies and denote by $\DiffIdMX$ and $\StabilizerId{\func,\Xman}$ the corresponding path components of $\id_{\Mman}$ in $\DiffMX$ and $\Stabilizer{\func,\Xman}$, and by $\OrbffX$ the path component of $\OrbfX$ containing $\func$.
We will omit $\Xman$ from notation whenever it is empty.

Let $\Cid{\Mman}{\Pman} \subset \Ci{\Mman}{\Pman}$ be the subset consisting of maps $\func:\Mman\to\Pman$ satisfying the following axiom:
\begin{enumerate}[leftmargin=*, topsep=1ex, parsep=1ex, label={\AxBd}]
\item\label{axiom:bd}
\it
The map $\func$ takes a constant value at each connected component of $\partial\Mman$ and has no critical points in $\partial\Mman$.
\end{enumerate}

Let also $\Mrs{\Mman}{\Pman} \subset \Cid{\Mman}{\Pman}$ be the subset consisting of \myemph{Morse maps}, i.e. maps having only non-degenerate critical points, i.e. in some local coordinates $(x,y)$ at such point $\func$ is given by the formula $\pm x^2\pm y^2$ for some choice of signs.
Notice that such a polynomial can be characterized as a non-zero homogeneous polynomial of degree $2$ having no multiple factors.
A Morse map is called
\begin{itemize}[leftmargin=5ex]
\item \myemph{simple} if each connected component of each level set of $\func$ contains at most one critical point;
\item \myemph{generic} if it takes distinct value at distinct critical points.
\end{itemize}
Every generic Morse map is evidently simple.
Denote by $\MrsGen{\Mman}{\Pman}$ and $\MrsSmp{\Mman}{\Pman}$ the sets of all generic and simple Morse maps $\Mman\to\Pman$.
Then
\[
	\MrsGen{\Mman}{\Pman} \ \subset \ \MrsSmp{\Mman}{\Pman} \ \subset \ \Mrs{\Mman}{\Pman}.
\]
It is well known that each of these three spaces is \myemph{open} and \myemph{everywhere dense} in $\Cid{\Mman}{\Pman}$ with respect to $\Cinfty$ topology, e.g.~\cite[Chapter~6]{Hirsch:DiffTop}.

More generally, let $\FSP{\Mman}{\Pman} \subset \Cid{\Mman}{\Pman}$ be the subset consisting of maps satisfying one more axiom:
\begin{enumerate}[leftmargin=*, topsep=1ex, parsep=1ex, label={\AxCrPt}]
\item\label{axiom:sing}
\it
For every critical point $z$ of $\func$, there are local coordinates in which $\func$ is a homogeneous polynomial $\bR^2\to\bR$ of degree $\geq2$ without multiple factors.
\end{enumerate}

The present paper contains a review of recent results about the homotopy types of $\Stabilizer{\func,\Xman}$ and $\OrbfX$ of maps $\func \in \FSP{\Mman}{\Pman}$ obtained by
S.~Maksymenko, B.~Feshchenko, A.~Kravchenko, I.~Kuznietsova, Yu.~Soroka,
\cite{Maksymenko:AGAG:2006, Maksymenko:hamv2, Maksymenko:BSM:2006, Maksymenko:TrMath:2008, Maksymenko:MFAT:2009, Maksymenko:MFAT:2010, Maksymenko:ProcIM:ENG:2010, Maksymenko:UMZ:ENG:2012, Maksymenko:TA:2020, KravchenkoMaksymenko:PIGC:2018, KravchenkoMaksymenko:EJM:2020, KravchenkoMaksymenko:JMFAG:2020, MaksymenkoFeshchenko:UMZ:ENG:2014, MaksymenkoFeshchenko:MS:2015, MaksymenkoFeshchenko:MFAT:2015, Feshchenko:Zb:2015, MaksymenkoKuznietsova:PIGC:2019, KuznietsovaSoroka:UMJ:2021} and
E.~Kudryavtseva~\cite{KudryavtsevaPermyakov:MatSb:2010, Kudryavtseva:MathNotes:2012, Kudryavtseva:SpecMF:VMU:2012, Kudryavtseva:MatSb:2013, Kudryavtseva:ENG:DAN2016},

Notice that we have the following inclusions:
\[
	\Mrs{\Mman}{\Pman} \ \subset \
	\FSP{\Mman}{\Pman}  \ \subset \
	\Cid{\Mman}{\Pman}  \ \subset \
	\Ci{\Mman}{\Pman}.
\]

It is also easy to show (see~\S\ref{sect:isolated_cr_pt}) that every $\func\in\FSP{\Mman}{\Pman}$ has only isolated critical points, so the set of critical points is finite.
Moreover, for every isolated critical point $z$ of a $\Cr{3}$ map $\gfunc:\bR^2\to\bR$ the local \myemph{topological structure} of level-sets of $\gfunc$ near $z$ is realized by level sets of homogeneous polynomial without multiple factors, see~\S\ref{sect:isolated_cr_pt}.

Thus $\FSP{\Mman}{\Pman}$ consists of ``typical'' maps with ``toplogically typical'' critical points, and therefore the presented results thus describe typical deformational properties of smooth maps on surfaces.

\section{Algebraic preliminaries}\label{sect:algebraic}
In this section we will present necessary topological and algebraic definitions and list of preliminary statements.
The reader may skip this section on first reading and use it for the references.
Everywhere in the paper $\monoArrow$ will mean a ``\myemph{monomorphism}'', $\epiArrow$ an ``\myemph{epimorphism}'', and $\isom$ an \myemph{isomorphism}.

\subsection{Commutative diagrams}
Suppose we are given two sequences of homomorphisms of groups:
\begin{align*}
	&\uSeq: \kA_1 \xrightarrow{\alpha_1} \cdots \xrightarrow{\alpha_{k-1}} \kA_{k}, &
	&\vSeq: \kB_1 \xrightarrow{\beta_1}  \cdots \xrightarrow{\beta_{k-1}} \kB_{k}.
\end{align*}
Then by a \myemph{homomorphism} $\gamma=(\gamma_1,\ldots,\gamma_{k}): \uSeq \to \vSeq$ we will mean a collection of homomorphisms $\gamma_i:\kA_i\to \kB_i$, $i=1,\ldots,k$, making commutative the following diagram:
\[
\aligned
\xymatrix@R=1.1em{
\kA_1 \ar[r]^-{\alpha_1}  \ar[d]^-{\gamma_1} &
\kA_2 \ar[r]^-{\alpha_2}  \ar[d]^-{\gamma_2} &
\cdots \ar[r]^-{\alpha_{k-1}} & \kA_{k} \ar[d]^-{\gamma_k} \\
\kB_1 \ar[r]^-{\beta_1}   &
\kB_2 \ar[r]^-{\beta_2}   &
\cdots \ar[r]^-{\beta_{k-1}} & \kB_{k}
}
\endaligned
\]
In this case $\gamma$ is said to be an \myemph{epimorphism} (resp.\ \myemph{monomorphism}, \myemph{isomorphism}) whenever $\gamma_i$ is so.
In particular, one can talk about \myemph{exact sequences} of sequences of homomorphisms.
Also by a \myemph{product} $\uSeq\times\vSeq$ we will mean the following sequence
\[
	\uSeq\times\vSeq:
		\kA_1 \times \kB_1
			\xrightarrow{\alpha_1 \times \beta_1}
		\kA_2 \times \kB_2
			\xrightarrow{\alpha_2 \times \beta_2}
            \cdots
			\xrightarrow{\alpha_{k-1}\times \beta_{k-1}}
		\kA_{k} \times \kB_{k}.
\]
More generally, one can define in an obvious way similar notions for commutative diagrams of arbitrary fixed type not only for chains of homomorphisms.
In particular, by \myemph{exact $(3\times3)$-diagram} we will mean a commutative diagram shown on the left:
\begin{equation}\label{equ:3x3_diagram}
\aligned
\xymatrix@C=1.2em@R=1em{
	\uSeq_0: &
	K \ar@{^(->}[d] \ar@{^(->}[r]  &
	L \ar@{^(->}[d]  \ar@{->>}[r]  &
	M \ar@{^(->}[d] \\
%%%%%%%%%%%%%%%%%%%%%%%%%%%%%%%%%%%%%%%%%%%%%%%%%%%%%%%%%%%%%%%%%%%%%%%%%%%
	\uSeq_1: &
	A \ar@{^(->}[r] \ar@{->>}[d] &
	B \ar@{->>}[r] \ar@{->>}[d] &
	C \ar@{->>}[d] \\
%%%%%%%%%%%%%%%%%%%%%%%%%%%%%%%%%%%%%%%%%%%%%%%%%%%%%%%%%%%%%%%%%%%%%%%%%%%
	\uSeq_2: &
	P \ar@{^(->}[r] &
	Q \ar@{->>}[r]  &
	R
}
%%%%%%%%%%
\endaligned
\end{equation}
in which each row and column is a short exact sequence.
It can be regarded as a \myemph{short exact sequence of its rows $\uSeq_0 \monoArrow \uSeq_1 \epiArrow \uSeq_2$ (or columns) being in turn short exact sequences of groups homomorphisms}.
Notice that if all monomorphisms in~\eqref{equ:3x3_diagram} are inclusions of subgroups, then $L$ and $A$ are normal subgroups of $B$ with $K=L\cap A$ and that diagrams is isomorphic with the following one
\begin{equation}\label{equ:iso_3x3_diagrams}
\aligned
	\xymatrix@C=1.5em@R=1em{
	%%%%%%%%%%
	A\cap L \ar@{^(->}[d] \ar@{^(->}[rr]  &&
	L \ar@{^(->}[d]  \ar@{->>}[r]  &
	\frac{L}{A\cap L} \ar@{^(->}[d] \\
	%%%%%%%%%%%%%%%%%%%%%%%%%%%%%%%%%%%%%%%%%%%%%%%%%%%%%%%%%%%%%%%%%%%%%%%%%%%
	A \ar@{^(->}[rr] \ar@{->>}[d] &&
	B \ar@{->>}[r] \ar@{->>}[d] &
	B/A \ar@{->>}[d] \\
	%%%%%%%%%%
	\frac{A}{A\cap L} \ar@{^(->}[rr] &&
	B/L \ar@{->>}[r]  &
	\frac{B/L}{A/(A\cap L)} \cong \frac{B/A}{M/(A\cap L)}
}
\endaligned
\end{equation}
via isomorphism being identity on $A$, $L$, and $B$.

\subsection{Sections of homomorphisms}
Let $p:G \to Z$ be a homomorphism.
Then another homomorphism $s:Z\to G$ is called a \myemph{section} of $p$, whenever $p\circ s= \id_{Z}$.
In this case $p$ must be surjective, and $s$ isomorphically maps $Z$ onto the subgroup $s(Z)$ of $G$.

An essential point here is that $s$ must be a \myemph{homomorphism}.
For instance $p:\bZ \xepiArrow{b \bmod n} \bZ_n$ for $n\geq2$ has no sections.
Indeed if $s:\bZ_n \to \bZ$ a section, then $s(\bZ_n)$ must be a finite subgroup of $\bZ$, and therefore it is $\{0\}$.
Thus $s$ is the zero homomorphism, and thus $p\circ s = 0 \not = \id_{\bZ_n}$.

\subsection{Direct products}
Let $G$ be a group, $G_1,\ldots,G_k$ a collection of its subgroups, and $\psi:\mprod\limits_{i=1}^{k} G_i \to G$ a \myemph{map} defined by $\psi(g_1,\ldots,g_k) = g_1\cdots g_k$.
Then $\psi$ is an \myemph{isomorphism of groups} if and only if $G_1,\ldots,G_k$ pairwise commute, generate $G$, and $G_i\cap G_j = \{e\}$ for $i\not=j$.
In that case $G$ \myemph{splits into a direct product of its subgroups $G_1,\ldots,G_k$}

\newcommand\lact[2]{{}^{#1}{#2}}
\newcommand\eA{e_A}
\newcommand\eZ{e_Z}

\subsection{Semidirect products}
Let $Z$ and $A$ be two groups with units $\eZ$ and $\eA$ respectively, and $\phi:Z \to \Aut(A)$ a \myemph{homomorphism} into the group of automorphisms of $A$ regarded as a group with respect to the composition of automorphisms.
For $x\in Z$ and $a\in A$ it will be convenient to denote $\phi(x)(a) = \lact{x}{a}$, whence for $x,y\in Z$ and $a,b\in A$ we have that
\begin{align*}
	\lact{(xy)}{a} &= \phi(xy)(a) = \phi(x)\bigl(\phi(y)(a)\bigr) = \lact{x}{(\lact{y}{a})}, \\
	\lact{x}{(ab)}   &= \phi(x)(ab) = \phi(x)(a) \cdot \phi(x)(b) = \lact{x}{a} \cdot   \lact{x}{b}.
\end{align*}
Then there is a group structure on the Cartesian product of \myemph{sets} $A\times Z$, denoted by $A\rtimes_{\phi} Z$ and called \myemph{a semidirect product of $A$ and $Z$ (with respect to $\phi$)}, defined by the following rule:
\[
	(a,x) \cdot (b,y) := (a\cdot  \lact{x}{b}, xy), \qquad (a,x), (b,y)\in A\times Z.
\]
If $\phi$ is assumed from the context, then $A\rtimes_{\phi} Z$ is sometimes denoted simply by $A\rtimes Z$.

One easily check associativity of such multiplication, and that
\[
	(a,x) \cdot (b,y) \cdot (c,z) := (a\cdot  \lact{x}{b} \cdot \lact{xy}{c}, xyz),
\]
the unit is $(\eA, \eZ)$, and $(a,x)^{-1} = (\lact{x^{-1}}{a^{-1}}, x^{-1})$.

It follows that the following maps
\begin{align*}
	& \rho: A \to A\rtimes_{\phi} Z, && \rho(a) = (a,\eZ), \\
	& \sigma: Z \to A\rtimes_{\phi} Z, && \sigma(z) = (\eA,z), \\
	& \pi: A\rtimes_{\phi} Z \to Z,    && \pi(a,z) = z,
\end{align*}
are homomorphisms, $\rho$ isomorphically maps $A$ on $A\times\eZ$, and $\sigma$ is a \myemph{section} of $\pi$, i.e. $\pi\circ \sigma = \id_{Z}$, and its isomorphically maps $Z$ onto $\eA\times Z$.
In other words, we have the following short exact sequence admitting a section:
$
	\xymatrix{
		 A \  \ar@{^{(}->}[r]^-{\rho} &
		\ A  \rtimes Z \ \ar@{->>}[r]^-{\pi} &
		\ Z. \ar@/^1.6ex/[l]^-{\sigma} %{z \mapsto (\eA,z)}
	}
$

The following statement characterizes semidirect products via short exact sequences admitting sections.
\begin{sublemma}\label{lm:short_ex_seq_sections}
Let $p:G \to Z$ be a homomorphism with $A = \ker(p)$.
Suppose there exists a \myemph{section $s:Z\to G$} of $p$.
Then $s(Z)$ acts on $A$ by conjugations, so we get a homomorphism
\[ \phi:Z \to \Aut(A), \qquad \phi(z)(a) = s(z) \cdot a \cdot s(z)^{-1},\]
and can define the semidirect product $A\rtimes_{\phi} Z$.
Moreover, the map
\[ \psi:A\rtimes_{\phi} Z \to G, \qquad \psi(a,z) = r(a)\sigma(z),\]
is an \myemph{isomorphism} which induces isomorphism of the following short exact sequences:
\begin{equation}\label{equ:semidirect_prod}
	\aligned
	\xymatrix@R=3ex{
	\ A            \           \ar@{=}[d] \ar@{^{(}->}[rr]^-{a \mapsto (a,e)} &&
	\  A\rtimes_{\phi} Z \ \ar[d]^-{\psi}_-{\cong} \ar@{->>}[rr]^-{(a,z) \mapsto z \ \ } &&
	\ Z            \ \ar@{=}[d]  \\
	\ A            \ \ar@{^{(}->}[rr] && \ G  \  \ar@{->>}[rr]^-{p} && \ Z \
	}
	\endaligned
\end{equation}
\end{sublemma}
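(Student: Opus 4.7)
The plan is to verify the lemma in four routine steps, since there is essentially no deep obstacle: everything reduces to the defining properties of $s$ and to the fact that $A = \ker(p)$ is normal in $G$.

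First I would check that $\phi$ is well-defined as a homomorphism $Z \to \Aut(A)$. For any $z \in Z$ and $a \in A$, applying $p$ gives $p\bigl(s(z)\,a\,s(z)^{-1}\bigr) = z \cdot p(a) \cdot z^{-1} = z \cdot e \cdot z^{-1} = e$, so conjugation by $s(z)$ sends $A$ into $A$; since the inverse conjugation is a two-sided inverse, $\phi(z) \in \Aut(A)$. Because $s$ is itself a homomorphism, $s(xy) = s(x)s(y)$, and a direct computation gives $\phi(xy) = \phi(x) \circ \phi(y)$, so $\phi$ is a homomorphism. This allows the formation of $A \rtimes_{\phi} Z$ as in the preceding subsection.

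Next I would define $\psi: A \rtimes_{\phi} Z \to G$ by $\psi(a,z) = a \cdot s(z)$ (this is the natural candidate and, I believe, what is intended by the statement; the factor $\lact{x}{b}$ in the semidirect product multiplication is built precisely so that this formula becomes a homomorphism). The check runs
\[
\psi\bigl((a,x)(b,y)\bigr) = \psi\bigl(a \cdot \lact{x}{b},\, xy\bigr) = a \cdot s(x) b s(x)^{-1} \cdot s(x)s(y) = a\, s(x) \cdot b\, s(y) = \psi(a,x) \cdot \psi(b,y),
\]
using once more that $s$ is a homomorphism.

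To see that $\psi$ is a bijection, given $g \in G$ set $z := p(g)$ and $a := g \cdot s(z)^{-1}$; then $p(a) = z \cdot z^{-1} = e$, so $a \in A$, and $\psi(a,z) = g$, proving surjectivity. If $\psi(a,x) = \psi(b,y)$, then applying $p$ gives $x = y$, and cancelling $s(x)$ on the right gives $a = b$, hence injectivity. Finally, the commutativity of the diagram~\eqref{equ:semidirect_prod} is immediate: the top-row inclusion sends $a \mapsto (a,\eZ)$ and $\psi(a,\eZ) = a \cdot s(\eZ) = a$, matching the bottom-row inclusion of $A$ into $G$; and $p \circ \psi(a,z) = p(a)\,p(s(z)) = e \cdot z = z$, matching the projection $(a,z) \mapsto z$.

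There is no genuinely hard step here; the only point that requires care is recognising that the twisted multiplication in $A \rtimes_{\phi} Z$ is forced by the requirement $\psi(a,z) = a \cdot s(z)$ to be a homomorphism, which then makes every other verification a one-line computation.
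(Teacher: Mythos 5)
Your proof is correct, and since the paper states this lemma without proof (it is a standard preliminary fact about split short exact sequences), there is no alternative argument in the paper to compare against. You also rightly resolve a notational glitch in the statement: the formula $\psi(a,z)=r(a)\sigma(z)$ uses symbols that, in the preceding paragraph, denote maps into $A\rtimes_\phi Z$ rather than into $G$; your reading $\psi(a,z)=a\cdot s(z)$ is the intended one, and all four verifications (that $\phi$ lands in $\Aut(A)$ and is a homomorphism, that $\psi$ is a homomorphism, that it is bijective, and that the diagram commutes) are carried out correctly.
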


\newcommand\wrGmZ{\nwr{G}{m}{\bZ}}
\newcommand\wrGZm{\ewr{G}{\bZ_m}}
\newcommand\wrGmnZZ{\nwr{G}{m,n}{\bZ^2}}
\newcommand\wrGZmZn{\ewr{G}{(\bZ_{m}\times\bZ_{n})}}

\subsection{Semidirect products with $\bZ$}
Clearly, any epimorphism $\eta:G \to \bZ$ onto the group $\bZ$ of integers, always has a section: just take any $g\in G$ with $\eta(g)=1$, and put $s(n) = g^n$.
Then $G$ is a semidirect product $\ker(\eta) \rtimes \bZ$, i.e. a Cartesian product $\ker(\eta) \times \bZ$ of sets with the following multiplication:
\begin{equation}\label{equ:prod_in_GrtimesZ}
(a,m)(b,n) = \bigl(a g^{m} b g^{-m}, \ m+n\bigr).
\end{equation}

\subsection{Wreath products}
Let ${G}$ be a group, $m \geq 1$, and ${G}^m$ be the $m$-th power of ${G}$, so its elements are $m$-tuples of elements of ${G}$.
Also for $n\geq1$ one can regard the elements of $mn$-power ${G}^{mn}$ of ${G}$ as $(m\times n)$-matrices whose entries belong to ${G}$.
In particular, there are the following natural \myemph{non-effective} actions of $\bZ$ on ${G}^m$ and $\bZ^2$ on ${G}^{mn}$ by cyclic shifts of coordinates:
\begin{align*}
&{G}^m \times \bZ\to {G}^m,         &&  \bigl( \{ g_i\}_{i=0}^{m-1}, a \bigr) \mapsto \{ g_{i+a}\}, \\
&{G}^{mn} \times \bZ^2\to {G}^{mn}, &&  \bigl( \{ g_{i,j}\}_{i=0,\ldots,m-1}^{j=0,\ldots, n-1}, (a,b) \bigr) \mapsto \{ g_{i+a, j+b}\},
\end{align*}
for $a,b\in\bZ$, where $i+a$ is taken modulo $m$, and $j+b$ is taken modulo $n$.
These actions reduce to \myemph{effective} actions of $\bZ_m$ on ${G}^{m}$ and $\bZ_m\times\bZ_n$ on ${G}^{mn}$.
Let
\begin{align*}
&\wrGmZ, &
&\ewr{G}{\bZ_m}, &
&\nwr{G}{mn}{\bZ^2}, &
&\ewr{G}{(\bZ_{m}\times\bZ_{n})}
\end{align*}
be the respective semidirect products induced by the above actions.
Such semidirect products are called \myemph{wreath products}.
For instance, $\wrGmZ$ is a cartesian product of sets $G^m \times \bZ$ with the following operation:
\begin{multline*}
(g_0,\ldots,g_{m-1}, a) \cdot (h_0,\ldots,h_{m-1}, b) = \\
 =  \bigl( g_0 h_a, \, g_{1} h_{a+1}, \, \ldots, \, g_{m-1} h_{a-1}, \, a+b \bigr),
\end{multline*}
where all indices are taken modulo $m$.
The multiplications in other groups $\ewr{G}{\bZ_m}$, $\nwr{G}{mn}{\bZ^2}$, and $\ewr{G}{(\bZ_{m}\times\bZ_{n})}$ are similar.

These groups will play the key role in what follows.
For a group $G$ denote by $Z(G)$ its center and by $G':=[G,G]$ its derived subgroup (commutant).
Let also $\ab:G\to G/G'$ be the natural abelianization epimorphism.

\begin{sublemma}{\rm\cite{KuznietsovaSoroka:UMJ:2021}}\label{lm:center_comm}
Let $G$ be any group and $m,n\geq1$.
Then
\begin{gather*}
Z\bigl( \wrGmZ \bigr) =
\bigl\{ (g,\ldots,g, mk) \mid g\in G, k\in\bZ \bigr\} \ \cong \ Z(G)\times \bZ, \\
%%%%%%%%%%%
Z\bigl( \nwr{G}{m,n}{\bZ^2} \bigr) =
\bigl\{ (g,\ldots,g, mk, nl) \mid g\in G, k,l\in\bZ \bigr\} \ \cong \ Z(G)\times \bZ^2.
\end{gather*}
Moreover, the following maps
\begin{align*}
	&\gamma: \wrGmZ   \to (G/G') \times \bZ,   && \gamma(g_1,\ldots,g_n, k)   = \bigl(\ab(g_1\cdots g_m), k), \\
	&\delta: \wrGmnZZ \to (G/G') \times \bZ^2, && \!\gamma(\{ g_{i,j} \}, k, l) = \Bigl(\ab\bigl( \mprod_{i=1}^{m} \mprod_{j=1}^{n} g_{i,j} \bigr), k, l\Bigr)
\end{align*}
are well-defined surjective homomorphisms with
\begin{align*}
	\bigl(\wrGmZ \bigr)' &= \ker(\gamma) = \bigl\{  (g_1,\ldots,g_n, 0) \mid \mprod_{i=1}^{m} g_i\in G' \bigr\}, \\
	\bigl(\wrGmnZZ \bigr)' &= \ker(\delta) = \bigl\{  (\{ g_{i,j} \}, 0, 0)  \mid \mprod_{i=1}^{m} \mprod_{j=1}^{n} g_{i,j} \in G' \bigr\},
\end{align*}
so we have the following commutative diagrams:
\begin{align*}
&\xymatrix@R=4ex{
	\wrGmZ  \ar[r]^-{\ab} \ar@{=}[d] &
	\frac{\wrGmZ}{\bigl(\wrGmZ\bigr)'}  \ar[d]^-{\eta}_-{\cong} \\
	 \wrGmZ  \ar[r]^-{\gamma}  &
	 G/G' \times \bZ
}&\qquad
\xymatrix@R=4ex{
	\wrGmnZZ  \ar[r]^-{\ab} \ar@{=}[d] &
	\frac{\wrGmnZZ}{\bigl(\wrGmnZZ\bigr)'}  \ar[d]^-{\nu}_-{\cong} \\
	\wrGmnZZ  \ar[r]^-{\delta}  &
	 G/G' \times \bZ^2
}
\end{align*}
for unique isomorphisms $\eta$ and $\nu$.

Finally, if $G$ is \myemph{torsion free}, then so are $\wrGmZ$ and $G\wrm{m,n}\bZ$.
\end{sublemma}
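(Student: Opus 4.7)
The plan is to work throughout in the explicit semidirect product $\wrGmZ = G^m \rtimes \bZ$ with the multiplication rule spelled out just before the lemma, so that every verification reduces to an elementary componentwise computation.

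For the center, suppose $(g_0,\ldots,g_{m-1},a)$ commutes with every $(h_0,\ldots,h_{m-1},b)$. Equating entries gives the identities $g_i\, h_{i+a} = h_i\, g_{i+b}$ for all $i$, all $h\in G^m$, and all $b\in\bZ$. Taking $h = e$ and $b = 1$ yields $g_i = g_{i+1}$, so all $g_i$ coincide. Taking $b = 0$ and $h$ with a single non-identity entry $h_0\in G$ shows on the one hand that $g_0 \in Z(G)$ (and likewise each $g_i$), and on the other hand that if $a \not\equiv 0 \pmod m$ then one would force $h_0 = e$; hence $a = mk$. This gives the stated description of $Z(\wrGmZ)$.

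For the commutant and abelianization, I would first verify by direct calculation that $\gamma(g_0,\ldots,g_{m-1},a) = (\ab(g_0 g_1 \cdots g_{m-1}),\, a)$ is a surjective homomorphism: the key observation is that in $G/G'$ the cyclically shifted product $h_a h_{a+1}\cdots h_{a-1}$ equals $h_0 h_1 \cdots h_{m-1}$, so the multiplication rule is respected. Since the target is abelian, $(\wrGmZ)' \subseteq \ker\gamma$. For the reverse inclusion the cleanest route is to compute the abelianization of the semidirect product directly: $G^m$ abelianizes to $(G/G')^m$, and the relations $\sigma^k(y)y^{-1}$ imposed by the $\bZ$-action generate the telescoping subgroup, which coincides with the kernel of the sum map $(G/G')^m \to G/G'$. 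Hence the abelianization is $(G/G')\times\bZ$, $\gamma$ induces an isomorphism on abelianizations, and the explicit form of $\ker\gamma$ together with the isomorphism $\eta$ follow.

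Torsion-freeness is immediate: if $(g_0,\ldots,g_{m-1},a)^N = e$ with $N \geq 1$, then the $\bZ$-coordinate equation $Na = 0$ forces $a = 0$, so the shift is trivial and the $N$-th power collapses to $(g_0^N,\ldots,g_{m-1}^N,0)$; torsion-freeness of $G$ then gives every $g_i = e$. The entire argument transfers to $\wrGmnZZ = G^{mn}\rtimes\bZ^2$ essentially verbatim: the $\bZ^2$-shift acts trivially exactly when $a \equiv 0 \pmod m$ and $b \equiv 0 \pmod n$, and the product $\prod_{i,j}g_{i,j}$ is cyclically invariant in $G/G'$ in both indices, so $\delta$ plays the role of $\gamma$. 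The main technical point I expect to watch carefully is the identification of the telescoping subgroup of $(G/G')^m$ with the kernel of the sum map — once that bookkeeping is pinned down, the rest of the argument is routine.
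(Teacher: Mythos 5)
Your argument is correct and is the natural direct approach; the paper itself gives no proof (it cites Kuznietsova--Soroka for the center/commutator part and Maksymenko:TA:2020 for torsion-freeness), so there is nothing to compare against in the text, but this is surely the computation the references carry out.

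Two small points you should pin down when writing it up. First, you invoke the formula for the abelianization of a split extension $N\rtimes\bZ$ (resp.\ $\bZ^2$) as $(N^{\ab})_{\bZ}\times\bZ$; this is correct (the transgression $H_2(H)\to (N^{\ab})_H$ vanishes because the section splits it), but if you want to stay entirely elementary it is cleaner to argue directly: $(\wrGmZ)'\subseteq\ker\gamma$ because the target is abelian, and conversely each $(g_0,\dots,g_{m-1},0)$ with $g_0\cdots g_{m-1}\in G'$ is a product of commutators, since modulo $(\wrGmZ)'$ conjugation by $s=(e,\dots,e,1)$ lets you slide all entries into the zeroth slot and collapse $(g_0,\dots,g_{m-1},0)$ to $(g_0\cdots g_{m-1},e,\dots,e,0)$, which dies because $[G,G]$ embedded in the zeroth slot lies in $(\wrGmZ)'$. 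Second, for $\wrGmnZZ$ the ``telescoping'' step is not quite verbatim: you now have two commuting shifts $\sigma_1,\sigma_2$, and you must check that $\mathrm{im}(\sigma_1-\mathrm{id})+\mathrm{im}(\sigma_2-\mathrm{id})$ equals the kernel of the total sum on $(G/G')^{mn}$. This holds: $\mathrm{im}(\sigma_1-\mathrm{id})$ is exactly the set of arrays with zero column sums, $\mathrm{im}(\sigma_2-\mathrm{id})$ the set with zero row sums, and any array with zero total sum splits as one of each (subtract off the array supported on row $0$ carrying the column sums). You flagged the telescoping identification as the point to watch, so this is a refinement of your plan rather than a gap, but it is worth spelling out since the $1$-variable argument does not literally apply. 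Everything else — the center computation by taking $h=e$, $b=1$ and then $b=0$ with a single nontrivial entry, and the torsion-freeness argument forcing $a=0$ and then componentwise powers — is exactly right.
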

\begin{smallproof}{Notes to the proof}
Statements about centers, derived subgroups, and abelianization are proved in~I.~Kuznietsova and Yu.~Soroka~\cite{KuznietsovaSoroka:UMJ:2021}.
The latter statement about torsion free property of $\wrGmZ$ is established in~\cite[Lemma~2.2]{Maksymenko:TA:2020}, and the same arguments can be used to prove it for $\wrGmnZZ$.
\end{smallproof}

\subsection{Special short exact sequences}
In what follows for $m,n\geq1$ we will use the following short exact sequences:
\begin{align}
\label{equ:ex_seq:z0}  \seqTriv   & : \ \{1\}\monoArrow \{1\} \epiArrow \{1\}, \\
\label{equ:ex_seq:z1}  \seqZ{1}   & : \ \bZ\xmonoArrow{\id} \bZ \epiArrow \{1\}, \\
\label{equ:ex_seq:zm}  \seqZ{m}   & : \ m\bZ\xmonoArrow{~~} \bZ \xepiArrow{~\bmod m~} \bZ_m, \\
\label{equ:ex_seq:zmn} \seqZ{m,n} & : \ m\bZ \times n\bZ \xmonoArrow{~~} \bZ\times \bZ \xepiArrow{~(\mathrm{mod}\, m, \ \mathrm{mod}\, n)~} \bZ_m \times \bZ_n,
\end{align}
Then for a short exact sequence $\aSeq: \kA\xmonoArrow{\alpha} \kB \xepiArrow{~\beta~} \kC$ we have the following two exact $(3\times3)$-diagram:

%{\footnotesize
\newcommand\asp{\hspace{-0.5cm}}%
\begin{equation}\label{equ:wr_ex_seq_qzm}
\aligned
\xymatrix@C=1.8em@R=1.2em{
\ \aSeq^{m}: \ar@{^(->}[d] &\asp
\kA^m\times0 \ar@{^(->}[d] \ar@{^(->}[r]  &
\kB^m\times0 \ar@{^(->}[d]  \ar@{->>}[r]  &
\kC^m\times0 \ar@{^(->}[d] \\
%%%%%%%%%%%%%%%%%%%
\ \seqWrm{\aSeq}{m}: \ar@{->>}[d] &\asp
\kA^m\times m\bZ \ \ar@{^(->}[r]^-{\alpha'} \ar@{->>}[d] &
\kB\wrm{m}\bZ \ar@{->>}[r]^-{\beta'} \ar@{->>}[d]^-{\,p'} &
\kC\wr\bZ_m \ar@{->>}[d] \\
%%%%%%%%%%%%%%%%%%%
\ \seqZ{m}: &\asp
m\bZ \ar@{^(->}[r] &
\bZ \ar@{->>}[r]  &
\bZ_m
}
\endaligned
\end{equation}%
\begin{equation}\label{equ:wr_ex_seq_qzmn}
\aligned
\MResize{0.85\textwidth}{
	\xymatrix@C=1.8em@R=1.5em{
	\ \aSeq^{mn}: \ar@{^(->}[d] &\asp
	\ \kA^{mn}\times 0\times 0 \ \ar@{^(->}[d] \ar@{^(->}[r]  &
	\kB^{mn}\times 0\times 0 \ar@{^(->}[d]  \ar@{->>}[r]  &
	\kC^{mn}\times 0\times 0 \ar@{^(->}[d] \\
	%%%%%%%%%%%%%%%%%%%
	\ \seqWrm{\aSeq}{m,n}: \ar@{->>}[d] &\asp
	\kA^{mn}\times m\bZ \times n\bZ \ \ar@{^(->}[r]^-{\alpha''} \ar@{->>}[d] &
	\kB\wrm{m,n}\bZ^2 \ar@{->>}[r]^-{\beta''} \ar@{->>}[d]^-{\,p''} &
	\kC\wr(\bZ_m\times\bZ_n) \ar@{->>}[d] \\
	%%%%%%%%%%%%%%%%%%%
	\ \seqZ{m,n}: &\asp
	\ m\bZ \times n\bZ \ \ar@{^(->}[r] &
	\ \bZ^2 \            \ar@{->>}[r]  &
	\ \bZ_m\times\bZ_n \
	}
}
\endaligned
\end{equation}%}%
where $p'$ and $p''$ are the projection to the last coordinates, and
\begin{align*}
\alpha'\bigl(a_1,\ldots,a_m, mk\bigr)     &= \bigl(\alpha(a_1),\ldots,\alpha(a_m), mk \bigr), \\
\beta'\bigl(b_1,\ldots,b_m, k\bigr)       &= \bigl(\beta(b_1),\ldots,\beta(b_m), \MOD{k}{m} \bigr) \\
\alpha''\bigl(\{ a_{i,j}\},  mk, nl\bigr) &= \bigl( \{ \alpha(a_{i,j}) \}, mk, nl \bigr), \\
\beta''\bigl( \{ b_{i,j} \}, k, l\bigr)    &= \bigl(\{ \beta(b_{i,j}) \}, \MOD{k}{m}, \MOD{l}{n} \bigr),
\end{align*}
$\alpha_i\in \kA$, $\beta_i\in \kB$ for all $i=1,\ldots,m$, $j=1,\ldots,n$, and $k,l\in\bZ$.
The middle horizontal sequences of both diagrams will be called the \myemph{wreath product of $\aSeq$ with $\seqZ{m}$ and $\seqZ{m,n}$} and denoted by $\seqWrm{\aSeq}{m}$ and $\seqWrmn{\aSeq}{m}{n}$ respectively.
Thus~\eqref{equ:wr_ex_seq_qzm} and~\eqref{equ:wr_ex_seq_qzmn} can be written as short exact sequences of their rows:
\begin{align}\label{equ:seq_qzm_qzmn}
	&\aSeq^{m}\monoArrow\seqWrm{\aSeq}{m}\epiArrow\seqZ{m}, &
	&\aSeq^{mn}\monoArrow\seqWrm{\aSeq}{m,n}\epiArrow\seqZ{m,n}
\end{align}
Evidently,
\begin{gather*}\label{equ:W_Z1}
\seqWrmn{\aSeq}{m}{1} \ \cong \ \seqWrmn{\aSeq}{1}{m}, \\
\seqWrm{\aSeq}{1} \ \cong \ \aSeq \times \seqZ{1} : \ \
\kA \times \bZ \ \monoArrow \ \kB \wrm{1} \bZ\equiv \kB\times\bZ \ \epiArrow \ \kC, \\
%%%%%%%%%%%%%%%%%%%%%
\label{equ:Zm_Zn}
\seqWrm{\seqZ{k}}{m}: \ \ (k\bZ)^m \times m\bZ \ \xmonoArrow{~} \ \bZ\wrm{m}\bZ \ \xepiArrow{~} \  \bZ_k\wr\bZ_m, \\
\seqWrmn{\seqZ{k}}{m}{n}: \ (k\bZ)^{mn} \times m\bZ\times n\bZ \ \xmonoArrow{~} \ \bZ\wrm{m,n}\bZ^2 \ \xepiArrow{~} \  \bZ_k\wr(\bZ_m\times\bZ_n).
\end{gather*}

\newcommand\dwSeq[1]{{}^{#1}\!/_{\!#1}}
\newcommand\ssSeq[1]{\mathsf{s}(#1)}
\newcommand\ddSeq[1]{\mathsf{d}(#1)}
\newcommand\qwSeq[1]{\natural#1}

\newcommand\grs{\gamma}
\newcommand\grsi[1]{\grs_{#1}}
\subsection{Garside elements}\label{sect:garside_elem}
Let $G$ be any group with unit $e$ and $m\geq1$.
Then the element $\grs=(\underbrace{e,\ldots,e}_{m}, m) \in \wrGmZ$ will be called the \myemph{Garside} element.
By Lemma~\ref{lm:center_comm} $\grs$ belongs to the center of $\wrGmZ$.

% We will not be talking about Garside structures, however such an element will have an interpretation as

\subsubsection{Garside sequences}
Let $\aSeq:\kA \monoArrow \kB \epiArrow \kC$ be a short exact sequence and $m\geq1$.
For simplicity assume that $\kA$ is a subgroup of $\kB$ and let $e$ be the common unit element of $\kA$ and $\kB$.
Consider the wreath product
\begin{equation}\label{equ:q_wr_zm}
	\seqWrm{\aSeq}{m}: \kA^m\times m\bZ  \monoArrow \kB \wrm{m} \bZ \epiArrow \kC \wr \bZ.
\end{equation}
Then the Garside element $\grs = (e,\ldots,e, m)$ of $\kB \wrm{m} \bZ$ belongs to $\kA^m\times m\bZ$, and $\grs^k = (e,\ldots,e, km)$ for all $k\in\bZ$.
It follows that there exists the following exact $(3\times3)$-diagram:
\begin{equation}\label{equ:seqWrZm_finite}
\aligned
\xymatrix@C=1.8em@R=1.4em{
	\seqZ{1} \ar@{^(->}[d]
	& \hspace{-1cm} :  &
	e^m\times m\bZ  \ar@{^(->}[d]^-{\,k \,\mapsto\, \grs^k} \ar@{=}[r]  &
	e^m\times m\bZ  \ar@{^(->}[d] \ar@{->>}[r]  &
	1 \ar@{^(->}[d] \\
	%%%%%%%%%%%%%%%%%%%
	\seqWrm{\aSeq}{m} \ar@{->>}[d]
	& \hspace{-1cm}:   &
	\kA^m\times m\bZ  \ar@{^(->}[r] \ar@{->>}[d] &
	\kB\wrm{m}\bZ \ar@{->>}[r] \ar@{->>}[d] &
	\kC\wr\bZ_m \ar@{=}[d] \\
	%%%%%%%%%%%%%%%%%%%
	\seqWrm{\aSeq}{m}
	& \hspace{-1cm} : &
	\kA^m  \ar@{^(->}[r]  &
	\kB\wr\bZ_m \ar@{->>}[r]  &
	\kC\wr\bZ_m
	}
\endaligned
\end{equation}
in which the bottom row sequence will be denoted by $\seqWrm{\aSeq}{m}$. % and called \myemph{Garside quotient of $\seqWrm{\aSeq}{m}$}.
The total sequence $\seqZ{1} \monoArrow \seqWrm{\aSeq}{m} \epiArrow \seqWrm{\aSeq}{m}$ will be called the \myemph{Garside sequence of $\seqWrm{\aSeq}{m}$}.

Similarly, for $m,n\geq1$ one has analogous diagram
\begin{equation}\label{equ:seqWrZmn_finite}
\MResize{0.85\textwidth}{
\aligned
\xymatrix@C=1.8em@R=1.4em{
	\seqZ{1}^2 \ar@{^(->}[d]
	& \hspace{-1cm} : \hspace{-0.6cm} &
	e^{mn}\times m\bZ\times n\bZ  \ar@{^(->}[d]^-{\,k \,\mapsto\, \grs^k} \ar@{=}[r]  &
	e^{mn}\times m\bZ\times n\bZ  \ar@{^(->}[d] \ar@{->>}[r]  &
	1 \ar@{^(->}[d] \\
	%%%%%%%%%%%%%%%%%%%
	\seqWrmn{\aSeq}{m}{n} \ar@{->>}[d]
	& \hspace{-1cm}: \hspace{-0.6cm}  &
	\kA^{mn}\times m\bZ \times n\bZ  \ar@{^(->}[r] \ar@{->>}[d] &
	\kB\wrm{m,n}\bZ^2 \ar@{->>}[r] \ar@{->>}[d] &
	\kC\wr(\bZ_m\times\bZ_n) \ar@{=}[d] \\
	%%%%%%%%%%%%%%%%%%%
	\seqWrmn{\aSeq}{m}{n}
	& \hspace{-1cm} : \hspace{-0.6cm} &
	\kA^{mn}  \ar@{^(->}[r]  &
	\kB\wr(\bZ_m\times\bZ_n) \ar@{->>}[r]  &
	\kC\wr(\bZ_m\times\bZ_n)
	}
\endaligned
}
\end{equation}
The total sequence $\seqZ{1}^2 \monoArrow \seqWrmn{\aSeq}{m}{n} \epiArrow \seqWrmn{\aSeq}{m}{n}$ will be called the \myemph{Garside sequence of $\seqWrmn{\aSeq}{m}{n}$}.

\subsubsection{Diagonal Garside sequences}
More generally, let $n\geq1$, and for each $i=1,\ldots,n$ let $\aSeq_i:\kA_i \monoArrow \kB_i \epiArrow \kC_i$, be a short exact sequence, $m_i\geq1$, $\grsi{i} \in \kB_i \wrm{m_i} \bZ$ be the corresponding Garside element, and
\[ \widehat{\grs} = (\grsi{1}, \ldots,\grsi{n}) \in \mprod\limits_{i=1}^{n} (\kA_i^{m_i}\times m_i\bZ)\] be the \myemph{``diagonal'' element} in the product of subgroups $\kA_i^{m_i}\times m_i\bZ$ generated by the corresponding Garside elements.
Then we have the following commutative diagram:
\begin{equation}\label{equ:diag_Garside_seq}
\aligned
\MResize{0.89\textwidth}{\xymatrix@C=1em@R=1.4em{
	\seqZ{1} \ar@{^(->}[d]
	& \hspace{-1cm} :  &
	\bZ  \ar@{^(->}[d]^-{k \,\mapsto\, \langle \widehat{\grs} \rangle^k = (\grsi{1}^k, \ldots, \grsi{n}^k)} \ar@{=}[r]  &
	\bZ  \ar@{^(->}[d] \ar@{->>}[r]  &
	1 \ar@{^(->}[d] \\
	%%%%%%%%%%%%%%%%%%%
	\mprod\limits_{i=1}^{n}\seqWrm{\aSeq}{m} \ar@{->>}[d]
	& \hspace{-1cm}:   &
	\mprod\limits_{i=1}^{n} (\kA_i^{m_i}\times m_i\bZ)  \ar@{^(->}[r] \ar@{->>}[d] &
	\mprod\limits_{i=1}^{n} (\kB_i\wrm{m_i}\bZ)  \ar@{->>}[r] \ar@{->>}[d] &
	\mprod\limits_{i=1}^{n} (\kC_i \wr\bZ_{m_i}) \ar@{=}[d] \\
	%%%%%%%%%%%%%%%%%%%
	(\mprod\limits_{i=1}^{n}\seqWrm{\aSeq}{m}) / \seqZ{1}
	& \hspace{-1cm} :  &
	\Bigl( \mprod\limits_{i=1}^{n} (\kA_i^{m_i}\times m_i\bZ) \Bigr) / \langle \widehat{\grs} \rangle   \ar@{^(->}[r]  &
	\mprod\limits_{i=1}^{n} (\kB_i\wrm{m_i}\bZ) / \langle \widehat{\grs} \rangle  \ar@{->>}[r]  &
	\mprod\limits_{i=1}^{n} (\kC_i \wr\bZ_{m_i})
	}
}
\endaligned
\end{equation}
which will be called \myemph{diagonal Garside sequence of $\mprod\limits_{i=1}^{n}\seqWrm{\aSeq_i}{m_i}$}.

\subsubsection{Several constructions}
For every group $A$ one can associate two short exact sequences $\ssSeq{A}: A = A \epiArrow 1$ and $\ddSeq{A}: 1 \monoArrow A = A$.

Then for a pair $\kSeq:A \monoArrow B \epiArrow C$ and $\lSeq:P \monoArrow Q \epiArrow R$ of two exact sequences one can also define the following \myemph{split sequence of $\kSeq$ and $\lSeq$:}
\begin{gather*}\label{equ:split_seq}
 \ssSeq{\kSeq} \times \ddSeq{\lSeq}:  \kSeq \monoArrow \kSeq\times \lSeq \epiArrow \lSeq \\
    \xymatrix@C=4ex@R=3ex{
        \kSeq \ar@{^(->}[d]             & \hspace{-1.2cm} : & A \ar@{^(->}[r] \ar@{^(->}[d]         & B \ar@{^(->}[d] \ar@{->>}[r]         & C \ar@{^(->}[d] \\
        \kSeq\times \lSeq \ar@{->>}[d]  & \hspace{-1.2cm} : & A\times P \ar@{^(->}[r] \ar@{->>}[d]  & B\times Q \ar@{->>}[r] \ar@{->>}[d]  & C\times R \ar@{->>}[d]   \\
        \lSeq                           & \hspace{-1.2cm} : & P \ar@{^(->}[r]                       & Q \ar@{=}[r]                         & R
    }
\end{gather*}
Also for each short exact sequence $\uSeq: A \monoArrow B \epiArrow C$ one can define the following exact $(3\times3)$-diagram

\begin{equation}\label{equ:u_div_u}
\aligned
    \xymatrix@C=4ex@R=2ex{
      \ssSeq{A} \ar@{^(->}[d] & \hspace{-1.8cm} : \hspace{-1cm} & A \ar@{=}[r]    \ar@{=}[d]    & A \ar@{->>}[r] \ar@{^(->}[d] & 1 \ar@{^(->}[d] \\
      \uSeq     \ar@{->>}[d]  & \hspace{-1.8cm} : \hspace{-1cm} & A \ar@{^(->}[r] \ar@{->>}[d]  & B \ar@{->>}[r] \ar@{->>}[d]  & C \ar@{->>}[d]   \\
      \ddSeq{A}               & \hspace{-1.8cm} : \hspace{-1cm} & 1 \ar@{^(->}[r]               & C \ar@{=}[r]                 & C
    }
\endaligned
\end{equation}
which can be viewed as a short exact sequence $\qwSeq{\uSeq}: \ssSeq{A} \monoArrow \uSeq \epiArrow \ddSeq{C}$ of short exact sequences which will be denoted by $\qwSeq{\uSeq}$.

\subsection{Characterization of $\aSeq^{m}\monoArrow\seqWrm{\aSeq}{m}\epiArrow\seqZ{m}$}
Suppose we have a \myemph{short exact sequence} of short exact sequences $\kSeq \monoArrow \lSeq \epiArrow \seqZ{m}$:
\begin{equation}\label{equ:3x3_general:m}
\aligned
\xymatrix@C=0.8em@R=1.1em{
**[l] \kSeq: &
\ \kA \ \ar@{^(->}[d] \ar@{^(->}[rrr]         &&&
\ \kB \ \ar@{^(->}[d]  \ar@{->>}[rrr]  &&&
\ \kC \ \ar@{^(->}[d] \\
%%%%%%%%%%%%%%%%%%%
**[l] \lSeq: &
\ \xA \  \ar@{^(->}[rrr] \ar@{->>}[d]         &&&
\ \xB \   \ar@{->>}[rrr]^-{\fc}  \ar@{->>}[d]^-{\hb} &&&
\ \xC \  \ar@{->>}[d] \\
%%%%%%%%%%%%%%%%%%%
**[l] \seqZ{m}: &
 m\bZ \ar@{^(->}[rrr]                    &&&
\bZ    \ar@{->>}[rrr]^-{\mathrm{mod}\,m} &&&
\bZ_m
}
\endaligned
\end{equation}
in which $\kA, \xA, \kB$ are normal subgroups of $\xB$ and
\begin{align}
\ \ \xA &= \ker(\fc) = \hb^{-1}(m\bZ), &
\ \ \kB &= \ker(\hb), &
\ \ \kA &= \xA \cap \kB.
\end{align}

Fix an element $g\in \xB$ and let $\zB{0} \subset \kB$ be a subgroup.
Denote $\zA{0} = \kA \cap \zB{0}$ and  $\zC{0} = \zB{0}/\zA{0}$, so we get a short exact sequence $\uSeq: \zA{0} \monoArrow \zB{0} \epiArrow \zC{0}$.

Also let $\zB{i} := g^{-i} \zB{0} g^{i}$ and $\zA{i} := g^{-i} \zA{0} g^{i}$ for $i=0,\ldots,m-1$.
Then $\zB{i}\subset\kB$ and $\zA{i}\subset\kA$ since $\kB$ and $\kA$ are normal.

\def\bsp{\hspace{-7mm}}
\begin{sublemma}\label{lm:charact_seq_wrm}{\rm\cite[Lemma~2.3]{Maksymenko:TA:2020}, cf.\cite{KuznietsovaMaksymenko:Mob:2020}.}
Suppose that
\begin{enumerate}[leftmargin=*, label={\rm(\alph*)}]
\item\label{enum:3x3:g:m}
$\hb(g)=1$ and $g^m$ commutes with $\kB$;

\item\label{enum:3x3:prod:m}
$\kB$ splits into the product subgroups $\zB{0},\ldots,\zB{m-1}$, i.e. those subgroups generate $\kB$, pairwise commute, and $\zB{i}\cap \zB{j}=\{e\}$ for all $i\not=j$;

\item\label{enum:3x3:A_i:m}
$\zA{0},\ldots,\zA{m-1}$ generate $\kA$.
\end{enumerate}
Then the map $\beta:\zB{0}\wrm{m}\bZ \to \xB$ defined by
\begin{multline*}%\label{equ:beta_iso}
\beta(\xv{0},\xv{1},\ldots,\xv{m-1}, k) = \\
 = \xv{0} \, (g^{-1} \xv{1} g^{1}) \, (g^{-2} \xv{2} g^{2}) \, \cdots \, (g^{-m+1} \xv{m-1} g^{m-1}) g^{k} = \\
 = \xv{0} \, g^{-1} \, \xv{1}\,  \cdots \, g^{-1} \, \xv{{}m-1} \, g^{-1+m+k},
\end{multline*}
for $\xv{i}\in \zB{0}$, $i=0,\ldots,m-1$, and $k\in\bZ$, is an isomorphism of groups inducing an isomorphism of exact $(3\times3)$-diagrams:
\begin{equation*}%\label{equ:iso_3x3_3x3}
\MResize{\textwidth}{
\xymatrix@C=1.7em@R=1.4em{
\zA{0}^m\times0 \ar@{^(->}[d] \ar@{^(->}[r]  &
\zB{0}^m\times0 \ar@{^(->}[d]  \ar@{->>}[r]  &
\zC{0}^m\times0 \ar@{^(->}[d]                &
\bsp\bsp\bsp &&
\ \kA \ \ar@{^(->}[d] \ar@{^(->}[r] &
\ \kB \ \ar@{^(->}[d]  \ar@{->>}[r] &
\ \kC \ \ar@{^(->}[d]               \\
%%%%%%%%%%%%%%%%%%%%%%%%%%%%%%%%
%%%%%%%%%%%%%%%%%%%%%%%%%%%%%%%%
\zA{0}^m\times m\bZ \ \ar@{^(->}[r]^-{\alpha'} \ar@{->>}[d] &
\zB{0}\wrm{m}\bZ \ar@{->>}[r]^-{\beta'} \ar@{->>}[d]^{p}    &
\zC{0}\wr\bZ_m \ar@{->>}[d]                                 &
%%%%%%%%%%%%%%%%%%%
\bsp\bsp\bsp\ar@{=>}[r]^-{~~\beta~~}_-{\cong} & &
\ \xA \  \ar@{^(->}[r] \ar@{->>}[d]                &
\ \xB \   \ar@{->>}[r]^-{\fc}  \ar@{->>}[d]^-{\hb} &
\ \xC \  \ar@{->>}[d]                              \\
%%%%%%%%%%%%%%%%%%%%%%%%%%%%%%%%
%%%%%%%%%%%%%%%%%%%%%%%%%%%%%%%%
 m\bZ \ar@{^(->}[r] &
\bZ    \ar@{->>}[r] &
\bZ_m               &
%%%%%%%%%%%%%%%%%%%%
\bsp\bsp\bsp\ar@{=}[r]^-{~~\id~~} & &
m\bZ \ar@{^(->}[r] &
\bZ \ar@{->>}[r]   &
\bZ_m
}}
\end{equation*}
being identity on the lower sequence.
In other words, we get an \myemph{``isomorphism over $\seqZ{m}$''} of short exact sequences
\ $\uSeq^m \monoArrow \seqWrm{\uSeq}{m} \epiArrow \seqZ{m}$ \ and \ $\kSeq \monoArrow \lSeq \epiArrow \seqZ{m}$.
\end{sublemma}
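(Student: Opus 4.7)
The plan is to check that the explicit formula defining $\beta$ yields a group isomorphism by verifying separately (i) bijectivity of the underlying set map, (ii) the homomorphism property, and then (iii) tracing through the $(3\times 3)$-diagram to see that the induced maps on rows are the stated ones.

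First, I will use assumptions~\ref{enum:3x3:g:m} and~\ref{enum:3x3:prod:m} to build the basic ``kernel-level'' isomorphism. Since $g^m$ commutes with $\kB$, conjugation by $g$ on $\kB$ has order dividing $m$ and cyclically permutes the subgroups $\zB{i}$: indeed $g^{-1}\zB{i}g = \zB{i+1}$ for $0\le i\le m-2$, while $g^{-1}\zB{m-1}g = g^{-m}\zB{0}g^{m} = \zB{0}$. Combined with~\ref{enum:3x3:prod:m}, this shows that the map
\[
\mu_0: \zB{0}^m \longrightarrow \kB,\qquad \mu_0(\xv{0},\ldots,\xv{m-1}) = \xv{0}\,(g^{-1}\xv{1}g)\,\cdots\,(g^{-(m-1)}\xv{m-1}g^{m-1}),
\]
is a well-defined isomorphism of groups: bijectivity of the underlying set map follows from~\ref{enum:3x3:prod:m} together with the fact that $x\mapsto g^{-i}xg^i$ is an isomorphism $\zB{0}\to\zB{i}$, while the homomorphism property follows because the images lie in pairwise commuting subgroups, so one may freely reorder the factors when multiplying two values.

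The key calculation is then that for every $n\in\bZ$ and every tuple $\mathbf{x}=(\xv{0},\ldots,\xv{m-1})\in\zB{0}^m$,
\[
g^n\,\mu_0(\xv{0},\ldots,\xv{m-1})\,g^{-n} \;=\; \mu_0(\xv{n},\xv{n+1},\ldots,\xv{n-1})\qquad\text{(indices mod $m$).}
\]
Indeed, conjugation by $g^n$ sends the $i$-th factor $g^{-i}\xv{i}g^i$ to $g^{-(i-n)}\xv{i}g^{i-n}$; writing $i-n = k + mt$ with $k=(i-n)\bmod m$ and using that $g^{mt}$ commutes with $\zB{0}\subset\kB$ by~\ref{enum:3x3:g:m}, this equals $g^{-k}\xv{i}g^{k}$. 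Reordering the commuting factors and relabelling $i=(n+k)\bmod m$ then produces the right-hand side. Since the $\bZ$-action by shift on $\zB{0}^m$ just described is precisely the one defining the wreath product $\zB{0}\wrm{m}\bZ$, Sublemma~\ref{lm:short_ex_seq_sections} applied to the section $\nu:\bZ\to\xB$, $\nu(k)=g^k$, yields a well-defined homomorphism
\[
\beta(\xv{0},\ldots,\xv{m-1},k) \;=\; \mu_0(\xv{0},\ldots,\xv{m-1})\cdot g^k,
\]
which coincides with the formula in the statement.

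Finally, I will dispatch bijectivity and identify the diagram. Surjectivity: the image of $\beta$ contains $\kB=\mu_0(\zB{0}^m)$ together with $g = \beta(e,\ldots,e,1)$, and these generate $\xB$ since $\hb(g)=1$ generates $\bZ\cong\xB/\kB$. Injectivity: if $\mu_0(\mathbf{x})g^k=e$, then applying $\hb$ gives $k=0$, whence $\mu_0(\mathbf{x})=e$ and $\mathbf{x}=(e,\ldots,e)$ by injectivity of $\mu_0$. For the top-row restriction, condition~\ref{enum:3x3:A_i:m} together with the observation that $\zA{0},\ldots,\zA{m-1}$ pairwise commute (as subgroups of the pairwise commuting $\zB{i}$'s, whose pairwise intersections are trivial) yields $\kA = \prod_{i=0}^{m-1}\zA{i}$ as an internal direct product, so $\mu_0$ restricts to an isomorphism $\zA{0}^m\cong\kA$; combined with $g^m\in\xA$ (because $\hb(g^m)=m\in m\bZ$) and the decomposition $\xA=\kA\cdot\langle g^m\rangle$, $\beta$ restricts to the desired isomorphism $\zA{0}^m\times m\bZ\cong\xA$. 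Commutativity of the bottom square with the identity on $\seqZ{m}$ is automatic from $\hb(\beta(\mathbf{x},k))=k$. I expect the main obstacle to be the conjugation identity in the second paragraph, which demands careful bookkeeping of indices modulo $m$ and repeated use of~\ref{enum:3x3:g:m} to absorb ``excess'' shifts $g^{mt}$ into the commutant of $\zB{0}$.
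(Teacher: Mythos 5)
The paper states this lemma with a citation to an external reference and does not include its own proof, so a direct comparison of approaches is not possible; on its own merits your argument is correct and reasonably complete. You build the ``kernel-level'' isomorphism $\mu_0:\zB{0}^m\to\kB$, verify the crucial $\bZ$-equivariance $g^n\mu_0(\mathbf{x})g^{-n}=\mu_0(\xv{n},\ldots,\xv{n-1})$ (which correctly matches the paper's shift action defining $\wrm{m}\bZ$, with the $g^{mt}$-absorption via hypothesis~\ref{enum:3x3:g:m} handled properly), then splice in the section $n\mapsto g^n$ via Sublemma~\ref{lm:short_ex_seq_sections} to produce $\beta$, and finally check surjectivity/injectivity using $\hb$ and restrict to the $\xA$-row using $\xA=\kA\cdot\langle g^m\rangle$ and hypothesis~\ref{enum:3x3:A_i:m}. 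Two small points worth flagging for polish, neither of which affects correctness of the logic: (i) the invocation of Sublemma~\ref{lm:short_ex_seq_sections} really produces an isomorphism $\kB\rtimes_{\mathrm{conj}}\bZ\cong\xB$, and it is the equivariance of $\mu_0$ that upgrades this to a homomorphism on $\zB{0}\wrm{m}\bZ$; spelling out that $\beta=\psi\circ(\mu_0\times\id)$ would make the logical order explicit. (ii) For the identification $\mu_0(\zA{0}^m)=\kA$ you do not actually need the full internal-direct-product structure on $\kA$: pairwise commutativity of the $\zA{i}$'s together with hypothesis~\ref{enum:3x3:A_i:m} already gives surjectivity of the restriction, and injectivity is inherited from $\mu_0$ on $\zB{0}^m$, so the argument is if anything slightly more robust than stated.
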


\subsection{Characterization of $\aSeq^{mn}\monoArrow\seqWrm{\aSeq}{m,n}\epiArrow\seqZ{m,n}$}
Suppose we have a \myemph{short exact sequence} of short exact sequences $\kSeq \monoArrow \lSeq \epiArrow \seqZ{m,n}$:
\begin{equation}\label{equ:3x3_general:mn}
\aligned
\xymatrix@C=0.8em@R=1.5em{
**[l] \kSeq: &
\ \kA \ \ar@{^(->}[d] \ar@{^(->}[rrr]         &&&
\ \kB \ \ar@{^(->}[d]  \ar@{->>}[rrrr]  &&&&
\ \kC \ \ar@{^(->}[d] \\
%%%%%%%%%%%%%%%%%%%
**[l] \lSeq: &
\ \xA \  \ar@{^(->}[rrr] \ar@{->>}[d]         &&&
\ \xB \   \ar@{->>}[rrrr]^-{\fc}  \ar@{->>}[d]^-{\hb} &&&&
\ \xC \  \ar@{->>}[d] \\
%%%%%%%%%%%%%%%%%%%
**[l] \seqZ{m}: &
 m\bZ \times n\bZ \ar@{^(->}[rrr]                    &&&
\bZ\times\bZ    \ar@{->>}[rrrr]^-{\mathrm{mod}\,m,\, \mathrm{mod}\,n} &&&&
\bZ_m\times\bZ_n
}
\endaligned
\end{equation}
in which $\kA, \xA, \kB$ are normal subgroups of $\xB$ and
\begin{align}
\xA &= \ker(\fc) = \hb^{-1}(m\bZ\times n\bZ), &
\kB &= \ker(\hb), &
\kA &= \xA \cap \kB.
\end{align}

Let $\zB{0} \subset \kB$ be a subgroup, $\zA{0} = \kA \cap \zB{0}$ and  $\zC{0} = \zB{0}/\zA{0}$, so we get a short exact sequence $\uSeq: \zA{0} \monoArrow \zB{0} \epiArrow \zC{0}$.
Fix also two elements $g,h\in \xB$ and for $i=0,\ldots,m-1$, $j=0,\ldots,n-1$ put
\begin{align*}
	\zA{i,j} &:= g^{i} \, h^{j} \, \zA{0} \, h^{-j} \, g^{-i}, &
	\zB{i,j} &:= g^{i} \, h^{j} \, \zB{0} \, h^{-j} \, g^{-i}.
\end{align*}
Then $\zA{i,j}\subset\kA$ and $\zB{i,j}\subset\kB$ since $\kA$ and $\kB$ are normal.
The following lemma can be proved similarly to Lemma~\ref{lm:charact_seq_wrm}.
\begin{sublemma}\label{lm:charact_seq_wrmn}%{\rm\cite[Lemma~2.3]{Maksymenko:TA:2020}, cf.\cite{KuznietsovaMaksymenko:Mob:2020}.}
Suppose that
\begin{enumerate}[leftmargin=*, label={\rm(\alph*)}]
\item\label{enum:3x3:g:mn}
$gh=hg$, $\hb(g)=(1,0)$, $\hb(h)=(0,1)$, and both $g^m$ and $h^n$ commute with $\kB$;

\item\label{enum:3x3:prod:mn}
$\kB$ splits into the product subgroups $\{\zB{i,j}\}$, i.e. those subgroups generate $\kB$, pairwise commute, and $\zB{i,j}\cap \zB{i',j'}=\{e\}$ for all $(i,j)\not=(i',j')$;

\item\label{enum:3x3:A_i:mn}
$\{ \zA{i,j} \}$ generate $\kA$.
\end{enumerate}
Then the map $\beta:\zB{0}\wrm{m,n}\bZ^2 \to \xB$ defined by
\[
\beta\bigl(\{ \xv{i,j} \}, k, l\bigr) =
\Bigl( \mprod\limits_{i,j} g^{i} \, h^{j} \, \xv{i,j} \, h^{-j} \, g^{-i} \Bigr) \cdot g^k h^l,
\]
for $\xv{i,j}\in \zB{0}$, $i=0,\ldots,m-1$, $j=0,\ldots,n-1$, and $k,l\in\bZ$, is an isomorphism of groups inducing an isomorphism of the following exact $(3\times3)$-diagram:
\begin{equation*}%\label{equ:iso_3x3_3x3}
\xymatrix@C=1.7em@R=1.4em{
	\uSeq^m \ar@{^(->}[d]: &
	\zA{0}^m\times 0 \times 0 \ar@{^(->}[d] \ar@{^(->}[r]  &
	\zB{0}^m\times 0 \times 0 \ar@{^(->}[d]  \ar@{->>}[r]  &
	\zC{0}^m\times 0 \times 0 \ar@{^(->}[d]                \\
	%%%%%%%%%%%%%%%%%%%%%%%%%%%%%%%%
	\seqWrm{\uSeq}{m,n} \ar@{->>}[d]: &
	\zA{0}^m\times m\bZ \times n\bZ \ \ar@{^(->}[r]^-{\alpha'} \ar@{->>}[d] &
	\zB{0}\wrm{m,n}\bZ^2 \ar@{->>}[r]^-{\beta'} \ar@{->>}[d]^{p}    &
	\zC{0}\wr(\bZ_m\times\bZ_n) \ar@{->>}[d]                    \\
	%%%%%%%%%%%%%%%%%%%%%%%%%%%%%%%%
	\seqZ{m,n} &
	 m\bZ\times n\bZ \ar@{^(->}[r] &
	\bZ\times \bZ    \ar@{->>}[r] &
	\bZ_m \times \bZ_n
	}
\end{equation*}
to the diagram~\eqref{equ:3x3_general:mn} and that isomorphism is the identity on the lower rows.
In other words, we get an \myemph{``isomorphism over $\seqZ{m,n}$''} of short exact sequences
\[
	\uSeq^m \monoArrow \seqWrm{\uSeq}{m,n} \epiArrow \seqZ{m,n}
	\qquad \text{and} \qquad
	\kSeq \monoArrow \lSeq \epiArrow \seqZ{m,n}.
\]
\end{sublemma}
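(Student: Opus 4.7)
The plan is to mirror the proof of Lemma~\ref{lm:charact_seq_wrm}, replacing the single generator $g$ and single $\bZ$-shift by the commuting pair $(g,h)$ and a $\bZ^2$-shift. I introduce the shorthand $\xv{i,j}^{g,h} := g^i h^j \xv{i,j} h^{-j} g^{-i} \in \zB{i,j}$, so that $\beta(\{\xv{i,j}\}, k, l) = \bigl(\prodij \xv{i,j}^{g,h}\bigr) g^k h^l$. By \ref{enum:3x3:prod:mn}, the subgroups $\zB{i,j}$ pairwise commute, so this product is order-independent. The first step is to verify that $\beta$ is a homomorphism: using $gh = hg$ from \ref{enum:3x3:g:mn}, the tails rearrange as $g^k h^l \cdot g^{k'} h^{l'} = g^{k+k'} h^{l+l'}$; meanwhile, conjugating a factor $\xw{i,j}^{g,h}$ by $g^k h^l$ yields $g^{k+i} h^{l+j} \xw{i,j} h^{-j-l} g^{-k-i}$, and the assumption that $g^m$ and $h^n$ commute with $\kB$ lets one reduce exponents modulo $m$ and $n$ around the $\kB$-element $\xw{i,j}$, placing the result in $\zB{(k+i) \bmod m, (l+j) \bmod n}$. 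After a bijective reindexing the product so obtained matches $\beta$ applied to the wreath product of the two elements.

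Next I would verify commutativity of the claimed $3 \times 3$ diagram. The bottom row commutes because $\hb$ annihilates each $\xv{i,j}^{g,h} \in \kB = \ker\hb$, leaving $\hb \circ \beta(\{\xv{i,j}\}, k, l) = \hb(g)^k \hb(h)^l = (k, l)$, matching the projection $p$ to $\bZ^2$. The top row commutes because $\xv{i,j} \in \zA{0}$ forces $\xv{i,j}^{g,h} \in \zA{i,j} \subset \kA$, while $(k, l) \in m\bZ \times n\bZ$ forces $g^k h^l \in \hb^{-1}(m\bZ \times n\bZ) = \xA$; the induced map on quotients is then determined by these.

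For \emph{injectivity}, if $\beta(\{\xv{i,j}\}, k, l) = e$ then applying $\hb$ gives $(k, l) = (0, 0)$, hence $\prodij \xv{i,j}^{g,h} = e$ with $\xv{i,j}^{g,h} \in \zB{i,j}$. By the direct-product hypothesis \ref{enum:3x3:prod:mn} each factor is trivial, so each $\xv{i,j} = e$. For \emph{surjectivity}: given $x \in \xB$, set $(k, l) := \hb(x)$; then $x g^{-k} h^{-l} \in \ker\hb = \kB = \prodij \zB{i,j}$ decomposes uniquely as $\prodij g^i h^j \xv{i,j} h^{-j} g^{-i}$ with $\xv{i,j} \in \zB{0}$, and this yields $\beta(\{\xv{i,j}\}, k, l) = x$. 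If moreover $x \in \xA$, then $(k, l) \in m\bZ \times n\bZ$ and $x g^{-k} h^{-l} \in \kA$; by \ref{enum:3x3:A_i:mn} this element lies in the subgroup generated by the $\zA{i,j}$, and by uniqueness of the decomposition inside $\prodij \zB{i,j}$ each $\xv{i,j}$ in fact lies in $\zA{0}$, showing that the restriction $\beta\colon \zA{0}^{mn} \times m\bZ \times n\bZ \to \xA$ is also bijective.

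The main obstacle is the bookkeeping in the homomorphism verification: one must simultaneously track two commuting cyclic shifts and the conjugations by $g$ and $h$. Once the key identity $g^k h^l \cdot \xw{i,j}^{g,h} \cdot h^{-l} g^{-k} \in \zB{(i+k) \bmod m, (j+l) \bmod n}$ is in hand from \ref{enum:3x3:g:mn}, the remaining verifications are direct transcriptions of the $m$-case argument of Lemma~\ref{lm:charact_seq_wrm}.
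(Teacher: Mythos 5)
Your proposal is correct and takes exactly the approach the paper intends: the paper supplies no independent proof of this lemma beyond the remark that it ``can be proved similarly to Lemma~\ref{lm:charact_seq_wrm},'' and you carry out that adaptation faithfully, replacing the single conjugator $g$ by the commuting pair $(g,h)$ and using normality of $\kB$ together with the hypotheses that $g^m$ and $h^n$ commute with $\kB$ to reduce exponents modulo $m$ and $n$.

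One small point deserves care when you write out the reindexing step in full. As you correctly computed, conjugation by $g^kh^l$ sends the factor $\xw{i,j}^{g,h}$ into $\zB{(i+k)\bmod m,\ (j+l)\bmod n}$; after reindexing, the entry of the resulting product sitting at position $(i,j)$ is therefore $\xv{i,j}\,\xw{(i-k)\bmod m,\ (j-l)\bmod n}$, whereas the wreath-product multiplication defined in the paper produces $\xv{i,j}\,\xw{(i+k)\bmod m,\ (j+l)\bmod n}$ in that slot. Note that in Lemma~\ref{lm:charact_seq_wrm} the subgroups are defined by $\zB{i}=g^{-i}\zB{0}g^{i}$, while here the statement uses the opposite conjugation direction $\zB{i,j}=g^{i}h^{j}\zB{0}h^{-j}g^{-i}$; this inversion is the source of the sign mismatch. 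The fix is purely notational (either invert the conjugators in $\beta$ and in the definition of $\zB{i,j}$, or replace the terminal factor $g^kh^l$ by $g^{-k}h^{-l}$), and your argument goes through verbatim once the conventions are aligned — but the verification at the reindexing step is where this must be caught, so do not simply assert that the products ``match'' without checking the direction of the index shift.
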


\section{Homogeneous polynomials without multiple factors}\label{sect:homo_poly}
The fundamental theorem of algebra implies that every real homogeneous polynomial
$\gfunc:\bR^2\to\bR$ is a product of finitely many linear $L_i = a_i x + b_i y$ and irreducible over $\bR$ quadratic factors $Q_j(x,y) = c_j x^2 + 2d_j xy + e_j y^2$:
\[
	\gfunc(x,y) = \prod_{i=1}^{p} L_i(x, y) \cdot \prod_{j=1}^{q}Q_j(x, y).
\]

Evidently, $\gfunc$ has critical points if and only if $\deg\gfunc\geq2$.
Moreover, if $\gfunc$ has two proportional linear factors, that is $L_i = s L_j$ for some $i\not=j$ and $s\not=0$, then all the line $L_i=0$ consists of critical points of $\gfunc$.
This implies that \myemph{the origin $0\in\bR^2$ is a unique critical point of $\gfunc$ iff $\deg\gfunc\geq2$ and $\gfunc$ has no multiple linear factors}.

In the latter case the number $p$ of linear factors can be seen from the topological structure of level sets of $\gfunc$, see Figure~\ref{fig:isol_crit_pt}.
\begin{figure}[htbp!]
	\centering
	\footnotesize
	\begin{tabular}{ccccccc}
		\includegraphics[height=1.2cm]{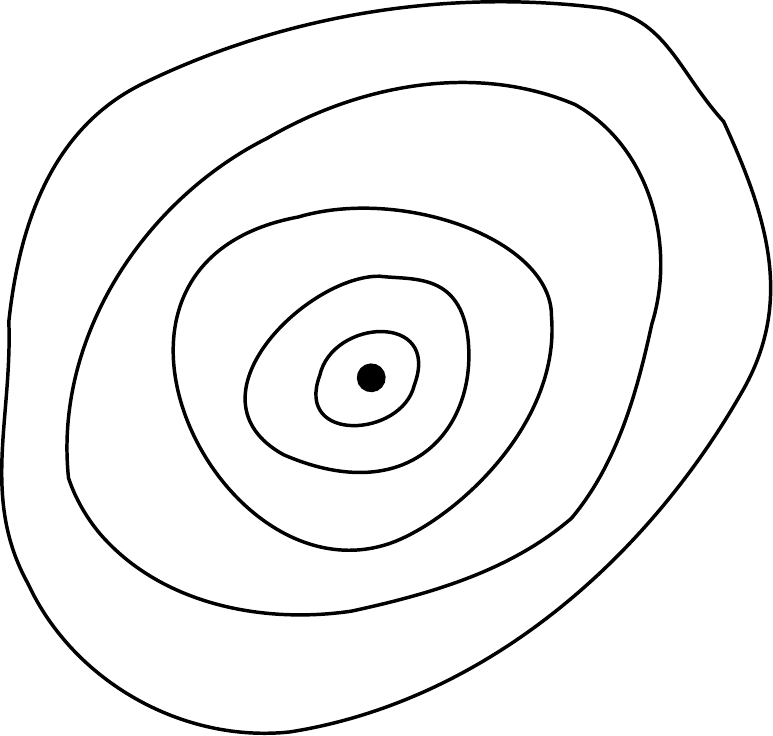}  & \ &
		\includegraphics[height=1.2cm]{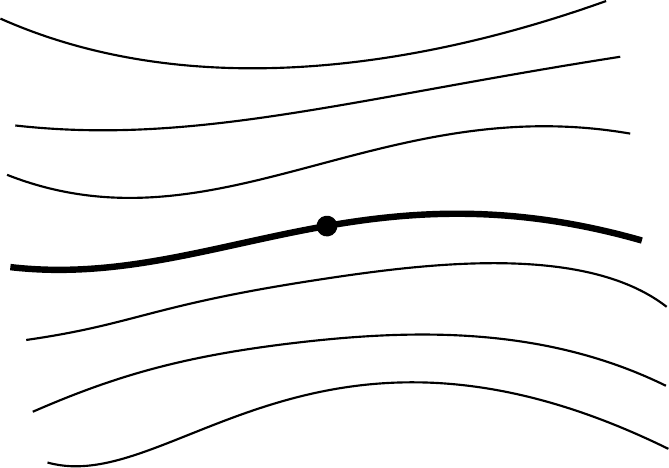}    & \ &
		\includegraphics[height=1.2cm]{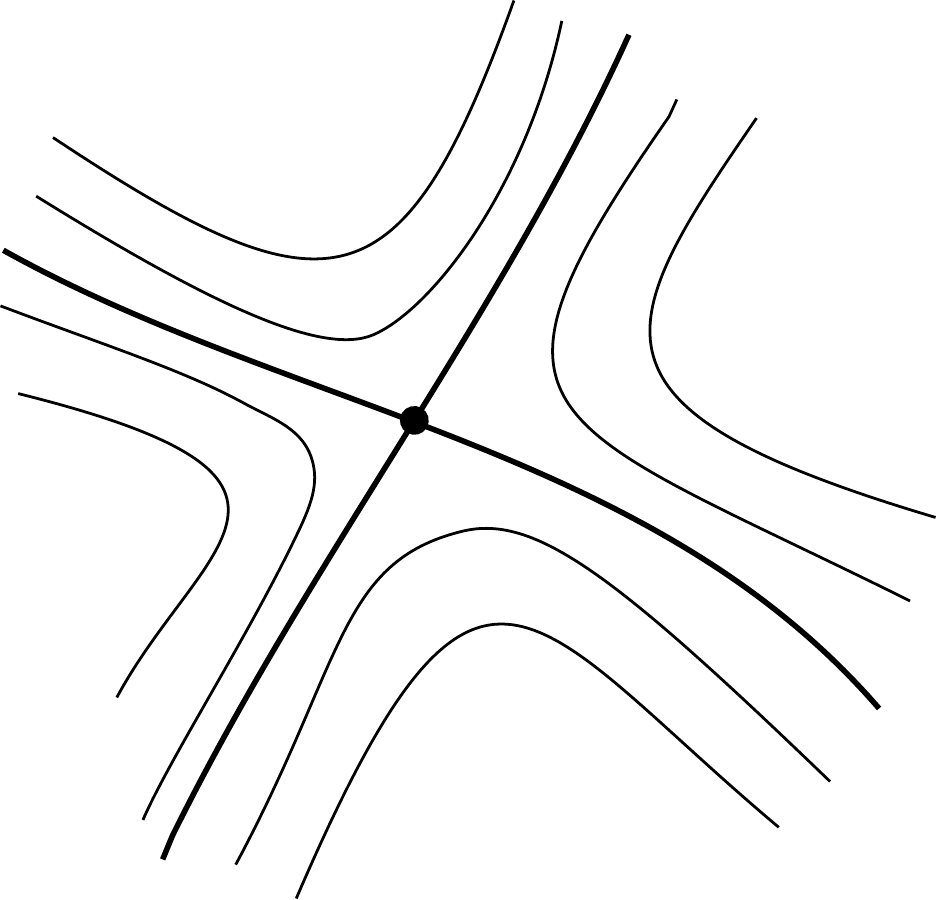}    & \ &
		\includegraphics[height=1.2cm]{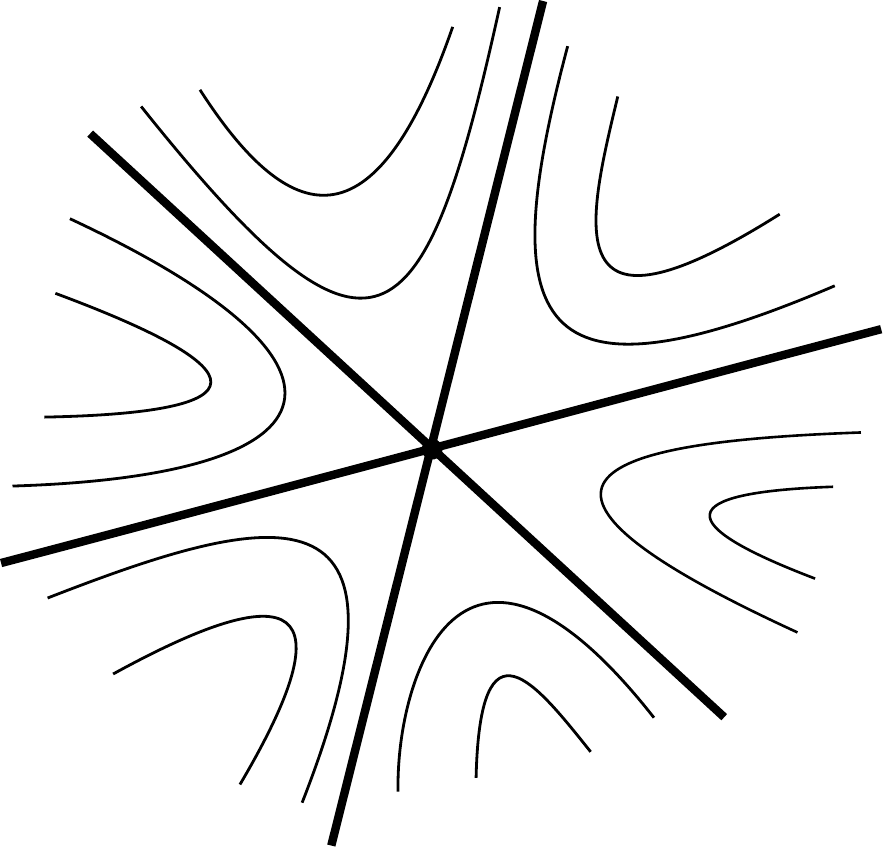}  \\
		$p=0$ & & $p=1$ & & $p=2$ & & $p=3$
	\end{tabular}
	\caption{Topological structure of level-sets of homogeneous polynomials without multiple factors}
	\label{fig:isol_crit_pt}
\end{figure}

We will say that $0\in\bR^2$ is
\begin{enumerate}[label={$\bullet$}, itemsep=0.8ex]
\item a \myemph{non-degenerate extreme} if $p=0$ and $q=1$, so $\gfunc = Q_1$;
\item a \myemph{degenerate extreme} if $p=0$ and $q\geq2$, so $\gfunc = Q_1 Q_2 \cdots Q_q$;
\item a \myemph{quasi saddle} if $p=1$, so $\gfunc = L_1 \cdot Q_1\cdots Q_q$;
\item a \myemph{non-degenerate saddle} if $p=2$ and $q=0$, so $\gfunc = L_1 L_2$;
\item a \myemph{saddle} if $p\geq2$ and $\deg\gfunc = p+2q\geq3$.
\end{enumerate}

\subsection{Symmetries}
Let $\gfunc:\bR^2\to\bR$ be a homogeneous polynomial without multiple factors.
Let also $\Stabilizer{\gfunc}$ be the group of \myemph{germs} at $0\in\bR^2$ of \myemph{diffeomorphisms} $\dif:(\bR^2,0)\to(\bR^2,0)$ such that $\gfunc\circ \dif = \gfunc$, and $\LStab(\gfunc)\subset \GL(\bR,2)$ be the group of \myemph{linear isomorphisms} $A:\bR^2\to\bR^2$ which also preserve $\gfunc$, that is $\gfunc(Ax) = \gfunc(x)$ for all $x\in\bR^2$.
Then $\LStab(\gfunc)$ can be regarded as a subgroup of $\Stabilizer{\gfunc}$.

Let $\dif\in\Stabilizer{\gfunc}$, and $A = J(\dif)$ be its Jacobi matrix at $0$.
Since $\gfunc$ is homogeneous, say of degree $k$, the identity $\gfunc\circ \dif = \gfunc$ easily implies that $\gfunc(Ax) = \gfunc(x)$ for all $x\in\bR^2$, \cite[Lemma~36]{Maksymenko:TA:2003}:
\[
\gfunc(x) = \frac{\gfunc(tx)}{t^k} = \frac{\gfunc\bigl(\dif(tx)\bigr)}{t^k} =
\gfunc\left( \frac{\dif(tx)}{t}\right) \xrightarrow[t\to 0]{} \gfunc(Ax).
\]
In other words, \myemph{if a diffeomorphism preserves a homogeneous polynomial $\gfunc$, then its Jacobi matrix at $0$ also preserves $\gfunc$}.
Hence we get a natural homomorphism:
\[J:\Stabilizer{\gfunc}\to \LStab(\gfunc), \qquad \dif\mapsto J(\dif). \]
Since for a linear map given by a matrix $A$ its Jacobi matrix is $A$, we see that $J$ is the identity on $\LStab(\gfunc)$.
In other words, $J$ is a \myemph{retraction} of $\Stabilizer{\gfunc}$ onto $\LStab(\gfunc)$, and in particular it is \myemph{surjective}.

Let also $\LStabPl(\gfunc) = \LStab(\gfunc)\cap \GL^{+}(\bR,2)$ and $\StabilizerPlus{\gfunc}$ be the subgroup of $\Stabilizer{\gfunc}$ consisting of orientation preserving germs.
Then \[ J(\StabilizerPlus{\gfunc}) = \LStabPl(\gfunc).\]

\begin{sublemma}\label{lm:LStabf}{\rm\cite[Lemma~6.2]{Maksymenko:MFAT:2009}.}
After a proper linear change of coordinates in $\bR^2$ and replacing (if necessary) $\gfunc$ with $-\gfunc$, one can assume that the following properties hold.
\begin{enumerate}[label=$\mathrm{(\alph*)}$, leftmargin=*, topsep=0.5ex, parsep=0ex, itemsep=0ex]
\item\label{enum:lm:LStabf:nondeg_loc_extr}
Suppose $0\in\bR^2$ is a \myemph{non-degenerate local extreme}, i.e.\ $\deg\gfunc=2$, and $\gfunc$ is an irreducible quadratic form.
Then
\begin{align*}
	\gfunc(x,y) &=x^2+y^2, &
	\LStab(\gfunc) &= O(2), &
	\LStabPl(\gfunc) &= SO(2).
\end{align*}

\item\label{enum:lm:LStabf:nondeg_saddle}
If $0\in\bR^2$ is a \myemph{non-degenerate saddle}, so $\deg\gfunc=2$, and $\gfunc$ is a product of two independent linear factors, then $\gfunc(x,y)=xy$,
\begin{align*}
\LStab(\gfunc) &=   \bigl\{ \pm\left(\begin{smallmatrix} t^{-1} & 0 \\ 0 & t\end{smallmatrix} \right) \mid t\not=0 \bigr\},
	& \
\LStabPl(\gfunc) &= \bigl\{ \left(\begin{smallmatrix} t^{-1} & 0 \\ 0 & t\end{smallmatrix} \right) \mid t\not=0 \bigr\}.
\end{align*}

\item\label{enum:lm:LStabf:degen}
Finally, assume that $0\in\bR^2$ is a \myemph{degenerate} critical point of $\gfunc$, so $\deg\gfunc\geq3$.
Then
\begin{enumerate}[label=$\mathrm{(\alph*)}$]
\item
$\LStabPl(\gfunc)$ is a finite cyclic subgroup of $SO(2)$ of some order $m\geq1$ generated by the rotation by $\frac{2\pi}{m}$;
\item
$\LStab(\gfunc)$ either coincides with $\LStabPl(\gfunc)$ or it is a dihedral group $\mathbb{D}_{m}$ of order $2m$ generated by $\LStabPl(\gfunc)$ and the reflection $(x,y)\mapsto(-x,y)$;
\item\label{enum:lm:LStab:c}
if $\deg\gfunc$ is even, e.g. $z$ is a \myemph{degenerate local extreme}, then $m$ is always even, since $\LStabPl(\gfunc)$ contains the map $q(x,y) = (-x,-y)$.
\end{enumerate}
\end{enumerate}
\end{sublemma}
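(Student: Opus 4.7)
The plan is to treat the three cases by different methods: (a) and (b) are direct linear-algebraic normalizations, while (c) first requires proving that $\LStab(\gfunc)$ is compact (in fact finite) and then reducing to the classification of finite subgroups of $O(2)$.

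For (a), irreducibility of the real quadratic form $\gfunc$ forces it to be definite, since an indefinite binary quadratic form always factors into two real linear forms. Replacing $\gfunc$ with $-\gfunc$ if needed, $\gfunc$ becomes positive definite, and a standard diagonalization brings it to $x^2+y^2$; the identities $\LStab(\gfunc)=O(2)$ and $\LStabPl(\gfunc)=SO(2)$ are then immediate from the definitions. For (b), send the two independent linear factors to $x$ and $y$ to obtain $\gfunc=xy$. Writing $A=\left(\begin{smallmatrix} a & b \\ c & d\end{smallmatrix}\right)$ and expanding $(ax+by)(cx+dy)=xy$ gives the polynomial conditions $ac=0$, $bd=0$, $ad+bc=1$, which are solved directly (noting that $a=c=0$ is incompatible with $\det A\neq 0$); intersecting with $\GL^+(\bR,2)$ gives $\LStabPl(\gfunc)$.

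For (c) the argument has four steps. First, one proves that $\LStab(\gfunc)$ is finite. Since $\gfunc$ has only finitely many linear and irreducible quadratic factors and no multiple ones, $\LStab(\gfunc)$ permutes these factors (each up to scalar), giving a finite-image homomorphism to a symmetric group. Its kernel $K$ consists of matrices that scale each factor individually, and the compatibility condition $\gfunc\circ A=\gfunc$ together with the scaling constraints forces $K$ to be finite, by a short case analysis on the numbers $(p,q)$ of linear and irreducible quadratic factors (using that $\deg\gfunc\geq 3$ produces sufficiently many nontrivial constraints). Second, averaging the standard Euclidean inner product over the finite group $\LStab(\gfunc)$ yields an invariant positive-definite inner product, and choosing coordinates in which it is standard embeds $\LStab(\gfunc)\hookrightarrow O(2)$, hence $\LStabPl(\gfunc)\hookrightarrow SO(2)$. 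Third, finite subgroups of $SO(2)$ are cyclic, so $\LStabPl(\gfunc)$ is generated by a rotation by $2\pi/m$ for some $m\geq 1$; the index $[\LStab(\gfunc):\LStabPl(\gfunc)]$ is $1$ or $2$, and in the nontrivial case $\LStab(\gfunc)$ is dihedral. A further rotation of coordinates, chosen to send one of the reflections to $(x,y)\mapsto(-x,y)$, preserves the inclusion $\LStabPl(\gfunc)\subseteq SO(2)$. Finally, when $\deg\gfunc$ is even, $\gfunc(-v)=\gfunc(v)$ puts $-\id$ (rotation by $\pi$) into $\LStabPl(\gfunc)$, forcing $m$ to be even.

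The main obstacle is the first step: proving finiteness of $K$. The no-multiple-factors hypothesis is essential here, since without it noncompact symmetries appear (e.g.\ the diagonals $\mathrm{diag}(t,1/t)$ preserve $(xy)^k$ for every $k$). The case analysis splits as: for $p\geq 3$, $K$ acts as a scalar on each of three nonproportional lines, hence is scalar; for $p=2$ and $q\geq 1$ (as forced by $\deg\gfunc\geq 3$), each irreducible quadratic factor imposes a relation on the two diagonal eigenvalues, cutting them down to a finite set after imposing $\gfunc\circ A=\gfunc$; for $p=1$ with $q\geq 1$, the kernel is lower-triangular and pinned down similarly; and for $p=0$ with $q\geq 2$, the joint stabilizer of two nonproportional irreducible positive-definite quadratic factors is a finite subgroup of the ambient $O(2)$ stabilizing either one. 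Once finiteness is in hand, the remaining steps are routine.
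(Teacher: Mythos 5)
The paper does not actually prove Lemma~\ref{lm:LStabf}; it cites it to an external source (Lemma~6.2 of the reference tagged Maksymenko:MFAT:2009). So there is no in-paper proof to compare against. Evaluating your argument on its own terms:

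Your treatment of (a) and (b) is correct and essentially forced: a real irreducible binary quadratic form is definite, and after negating and diagonalizing you get $x^2+y^2$ with stabilizer $O(2)$; for $\gfunc=xy$ the conditions $ac=0$, $bd=0$, $ad+bc=1$ indeed yield exactly the diagonal matrices $\mathrm{diag}(t^{-1},t)$ (determinant $1$) and the anti-diagonal ones (determinant $-1$), and intersecting with $\GL^{+}$ picks out the former.

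For (c), the plan (finiteness of $\LStab(\gfunc)$, then an averaged inner product to conjugate into $O(2)$, then the classification of finite subgroups of $O(2)$, then $-\id\in\LStabPl(\gfunc)$ for even degree) is correct. The one place where some real work is needed is the finiteness step, and your case analysis on $(p,q)$ does go through, but I would flag two points. First, the cases $p=1,\,q\geq1$ and $p=0,\,q\geq 2$ are genuinely nontrivial and your sketch is thin: in the latter, the kernel $K$ a priori only preserves each $Q_j$ \emph{up to scalar}, so it is not immediately a subgroup of a conjugate of $O(2)$; one must first use the global constraint $\gfunc\circ A=\gfunc$ together with the fact that $M_1^{-1}M_2$ has distinct eigenvalues (where $M_j$ is the Gram matrix of $Q_j$) to force all scalars to equal $1$, and only then does $K$ sit inside $O(Q_1)\cap O(Q_2)$, which is finite because its Lie algebra is trivial. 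Second, the whole case analysis can be bypassed: pass to $\bC$, factor $\gfunc$ into $k=\deg\gfunc\geq 3$ pairwise nonproportional complex linear forms (no multiple factors), observe that any real $A\in\LStab(\gfunc)$ permutes these forms up to scalars by unique factorization in $\bC[x,y]$, and that the kernel of this permutation action fixes $\geq 3$ nonproportional lines in $\bC^2$ and is therefore scalar, hence contained in $\{cI:c^{k}=1\}\cap\GL(2,\bR)\subset\{\pm I\}$. This gives finiteness uniformly, with the no-multiple-factors hypothesis doing exactly the work you identified, and it spares the $(p,q)$ casework. Your argument is correct; this alternative is simply cleaner.
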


\begin{subremark}\label{rem:comparing_deg_nondeg}
Statements~\ref{enum:lm:LStabf:nondeg_loc_extr} and~\ref{enum:lm:LStabf:degen} of Lemma~\ref{lm:LStabf} indicate an essential difference between diffeomorphisms preserving degenerate and non-degenerate local extremes.
Suppose we have a continuous family
\[\dif_t = (p_t,q_t):(\bR^2,0)\to(\bR^2,0), \quad t\in[0,1],\]
of germs of diffeomorphisms preserving a homogeneous polynomial $g(x,y)$, such that this family defines a continuous path in $\StabilizerPlus{\gfunc}$ ``with respect to at least $\Cr{1}$ topology'', which mean that the Jacobi matrices $J(\dif_t)$ are continuous in $t$.

If, for example, $g(x,y)=x^2+y^2$, so $p_t(x,y)^2+q_t(x,y)^2 \equiv x^2+y^2$, then \myemph{the matrices $J(\dif_t)$ can be arbitrary rotations}.
On the other hand, if $g(x,y)=x^4+y^4$, i.e. $p_t(x,y)^4+q_t(x,y)^4 \equiv x^4+y^4$, then there are only finitely many possibilities for $J(\dif_t)$.
Actually, in this case $J(\dif_t)$ can only be a rotation by $\frac{k\pi}{2}$ for $k=0,1,2,3$.
Therefore continuity of $J(\dif_t)$ in $t$ implies that now \myemph{$J(\dif_t)$ must be the same for all $t\in[0,1]$}.
\end{subremark}

\subsection{Symmetry index of a degenerate local extreme}
Suppose that $0\in\bR^2$ is a degenerate local extreme of $\gfunc$, as in the in the case 3\ref{enum:lm:LStab:c} of Lemma~\ref{lm:LStabf}.
The order $m$ of the cyclic group $\LStab(\gfunc)$ will be called the \myemph{symmetry index} of $0\in\bR^2$.

\begin{subexample}
Let \[ \gfunc(x,y)=x^4+y^4 = (x^2+\sqrt{2}xy + y^2)(x^2-\sqrt{2}xy + y^2).\]
Then $\LStabPl(\gfunc)\cong\bZ_4$ is generated by rotation by $\pi/2$: $r(x,y)=(-y,x)$, and $\LStab(\gfunc)$ is isomorphic to the dihedral group $\mathbb{D}_{4}$ generated by $r$ and the reflection $s(x,y)=(-x,y)$.
Thus here $m=4$.
\end{subexample}

\subsection{Framings at a degenerate local extreme}\label{sect:framing}
Let $v\in T_{0}\bR^2 = \bR^2$ be a non-zero tangent vector at $0\in\bR^2$.
Then its orbit
\[ \zfrm{0, v} = \{ T_{0}\dif(v) \mid \dif\in\Stabilizer{\gfunc} \} \]
with respect to $\LStab(\gfunc)$, and thus with respect to $\Stabilizer{\gfunc}$, consists of either $2m$ or $m$ vectors that are cyclically order, see Figure~\ref{fig:framings}.

This set $\zfrm{0, v}$ will be called a \myemph{framing} at $0\in\bR^2$, whenever either of the following equivalent conditions hold:
\begin{enumerate}[label={\rm(\alph*)}]
\item
the action of $\LStab(\gfunc)$ to $\zfrm{0, v}$ is \myemph{effective}, i.e. there is a non-unit element $A\in\LStab(\gfunc)$ fixed on $\zfrm{0, v}$;
\item if $\dif\in\Stabilizer{\gfunc}$ is such that $T_{0}\dif(w) = w$ for all $w\in \zfrm{0, v}$, then $T_{0}\dif=\id_{\bR^2}$.
\end{enumerate}
Roughly speaking the elements of $\LStab(\gfunc)$ can be distinguished by their action on the finite set $\zfrm{0, v}$.

\begin{subexample}
Let $\gfunc(x,y)=(x^2+y^2)(x^2+2y^2)$.
Then $\LStabPl(\gfunc) \cong \bZ_2$ is generated by $-\id_{\bR^2}$, while $\LStab(\gfunc)$ is the dihedral group $\mathbb{D}_{2} \cong \bZ_2 \times \bZ_2$ generated by $-\id_{\bR^2}$ and the reflection $s(x,y)=(-x,y)$.
Here $m=2$.
Also, if $v = (0,1)\in\bR^2$, then $\zfrm{0, v}=\{\pm v\}$ is not a framing, since $s$ trivially acts on $\zfrm{0, v}$.
For any other vector $w\in\bR^2$ which is not collinear to $v$, its orbit $\zfrm{0, w}$ is a framing.
\end{subexample}

\begin{subexample}
Let $\gfunc(x,y)=(x^2+y^2)(3x^2+2y^2)(x^2+ xy + y^2)$.
Then $\LStab(\gfunc)= \LStabPl(\gfunc) \cong \bZ_2$ is generated by $-\id_{\bR^2}$.
Here $m=2$ as well and for any non-zero vector $w\in\bR^2$ its orbit $\zfrm{0, w}$ is a framing.
\end{subexample}

The following easy lemma shows that one can always choose $v$ so that $\zfrm{0, v}$ is a framing.
\begin{sublemma}
\it
The set $\zfrm{0, v}$ is \myemph{not a framing} iff the following two conditions hold:
\begin{enumerate}[label={\rm(\alph*)}]
\item
$\LStab(\gfunc)$ is the dihedral group $\mathbb{D}_{2} \cong \bZ_2 \times \bZ_2$, so $m=2$;
\item
$v$ is fixed under the reflection from $\LStab(\gfunc)$.
\end{enumerate}
\end{sublemma}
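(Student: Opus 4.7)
The plan is to apply the structural description of $\LStab(\gfunc)$ at a degenerate local extreme provided by Lemma~\ref{lm:LStabf}. By definition, $\zfrm{0,v}$ fails to be a framing iff there exists some $A \in \LStab(\gfunc) \setminus \{\id\}$ fixing every element of $\zfrm{0,v}$; in particular $Av = v$. Since $\LStabPl(\gfunc)$ consists of rotations of $\bR^2$ and any non-trivial rotation fixes only the origin, such an $A$ cannot be orientation preserving. Hence $A$ must be a reflection, so $\LStab(\gfunc)$ strictly contains $\LStabPl(\gfunc)$, and Lemma~\ref{lm:LStabf} forces $\LStab(\gfunc) = \mathbb{D}_m$ with $m$ even.

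Next I would pin down $m$. Let $r$ denote the rotation by $\tfrac{2\pi}{m}$, a generator of $\LStabPl(\gfunc)$. Since $r \in \LStab(\gfunc)$, the vector $rv$ belongs to $\zfrm{0,v}$, and the hypothesis gives $A(rv) = rv$. Thus the one-dimensional fixed line of the reflection $A$ must contain both non-zero vectors $v$ and $rv$; equivalently, $v$ and $rv$ are collinear. If $m \geq 4$, the rotation angle $\tfrac{2\pi}{m}$ lies strictly between $0$ and $\pi$, so $rv$ cannot be a scalar multiple of $v$, giving a contradiction. As $m$ is even and $m < 4$, we conclude $m = 2$, which is condition~(a); and then $Av = v$ with $A$ one of the two reflections of $\mathbb{D}_2$ is precisely condition~(b).

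For the converse, assume (a) and (b), so $\LStab(\gfunc) = \mathbb{D}_2 = \{\id,\,-\id,\,s,\,-s\}$ and $sv = v$ for the reflection $s$. Then the orbit
\[
	\zfrm{0,v} \;=\; \{v,\,-v,\,sv,\,-sv\} \;=\; \{v,\,-v\}
\]
consists of the two points $\pm v$, and the non-identity element $s$ fixes both of them. Thus $s$ acts trivially on $\zfrm{0,v}$, and so $\zfrm{0,v}$ is not a framing.

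The only delicate step is ruling out $m \geq 4$: this is where one uses that $\LStab(\gfunc)$ then contains a non-trivial rotation whose angle lies in $(0,\pi)$, which is incompatible with the one-dimensional fixed set of a reflection. All remaining verifications are direct inspection of the action of $\mathbb{D}_m$ on $\bR^2$.
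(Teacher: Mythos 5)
Your proof is correct.  The paper states this sublemma without giving an argument, so there is no proof in the text to compare against; yours is the natural one and it is complete.  The structure is exactly right: negating the effectiveness condition gives a non-trivial $A\in\LStab(\gfunc)$ fixing $\zfrm{0,v}$ pointwise; since a non-trivial rotation in $SO(2)$ has no non-zero fixed vectors, $A$ must be orientation-reversing, so $\LStab(\gfunc)=\mathbb{D}_m$; the generator $r$ of $\LStabPl(\gfunc)$ puts $rv$ into the orbit, so the one-dimensional fixed line of the reflection $A$ must contain both $v$ and $rv$, forcing these to be collinear and hence $m\le 2$; and since $0$ is a \emph{degenerate local extreme} (so that $-\id_{\bR^2}\in\LStabPl(\gfunc)$ and $m$ is even by Lemma~\ref{lm:LStabf}\ref{enum:lm:LStab:c}) this gives $m=2$.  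You correctly invoke the evenness of $m$ precisely to rule out $m=1$; without that the collinearity argument alone only gives $m\in\{1,2\}$.  The converse is the right direct orbit computation: under (a) and (b) the orbit collapses to $\{\pm v\}$, on which the reflection $s$ acts trivially.

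One small stylistic point: the statement says ``the reflection from $\LStab(\gfunc)$'', but $\mathbb{D}_2=\{\id,-\id,s,-s\}$ contains \emph{two} reflections, $s$ and $-s$.  Your argument is insensitive to this (you take $A$ to be whichever reflection is produced by the hypothesis, and the converse works for either), so this is an imprecision in the statement of the lemma rather than a gap in your proof.
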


\section{Space of maps $\FSP{\Mman}{\Pman}$}\label{sect:FSP}
In this section we will describe several properties of maps belonging to $\FSP{\Mman}{\Pman}$.
Recall that by definition a map $\func:\Mman\to\Pman$ belongs to $\FSP{\Mman}{\Pman}$ if it satisfies axioms~\ref{axiom:bd} and~\ref{axiom:sing}.

In the case when $\Pman=\Circle$ one can also say about local extremes of $\func$, and even about local minimums or maximums if we fix an orientation of $\Circle$.
Moreover, since $\Circle$ is a group $\bR/\bZ$, one can say about neighborhoods of a point $c\in\Circle$ of the form $(c-\eps; c+\eps)$ for small $\eps>0$.

First we will study structure of maps from $\FSP{\Mman}{\Pman}$ at their critical points.

\subsection{Smooth functions on the plane with isolated critical points}\label{sect:isolated_cr_pt}
Let $\gfunc:\bR^2 \equiv \bC \to \bR$ be a $\Cinfty$ function such that $0\in\bC$ is an isolated critical point and $\gfunc(0)=0$.
Then there are germs of homeomorphisms $\dif:(\bC,0) \to (\bC,0)$ and $\phi:(\bR,0) \to (\bR,0)$ such that
\begin{equation}\label{equ:isol_sing}
\phi \circ \gfunc \circ \dif (z) =
\begin{cases}
|z|^2, & \text{if $0\in\bC$ is a \myemph{local extreme} of $\func$, \cite{Dancer:2:JRAM:1987}}, \\
Re(z^m), & \text{otherwise, \cite{ChurchTimourian:PJM:1973, Prishlyak:TA:2002}}.
\end{cases}
\end{equation}
In particular, critical points of homogeneous polynomials without multiple factors cover all possible topological types of isolated critical points of maps $\bR^{2}\to\bR$.

\subsection{$\func$-adapted subsurfaces}\label{sect:func_adapted}
Now let $\func\in\FF(\Mman,\Pman)$.
As mentioned above condition~\ref{axiom:sing} implies that each critical point of $\pz\in\fSing$ of $\func$ is isolated.

A connected component $\Kman$ of a level-set $f^{-1}(c)$, $c\in\Pman$, will be called a \myemph{leaf} (of $\func$).
We also call $\Kman$ \myemph{regular} if it contains no critical points, and \myemph{critical} otherwise.

Evidently, a regular leaf of $\func$ is a submanifold of $\Mman$ diffeomorphic to the circle.
On the other hand, it follows from Axiom~\ref{axiom:sing}, (see also Figure~\ref{fig:isol_crit_pt}) that a critical leaf $\Kman$ has a structure of a $1$-dimensional CW-complex whose $0$-cells are critical points of $\func$ belonging to $\Kman$.
Notice that if $\Kman$ contains only quasi-saddles of $\func$, see Figure~\ref{fig:isol_crit_pt}\,b), then it is a smooth submanifold of $\Mman$ diffeomorphic to the circle, however it is still \myemph{critical} as a leaf of $\func$.

Let $\Kman$ be a (regular or critical) leaf of $\func$.
For $\eps>0$ let $\Nman_{\eps}$ be the connected component of $f^{-1}[c-\varepsilon, c+\varepsilon]$ containing $\Kman$.
Then $\Nman_{\eps}$ will be called an \myemph{$\func$-regular neighborhood of $\Kman$} if $\eps$ is so small that $\Nman_{\eps}\setminus\Kman$ contains no critical points of $\func$   and no boundary components of $\partial\Mman$.

A submanifold $\XSman\subset\Mman$ will be called \myemph{$\func$-adapted} if $\XSman = \mathop{\cup}\limits_{i=1}^{a} \Aman_i$, where each $\Aman_i$ is either a critical point of $\func$ or a regular leaf of $\func$ or an $\func$-regular neighborhood of some (regular or critical) leaf of $\func$.
We will denote by $\skl{\XSman}{i}$, $i=0,1,2$, the union of connected components of $\XSman$ of dimension $i$.

Notice that if $\XSman$ is an $\func$-adapted subsurface, then $\func|_{\XSman}:\XSman\to\Pman$ satisfies Axioms~\ref{axiom:bd} and~\ref{axiom:sing}, that is $\func|_{\XSman} \in \FF(\XSman,\Pman)$.

\subsection{Graph of $\func\in\FSP{\Mman}{\Pman}$}
Consider the partition $\KRGraphf$ of $\Mman$ into the leaves of $\func$, and let $\prj:\Mman\to\KRGraphf$ be the natural map associating to each $x\in\Mman$ the corresponding element of $\KRGraphf$ containing $x$.
Endow $\KRGraphf$ with the quotient topology, so a subset (a collection of leaves) $A\subset \KRGraphf$ is open iff $p^{-1}(A)$ (that is their union) is open in $\Mman$.
It follows from axioms~\ref{axiom:bd} and~\ref{axiom:sing} that $\KRGraphf$ has a natural structure of $1$-dimensional CW-complex, whose $0$-cells correspond to boundary components of $\Mman$ and critical leaves of $\func$.
We will call $\KRGraphf$ the \myemph{graph} of $\func$.

Since by definition $\func$ takes constant values on elements of $\KRGraphf$, it induces a function $\PF{\func}:\KRGraphf\to\Pman$ such that $\func = \PF{\func}\circ p$, see Figure~\ref{fig:graph}.
\begin{figure}[htbp!]
\centering
\includegraphics[height=1.8cm]{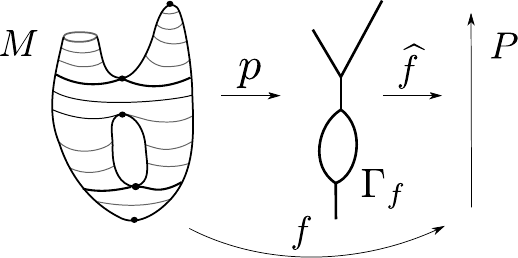}
\caption{}\label{fig:graph}
\end{figure}

\begin{subremark}
This graph was independently introduced in the papers by G.~Adelson-Welsky and A.~Kronrod~\cite{AdelsonWelskyKronrod:DANSSSR:1945}, and G.~Reeb~\cite{Reeb:CR:1946}, and often called \myemph{Kronrod-Reeb} or \myemph{Reeb} graph of $\func$.
It is a useful tool for understanding the topological structure of smooth functions on surfaces, e.g.~\cite{Kronrod:UMN:1950}, \cite{BolsinovFomenko:ENG:1997}.
It plays as well an important role in a theory of dynamical systems on manifolds and called \myemph{Lyapunov graph} of $\func$ following J.~Frank~\cite{Franks:Top:1985, RezendeFranzosa:TrAMS:1993, Yu:TrAMS:2013, RezendeLedesmaManzoli-NetoVago:TA:2018}.
The reason is that for generic Morse maps $\KRGraphf$ can be embedded into $\Mman$, so that $\func$ will be monotone on its edges.
\end{subremark}

\begin{subremark}
Notice that if $\Mman$ is a torus or a Klein bottle, then there exists a locally trivial fibrations $\func:\Mman\to\Circle$, see~maps~\ref{enum:specfunc:torus} and~\ref{enum:specfunc:klein} in Theorem~\ref{th:sdo:except_cases}.
Such a map has no critical points and so $\func\in\FSP{\Mman}{\Circle}$.
Evidently, the graph $\KRGraphf$ of $\func$ is a circle and it has no ``vertices'' that correspond to critical leaves.
In this case we assume that $\KRGraphf$ consists of one edge (homeomorphic to the circle) and has no vertices.
\end{subremark}

\begin{sublemma}\label{lm:p_H1M_H1G_surj}
The induced maps $\prj_1: H_1(\Mman,\bZ) \to H_1(\KRGraphf,\bZ)$ of homology groups is surjective and if $H_1(\Mman,\bZ) \not=0$, always have a non-trivial kernel.
Therefore if $\Mman=\Sphere$, $\Disk$, $\Circle\times[0,1]$, $\PrjPlane$, M\"obius band, then $\KRGraphf$ is a tree.
If $\Mman=\Torus$ or Klein bottle, then $\KRGraphf$ is either a tree or has a unique cycle.
\end{sublemma}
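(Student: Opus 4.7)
My plan is to prove surjectivity of $\prj_*$ by exhibiting a map $\sect\colon\KRGraphf\to\Mman$ with $\prj\circ\sect$ homotopic to $\id_{\KRGraphf}$, and then to treat the non-triviality of $\ker\prj_*$ by a dichotomy on the combinatorial shape of $\KRGraphf$. For the section, I pick for every vertex $v\in\KRGraphf$ a point $x_v$ in the corresponding leaf (an internal critical leaf $\Kman_v$ of $\func$ or a boundary component of $\partial\Mman$); this is legitimate because each such leaf is path-connected, being a connected $1$-dimensional CW-subcomplex of $\Mman$. For every edge $e=(v,w)$ I choose a continuous arc $\gamma_e$ in the saturated cylinder $\prj^{-1}(\Int(e))\cup\{x_v,x_w\}$ transverse to the regular leaves and joining $x_v$ to $x_w$, and let $\sect$ be their concatenation. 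Since $\func$ is strictly monotone along $\gamma_e$, the composition $\prj\circ\sect$ is a monotone reparametrization on each edge and the identity on vertices, hence homotopic to $\id_{\KRGraphf}$. Consequently $\prj_*\circ\sect_*=\id$ on $H_1(\KRGraphf,\bZ)$, which yields surjectivity of $\prj_*$.

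For non-triviality of the kernel under the hypothesis $H_1(\Mman,\bZ)\neq0$, I split into two cases. If $\KRGraphf$ is a tree, then $H_1(\KRGraphf,\bZ)=0$ and $\ker\prj_*=H_1(\Mman,\bZ)\neq0$. Otherwise $\KRGraphf$ contains a cycle, so some edge $e$ is not a bridge; picking an interior point $y$ of $e$, let $\Lman=\prj^{-1}(y)$, a regular leaf diffeomorphic to $\Circle$. A quotient-topology argument shows $\Lman$ is non-separating in $\Mman$: if $\Mman\setminus\Lman=A\sqcup B$ were a disjoint union of two nonempty open subsets, then because each leaf distinct from $\Lman$ is connected and thus contained in $A$ or $B$, the sets $A,B$ are saturated under $\prj$, so $\prj(A),\prj(B)$ are disjoint open subsets of $\KRGraphf$ exhausting $\KRGraphf\setminus\{y\}$, contradicting the choice of $e$. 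Since $\Lman$ is a non-separating simple closed curve in $\Mman$, the class $[\Lman]\in H_1(\Mman,\bZ)$ is nonzero, whereas $\prj_*[\Lman]=0$ because $\prj(\Lman)=\{y\}$; thus $\ker\prj_*$ is nontrivial.

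Once $\prj_*$ is known to be a surjection onto a free abelian group with nontrivial kernel whenever $H_1(\Mman,\bZ)\neq0$, the corollaries are bookkeeping. For $\Mman\in\{\Sphere,\Disk,\PrjPlane\}$ the group $H_1(\Mman,\bZ)$ is either trivial or pure torsion, so its surjective image in the torsion-free group $H_1(\KRGraphf,\bZ)$ must vanish; hence $\KRGraphf$ is a tree. For $\Mman\in\{\Circle\times[0,1],\text{M\"obius band}\}$ we have $H_1(\Mman,\bZ)\cong\bZ$ and the surjection $\bZ\twoheadrightarrow H_1(\KRGraphf,\bZ)$ has nonzero kernel, hence the image is finite; being also torsion-free it is zero, so again $\KRGraphf$ is a tree. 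For $\Mman=\Torus$ we have $H_1\cong\bZ^2$; a surjection $\bZ^2\twoheadrightarrow\bZ^k$ whose kernel is nontrivial (hence of rank $\geq1$, since subgroups of $\bZ^2$ are torsion-free) forces by the rank formula $k\leq 1$. For the Klein bottle, the free part of $H_1$ has rank $1$ and the torsion summand dies in a torsion-free quotient, so the surjective image has rank at most $1$; in either case $H_1(\KRGraphf,\bZ)$ is $0$ or $\bZ$, meaning $\KRGraphf$ is a tree or carries a single cycle.

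The step requiring most care is the assertion that a non-separating simple closed curve $\Lman\subset\Mman$ represents a nonzero class in $H_1(\Mman,\bZ)$. For orientable $\Mman$ this follows from the standard fact that non-separating $\Leftrightarrow$ non-null-homologous, via Poincar\'e--Lefschetz duality. In the non-orientable cases I would work first with $\bZ/2$-coefficients (where non-separating is equivalent to $[\Lman]\neq0$ in $H_1(\Mman,\bZ/2)$) and then observe that the mod-$2$ reduction $H_1(\Mman,\bZ)\to H_1(\Mman,\bZ/2)$ only annihilates even multiples of classes represented by two-sided simple closed curves, all of whose representatives are separating; hence $[\Lman]\neq0$ in $\bZ$-homology as well.
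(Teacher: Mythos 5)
The paper states this lemma without any accompanying proof, so there is nothing to compare your argument against; I will simply audit it. The strategy — build a homotopy section $\sect:\KRGraphf\to\Mman$ of $\prj$ out of arcs transverse to the leaves of $\func$, then detect $\ker\prj_1\neq 0$ via a saturated non-separating regular leaf $\Lman$ when $\KRGraphf$ has a cycle (and trivially when it is a tree) — is sound, and the algebraic bookkeeping at the end is correct. Two technical points need repair, neither of which affects the essential idea.

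The arc $\gamma_e$ need not exist inside $\prj^{-1}(\Int(e))\cup\{x_v,x_w\}$ for an arbitrary choice of $x_v$. If $\Kman_v$ is, say, a figure-eight saddle level, then the cylinder $\prj^{-1}(\Int(e))$ over one of the two lower edges at $v$ limits onto only one loop of the figure-eight; choosing $x_v$ on the other loop leaves no arc from $x_v$ into that cylinder whose interior stays inside it. The fix is either to place $x_v$ at a critical point of $\func$ in $\Kman_v$ (from which a gradient-like arc enters every adjacent cylinder; $\func\circ\gamma_e$ is then still strictly monotone, just with vanishing derivative at the endpoint), or to let $\gamma_e$ first travel inside $\Kman_v$ before entering the cylinder, weakening ``strictly monotone'' to ``monotone''. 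In either fix $\prj\circ\sect$ still restricts on each edge to a degree-one self-map fixing endpoints, hence $\prj\circ\sect\simeq\id_{\KRGraphf}$ and surjectivity of $\prj_1$ stands.

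Your last sentence about the kernel of the mod-$2$ reduction is wrong: on the Klein bottle $\ker\bigl(H_1(\Mman,\bZ)\to H_1(\Mman,\bZ/2)\bigr)$ is twice the infinite cyclic summand, and a generator of that summand is realized by a non-separating two-sided curve, contradicting the parenthetical ``all of whose representatives are separating.'' Luckily the detour is unnecessary: once $[\Lman]_2\neq 0$ in $H_1(\Mman,\bZ/2)$ is known, and since $[\Lman]_2$ is by definition the mod-$2$ reduction of $[\Lman]$, it follows at once that $[\Lman]\neq 0$ in $H_1(\Mman,\bZ)$. It is also worth making explicit that regular leaves are two-sided — a transverse orientation is supplied by $d\func$ — so that the standard ``non-separating $\Leftrightarrow$ nonzero in $H_1(\Mman,\bZ/2)$'' criterion for simple closed curves genuinely applies to $\Lman$.
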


Denote by $\Homeo(\KRGraphf)$ the group of homeomorphisms of $\KRGraphf$.
Then for each $\dif\in\Stabilizer{\func}$ the identity $\func\circ\dif=\func$ implies that $\dif(\func^{-1}(c))=\func^{-1}(c)$ for all $c\in\Pman$.
Thus $\dif$ leaves invariant every level set of $\func$, and in particular induces a certain permutation $\rho(\dif)$ of connected components of $\func^{-1}(c)$, i.e. leaves of $\func$ being in turn points of $\KRGraphf$.
On other words, we get a map $\rho(\dif):\KRGraphf\to\KRGraphf$.
One can easily check that $\rho(\dif)$ is a \myemph{homeomorphism} of $\KRGraphf$ making commutative the following diagram:
\begin{equation}\label{equ:2x2_M_Graph}
\aligned
\xymatrix@R=2ex{
\Mman \ar[rr]^-{p} \ar[d]_-{\dif} &&
\KRGraphf \ar[rr]^-{\widehat{\func}} \ar[d]^-{\rho(\dif)} &&
\Pman \ar@{=}[d]  \\
\Mman \ar[rr]^-{p} &&
\KRGraphf \ar[rr]^-{\widehat{\func}} &&
\Pman
}
\endaligned
\end{equation}

Moreover, the correspondence $\dif\mapsto \rho(\dif)$ is a \myemph{homomorphism} of groups
\begin{equation}\label{equ:Stab_2_KRGraphf_homo}
	\rho :\Stabilizer{\func} \to \Homeo(\KRGraphf).
\end{equation}

\subsection{Enhanced graph of $\func\in\FF(\Mman,\Pman)$}\label{sect:enhanced_graph}
In order to encode an information coming from \myemph{degenerate local extremes} of $\func$, see Remark~\ref{rem:comparing_deg_nondeg}, we will add to $\KRGraphf$ new edges corresponding to framings at such points.

Let $z$ be a \myemph{degenerate} local extreme of $\func$ and $v\in T_{z}\Mman$, and
\[ \zfrm{z, v} = \{ T_{z}\dif(v) \mid \dif\in\Stabilizer{\func,z} \}  \ \subset \ T_{z}\Mman \]
be its orbit with respect to the action of $\Stabilizer{\func,z}$.
Similarly, to~\S\ref{sect:framing}, we will say that $\zfrm{z, v}$ is a \myemph{framing at $z$} if for every $\dif\in \Stabilizer{\func,z}$ such that $\dif$ fixes each element from $\zfrm{z, v}$ the tangent map $T_{z}\dif:T_{z}\Mman\to T_{z}\Mman$ is the identity.

Let $z_i$, $(i=1,\ldots,l)$, be all the \myemph{degenerate} local extremes of $\func$, and $\lfrm_i=\zfrm{z_i,v^i_0}$ be some framing at $z_i$ containing $k_i$ edges.
We will say that such a collection of framings $\lfrm = \{\lfrm_i\}_{i=1,\ldots,l}$ is \myemph{$\func$-adapted}, if it is invariant with respect to $\Stabilizer{\func}$, that is if $\dif\in\Stabilizer{\func}$ and $\dif(z_i) = z_j$ for some $i,j$, then $T_{z_i}\dif(\lfrm_i)=\lfrm_j$.
One easily checks, that $\func$-adapted framings always exist, \cite[Corolary 1]{Maksymenko:ProcIM:ENG:2010}.

\begin{figure}[ht]
\centering
\begin{tabular}{ccc}
\includegraphics[height=1.8cm]{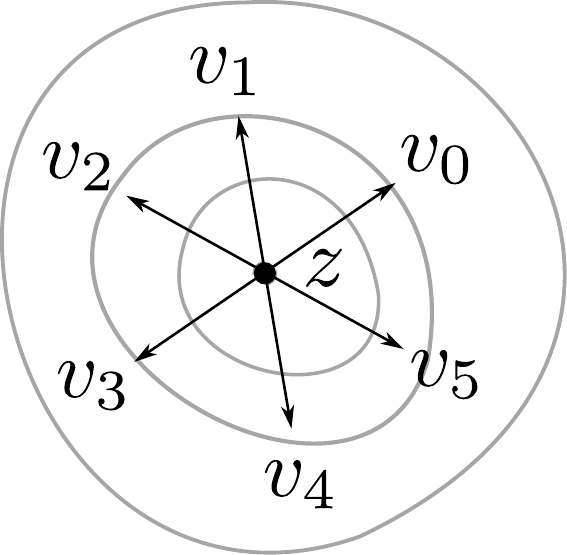}  &  \qquad  &
\includegraphics[height=1.8cm]{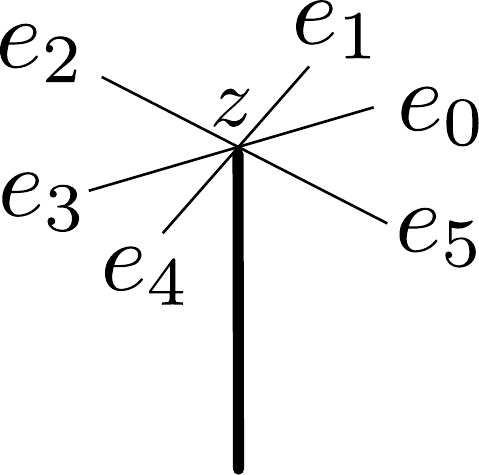}   \\
a) Framing at a degenerate local & & b) Enhanced graph  \\
extreme with symmetry index $m=6$
\end{tabular}
\caption{}\label{fig:framings}
\end{figure}

Thus $\Stabilizer{\func}$ naturally acts on $\KRGraphf$ as well as on each $\func$-adapted framing $\lfrm$ via the corresponding differentials of maps.
We want to ``join'' these actions.

Regard every point $z_i$, $i=1,\ldots,l$, as a vertex of $\KRGraphf$ of degree $1$, and glue to this vertex $k_i$ edges $e^i_{0},\ldots,e^i_{k_i-1}$ corresponding to the vectors in the corresponding framing $\zfrm{z_i,v^i_0}$, see Figure~\ref{fig:framings}b).
The obtained graph will be denoted by $\EKRGraphf$ and called the \myemph{enhanced} graph of $\func$.

One can assume that each new edge has length $1$.
Then the action of $\Stabilizer{\func}$ on $\KRGraphf$ extends to a unique action of $\Stabilizer{\func}$ on $\EKRGraphf$ so that each $\dif\in\Stabilizer{\func}$ interchanges edges  (via length preserving maps) of $\EKRGraphf\setminus\KRGraphf$ in the same way as its tangent map $T\dif$ interchanges vectors in the $\func$-adapted framing $\lfrm$.

\subsection{Action of $\Stabilizer{\func}$ on $\EKRGraphf$}
The previous paragraphs means that the homomorphism~\eqref{equ:Stab_2_KRGraphf_homo} extends to a homomorphism
\begin{equation}\label{equ:Stab_2_EKRGraphf_homo}
	\rho :\Stabilizer{\func} \to \Homeo(\EKRGraphf).
\end{equation}
associating to each $\dif\in\Stabilizer{\func}$ the induced homeomorphism of $\EKRGraphf$.
Let
\begin{align*}
	\FolStabilizer{\func} &= \ker(\rho), &
	\GrpKR{\func} &= \rho(\Stabilizer{\func}), &
	\GrpKR{\func,\XSman} &= \rho(\Stabilizer{\func,\XSman}),
\end{align*}
where $\XSman$ is any $\func$-adapted submanifold.
Then, we obtain the following short exact sequence:
\begin{equation}\label{equ:DSG_sequence_fX}
 \seqStab{\func,\XSman}: \quad \FolStabilizer{\func,\XSman} \monoArrow \Stabilizer{\func,\XSman} \epiArrow \GrpKR{\func,\XSman}.
\end{equation}
Evidently, $\dif\in\Stabilizer{\func}$ belongs to $\FolStabilizer{\func}$ if and only if
\begin{itemize}[leftmargin=*]
\item
$\dif$ preserves every leaf of $\func$;
\item
and for every \myemph{degenerate local extreme} $z$ (being also a critical leaf) of $\func$ the corresponding tangent map $T_{z}\dif: T_z\Mman\to T_z\Mman$ of $\dif$ at $z$ is the identity.
\end{itemize}
For an $\func$-adapted submanifold $\XSman$ let
\begin{align}
\label{equ:DeltafX}
\FolStabilizerIsotId{\func,\XSman} & = \FolStabilizer{\func} \cap \DiffId(\Mman,\XSman), \\
\label{equ:StabfX}
\StabilizerIsotId{\func,\XSman}    & = \Stabilizer{\func} \cap \DiffId(\Mman,\XSman), \\
\label{equ:GKRfX}
\GrpKRIsotId{\func,\XSman}         & = \rho( \StabilizerIsotId{\func,\XSman}).
\end{align}

\begin{sublemma}\label{lm:GfX}
{\rm{e.g.}~\cite[Lemma 2.2]{KravchenkoMaksymenko:EJM:2020}}
The group $\GrpKR{\func,\XSman}$ is finite.
Moreover, if $\func$ is a generic Morse map, then $\GrpKRIsotId{\func,\XSman}$ is trivial.
\end{sublemma}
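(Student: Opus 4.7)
\emph{Finiteness of $\GrpKR{\func,\XSman}$.} Axiom~\ref{axiom:sing} implies (see~\S\ref{sect:isolated_cr_pt}) that every critical point of $\func$ is isolated, so compactness of $\Mman$ gives $|\fSing|<\infty$, and hence $\EKRGraphf$ is a finite $1$-dimensional CW-complex. By the commutative diagram~\eqref{equ:2x2_M_Graph}, every $\rho(\dif)\in\GrpKR{\func,\XSman}$ satisfies $\widehat{\func}\circ\rho(\dif)=\widehat{\func}$, and by the construction of framings (\S\ref{sect:enhanced_graph}) it is isometric on the finitely many length-$1$ framing edges. Since $\widehat{\func}$ is a monotone bijection from each edge of $\KRGraphf$ onto an arc of $\Pman$, the restriction of $\rho(\dif)$ to any such edge $e$ is uniquely recovered from the image edge via $\widehat{\func}|_{\rho(\dif)(e)}^{-1}\circ\widehat{\func}|_e$. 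Thus $\rho(\dif)$ is completely determined by a permutation of the finite vertex and edge sets of $\EKRGraphf$, embedding $\GrpKR{\func,\XSman}$ into the finite automorphism group of the combinatorial graph.

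\emph{Triviality of $\GrpKRIsotId{\func,\XSman}$ for generic Morse $\func$.} Under genericity there are no degenerate local extremes, so $\EKRGraphf=\KRGraphf$, and distinct vertices of $\KRGraphf$ have distinct $\widehat{\func}$-values: for critical-leaf vertices this is the genericity hypothesis, while boundary vertices are preserved separately because any $\dif$ isotopic to the identity fixes each connected component of $\partial\Mman$ setwise. For $\dif\in\StabilizerIsotId{\func,\XSman}$, the identity $\widehat{\func}\circ\rho(\dif)=\widehat{\func}$ then forces $\rho(\dif)$ to fix every vertex, so the task reduces to showing that no pair of parallel edges $e_1\ne e_2$ with the same endpoints $v,w$ can be swapped by $\rho(\dif)$.

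Suppose, for contradiction, that such a swap occurs, with $\Nman_1,\Nman_2\subset\Mman$ the corresponding cylinders of regular leaves, so $\dif(\Nman_1)=\Nman_2$. Both $v$ and $w$ have degree $\geq 2$ in $\KRGraphf$, hence come from saddle critical points (boundary and local-extreme vertices have degree~$1$). Fix a saddle $z$ at $v$; in local coordinates where $\func=xy$ the two up-cylinders at $z$ lie in the first and third quadrants, and the swap $\Nman_1\leftrightarrow\Nman_2$ forces $T_z\dif$ to interchange these quadrants. By Lemma~\ref{lm:LStabf}\ref{enum:lm:LStabf:nondeg_saddle}, $T_z\dif\in\LStabPl(xy)=\{\mathrm{diag}(t^{-1},t)\mid t\neq 0\}$, and the required quadrant-swap forces $t<0$; that is, $T_z\dif$ lies in the non-identity component of $\LStabPl(xy)$.

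The remaining and most delicate step is to turn this local obstruction into a global contradiction. The plan is a homological orientation argument: canonically orient a regular leaf $C_1\subset\Nman_1$ using the orientation of $\Mman$ together with the outward $\func$-gradient, and likewise $C_2=\dif(C_1)\subset\Nman_2$; the explicit form of $T_z\dif$ with $t<0$ then shows that $\dif\colon C_1\to C_2$ \emph{reverses} these canonical orientations. On the other hand, since $\dif$ is isotopic to $\id_\Mman$, the induced map $\dif_*$ is the identity on $H_1(\Mman;\bZ)$; and since $C_1$ and $C_2$ cobound the (collar-thickened) subsurface $\Nman_1\cup\Nman_2$, one has $[C_1]=[C_2]$ in $H_1(\Mman)$, so $\dif_*[C_1]=[C_1]$ must preserve the canonical orientation, contradicting the reversal computed above. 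The main technical obstacle is the degenerate case when $[C_1]=0$ in $H_1(\Mman)$; there one runs the same argument in $H_1$ (or the relative mapping class group) of a suitable $\dif$-invariant regular neighborhood of $\Nman_1\cup\Nman_2$, in which $[C_1]\ne 0$ by construction.
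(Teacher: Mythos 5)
Your finiteness argument is sound in spirit and essentially what one expects: $\EKRGraphf$ is a finite CW--complex, and since $\widehat{\func}$ is a monotone homeomorphism on each closed edge (so long as $\KRGraphf$ has at least one vertex), the action $\rho(\dif)$ is pinned down by the induced permutation of vertices and edges, hence $\GrpKR{\func,\XSman}$ embeds into a finite group. You should just note the one case your phrasing misses, namely the fibrations $\func:\Torus\to\Circle$ or $\func:\KleinBottle\to\Circle$ of Theorem~\ref{th:sdo:except_cases}, where $\KRGraphf$ is a single circle with no vertices and $\widehat\func$ is an $m$--fold covering rather than an injection on an edge; there $\GrpKR{\func,\XSman}$ sits in the (finite) deck group $\bZ_m$ (possibly extended by a reflection), so the conclusion still holds.

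The triviality argument, however, contains a genuine error at its pivot. You claim that $T_z\dif=\mathrm{diag}(t^{-1},t)$ with $t<0$ \emph{reverses} the canonical orientation of a transverse regular leaf. This is false, and must be false for abstract reasons: the canonical orientation of a regular leaf $C$ is determined by the orientation of $\Mman$ together with the positive $\func$-direction transverse to $C$, and any $\dif$ that is orientation-preserving on $\Mman$ and satisfies $\func\circ\dif=\func$ preserves both data, hence preserves the canonical orientation of \emph{every} regular leaf --- regardless of whether $T_z\dif$ swaps quadrants at a saddle. (A direct check for $\func=xy$ confirms this: $\mathrm{diag}(a,1/a)$ with $a<0$ carries the canonically oriented first-quadrant branch of $xy=\eps$ to the canonically oriented third-quadrant branch.) Once this is corrected, there is no local/global ``orientation mismatch,'' and the intended contradiction disappears. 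Writing it out carefully with the canonical (= boundary) orientations: you get $\dif_*[C_1]=[C_2]$ from preservation, $\dif_*[C_1]=[C_1]$ from the isotopy to $\id$, and $[C_1]+[C_2]=0$ from the cobounding subsurface on the $w$-side (when that subsurface meets no component of $\partial\Mman$; otherwise extra boundary terms appear and even this relation fails). Combining these yields only $2[C_1]=0$, hence $[C_1]=0$ since $H_1(\Mman;\bZ)$ is torsion-free. That is not a contradiction; it is precisely the ``degenerate case'' you set aside, and it is not at all exceptional --- separating or boundary-parallel curves arise generically on surfaces of higher genus or with boundary. Your proposed repair (rerun the argument in a $\dif$-invariant regular neighborhood of $\Nman_1\cup\Nman_2$ where $[C_1]\ne 0$) is unsubstantiated: one would have to produce such an invariant subsurface in which $C_1$ is non-separating, which is essentially the content to be proved. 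As it stands, the second half of the proof does not close, and a genuinely different mechanism (for instance, an analysis of how $\dif$ acts on the curve system of the regular neighborhood of the critical leaf at $w$, or an appeal to incompressibility results as in~\cite{Maksymenko:MFAT:2010}) is needed to rule out the edge swap.
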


\begin{sublemma}[{\cite[Lemma 4.1]{Maksymenko:TA:2020}}]\label{lm:DeltaIdfX}
$\StabilizerId{\func,\XSman}$ is the identity path component of $\FolStabilizerIsotId{\func,\XSman}$.
Therefore $\FolStabilizerIsotId{\func,\XSman}$ is a disjoint union of some path components of $\StabilizerIsotId{\func,\XSman}$, and the inclusion $\FolStabilizerIsotId{\func,\XSman} \subset \StabilizerIsotId{\func,\XSman}$ induces a monomorphism
\[
	\pi_0\FolStabilizerIsotId{\func,\XSman} \monoArrow \pi_0\StabilizerIsotId{\func,\XSman},
\]
whence
\begin{equation}\label{equ:GfX_quotient}
	\GrpKRIsotId{\func,\XSman} \cong
	\frac{\StabilizerIsotId{\func,\XSman}}{\FolStabilizerIsotId{\func,\XSman}} \cong
	\frac{\pi_0\StabilizerIsotId{\func,\XSman}}{\pi_0\FolStabilizerIsotId{\func,\XSman}}.
\end{equation}
\end{sublemma}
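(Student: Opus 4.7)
The plan is to exploit two ingredients that have already been prepared in the paper: the continuity of the homomorphism $\rho:\Stabilizer{\func}\to\Homeo(\EKRGraphf)$ defined in~\eqref{equ:Stab_2_EKRGraphf_homo}, and the finiteness of its image $\GrpKR{\func,\XSman}$ established in Lemma~\ref{lm:GfX}. Once $\GrpKR{\func,\XSman}$ is viewed as a finite, hence discrete, topological group, the continuous map $\rho$ restricted to $\StabilizerIsotId{\func,\XSman}$ must be constant on every path. This one observation drives all four conclusions.

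First I would verify the inclusion $\StabilizerId{\func,\XSman}\subseteq\FolStabilizerIsotId{\func,\XSman}$. If $\{h_t\}_{t\in[0,1]}$ is a path in $\Stabilizer{\func,\XSman}$ from $\id$ to some $h$, then $t\mapsto\rho(h_t)$ is a continuous curve in the finite discrete group $\GrpKR{\func,\XSman}$ starting at $\id$, and therefore equals $\id$ identically. Hence each $h_t\in\ker(\rho)=\FolStabilizer{\func}$, and since the same path also shows $h\in\DiffId(\Mman,\XSman)$, we get $h_t\in\FolStabilizerIsotId{\func,\XSman}$ for every $t$. This simultaneously proves the $\subseteq$ part of the first assertion and shows that the connecting path lies in $\FolStabilizerIsotId{\func,\XSman}$. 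The reverse inclusion is formal: $\FolStabilizerIsotId{\func,\XSman}\subset\Stabilizer{\func}\cap\DiffMX=\Stabilizer{\func,\XSman}$, so a path to $\id$ inside $\FolStabilizerIsotId{\func,\XSman}$ is also a path inside $\Stabilizer{\func,\XSman}$. This identifies the path component of $\id$ in $\FolStabilizerIsotId{\func,\XSman}$ with $\StabilizerId{\func,\XSman}$.

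Second, I would observe that $\FolStabilizerIsotId{\func,\XSman}=\rho^{-1}(\id)\cap\StabilizerIsotId{\func,\XSman}$ is clopen in $\StabilizerIsotId{\func,\XSman}$: it is the preimage of a point of a finite discrete target under a continuous map. Hence it is a union of path components of $\StabilizerIsotId{\func,\XSman}$, which yields the claim that the inclusion induces a monomorphism $\pi_0\FolStabilizerIsotId{\func,\XSman}\monoArrow\pi_0\StabilizerIsotId{\func,\XSman}$. For the displayed isomorphisms~\eqref{equ:GfX_quotient}, the first one is just the first isomorphism theorem applied to the restricted epimorphism $\rho|_{\StabilizerIsotId{\func,\XSman}}:\StabilizerIsotId{\func,\XSman}\epiArrow\GrpKRIsotId{\func,\XSman}$ whose kernel is exactly $\FolStabilizerIsotId{\func,\XSman}$. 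The second isomorphism uses that, by the same path-constancy argument as in step one, $\rho$ is constant on each path component of $\StabilizerIsotId{\func,\XSman}$ and hence factors through $\pi_0\StabilizerIsotId{\func,\XSman}$; the kernel of the factored map is $\pi_0\FolStabilizerIsotId{\func,\XSman}$ by the clopenness established above.

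The only non-routine point is the continuity of $\rho$ at the $C^{\infty}$ topology level, together with the fact that $\GrpKR{\func,\XSman}$ can be regarded as a discrete subgroup of $\Homeo(\EKRGraphf)$. This is where one genuinely uses that the combinatorial data encoded by $\EKRGraphf$---the permutation of critical leaves together with the action on an $\func$-adapted framing at each degenerate local extreme---is locally constant under $C^{\infty}$-small perturbations of a diffeomorphism that already preserves $\func$. With that in hand, everything else is a direct unwinding of definitions.
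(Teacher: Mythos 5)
Your proposal is correct and follows the natural route that the original Lemma~4.1 of~\cite{Maksymenko:TA:2020} takes: everything reduces to the observation that $\rho$ is locally constant on $\Stabilizer{\func,\XSman}$, which forces any path inside $\Stabilizer{\func,\XSman}$ starting at $\id$ to stay inside $\ker\rho = \FolStabilizer{\func}$. From there the identification of the identity path component, the clopenness of $\FolStabilizerIsotId{\func,\XSman}$, the injectivity on $\pi_0$, and the two displayed isomorphisms all follow by unwinding definitions, exactly as you describe. Two small remarks. First, you have correctly isolated the ``only non-routine point,'' but it deserves one more sentence: the permutation of critical leaves is rigid because the finitely many critical points are isolated, and the action on a framing at a degenerate extreme $z$ is rigid because $T_z\dif$ is forced by Lemma~\ref{lm:LStabf}\ref{enum:lm:LStabf:degen} to lie in the \emph{finite} group $\LStab(\gfunc)$ of the local model, so a $C^{1}$-small perturbation of a $\func$-preserving diffeomorphism cannot change it; this is precisely the point of Remark~\ref{rem:comparing_deg_nondeg}, and it is where the enhanced graph $\EKRGraphf$ (rather than $\KRGraphf$) is essential. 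Second, your detour through clopenness in the second step is more than you need: the path-constancy of $\rho$ you established in the first step already shows directly that $\rho^{-1}(\id)\cap\StabilizerIsotId{\func,\XSman}$ is a union of path components of $\StabilizerIsotId{\func,\XSman}$, without invoking local path-connectedness of the Fr\'echet manifold to pass from connected to path components.
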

It follows from~\eqref{equ:GfX_quotient} that we have another short exact sequence
\begin{equation}\label{equ:bieberbach_sequence}
	\seqStabIsotId{\func,\XSman}:
	\quad
	\pi_0\FolStabilizerIsotId{\func,\XSman} \monoArrow
	\pi_0\StabilizerIsotId{\func,\XSman} \epiArrow
	\GrpKRIsotId{\func,\XSman}.
\end{equation}
This sequence will be called the \myemph{Biebarbach sequence of the pair $(\func,\XSman)$}.
It plays the main role in our considerations.

\subsection{Simplification results}
The following Lemmas~\ref{lm:reduct:reg_nbh} and~\ref{lm:reduction_Mconn_V_dM} reduce computation of $\seqStabIsotId{\func,\XSman}$ to the case when $\Mman$ is connected and $\XSman$ consists of critical points of $\func$ and boundary components of $\Mman$.
Let $\regN{\XSman}$ be a $\func$-regular neighborhood of $\XSman$.
\begin{sublemma}\label{lm:reduct:reg_nbh}{\rm(\cite[Corollary~7.2]{Maksymenko:TA:2020})}
The natural inclusions of pairs
\begin{gather}
    \label{equ:inclusios:stab}
     \bigl( \Stabilizer{\func,\regN{\XSman}}, \FolStabilizer{\func,\regN{\XSman}} \bigr)  \ \subset \ \bigl( \Stabilizer{\func,\XSman}, \FolStabilizer{\func,\XSman} \bigr), \\
    %%%%%%
    \label{equ:inclusios:stab:isot_id}
     \bigl( \StabilizerIsotId{\func,\regN{\XSman}}, \FolStabilizerIsotId{\func,\regN{\XSman}} \bigr)
    \ \subset \
    \bigl( \StabilizerIsotId{\func,\XSman}, \FolStabilizerIsotId{\func,\XSman} \bigr),
\end{gather}
are homotopy equivalences, and therefore they induce isomorphisms of the corresponding sequences:
\begin{align*}
    \seqStab{\func,\regN{\XSman}} &\cong \seqStab{\func,\XSman}, &
    \seqStabIsotId{\func,\regN{\XSman}} &\cong \seqStabIsotId{\func,\XSman}.
\end{align*}
\end{sublemma}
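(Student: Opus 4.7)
The plan is to exhibit each inclusion of pairs in the lemma as a strong deformation retraction, which automatically gives the claimed homotopy equivalences and, after passing to $\pi_0$, the stated isomorphisms of short exact sequences. The first reduction is a localization: since $\XSman=\bigsqcup_{i=1}^{a}\Aman_i$ and $\regN{\XSman}=\bigsqcup_{i=1}^{a}\regN{\Aman_i}$, a deformation supported near one $\Aman_i$ extends trivially to the others, so it suffices, for a single component $\Aman$, to construct a continuous map
\[
H\colon [0,1]\times\Stabilizer{\func,\Aman}\longrightarrow\Stabilizer{\func,\Aman},\qquad (s,\dif)\longmapsto H_s(\dif),
\]
with $H_0=\id$, $H_1(\dif)\in\Stabilizer{\func,\regN{\Aman}}$ for every $\dif$, and $H_s(\dif)=\dif$ whenever $\dif$ already fixes $\regN{\Aman}$.

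The construction of $H$ uses the $\Stabilizer{\func}$-equivariant shift map $\Sh{\func}$ from~\cite{Maksymenko:AGAG:2006,Maksymenko:TA:2020}. Pick a thinner regular neighborhood $\regN{\Aman}'$ with $\Aman\subset\Int(\regN{\Aman}')$ and $\regN{\Aman}'\subset\Int(\regN{\Aman})$, and a smooth cut-off $\lambda\colon\Mman\to[0,1]$ that is constant on every leaf of $\func$, equal to $0$ on $\regN{\Aman}'$, and equal to $1$ outside $\regN{\Aman}$. On each component of $\regN{\Aman}\setminus\Aman$ the map $\func$ has no critical points, hence admits a product trivialization $\Kman\times I\to I$, $\func=\mathrm{pr}_I$, and a $\dif\in\Stabilizer{\func,\Aman}$ takes the form $(x,t)\mapsto(\dif_t(x),t)$ with $\dif_0=\id$. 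Define $H_s(\dif)$ on this piece by $(x,t)\mapsto(\dif_{(1-s\lambda(t))\,t}(x),t)$, and extend it by $\dif$ on $\Aman$; equivalently, $H_s(\dif)=\dif\circ\Sh{\func}^{-s\lambda}$.

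Compatibility with the four subspaces can then be checked step by step. By construction each $H_s(\dif)$ preserves $\func$, equals $\dif$ outside $\regN{\Aman}$, and is the identity on $\regN{\Aman}'$ at $s=1$, so $H$ is a strong deformation retraction of $\Stabilizer{\func,\Aman}$ onto $\Stabilizer{\func,\regN{\Aman}}$. Since the shift $\Sh{\func}$ preserves every leaf of $\func$ and acts trivially on the tangent space at every fixed critical point (including degenerate local extremes), $H_s$ sends $\FolStabilizer{\func,\Aman}$ into itself, giving the equivalence of pairs in the first inclusion. The path $\{\Sh{\func}^{-s'\lambda}\}_{s'\in[0,s]}$ is itself $\func$-preservingly isotopic to the identity through diffeomorphisms fixing $\XSman$, so $H_s$ preserves $\StabilizerIsotId{\func,\Aman}$ and $\FolStabilizerIsotId{\func,\Aman}$ as well, yielding the second line. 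Naturality of $\rho\colon\Stabilizer{\func}\to\Homeo(\EKRGraphf)$ with respect to $H_s$ then gives the claimed isomorphisms $\seqStab{\func,\regN{\XSman}}\cong\seqStab{\func,\XSman}$ and $\seqStabIsotId{\func,\regN{\XSman}}\cong\seqStabIsotId{\func,\XSman}$.

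The principal difficulty is showing that the reparametrization $\dif_t\mapsto\dif_{(1-s\lambda(t))t}$, which is manifestly smooth on $\regN{\Aman}\setminus\Aman$, in fact extends to a $\Cinfty$ self-diffeomorphism of $\Mman$ across the critical leaf $\Aman$. This is the non-trivial content of the shift-map construction in~\cite{Maksymenko:AGAG:2006}: one needs $\lambda$ to be constant on each leaf of $\func$ (which is why we arranged it so above), and one verifies smoothness at every critical point of $\func$ using the local models of~\S\ref{sect:homo_poly}. Continuity of $H$ in the $\Cinfty$ topology and the remaining checks are then routine once this smoothness has been secured.
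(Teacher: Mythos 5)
Your overall strategy---retract $\Stabilizer{\func,\XSman}$ onto $\Stabilizer{\func,\regN{\XSman}}$ by reparametrising a representative $\dif$ transversely to the leaves near each component $\Aman$, the only genuine difficulty being smoothness of the reparametrised diffeomorphism across the critical leaf---is the right one and does reflect the shift-map machinery of~\cite{Maksymenko:TA:2003,Maksymenko:AGAG:2006,Maksymenko:TA:2020}. But the concrete homotopy you wrote is inconsistent. With $\lambda\equiv0$ on the thinner neighbourhood $\regN{\Aman}'$ and $\lambda\equiv1$ outside $\regN{\Aman}$, the formula $H_s(\dif)(x,t)=\bigl(\dif_{(1-s\lambda(t))t}(x),t\bigr)$ gives at $s=1$: $H_1(\dif)=\dif$ near $\Aman$ (where $\lambda=0$) and $H_1(\dif)=\id$ near the outer boundary---exactly the opposite of what is needed, so $H_1(\dif)\notin\Stabilizer{\func,\regN{\Aman}}$. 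Worse, $H_s(\dif)$ fails to glue continuously to $\dif$ along $\partial\regN{\Aman}$: on a boundary leaf $t=t_\partial$ one gets $\bigl(\dif_{(1-s)t_\partial}(x),t_\partial\bigr)$ from inside but $\bigl(\dif_{t_\partial}(x),t_\partial\bigr)$ from outside, which differ whenever $s>0$. The correct set-up must push the transition annulus \emph{outside} $\regN{\Aman}$: take a larger regular neighbourhood $\regN{\Aman}''\supset\regN{\Aman}$, parametrise $\regN{\Aman}''\setminus\Aman$ as $\Kman\times I$, put $\lambda\equiv1$ on $\regN{\Aman}$ and $\lambda\equiv0$ outside $\regN{\Aman}''$, and use the same formula; then $H_1(\dif)$ is the identity on $\regN{\Aman}$ and $H_s(\dif)$ agrees with $\dif$ across $\partial\regN{\Aman}''$.

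Even after this correction, what you obtain is \emph{not} a strong deformation retraction: if $\dif\in\Stabilizer{\func,\regN{\Aman}}$ and $t$ lies in the transition annulus, then $\mu_s(t)=(1-s\lambda(t))t<t$ may fall inside the $\regN{\Aman}$-range where $\dif_{\mu_s(t)}=\id$ while $\dif_t\neq\id$, so $H_s(\dif)\neq\dif$. What you do get is $H_0=\id$, $H_1(\Stabilizer{\func,\Aman})\subset\Stabilizer{\func,\regN{\Aman}}$, and $H_s\bigl(\Stabilizer{\func,\regN{\Aman}}\bigr)\subset\Stabilizer{\func,\regN{\Aman}}$ for all $s$ (since $H_s(\dif)$ still restricts to the identity on $\regN{\Aman}$), and this weaker criterion already makes the inclusion a homotopy equivalence---so the conclusion survives, but the claim should be adjusted. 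Finally, the side remark ``equivalently, $H_s(\dif)=\dif\circ\Sh{\func}^{-s\lambda}$'' is not what your formula computes: $H_s(\dif)$ is the conjugate $R_s^{-1}\circ\dif\circ R_s$ by the transverse reparametrisation $R_s(x,t)=(x,(1-s\lambda(t))t)$, while a leaf-wise shift $\Sh{\func}^{-s\lambda}$ acts in the $x$-direction and leaves $t$ fixed, so post-composing $\dif$ with it changes the $x$-coordinate of $\dif$ rather than the transverse index $t$ of $\dif_t$. None of these issues touch the genuinely hard step you correctly isolated (smooth extension across the critical leaf, where the shift-function technology enters), but the elementary scaffolding must be fixed before that step is even reached.
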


\begin{sublemma}[{cf.~\cite[Lemma~5.2]{Maksymenko:TA:2020}}]\label{lm:reduction_Mconn_V_dM}
Let $\Xman_1,\ldots,\Xman_{\cnt}$ all the connected components of $\overline{\Mman \setminus \regN{\XSman}}$, and $\hXman_i := \Xman_i \cap \regN{\XSman}$, $i=1,\ldots,\cnt$, so $\hXman_i$ consists of some common boundary components of $\Xman_i$ and $\regN{\XSman}$.
Then the natural homomorphism
\[
   \alpha: \DiffId(\Mman,\XSman) \to \prod_{i=1}^{\cnt} \DiffId(\Xman_i, \hXman_i), \qquad
   \alpha(\dif) = \bigl( \dif|_{\Xman_1}, \ldots, \dif|_{\Xman_{\cnt}}\bigr),
\]
induces an isomorphism $\seqStabIsotId{\func,\XSman} \cong \prod\limits_{i=1}^{\cnt} \seqStabIsotId{\func|_{\Xman_i},\hXman_i}$.

If $\Mman$ contains no connected components $\Mman_1$ and $\Mman_2$ that are diffeomorphic each other and do not intersect $\XSman$, then we also have an isomorphism $\seqStab{\func,\XSman} \cong \prod\limits_{i=1}^{\cnt} \seqStab{\func|_{\Xman_i},\hXman_i}$.
\end{sublemma}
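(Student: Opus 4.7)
The plan is to first invoke Lemma~\ref{lm:reduct:reg_nbh} to replace $\XSman$ by its $\func$-regular neighborhood $\regN{\XSman}$, reducing the problem to diffeomorphisms fixed on $\regN{\XSman}$. Such a diffeomorphism $\dif$ automatically preserves each complementary component $\Xman_i$ of $\overline{\Mman\setminus\regN{\XSman}}$ setwise (since $\dif|_{\regN{\XSman}}=\id$ forces each component to be sent to itself), and its restriction $\dif|_{\Xman_i}$ is identity on $\hXman_i = \Xman_i\cap\regN{\XSman}$. This is what gives the homomorphism $\alpha$ at the reduced level, and composing with the equivalence from Lemma~\ref{lm:reduct:reg_nbh} yields the stated $\alpha$.

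The next step is bijectivity. Injectivity is immediate, since $\dif$ is determined by its restrictions $\dif|_{\Xman_i}$ together with $\dif|_{\regN{\XSman}} = \id$. For surjectivity, given a tuple $(\dif_1,\ldots,\dif_\cnt)$ with $\dif_i \in \Diff(\Xman_i,\hXman_i)$, I would glue the $\dif_i$ to the identity on $\regN{\XSman}$. Smoothness of the resulting global map across $\partial\regN{\XSman}$ is the key technical point: since $\partial\regN{\XSman}$ consists of regular level sets of $\func$, a standard collar/isotopy argument lets one replace each $\dif_i$ with an isotopic representative identity on a neighborhood of $\hXman_i$ in $\Xman_i$, at which point the gluing is literally a smooth map. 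This smooth gluing across $\partial\regN{\XSman}$ is the main technical obstacle, and it is precisely what the passage to a regular neighborhood is designed to handle.

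Next I would verify that $\alpha$ respects each of the three subgroups appearing in $\seqStabIsotId{\func,\XSman}$. The equivalence $\func\circ\dif = \func$ iff $\func|_{\Xman_i}\circ\dif|_{\Xman_i} = \func|_{\Xman_i}$ for all $i$ is immediate from $\dif|_{\regN{\XSman}}=\id$. The equivalence $\dif \in \FolStabilizer{\func,\regN{\XSman}}$ iff each $\dif|_{\Xman_i} \in \FolStabilizer{\func|_{\Xman_i},\hXman_i}$ is likewise immediate, since preservation of every leaf and triviality of the tangent map at each degenerate local extreme are local conditions. For the isotopy-to-identity layer, any isotopy of $\dif$ in $\DiffId(\Mman,\regN{\XSman})$ restricts component-wise, and conversely a tuple of isotopies in each $\DiffId(\Xman_i,\hXman_i)$ glues with the constant identity on $\regN{\XSman}$ into a global isotopy (using the same collar trick). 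Hence $\alpha$ descends to the desired isomorphism of $\seqStabIsotId$.

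Finally, for the assertion about $\seqStab{\func,\XSman}$ without the isotopy-to-identity condition, the only new issue is that a general $\dif\in\Stabilizer{\func,\regN{\XSman}}$ could a priori permute those components $\Xman_i$ which coincide with entire connected components of $\Mman$ disjoint from $\XSman$. The hypothesis on $\Mman$ is precisely what excludes this: any such swap would identify two diffeomorphic connected components of $\Mman$ not meeting $\XSman$. Under the hypothesis, $\dif$ preserves each $\Xman_i$ individually, and the restriction/gluing argument from the isotopy case applies verbatim to give the product decomposition of $\seqStab{\func,\XSman}$.
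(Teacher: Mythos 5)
Your overall strategy---reduce to diffeomorphisms fixed on $\regN{\XSman}$ via Lemma~\ref{lm:reduct:reg_nbh}, show by a collar-gluing argument that $\alpha$ induces bijections on isotopy classes, and check that the decomposition respects the $\FolStabilizer$- and isotopy-layers---is the natural one and is essentially what the cited Lemma~5.2 of~\cite{Maksymenko:TA:2020} does.

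There is, however, a genuine slip in your first paragraph, and it is the very slip this lemma exists to repair. The parenthetical ``since $\dif|_{\regN{\XSman}}=\id$ forces each component to be sent to itself'' is false. Fixing $\regN{\XSman}$ pointwise forces $\dif(\Xman_i)=\Xman_i$ only when $\hXman_i\ne\varnothing$: there $\dif$ fixes $\hXman_i$ pointwise and connectedness does the rest. When $\hXman_i=\varnothing$, so that $\Xman_i$ is an entire connected component of $\Mman$ disjoint from $\regN{\XSman}$, a diffeomorphism that is the identity on $\regN{\XSman}$ can freely interchange $\Xman_i$ with another such component---this is exactly the gap in~\cite[Lemma~5.2]{Maksymenko:TA:2020} which the ``Notes to the proof'' of the paper point out. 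For the $\seqStabIsotId$-sequence the conclusion that every $\Xman_i$ is preserved is still true, but the reason is that $\dif\in\DiffId(\Mman,\regN{\XSman})$ is isotopic to $\id_{\Mman}$ and therefore preserves every connected component of $\Mman$, not that it fixes $\regN{\XSman}$. Your own last paragraph shows you understand this (you invoke exactly this permutation phenomenon to explain the extra hypothesis for $\seqStab$), so the first paragraph simply needs to state the correct reason and distinguish the cases $\hXman_i\ne\varnothing$ and $\hXman_i=\varnothing$. A secondary point: in the gluing step the straightening isotopy of each $\dif_i$ near $\hXman_i$ must stay inside $\StabilizerIsotId{\func|_{\Xman_i},\hXman_i}$, not merely inside $\DiffId(\Xman_i,\hXman_i)$, or the glued diffeomorphism need not stabilize $\func$; this is provided by applying Lemma~\ref{lm:reduct:reg_nbh} once more to $\hXman_i$ inside $\Xman_i$, which is what your appeal to the regularity of $\partial\regN{\XSman}$ is implicitly relying on and should be made explicit.
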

\begin{smallproof}{Notes to the proof}
In fact, Lemma~5.2 in~\cite{Maksymenko:TA:2020} \myemph{wrongly} states that we always have an isomorphism of sequences $\seqStab{\func,\XSman} \cong \prod_{i=1}^{\cnt} \seqStab{\func|_{\Xman_i},\hXman_i}$.
The gap is in the claim that the map $\alpha: \Diff(\Mman,\Uman) \to \prod_{i=1}^{\cnt} \Diff(\Xman_i, \Uman_i)$ given by the same formula $\alpha(\dif) = \bigl( \dif|_{\Xman_1}, \ldots, \dif|_{\Xman_{\cnt}}\bigr)$ is \myemph{well-defined}, where $\Uman_i$ is a $\func$-regular neighborhood of $\hXman_i$ in $\Xman_i$, and $\Uman = \cup_{i=1}^{\cnt}\Uman_i$.

Indeed, the definition of $\alpha$ assumes that every $\dif\in\Diff(\Mman,\Uman)$ leaves invariant each connected component of $\Mman$.
However, it might happen that $\Mman$ has two diffeomorphic connected components $\Mman_1$ and $\Mman_2$ which do not intersect $\Uman$.
Then $\alpha$ is not-defined for a diffeomorphism of $\Mman$ interchanging $\Mman_1$ and $\Mman_2$ and fixed on $\Mman\setminus(\Mman_1\cup\Mman_2)$.

Nevertheless, the statement about isomorphism of $\DSG$-sequences remains true, since every isotopic to the identity diffeomorphism preserves every connected component.
\end{smallproof}

The following lemma allows to cut an $\func$-adapted collar of some boundary component of $\Mman$.
\begin{sublemma}\label{lm:cut_the_collar}{\rm(\cite[Theorem~5.5(3)]{Maksymenko:TA:2020})}
Let $\Mman$ be a not necessarily orientable connected compact surface, $\func\in\FF(\Mman,\Pman)$, and $\XSman$ be a connected component of $\partial\Mman$.
Suppose there exists a regular component $\Wman$ of some level set of $\func$ separating $\Mman$ and let $\Bman$ and $\Cylinder$ be the connected components of $\overline{\Mman\setminus\Wman}$.
Assume that $\XSman\subset\Cylinder$ and let $\Xman\subset \Bman\setminus \Wman$ be an $\func$-adapted submanifold, see Figure~\ref{fig:cylinder_split}.
\begin{figure}[htbp!]
\centering
\includegraphics[height=1.8cm]{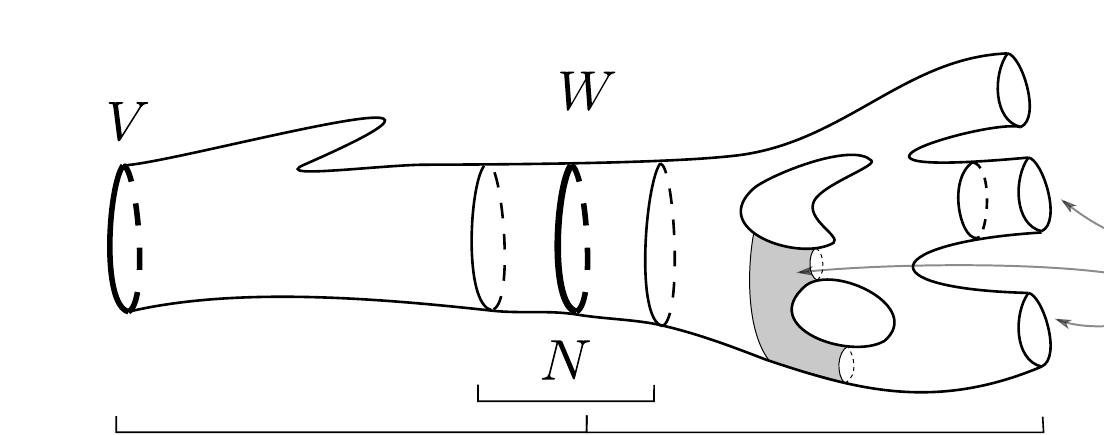}
\caption{}\label{fig:cylinder_split}
\end{figure}
Suppose $\Cylinder$ is an \myemph{annulus} and $\dif(\Cylinder)=\Cylinder$ for all $\dif\in\Stabilizer{\func,\Xman\cup\XSman}$.
Then we have the following homotopy equivalence:
\[
\Stabilizer{\func, \Xman \cup \XSman} \cong \Stabilizer{\func|_{\Bman}, \Xman \cup \Wman} \times \StabilizerIsotId{\func|_{\Cylinder}, \partial\Cylinder}
\]
which induces the homotopy equivalence
\[
	\StabilizerIsotId{\func, \Xman \cup \XSman} \cong \StabilizerIsotId{\func|_{\Bman}, \Xman \cup \Wman} \times \StabilizerIsotId{\func|_{\Cylinder}, \partial\Cylinder}
\]
and homotopy equivalences between the corresponding $\Delta$- and $\Delta'$-groups.
In particular, we get isomorphisms of short exact sequences:
\begin{equation}\label{equ:cyl:split}
\begin{aligned}
\seqStab{\func,\Xman\cup\XSman}&\cong \seqStab{\func|_{\Bman}, \Xman \cup \Wman} \times \seqStabIsotId{\func|_{\Cylinder}, \partial\Cylinder}, \\
\seqStabIsotId{\func,\Xman\cup\XSman} &\cong \seqStabIsotId{\func|_{\Bman}, \Xman \cup \Wman} \times \seqStabIsotId{\func|_{\Cylinder}, \partial\Cylinder}.
\end{aligned}
\end{equation}
Moreover, if $\Bman$ is either a $2$-disk or an annulus and $\Xman=\varnothing$, then
\begin{equation}\label{equ:bseq_two_cyl}
\seqStabIsotId{\func,\partial\Mman} \cong
\seqStabIsotId{\func|_{\Bman}, \partial\Bman} \times
\seqStabIsotId{\func|_{\Cylinder}, \partial\Cylinder}.
\end{equation}
\end{sublemma}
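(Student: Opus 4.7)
The plan is to reduce the statement to a section argument on the annular piece $\Cylinder$. Because every $\dif \in \Stabilizer{\func, \Xman \cup \XSman}$ preserves $\Cylinder$ setwise by hypothesis, it also preserves $\Bman$ and the separating regular leaf $\Wman$. Hence the restriction map $\dif \mapsto (\dif|_\Bman, \dif|_\Cylinder)$ identifies
\[
   \Stabilizer{\func, \Xman \cup \XSman} \ \cong\
   \Stabilizer{\func|_\Bman, \Xman} \times_{\Diff(\Wman)} \Stabilizer{\func|_\Cylinder, \XSman}
\]
as topological groups, the fibered product being over the common restriction to $\Wman$; this is an isomorphism since $\Wman$ is a regular submanifold of $\Mman$ and a matching pair of $\func$-preserving diffeomorphisms glues smoothly across it.

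The heart of the argument --- and the main obstacle I anticipate --- is to construct a continuous section $\tilde s \colon \Diff(\Wman) \to \Stabilizer{\func, \Xman \cup \XSman}$ of the combined restriction to $\Wman$, supported in a small $\func$-regular collar $\Nman$ of $\Wman$. Since $\Wman$ is a regular circle leaf, one can trivialize such a collar as $\Nman \cong \Wman \times (-\varepsilon, \varepsilon)$ with $\func$ becoming the projection to the second factor; given $\phi \in \Diff(\Wman)$ one then sets $\tilde s(\phi)(x,t) = (\phi_{\chi(t)}(x), t)$, where $\phi_\bullet$ is a prescribed isotopy from $\id_\Wman$ to $\phi$ within the identity component of $\Diff(\Wman)$ and $\chi$ is a bump function equal to $1$ at $t=0$ and vanishing near $t = \pm\varepsilon$, and extends by the identity on $\Mman\setminus\Nman$. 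This is manifestly $\func$-preserving, supported away from $\Xman \cup \XSman$, and depends continuously on $\phi$. The subtlety is that $\Diff(\Wman)$ has several path components, so $\tilde s$ is only a section on the identity component; on the other components one treats each separately using a fixed representative diffeomorphism. This is precisely the place where the annulus hypothesis on $\Cylinder$ is used.

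Given $\tilde s$, the map
\[
   \dif \ \longmapsto\ \Bigl(\bigl(\dif \cdot \tilde s(\dif|_\Wman)^{-1}\bigr)\big|_\Bman,\ \bigl(\dif \cdot \tilde s(\dif|_\Wman)^{-1}\bigr)\big|_\Cylinder\Bigr)
\]
lands in $\Stabilizer{\func|_\Bman, \Xman \cup \Wman} \times \Stabilizer{\func|_\Cylinder, \partial\Cylinder}$, because each factor now restricts to the identity on $\Wman$. Its composition with the natural inclusion back into the fibered product is the identity on the product, while the other composition is homotopic to the identity via the homotopy $(\dif, t) \mapsto \dif \cdot \tilde s_t(\dif|_\Wman)^{-1}$ obtained by scaling the bump $\chi$ in $\tilde s$ to $t \cdot \chi$ (at $t=0$ the correction is the identity of $\Mman$, so the homotopy stays inside $\Stabilizer{\func, \Xman \cup \XSman}$). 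This furnishes the homotopy equivalence
$\Stabilizer{\func, \Xman \cup \XSman} \simeq \Stabilizer{\func|_\Bman, \Xman \cup \Wman} \times \Stabilizer{\func|_\Cylinder, \partial\Cylinder}$; taking the path component of the identity on the $\Cylinder$-factor replaces $\Stabilizer{\func|_\Cylinder, \partial\Cylinder}$ by $\StabilizerIsotId{\func|_\Cylinder, \partial\Cylinder}$, and passing to identity components and to the $\FolStab$-subgroups on both sides yields~\eqref{equ:cyl:split} by functoriality of~\eqref{equ:DSG_sequence_fX}.

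Finally, for~\eqref{equ:bseq_two_cyl}: if $\Bman$ is a disk then $\partial\Bman = \Wman$ whence $\partial\Mman = \XSman$, and~\eqref{equ:bseq_two_cyl} is a direct instance of the general statement with $\Xman = \varnothing$. If $\Bman$ is an annulus, its second boundary component $W' \subset \partial\Mman$ can be absorbed into the first factor by applying the general statement with $\Xman$ replaced by an $\func$-regular neighborhood of $W'$ in $\Bman$ --- this is harmless by Lemma~\ref{lm:reduct:reg_nbh} --- which converts $\seqStabIsotId{\func, \Xman \cup \XSman}$ into $\seqStabIsotId{\func, \partial\Mman}$ and $\seqStabIsotId{\func|_\Bman, \Xman \cup \Wman}$ into $\seqStabIsotId{\func|_\Bman, \partial\Bman}$, giving~\eqref{equ:bseq_two_cyl}.
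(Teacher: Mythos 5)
Your argument hinges on constructing a \emph{continuous} section $\tilde s$ of the restriction-to-$\Wman$ map, supported in a collar, via the formula $\tilde s(\phi)(x,t)=(\phi_{\chi(t)}(x),t)$. The step that fails is the phrase ``where $\phi_\bullet$ is a prescribed isotopy from $\id_\Wman$ to $\phi$.'' For $\tilde s$, and later for the homotopy $\dif\mapsto \dif\,\tilde s_t(\dif|_\Wman)^{-1}$, to be continuous, the chosen isotopy $\phi_\bullet$ must depend continuously on $\phi$. Such a choice is a continuous section of the endpoint--evaluation map from the space of paths in $\DiffId(\Wman)$ based at $\id_\Wman$; a continuous section of this map is the same thing as a contraction of $\DiffId(\Wman)$. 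But $\Wman$ is a regular leaf, so $\Wman\cong\Circle$ and $\DiffId(\Wman)\simeq SO(2)\simeq\Circle$ is not contractible. Thus the section you invoke simply does not exist, and the subsequent homotopy is not a homotopy.

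This is not a reparable technicality: the intermediate statement your argument would furnish --- that $\Stabilizer{\func,\Xman\cup\XSman}$ is homotopy equivalent to $\Stabilizer{\func|_{\Bman},\Xman\cup\Wman}\times\Stabilizer{\func|_{\Cylinder},\partial\Cylinder}$, with the \emph{full} annular stabilizer and not its $\StabilizerIsotId$-subgroup --- is false. Take $\Mman=\Circle\times\UInt$, $\func(x,y)=y$, $\Xman=\varnothing$, $\XSman=\Circle\times 0$, $\Wman=\Circle\times\tfrac12$, so $\Bman$ and $\Cylinder$ are both annuli. Then $\Stabilizer{\func,\XSman}$ is the space of paths in $\Diff(\Circle)$ starting at $\id$, hence contractible, while on the right-hand side $\Stabilizer{\func|_{\Cylinder},\partial\Cylinder}$ is the based loop space of $\Diff(\Circle)$, with $\pi_0\cong\pi_1 SO(2)\cong\bZ$. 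So the two sides already have different $\pi_0$. The extra $\bZ$ is exactly the isotopy class of a Dehn twist supported in $\Cylinder$ near $\Wman$: nontrivial in $\pi_0\Stabilizer{\func|_{\Cylinder},\partial\Cylinder}$, but trivial in $\pi_0\Stabilizer{\func,\XSman}$ because in $\Mman$ it can be slid across $\Wman$ into $\Bman$ and annihilated by rotating $\Wman$. This is precisely the overcount that replacing $\Stabilizer{\func|_{\Cylinder},\partial\Cylinder}$ by $\StabilizerIsotId{\func|_{\Cylinder},\partial\Cylinder}$ in the lemma's statement is designed to remove.

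Your closing remark, ``taking the path component of the identity on the $\Cylinder$-factor,'' is not a deduction from anything preceding it. The corrected restriction $(\dif\,\tilde s(\dif|_\Wman)^{-1})|_{\Cylinder}$ need not lie in $\DiffId(\Cylinder,\partial\Cylinder)$ --- take $\dif$ equal to the identity on $\Bman$ and a Dehn twist along a regular leaf inside $\Cylinder$ --- and in any case $\StabilizerIsotId{\func|_{\Cylinder},\partial\Cylinder}$ is a union of components of $\Stabilizer{\func|_{\Cylinder},\partial\Cylinder}$, not the identity component. The passage to $\StabilizerIsotId$ has to be built into the argument from the start, by organizing the comparison around a fibration whose kernel accounts for the Dehn twist along $\Wman$, not by a global section of the restriction to $\Wman$, which is obstructed by $\pi_1\Diff(\Wman)\neq 0$.
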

% \begin{corollary}
% Let $(\Mman,\XSman) = (\Circle\times[0,1], \Circle\times 0)$ and $\func\in\FF(\Mman,\Pman)$.
% Then the inclusion of pairs
% \begin{multline*}%\label{equ:func_on_cylinder_hom_eq1}
% \bigl(
% \StabilizerIsotId{\func, \partial\Mman},\,  \FolStabilizerIsotId{\func, \partial\Mman}
% \bigr)
% \ \subset \
% \bigl( \StabilizerIsotId{\func, \XSman},\, \FolStabilizerIsotId{\func, \XSman} \bigr)
% \ \stackrel{\eqref{equ:StabIsotIdD2}}{\equiv} \
% \bigl( \Stabilizer{\func, \XSman},\, \FolStabilizer{\func, \XSman} \bigr)
% \end{multline*}
% is a homotopy equivalence, whence $\seqStabIsotId{\func,\partial\Mman} \cong \seqStabIsotId{\func,\XSman} \equiv \seqStab{\func,\XSman}$.
% \end{corollary}

\subsection{Structure of $\pi_0\FolStabilizer{\func,\XSman}$}
Fix a possibly empty collection $\XSman$ of boundary components of $\Mman$.

Let $e$ be an (open) edge of $\KRGraphf$, and $x\in e$ be a point, so it corresponds to a regular leaf $\gamma$ of $\func$ in $\Int{\Mman}$.
Then there exists a Dehn twist $\dtw{\gamma} \in \FolStabilizer{\func}$ along $\gamma$ supported in $\func$-regular neighborhood of $\gamma$ which does not intersect $\partial\Mman$.
We will call a \myemph{Dehn twist around the edge $e$} and also denote by $\dtw{e}$.

Notice that by definition $\dtw{e}$ is fixed on $\partial\Mman \supset \XSman$.
Then its isotopy class in $\pi_0\FolStabilizer{\func,\XSman}$ will be denoted by $\idtw{e}{}$.
Since for distinct edges $e,e'$ their Dehn twists $\dtw{e}$ and $\dtw{e'}$ have disjoint supports, we obtain that $\dtw{e}$ and $\dtw{e'}$ commute.
Hence their isotopy classes $\idtw{e}{}$ and $\idtw{e'}{}$ in $\pi_0\FolStabilizer{\func,\XSman}$ also commute.

Let also $v_0, v_1$ be the vertices of $e$, and $\Kman_i = \prj^{-1}(v_i)$, $i=0,1$, be the corresponding leaves of $\func$.
Then each $\Kman_i$ is either a critical leaf of $\func$ or a boundary components of $\Mman$.
We will say that $e$ (as well as the corresponding Dehn twist $\dtw{e}$ is \myemph{internal for $\XSman$} if each $\Kman_i$ is one of following leaves:
\begin{itemize}
	\item either a degenerate local extreme,
	\item or contains a saddle point of $\func$,
	\item or is contained in $\XSman$.
\end{itemize}
Otherwise $e$ is \myemph{external with respect to $\XSman$}.

\begin{sublemma}\label{lm:pi0_Delta}{\rm(e.g.~\cite[Theorem~6.2]{Maksymenko:AGAG:2006})}
Let $\XSman$ be a possibly empty collection of boundary components of $\Mman$.
\begin{enumerate}[leftmargin=*, label={\rm(\arabic*)}, itemsep=0.8ex]
\item\label{enum:lm:pi0_Delta:1}
An edge $e$ of $\KRGraphf$ is \myemph{external}, if and only if $\dtw{e} \in \StabilizerId{\func}$.

\item\label{enum:lm:pi0_Delta:3}
Suppose $\KleinBottle$ is a Klein bottle, and $\func:\KleinBottle\to\Circle$ is a map without critical points, see map~\ref{enum:specfunc:klein} in Theorem~\ref{th:sdo:except_cases}.
Then there exists an isotopy $H:\KleinBottle\times[0,1]\to\KleinBottle$ such that $H_0 = \id_{\KleinBottle}$ and $H_1 \in \FolStabilizerIsotId{\func}$ preserves each leaf of $\func$ and reverses its orientation.
In this case $\pi_0\FolStabilizerIsotId{\func,\XSman}\cong 0$ and $\pi_0\FolStabilizer{\func,\XSman} \cong\bZ_2$.

\item\label{enum:lm:pi0_Delta:4}
In all other cases $\pi_0\FolStabilizer{\func,\XSman}$ is a free abelian group freely generated by internal for $\XSman$ Dehn twists, so $\pi_0\FolStabilizer{\func,\XSman} \cong \bZ^k$.
Hence $\pi_0\FolStabilizerIsotId{\func,\XSman}$ is also a free abelian group generated by \myemph{relations between internal Dehn twists in $\pi_0\Diff(\Mman,\XSman)$.}
\end{enumerate}
\end{sublemma}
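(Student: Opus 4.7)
The plan is to treat the three parts separately but from a common viewpoint: after Lemma~\ref{lm:reduct:reg_nbh} we may replace any ``critical'' neighbourhood structure by a regular neighbourhood, and Dehn twists along disjoint regular leaves have disjoint supports, hence they commute in $\FolStabilizer{\func,\XSman}$ and \emph{a fortiori} their isotopy classes commute in $\pi_0\FolStabilizer{\func,\XSman}$. So the only substantive content is (i) deciding which $\dtw{e}$ become trivial after passing to $\pi_0$, and (ii) showing that the surviving ones are independent over $\bZ$.

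For part~\ref{enum:lm:pi0_Delta:1}, I first show that an external edge $e$ gives an isotopically trivial twist. If a vertex $v_i$ of $e$ corresponds to a non-degenerate local extreme, a regular leaf outside $\partial\Mman$, a non-extremal quasi-saddle leaf, or a boundary component \emph{not} in $\XSman$, then an $\func$-regular neighbourhood of the arc from $e$ up to $v_i$ is foliated by circular leaves collapsing to $v_i$; on this annulus-or-disk one can write $\dtw{e}$ as the time-$1$ map of a vector field tangent to the leaves, and the corresponding flow provides an isotopy in $\FolStabilizer{\func,\XSman}$ from $\dtw{e}$ to $\id$. This uses the local normal forms of Section~\ref{sect:homo_poly} and the fact that $\XSman$ is untouched. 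Conversely, if $e$ is internal, I need to detect $\dtw{e}$ with an invariant. The standard device is to lift $\dif\in\StabilizerId{\func}$ through a leaf-preserving isotopy and compare how the lifts move regular leaves; equivalently, one uses the action on $H_1(\Mman,\XSman\cup\fSing;\bZ)$ or on the fundamental group of a suitable cover ramified over critical points, where the Dehn twist $\dtw{e}$ acts as a genuine transvection because both endpoints of $e$ are either in $\XSman$ or ``rigid'' (saddle or degenerate extreme), leaving no room to absorb the twist.

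Part~\ref{enum:lm:pi0_Delta:3} is handled separately because the Klein bottle fibred over $\Circle$ has the exceptional feature that its total holonomy reverses the orientation of the fibre. Concretely, the pullback of $\func$ to the orientation double cover $\torus{2}\to\KleinBottle$ is a fibration over $\Circle$ whose deck involution $\sigma$ acts on each fibre as an orientation reversing involution. Averaging an $\func$-preserving isotopy with $\sigma$ produces an isotopy $H:\KleinBottle\times I\to \KleinBottle$ with $H_1\in\FolStabilizerIsotId{\func}$ reversing every leaf, proving $\pi_0\FolStabilizerIsotId{\func,\XSman}=0$ while the same involution generates a $\bZ_2$ in $\pi_0\FolStabilizer{\func,\XSman}$. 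Here $\KRGraphf=\Circle$ has no vertices, so there are no Dehn twists in the picture.

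Part~\ref{enum:lm:pi0_Delta:4} combines the two previous arguments. For each internal edge $e$ pick $\idtw{e}{}\in\pi_0\FolStabilizer{\func,\XSman}$; commutativity is immediate from disjoint supports, so we have a homomorphism $\bZ^{k}\to \pi_0\FolStabilizer{\func,\XSman}$, where $k$ is the number of internal edges. Surjectivity is the harder direction: given $\dif\in\FolStabilizer{\func,\XSman}$, I iteratively apply the trivialisation procedure of part~\ref{enum:lm:pi0_Delta:1} along external edges and reduce $\dif$, up to isotopy and composition with Dehn twists along internal edges, to a diffeomorphism supported in arbitrarily small neighbourhoods of critical leaves and of $\XSman$; after quotienting by $\fStabId$ what remains is a product of integer powers of the $\idtw{e}{}$. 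Injectivity (freeness) is the main obstacle and is proved by the same algebraic invariant as in part~\ref{enum:lm:pi0_Delta:1}: the image of $\idtw{e}{}$ under the action on $H_1$ of an appropriate branched cover (or on $\pi_1$ of $\Mman\setminus\fSing$) is a transvection in a direction that can be read off from $e$, and distinct internal edges give linearly independent directions; this is where I expect to invoke Lemma~\ref{lm:p_H1M_H1G_surj} to control the situations where $\Mman$ is a torus or Klein bottle, ensuring that the ambient homology is large enough to detect the twists. The last sentence of the lemma then follows by passing to the isotopic-to-identity subgroup: $\pi_0\FolStabilizerIsotId{\func,\XSman}$ is the kernel of $\pi_0\FolStabilizer{\func,\XSman}\to\pi_0\Diff(\Mman,\XSman)$ restricted to this free abelian group, hence is itself free abelian, with generators given by relations among the $\idtw{e}{}$ in $\pi_0\Diff(\Mman,\XSman)$.
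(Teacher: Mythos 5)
The paper does not prove Lemma~\ref{lm:pi0_Delta} here; it refers to \cite[Theorem~6.2]{Maksymenko:AGAG:2006} and to the machinery of shift maps along orbits of flows that underlies that reference (see the discussion just before Theorem~\ref{th:stabilizer}). So I will assess your sketch against that intended route. The overall shape of your argument---unwind external twists through a non-degenerate extreme or an unconstrained boundary circle, handle the Klein bottle separately, and then prove the remaining twists generate freely---is the right one. The serious problem is your detection mechanism for the converse/freeness step. You propose to see $\idtw{e}{}$ in the action on $H_1(\Mman,\XSman\cup\fSing;\bZ)$ or on $\pi_1$ of a cover branched over $\fSing$, i.e.\ an invariant that factors through $\pi_0\Diff(\Mman,\XSman\cup\fSing)$. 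This fails at \emph{degenerate local extremes}. If an endpoint of $e$ is a degenerate extreme $z$, then $\gamma_e$ bounds a once-punctured disk in $\Mman\setminus\fSing$, so $\dtw{e}$ is already trivial in $\pi_0\Diff(\Mman,\XSman\cup\fSing)$ and leaves no trace on any such invariant; yet by definition $e$ is internal and $\idtw{e}{}$ must generate a $\bZ$ in $\pi_0\FolStabilizer{\func,\XSman}$. The obstruction to unwinding the twist into $z$ is not homological but jet-theoretic: by Lemma~\ref{lm:LStabf}(3) the group $\LStabPl(\gfunc)$ at $z$ is finite, so, as explained in Remark~\ref{rem:comparing_deg_nondeg}, the jacobian $J(\dif_t)$ of a continuous leaf-preserving family is locally constant in $t$ and cannot sweep a full rotation. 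This is exactly what the framings and the enhanced graph $\EKRGraphf$ of Section~\ref{sect:enhanced_graph} encode; your invariant does not see them. The reference detects the twists via the shift functions of the Hamiltonian-type flow of $\func$, which record winding data on each regular leaf and are sensitive to this discrete symmetry. Your appeal to Lemma~\ref{lm:p_H1M_H1G_surj} for the torus and Klein bottle does not help here either: that lemma only controls $\prj_1:H_1(\Mman)\to H_1(\KRGraphf)$, not the separation of leaf-preserving isotopy classes.

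Three smaller points. In part~\ref{enum:lm:pi0_Delta:1} you list ``a regular leaf outside $\partial\Mman$'' among the possible endpoint types of $e$, but vertices of $\KRGraphf$ correspond to critical leaves and boundary components, never to interior regular leaves. In part~\ref{enum:lm:pi0_Delta:3} the phrase ``averaging an $\func$-preserving isotopy with $\sigma$'' is not a defined operation; what is actually needed is a direct argument that the leaf-reversing diffeomorphism of $\KleinBottle$ lies in $\DiffId(\KleinBottle)$, and your sketch does not produce one. Finally, in part~\ref{enum:lm:pi0_Delta:4} the step ``after quotienting by $\fStabId$ what remains is a product of integer powers of the $\idtw{e}{}$'' is not a consequence of the reduction you describe but is essentially the whole generation claim; it is precisely where the shift-function description of the components of $\FolStabilizer{\func,\XSman}$ does its work and cannot be asserted without that (or an equivalent) normal form.
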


\section{Homotopy types of stabilizers and orbits}\label{sect:hom_types_stab_orb}
In this section we assume that $\Mman$ is a compact surface, $\func\in\FSP{\Mman}{\Pman}$, and $\XSman$ is $\func$-adapted submanifold.
We will expose the results about homotopy types of $\Stabilizer{\func,\XSman}$ and orbits of $\Orbit{\func,\XSman}$.
Everywhere the sign $\homeq$ means ``\myemph{homotopy equivalent}''.
For a set $\XSman$ we will denote by $\ptnum{\XSman}$ the number of points in $\XSman$ if it is finite, and $\infty$ otherwise.

\begin{theorem}[\cite{Sergeraert:ASENS:1972, Maksymenko:AGAG:2006}]\label{th:SDO_lt_fibr}
\begin{enumerate}[leftmargin=*, label={\rm(\arabic*)}, itemsep=0.8ex, topsep=0.8ex]
\item\label{enum:th:SDO_lt_fibr:1}
The natural map into the orbit
\[
    \pj:\Diff(\Mman,\XSman)\to\Orbit{\func,\XSman}, \qquad
    \pj(\dif) = \func\circ\dif
\]
is a locally trivial principal $\Stabilizer{\func,\XSman}$-fibration.

\item\label{enum:th:SDO_lt_fibr:2}
The restriction $\pj:\DiffId(\Mman,\XSman)\to\OrbitPathComp{\func,\XSman}{\func}$ is a locally trivial principal $\StabilizerIsotId{\func,\XSman}$-fibration.

\item\label{enum:th:SDO_lt_fibr:2.1}
$\OrbitPathComp{\func,\XSman}{\func} = \OrbitPathComp{\func,\XSman \cup \Xman}{\func}$ for any collection $\Xman$ of boundary components of $\Mman$.

\item\label{enum:th:SDO_lt_fibr:3}
$\OrbitPathComp{\func}{\func}$ is a Fr\`echet manifold, whence {\em(by~\cite{Palais:Top:1966})} it has a homotopy type of a CW-complex.
\end{enumerate}
\end{theorem}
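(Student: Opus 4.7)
The core of the theorem is part~\ref{enum:th:SDO_lt_fibr:1}, from which the remaining parts are formal consequences. The plan for~\ref{enum:th:SDO_lt_fibr:1} is to construct a continuous local section $\sigma$ of $\pj$ near the base-point $\func\in\Orbit{\func,\XSman}$ satisfying $\sigma(\func)=\id_{\Mman}$, and then to transport it by the right-translation action of $\DiffMX$ on itself in order to obtain local sections near every point of the orbit. Since $\pj(\dif\circ s)=\pj(\dif)$ whenever $s\in\Stabilizer{\func,\XSman}$, the trivializations $(\gfunc,s)\mapsto\sigma(\gfunc)\circ s$ will automatically exhibit $\pj$ as a principal $\Stabilizer{\func,\XSman}$-bundle.

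To build $\sigma$ I would first fix an $\func$-regular neighborhood $\Nman$ of the critical set $\fSing$ and choose, at each critical point, the canonical local model supplied by axiom~\AxCrPt. For $\gfunc\in\Orbit{\func,\XSman}$ sufficiently $\Cinfty$-close to $\func$, the classification recalled in~\S\ref{sect:isolated_cr_pt} forces the critical set of $\gfunc$ to lie in $\Nman$ and to have the same topological type as $\fSing$; using the local models I would produce, continuously in $\gfunc$ and equal to $\id$ at $\gfunc=\func$, a diffeomorphism $\dif^{\Nman}_{\gfunc}$ supported in $\Nman$ with $\gfunc|_{\Nman}=\func\circ\dif^{\Nman}_{\gfunc}|_{\Nman}$. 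Outside $\Nman$ the map $\func$ is regular; axiom~\AxBd ensures that $\func$ is constant on each boundary component of $\XSman$, and hence the linearization $\xi\mapsto d\func\cdot\xi$ on the Fr\'echet space of vector fields vanishing on $\XSman\cup\partial\Nman$ is surjective onto the relevant tangent space. An implicit-function theorem in Fr\'echet spaces in the spirit of Sergeraert~\cite{Sergeraert:ASENS:1972} then yields a continuous family of diffeomorphisms $\dif^{\mathrm{ext}}_{\gfunc}$ on $\overline{\Mman\setminus\Nman}$, fixed on $\XSman\cup\partial\Nman$ and satisfying the analogous equation. Gluing the two families via a smooth cut-off along $\partial\Nman$ produces $\sigma(\gfunc)$. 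The main obstacle of the whole argument is the critical-point step: the linearization of $\pj$ degenerates on $\fSing$, so the implicit function theorem is unavailable there, and it must be replaced by the rigid algebraic structure coming from the homogeneous polynomial normal form in axiom~\AxCrPt.

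With~\ref{enum:th:SDO_lt_fibr:1} in hand, the remaining claims are short. For~\ref{enum:th:SDO_lt_fibr:2}, the image $\pj(\DiffIdMX)$ is path-connected and contains $\func$, so it lies in $\OrbitPathComp{\func,\XSman}{\func}$; conversely, the local trivializations from~\ref{enum:th:SDO_lt_fibr:1} show that this image is open in $\Orbit{\func,\XSman}$, hence it exhausts the entire path component, and local triviality is inherited by the restriction. For~\ref{enum:th:SDO_lt_fibr:2.1} I would exploit axiom~\AxBd to equip a collar of $\Xman\subset\partial\Mman$ with coordinates in which $\func$ has the form $\func(y,t)=c\pm t$; then any $\dif\in\DiffIdMX$ may be pre-composed with an $\func$-preserving isotopy supported in the collar so as to become the identity on $\Xman$ without altering $\func\circ\dif$, which yields equality of the two path components. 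Finally~\ref{enum:th:SDO_lt_fibr:3} is formal: $\DiffIdM$ is a Fr\'echet Lie group, the local product structure from~\ref{enum:th:SDO_lt_fibr:2} endows $\OrbitPathComp{\func}{\func}$ with the structure of a Fr\'echet manifold, and Palais's theorem~\cite{Palais:Top:1966} then furnishes the CW-complex homotopy type.
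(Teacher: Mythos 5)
Your plan for parts~(2)--(4) is essentially the right bookkeeping: (2) by path-lifting starting at $\id_\Mman$, (2.1) by absorbing the boundary rotation of $\dif$ into an $\func$-preserving isotopy in a collar where $\func(y,t)=c\pm t$, and (3) by observing that the local product structure gives a Fr\'echet manifold and then quoting Palais. These agree in spirit with what the paper does (it just cites \cite[Cor.~2.1]{Maksymenko:UMZ:ENG:2012} for (2) and (2.1)). For (2.1) you should be a little careful that the modified diffeomorphism actually lands in $\DiffId(\Mman,\XSman\cup\Xman)$ and not merely in $\Diff(\Mman,\XSman\cup\Xman)$ --- e.g.\ a Dehn twist along a component of $\Xman$ is in $\DiffIdMX$, fixed on $\Xman$, but not isotopic to the identity rel $\XSman\cup\Xman$ --- so the collar rotation must undo not just $\dif|_{\Xman}$ but the winding of the whole rel-$\XSman$ isotopy; this is precisely the content of the cited corollary.

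The real problem is your treatment of part~(1), and there is a genuine gap there. First, the construction of the local family $\dif^{\Nman}_{\gfunc}$ is asserted rather than explained. The classification in \S\ref{sect:isolated_cr_pt} produces \emph{germs of homeomorphisms}, not diffeomorphisms, and a fortiori not a family of diffeomorphisms depending $C^\infty$-continuously on $\gfunc$. Getting such a family is not a corollary of the normal form; it is the entire local theorem, and it is governed by the fact that Axiom~\AxCrPt\ (homogeneous polynomial without multiple factors) forces the germ $\func_z$ at each critical point $z$ to have \emph{finite codimension} --- finite Milnor number, equivalently finite determinacy in Mather--Tougeron sense. You gesture at ``rigid algebraic structure'' but never name or use this condition, and it is exactly the hypothesis under which Sergeraert's theorem applies. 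Second, and more fatally, the gluing step cannot work as written. The equation $\func\circ\sigma(\gfunc)=\gfunc$ is nonlinear in $\sigma$, so interpolating two local solutions $\dif^{\Nman}_{\gfunc}$ and $\dif^{\mathrm{ext}}_{\gfunc}$ by a cut-off $\chi$ does not produce a solution: $\func\circ(\chi\dif^{\Nman}+(1-\chi)\dif^{\mathrm{ext}})$ is not $\chi(\func\circ\dif^{\Nman})+(1-\chi)(\func\circ\dif^{\mathrm{ext}})$. If instead you insist both pieces equal $\id$ in a neighborhood of $\partial\Nman$, then $\gfunc=\func\circ\id=\func$ near $\partial\Nman$, which is not true for a generic $\gfunc$ in the orbit. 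The actual mechanism (Sergeraert \cite{Sergeraert:ASENS:1972}, generalized in \cite[Appendix~11]{Maksymenko:AGAG:2006} to tame actions of tame Fr\'echet Lie groups) is a \emph{single global} Nash--Moser implicit-function argument: the linearization $\xi\mapsto d\func\cdot\xi$ on vector fields vanishing on $\XSman$ is not surjective, but by the finite-codimension condition its cokernel is finite-dimensional (the sum of the local algebras $C^\infty_z/\mathcal{I}(\func_z)$ over critical points $z$), and it admits a tame right inverse modulo this finite-dimensional piece, which is enough for the implicit function theorem in the tame category and hence for local triviality of $\pj$. Splitting $\Mman$ into a critical part and a regular part and patching is not how the degeneracy is handled.
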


\begin{smallproof}{Notices to the proof}
Statements~\ref{enum:th:SDO_lt_fibr:2} and~\ref{enum:th:SDO_lt_fibr:2.1} are simple consequences of~\ref{enum:th:SDO_lt_fibr:2.1} (see~\cite[Corollary~2.1]{Maksymenko:UMZ:ENG:2012}).
Statements~\ref{enum:th:SDO_lt_fibr:1} and~\ref{enum:th:SDO_lt_fibr:3} were proved by F.~Sergeraert~\cite{Sergeraert:ASENS:1972} for functions $\func:\Mman\to\bR$ of the so-called \myemph{finite codimension} on a closed manifolds $\Mman$ and $\XSman=\varnothing$.
Further it was extended to a more general context of tame actions of \myemph{tame Lie groups on tame Frechet spaces} in~\cite[Appendix~11]{Maksymenko:AGAG:2006}.
The latter includes Theorem~\ref{th:SDO_lt_fibr}, see~\cite[Theorem~1.1]{Maksymenko:TA:2020} for details.
\end{smallproof}

\begin{subdefinition}[Weak homotopy equivalences of sequences of maps]
Suppose we are given two sequences of continuous maps of topological spaces:
\begin{align*}
	&\uSeq: \kA_1 \xrightarrow{\alpha_1} \cdots \xrightarrow{\alpha_{k-1}} \kA_{k}, &
	&\vSeq: \kB_1 \xrightarrow{\beta_1}  \cdots \xrightarrow{\beta_{k-1}} \kB_{k}.
\end{align*}
Then by a \myemph{morphism} $\gamma=(\gamma_1,\ldots,\gamma_{k}): \uSeq \to \vSeq$ of those sequences we will mean the following commutative diagram

\[
\aligned
\xymatrix{
\kA_1 \ar[r]^-{\alpha_1}  \ar[d]^-{\gamma_1} &
\kA_2 \ar[r]^-{\alpha_2}  \ar[d]^-{\gamma_2} &
\cdots \ar[r]^-{\alpha_{k-1}} & \kA_{k} \ar[d]^-{\gamma_k} \\
\kB_1 \ar[r]^-{\beta_1}   &
\kB_2 \ar[r]^-{\beta_2}   &
\cdots \ar[r]^-{\beta_{k-1}} & \kB_{k}
}
\endaligned
\]
Such a morphism will be called a \myemph{weak homotopy equivalence}, whenever each $\gamma_i$ is a weak homotopy equivalence.
In this case we will write $\gamma: \uSeq \whomeq \vSeq$.
\end{subdefinition}

The following sequence of maps will be called an $\sdo$-sequence of $(\func,\XSman)$:
\begin{equation}\label{equ:fibr_StabIsotId}
    \sdoseq{\func,\XSman}: \quad
    \FolStabilizerIsotId{\func,\XSman} \monoArrow{} \StabilizerIsotId{\func,\XSman} \monoArrow{} \DiffId(\Mman,\XSman) \xepiArrow{\pj} \OrbitPathComp{\func,\XSman}{\func},
\end{equation}
where the first two arrows are natural inclusions of normal subgroups, and the last map is a projection to cosets (we denoted it by $\epiArrow$ meaning that it is surjective).

We will study such sequences up to a weak homotopy equivalence and one of the main tools are long exact sequence of fibration $\pj$:
\begin{multline*}
    \cdots
    \to \pi_{k+1}\StabilizerId{\func,\XSman} \to \pi_{k+1}\DiffId(\Mman,\XSman) \xrightarrow{\pj_{k+1}} \pi_{k+1} \OrbitPathComp{\func,\XSman}{\func} \xrightarrow{\partial_{k+1}} \\
    \to \pi_{k}\StabilizerId{\func,\XSman}   \to \pi_{k}\DiffId(\Mman,\XSman)   \xrightarrow{\pj_{k}} \pi_{k} \OrbitPathComp{\func,\XSman}{\func}   \xrightarrow{\partial_{k}} \cdots \\    \cdots
                                            \to \pi_1\DiffId(\Mman,\XSman)     \xrightarrow{\pj_{1}} \pi_1\OrbitPathComp{\func,\XSman}{\func}      \xepiArrow{\partial_1} \pi_{0}\StabilizerIsotId{\func,\XSman},
\end{multline*}
where the corresponding base points are $\id_{\Mman}$ and $\func$.

The homotopy types of groups $\DiffId(\Mman,\XSman)$ are computed in~\cite[Theorem~A]{Smale:ProcAMS:1959}, \cite[Theorems~1,2]{EarleEells:BAMS:1967}, \cite[Theorems 1B,1C,1D]{EarleSchatz:DG:1970}, \cite[Th\'eor\`eme~1]{Gramain:ASENS:1973}, and are presented in the following Table~\ref{tbl:hom_type_DidMX}, in which $|\XSman|$ is the number of points in $\XSman$, $Mo$ is a M\"obius band, and $K$ is a Klein bottle.
Moreover, Theorem~\ref{th:stabilizer} below describes the homotopy type of $\Stabilizer{\func,\XSman}$.
This will give a complete information about higher homotopy groups of spaces in~\eqref{equ:fibr_StabIsotId} and a lot of information about fundamental groups.
\begin{table}[htbp!]
\centering
\caption{}\label{tbl:hom_type_DidMX}
\small
\begin{tabular}{|c|c|c|c|c|c|}\hline
$\Mman$  & $\ptnum{\XSman}$ &  \multicolumn{4}{|c|}{$\DiffIdM$} \\ \cline{3-6}
         &                 & homotopy    & \multicolumn{3}{|c|}{homotopy groups} \\ \cline{4-6}
         &                 & type        & $\pi_1$ & $\pi_2$ & $\pi_k$, $k\geq3$  \\ \hline \hline
%         &                 &    &         &         &   \\ \hline \hline
$S^2$, $\PrjPlane$ & $0$ & $SO(3) \equiv \bR{P}^3$ & $\bZ_2$ & $0$ & $\pi_k S^3 = \pi_k S^2 = \pi_k \PrjPlane$ \\ \hline
$T^2$ & $0$ & $T^2$ & $\bZ^2$ & $0$ & $0$ \\ \hline
$\Disk$, $\Circle\times\UInt$, $Mo$, $\KleinBottle$ & $0$ & & & & \\  \cline{1-2}
$\Disk$, $S^2$, $\PrjPlane$           & $1$ & $\Circle$ & $\bZ$ & $0$ & $0$ \\ \cline{1-2}
$S^2$                              & $2$ & & & &\\ \hline
\multicolumn{2}{|c|}{other cases} & point  & $0$ & $0$ & $0$ \\ \hline
\end{tabular}
\end{table}

\begin{subremark}[Explicit construction of $\partial_1$]\label{rem:construction_partial_1}
Let $\gamma:[0,1]\to \OrbitPathComp{\func,\XSman}{\func}$ be a loop with $\gamma(0)=\gamma(1)=\func$, and $[\gamma]$ its homotopy class in $\pi_1\OrbitPathComp{\func,\XSman}{\func}$.
Since $\pj$ satisfies path lifting axiom, there exists a path $\widehat{\gamma}:[0,1]\to\DiffId(\Mman,\XSman)$ such that $\widehat{\gamma}(0)=\id_{\Mman}$, and $\widehat{\gamma}(1) \in \StabilizerIsotId{\func,\XSman}$.
Then \[ \pj_1([\gamma]) = [\widehat{\gamma}(1)] \in \pi_0 \StabilizerIsotId{\func,\XSman} \]
is the isotopy class of $\widehat{\gamma}(1)$ in $\StabilizerIsotId{\func,\XSman}$.
\end{subremark}

\begin{sublemma}\label{lm:long_ex_seq_pj}
\begin{enumerate}[leftmargin=*, label={\rm(\arabic*)}]
\item
$\DiffId(\Mman,\XSman)$ is contractible if and only if $\chi(\Mman) < |\XSman|$, which in particular holds if $\XSman$ is infinite or when $\chi(\Mman)<0$.

\item
If $\XSman=\varnothing$, then $\pi_2\DiffIdM =0$ and for $k\geq 3$ we have that
\[
    \pi_k\DiffIdM = \pi_k \Mman =
    \begin{cases}
        \pi_k S^3 & \text{if $\Mman = S^2$ or $\PrjPlane$}, \\
        0         & \text{otherwise}.
    \end{cases}
\]

\item
The image of $\pj_{1}\bigl( \pi_1\DiffId(\Mman,\XSman) \bigr)$ is contained in the center of $\pi_1\OrbitPathComp{\func,\XSman}{\func}$, (\cite[Lemma~2.2]{Maksymenko:UMZ:ENG:2012}).

\item
The set $\pi_{0}\StabilizerIsotId{\func,\XSman}$ is a group, and $\partial_1:\pi_1\OrbitPathComp{\func,\XSman}{\func} \to \pi_{0}\StabilizerIsotId{\func,\XSman}$ is a homomorphism.
\end{enumerate}

\end{sublemma}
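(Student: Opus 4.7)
\smallskip

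\noindent\textbf{Proof plan.}
Parts (1) and (2) are essentially a case analysis against Table~\ref{tbl:hom_type_DidMX}. For~(1), for each of the rows of the table I would compare the homotopy type listed with the condition $\chi(\Mman) < |\XSman|$: e.g.\ $\Mman = S^2$ has $\chi = 2$ and the table gives $SO(3)$, $\Circle$, $\Circle$, \emph{point} for $|\XSman| = 0, 1, 2, \geq 3$, which matches the threshold $|\XSman| > \chi$. Going through $\PrjPlane, T^2, \Disk, \Circle\times\UInt, Mo, \KleinBottle$, and the ``other cases'' of $\chi(\Mman)<0$ produces the stated equivalence. The two sufficient conditions then follow trivially: $|\XSman| = \infty > \chi$, and $\chi(\Mman) < 0 \leq |\XSman|$. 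For~(2), with $\XSman = \varnothing$ the table gives only four possible homotopy types: $SO(3)$ (for $S^2, \PrjPlane$), $T^2$, $\Circle$, or a point. Using $\pi_k(SO(3)) = \pi_k(\bR P^3) = \pi_k(S^3)$ for $k \geq 2$ (universal cover), the Hopf fibration identity $\pi_k(S^2) = \pi_k(S^3)$ for $k \geq 3$, the double cover $S^2 \to \PrjPlane$, and the fact that $T^2$, $\Circle$, and all surfaces with $\chi < 0$ have contractible universal covers (so $\pi_k = 0$ for $k \geq 2$), one checks $\pi_2\DiffIdM = 0$ and $\pi_k\DiffIdM = \pi_k\Mman$ for $k \geq 3$ in each case.

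For Part~(3), the key observation is that the right action $\DiffMX \times \Ci{\Mman}{\Pman} \to \Ci{\Mman}{\Pman}$, $(\dif, \gfunc) \mapsto \gfunc \circ \dif$, is continuous and restricts to a continuous right action of $\DiffId(\Mman,\XSman)$ on $\OrbitPathComp{\func,\XSman}{\func}$. Given a loop $\gamma : S^1 \to \DiffId(\Mman,\XSman)$ with $\gamma(0)=\gamma(1)=\id_{\Mman}$ and an arbitrary loop $\delta : S^1 \to \OrbitPathComp{\func,\XSman}{\func}$ based at $\func$, I would define
\[
    F : [0,1]^2 \to \OrbitPathComp{\func,\XSman}{\func}, \qquad F(s,t) = \delta(t) \circ \gamma(s).
\]
The equalities $F(s,0) = \func\circ\gamma(s) = F(s,1)$ and $F(0,t) = \delta(t) = F(1,t)$ show that $F$ descends to a continuous map $T^2 \to \OrbitPathComp{\func,\XSman}{\func}$ whose two coordinate loops are $\pj\circ\gamma$ and $\delta$. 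Since $\pi_1(T^2) = \bZ^2$ is abelian, its image in $\pi_1\OrbitPathComp{\func,\XSman}{\func}$ is abelian, so $[\pj \circ \gamma]$ and $[\delta]$ commute. As $\delta$ was arbitrary, $[\pj\circ\gamma]$ lies in the centre.

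For Part~(4), note that $\StabilizerIsotId{\func,\XSman} = \Stabilizer{\func} \cap \DiffId(\Mman,\XSman)$ is an intersection of two subgroups of the topological group $\Diff(\Mman)$, hence is itself a topological group; therefore $\pi_0\StabilizerIsotId{\func,\XSman}$ inherits a group structure from the quotient by its identity component. That $\partial_1$ is a homomorphism is a standard consequence of the principal-bundle structure of $\pj$ established in Theorem~\ref{th:SDO_lt_fibr}\ref{enum:th:SDO_lt_fibr:2}. Following Remark~\ref{rem:construction_partial_1}, for loops $\gamma_1, \gamma_2$ in $\OrbitPathComp{\func,\XSman}{\func}$ lift $\gamma_1$ to $\widehat{\gamma_1}: [0,1]\to\DiffId(\Mman,\XSman)$ starting at $\id_{\Mman}$ and ending at some $\dif_1 \in \StabilizerIsotId{\func,\XSman}$; then the path $t \mapsto \dif_1 \cdot \widehat{\gamma_2}(t)$ is a lift of $\gamma_2$ starting at $\dif_1$ (by left-translation invariance of the $\Stab$-torsor structure on fibres), so the concatenation $\widehat{\gamma_1} * (\dif_1 \widehat{\gamma_2})$ is a lift of $\gamma_1 * \gamma_2$ ending at $\dif_1 \dif_2$. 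Hence $\partial_1[\gamma_1 * \gamma_2] = [\dif_1 \dif_2] = \partial_1[\gamma_1]\cdot\partial_1[\gamma_2]$ in $\pi_0\StabilizerIsotId{\func,\XSman}$.

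None of the four parts presents a genuine obstacle: (1) and (2) are bookkeeping relative to Table~\ref{tbl:hom_type_DidMX}, while (3) and (4) are standard consequences of, respectively, continuity of the group action and the principal-bundle structure of $\pj$. The only small care-point is to ensure, in (3), that $F(s,t) = \delta(t)\circ\gamma(s)$ indeed lies in the \emph{path component} $\OrbitPathComp{\func,\XSman}{\func}$; this is automatic because at $s=0$ we have $F(0,t) = \delta(t)$ already in that component, and $F$ is continuous on a connected parameter space.
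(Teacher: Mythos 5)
Your proof is correct and follows the expected route: the paper itself gives no explicit argument for this lemma (only the table reference for (1)--(2) and a citation to another paper for (3)), and your case-by-case reading of Table~\ref{tbl:hom_type_DidMX} together with the covering-space and Hopf-fibration bookkeeping exactly reconstructs (1) and (2). For (3) you give the standard torus-map argument (the right action is continuous, so $F(s,t)=\delta(t)\circ\gamma(s)$ descends to $T^2$ and forces $[\pj\circ\gamma]$ to commute with any $[\delta]$), and for (4) the fact that $\StabilizerIsotId{\func,\XSman}$ is a topological group makes $\pi_0$ a group, while the computation with $\widehat{\gamma_1}\ast(\dif_1\widehat{\gamma_2})$ (using $\func\circ\dif_1=\func$ to check this is still a lift of $\gamma_2$) shows $\partial_1$ is a homomorphism; both are the standard arguments, and the only phrasing one might sharpen is that the fibres of $\pj$ are \emph{left} cosets $\Stabilizer{\func,\XSman}\cdot\dif$, so ``left-translation by $\dif_1$'' is what preserves the fibre, which is exactly what your calculation uses.
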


% It will be convenient to use the following notations: ``$\equiv$'' means \myemph{the identity map}, $\pnt$ is a point, the group $SO(2)$ will also be identified with the unit circle $\Circle = \{ |z|=1 \}$ in the complex plane $\bC$.
% Notice that there is a short exact sequence of groups $\bZ_m \xmonoArrow{~s \mapsto e^{2\pi i s/m}~} \Circle \xepiArrow{~z\mapsto z^m~} \Circle$.
% Also there is a natural action transitive action of $SO(3)$ on the unit $2$-spere $S^2$ in $\bR^3$, so that the stabilizer of the vector $(0,0,1)$ coincides with the subgroup $\Circle \equiv SO(2)\subset SO(3)$ of rotations in the $xy$-plane, so we have another sequence of maps $\Circle \monoArrow{} SO(3) \epiArrow{} S^2$.

\subsection{Structure of $\StabilizerId{\func,\XSman}$}
The following theorem is proved in~\cite{Maksymenko:AGAG:2006} for Morse maps and in~\cite{Maksymenko:ProcIM:ENG:2010} for all $\func\in\FSP{\Mman}{\Pman}$.
In fact, it is a consequence of a series of papers~\cite{Maksymenko:TA:2003, Maksymenko:hamv2, Maksymenko:CEJM:2009, Maksymenko:IUMJ:2010, Maksymenko:OsakaJM:2011} about diffeomorphisms preserving orbits of flows.
The technique developed in those paper is also extensively used for the proof of almost all presented here results, e.g. Lemmas~\ref{lm:pi0_Delta},~\ref{lm:reduct:reg_nbh}, \ref{th:stab:chi_neg}, Theorem~\ref{th:sdo:except_cases}.

\begin{subtheorem}\label{th:stabilizer}{\rm(\cite{Maksymenko:AGAG:2006, Maksymenko:ProcIM:ENG:2010}).}
Suppose $\Mman$ is connected.
Then $\StabilizerId{\func,\XSman}$ is \myemph{contractible} if and only if either of the following conditions is satisfied:
\begin{enumerate}[leftmargin=*, label={\rm(\roman*)}]
    \item $\func$ is has at least one saddle critical point or degenerate local extreme;
    \item $\Mman$ is non-orientable;
    \item $\XSman$ contains more than $\chi(\Mman)$ points (which holds e.g.~when $\dim\XSman\geq1$ or $\chi(\Mman)<0$).
\end{enumerate}
In all other cases $\StabilizerId{\func,\XSman}$ is \myemph{homotopy equivalent to the circle $S^1$}.
In fact the latter holds if and only if $\func$ is one of maps of types~\ref{enum:specfunc:sphere}-\ref{enum:specfunc:torus} of Theorem~\ref{th:sdo:except_cases} below.
\end{subtheorem}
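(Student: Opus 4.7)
The strategy, following the series \cite{Maksymenko:TA:2003, Maksymenko:hamv2, Maksymenko:AGAG:2006, Maksymenko:ProcIM:ENG:2010}, is to realize $\StabilizerId{\func,\XSman}$ via the \emph{shift map} along a vector field tangent to the level sets of $\func$ and to read off the homotopy type from the space of admissible shift functions. The first step is to build a smooth field $F$ on $\Mman$ with $d\func(F)\equiv 0$, vanishing exactly on $\fSing$, supported away from $\XSman$, and such that every regular orbit of $F$ is closed of period~$1$. Away from $\fSing$ this is a rescaled Hamiltonian vector field of $\func$ with respect to a global area form, which requires $\Mman$ orientable; near each critical point $z$, Axiom~\ref{axiom:sing} and Lemma~\ref{lm:LStabf} give local coordinates in which $\func$ is a homogeneous polynomial without multiple factors, and $F$ is taken proportional to its Hamiltonian vector field. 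A partition of unity glues these pieces into a complete flow $F_t$ preserving $\func$.

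Let $\mathsf{Sh}$ denote the space of smooth functions $\alpha\colon\Mman\to\bR$ constant on each orbit of $F$ and such that $\varphi_\alpha|_\XSman = \id$, where $\varphi_\alpha(x):=F_{\alpha(x)}(x)$. Each connected component of $\mathsf{Sh}$ is an affine (hence contractible) subspace of $C^\infty(\Mman,\bR)$, and the shift map $S\colon\mathsf{Sh}\to\StabilizerId{\func,\XSman}$, $\alpha\mapsto\varphi_\alpha$, is continuous. The main technical assertion is that $S$ is surjective onto $\StabilizerId{\func,\XSman}$ and a weak homotopy equivalence onto its image, so the homotopy type of the stabilizer equals that of $\mathsf{Sh}/K$, where $K = S^{-1}(\id_\Mman)$ consists of $\alpha$ taking an integer constant value on every regular leaf.

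The case analysis of $K$ proceeds via Lemma~\ref{lm:LStabf} and Remark~\ref{rem:comparing_deg_nondeg}. If $\func$ has a saddle or a degenerate local extreme $z$, then for $\alpha\in K$ the required smoothness at $z$ together with integer-valuedness on nearby regular orbits forces $\alpha\equiv 0$ on a neighborhood of $z$; by connectedness of $\KRGraphf$ this propagates to $\alpha\equiv 0$ globally, so $K=0$ and $\mathsf{Sh}/K$ is contractible. Non-orientability likewise obstructs a globally coherent $F$ (the orientation double cover argument gives the same rigidity), and the condition $|\XSman|>\chi(\Mman)$ pins sufficiently many points to trivialize $K$. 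In the remaining case --- $\Mman$ orientable, $\func$ with only non-degenerate local extremes, and $|\XSman|\leq\chi(\Mman)$ --- the map $\func$ must be one of types~\ref{enum:specfunc:sphere}--\ref{enum:specfunc:torus} of Theorem~\ref{th:sdo:except_cases}, $\Mman$ admits a genuine $C^\infty$ circle action rotating the levels of $\func$, $K\cong\bZ$ is generated by the constant $\alpha\equiv 1$, and $\mathsf{Sh}/K\homeq S^1$.

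The principal obstacle is establishing that $S$ is surjective onto $\StabilizerId{\func,\XSman}$ and a weak homotopy equivalence, particularly the smooth extraction of $\alpha$ from $\dif\in\StabilizerId{\func,\XSman}$ across critical points. On a regular orbit $\alpha$ is determined by the phase shift of $\dif$, but at a critical point $z$ the field $F$ vanishes and $\alpha(z)$ is not directly read off from $\dif$. One must first show that the Jacobian $J(\dif)$ at $z$ equals $\id_{\bR^2}$ --- via Remark~\ref{rem:comparing_deg_nondeg} and the discreteness of $\LStab(\gfunc)$ at degenerate critical points --- and then use a linearization argument to produce a smooth $\alpha$ through $z$. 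Assembling these local smooth shifts into a global smooth $\alpha$ compatible across all of $\fSing$, together with upgrading any isotopy in $\DiffId(\Mman,\XSman)$ to one through $\Stabilizer{\func,\XSman}$, is the heart of the argument; Lemma~\ref{lm:reduct:reg_nbh} allows one to handle each critical leaf on its $\func$-regular neighborhood and patch the results together.
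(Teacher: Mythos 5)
Your overall strategy --- realize $\StabilizerId{\func,\XSman}$ via the shift map along a vector field tangent to level sets of $\func$ and read off the homotopy type from the space of shift functions --- is indeed the approach of the cited papers. However, there is a genuine internal inconsistency at the heart of your case analysis. You stipulate that $F$ be constructed so that ``every regular orbit of $F$ is closed of period~$1$.'' This normalization is impossible as soon as $\func$ has a saddle or a degenerate local extreme: the Hamiltonian periods of regular leaves necessarily grow without bound as the leaves approach the critical level, and any rescaling of $F$ that equalizes all periods to $1$ forces $F$ to vanish along entire separatrix curves (resp.\ on a full neighborhood of the degenerate extreme in the degenerate case), contradicting the requirement that $F$ vanish exactly on $\fSing$. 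Worse, if the normalization \emph{were} possible, your argument that ``integer-valuedness on nearby regular orbits forces $\alpha\equiv 0$'' would fail: the constant function $\alpha\equiv 1$ would be smooth, integer-valued on every orbit, and would give $\varphi_\alpha=\id$, so $K\cong\bZ$ in \emph{every} case, including those with saddles. The correct mechanism is exactly the opposite of what the period-$1$ hypothesis permits: near a saddle or degenerate extreme the orbit period $T(L)\to\infty$, so an $\alpha\in K$ --- which must be an integer multiple of $T(L)$ on each leaf $L$ and bounded by continuity --- is forced to vanish on nearby leaves, propagating to $\alpha\equiv 0$ globally. The boundedness-vs-unboundedness of periods \emph{is} the dichotomy the theorem is about; assuming them uniformly normalized erases precisely the distinction you then try to recover.

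A secondary gap is the treatment of the non-orientable case (ii). It is not true that non-orientability ``obstructs a globally coherent $F$'': for a map $\func\colon\KleinBottle\to\Circle$ without critical points one can construct a nowhere-zero smooth field tangent to the fibers, and it generates a periodic flow. The contractibility of $\StabilizerId{\func}$ comes from a different mechanism --- the fiber-orientation-reversing monodromy makes the loop of shifts nullhomotopic inside the stabilizer (compare Lemma~\ref{lm:pi0_Delta}\ref{enum:lm:pi0_Delta:3}) --- not from an inability to build the field. Both of these are real gaps in the argument as written, though the overall architecture (shift functions, kernel $K$, reduction to convexity/contractibility of $\mathsf{Sh}$) is correct.
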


The following theorem characterizes Morse maps having only local extremes and describes their $\sdo$-sequences.

\begin{subtheorem}\label{th:sdo:except_cases}{\rm(\cite[Theorem~1.9]{Maksymenko:AGAG:2006})}
Let $\func\in\FSP{\Mman}{\Pman}$.
Then every critical point of $\func$ is a \myemph{non-degenerate local extreme} if and only if $\func$ is one of the maps of the following types~\ref{enum:specfunc:sphere}-\ref{enum:specfunc:klein}.
\begin{enumerate}[leftmargin=*, label={\rm(\Alph*)}, itemsep=0.8ex]
\item\label{enum:specfunc:sphere}
$\func:S^2\to\Pman$ is a Morse map having precisely two critical points $\{a,b\}$, and those points are non-degenerate local extremes.
In this case
\begin{gather*}
    \sdoseq{\func} \ \whomeq \ [ \Circle \equiv \Circle \monoArrow{} SO(3) \epiArrow{} S^2 ], \\
    \sdoseq{\func, \{a\}} \ \whomeq \ \sdoseq{\func, \{a, b\}} \ \whomeq \ [\Circle \equiv \Circle \equiv \Circle \epiArrow{} \pnt].
\end{gather*}

\item\label{enum:specfunc:disk}
$\func\in\FSP{\Disk}{\Pman}$ is a map having precisely one critical point $a$, and that point is a non-degenerate local extreme.
In this case
\begin{gather*}
    \sdoseq{\func} \ \whomeq \ \sdoseq{\func, \{a\}} \ \whomeq \ [\Circle \equiv \Circle \equiv \Circle \epiArrow{} \pnt].
\end{gather*}

\item\label{enum:specfunc:cylinder}
$\func\in\FSP{\Circle\times[0,1]}{\Pman}$ is a map having no critical points and
\[ \sdoseq{\func} \ \whomeq \ [\Circle \equiv \Circle \equiv \Circle \epiArrow{} \pnt].\]

\item\label{enum:specfunc:torus}
$\func\in\FSP{\Torus}{\Circle}$ is a map without critical points.
Let $m\bZ \subset \bZ \cong \pi_1 \Circle$ be the image of the induced homomorphism $\func_{1}:\pi_1 \Torus \to \pi_1 \Circle$ for some $m$.
Then $m\geq1$, $\func:\Torus\to\Circle$ is a locally trivial fibration whose fiber is a disjoint union of $m$ circles, and
\[
\sdoseq{\func}  \ \whomeq \ [\Circle\times 0 \monoArrow \Circle\times\bZ_{m} \monoArrow\Circle\times\Circle \xepiArrow{(w,z) \mapsto z^m} \Circle]
\]

\item\label{enum:specfunc:klein}
$\func\in\FSP{\KleinBottle}{\Circle}$ is be a map from Klein bottle to the circle having no critical points.
Let $m\bZ \subset \bZ \cong \pi_1 \Circle$ be the image of the induced homomorphism $\func_{1}:\pi_1 \KleinBottle \to \pi_1 \Circle \equiv \bZ$ for some $m$.
Then $m\geq1$, $\func:\KleinBottle\to\Circle$ is a locally trivial fibration whose fiber is a disjoint union of $m$ circles, and
\[
\sdoseq{\func}  \ \whomeq \ [0 \monoArrow \bZ_{m} \monoArrow \Circle \xepiArrow{z \mapsto z^m} \Circle]
\]
\end{enumerate}
All the above weak homotopy equivalences are homotopy equivalences.
\end{subtheorem}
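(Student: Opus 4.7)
The plan splits the ``only if'' half from the computation of the $\sdo$-sequences; the ``if'' direction is immediate from the concrete models. For the classification, I would analyze the Kronrod--Reeb graph $\KRGraphf$. Under the hypothesis that $\func$ has no saddles and no degenerate extremes, vertices of $\KRGraphf$ arise only from boundary components of $\Mman$ and non-degenerate local extremes of $\func$; by the local normal form $x^2+y^2$ (or a boundary collar), each such vertex has degree~$1$. Hence the connected $1$-complex $\KRGraphf$ is either an unsubdivided circle or a single edge with two endpoints.

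In the segment case, the interior of the edge corresponds to a regular cylinder $\Circle\times(0,1)$ of leaves, and each endpoint is either a capping disk (at a local extreme) or a boundary circle. Partitioning the two endpoints produces exactly the three surfaces in~\ref{enum:specfunc:sphere}--\ref{enum:specfunc:cylinder}: two extremes glue to $\Sphere$, one extreme with one boundary to $\Disk$, and two boundaries to $\Circle\times[0,1]$. The non-orientable surfaces $\PrjPlane$ and $\MobiusBand$ must be ruled out separately: on $\PrjPlane$ a Morse function with no index-$1$ critical points would have $m_0+m_2=\chi(\PrjPlane)=1$, hence a unique critical point, but $\PrjPlane$ minus a point is a M\"obius band, not a disk; on $\MobiusBand$ with $\partial\MobiusBand$ at a constant value and no interior critical points the max principle fails, while with one interior extreme the resulting surface would be a disk. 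When $\KRGraphf$ is a circle, $\func$ is a submersion of a closed surface onto $\Pman$, forcing $\Pman=\Circle$; Lemma~\ref{lm:p_H1M_H1G_surj} together with the classification of closed surfaces leaves only $\Mman\in\{\Torus,\KleinBottle\}$, and the integer $m$ emerges as the index of $\func_*(\pi_1\Mman)$ in $\pi_1\Circle\cong\bZ$, producing the $m$-component circular fibers of~\ref{enum:specfunc:torus} and~\ref{enum:specfunc:klein}.

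For the $\sdo$-sequences I would linearize each case. In~\ref{enum:specfunc:sphere} one conjugates $\func$ so that $O(3)$ acts preserving it and Theorem~\ref{th:SDO_lt_fibr} identifies $\OrbitPathComp{\func}{\func}\simeq SO(3)/SO(2)\simeq\Sphere$; cases~\ref{enum:specfunc:disk} and~\ref{enum:specfunc:cylinder} reduce to radial or axial projection, where every term of the $\sdo$-sequence becomes $\Circle$ except the orbit, which is a point; cases~\ref{enum:specfunc:torus} and~\ref{enum:specfunc:klein} exploit the principal circle-bundle structure together with the cyclic $\bZ_m$-action permuting fibers, producing $\StabilizerIsotId{\func}\simeq\Circle\times\bZ_m$ (respectively $\bZ_m$), $\DiffIdM\simeq\Torus$ (respectively $\Circle$), and $\OrbitPathComp{\func}{\func}\simeq\Circle$. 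The hard part is precisely this linearization: one must show that the inclusion of the linear or isometric symmetry group into $\Stabilizer{\func,\XSman}$ is a homotopy equivalence, so that the smooth computation reduces to a group-theoretic one. For this I would invoke the flow-preservation techniques of~\cite{Maksymenko:TA:2003, Maksymenko:AGAG:2006} underlying Theorem~\ref{th:stabilizer}, which provide an explicit smooth deformation retraction of $\Stabilizer{\func}$ onto its linear part near each critical point or fiber, glued globally using the product structure on the regular edges of $\KRGraphf$.
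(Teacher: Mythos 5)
The paper itself does not reproduce a proof of this theorem: it cites \cite[Theorem~1.9]{Maksymenko:AGAG:2006} and gives only the illustrating example for case~\ref{enum:specfunc:sphere} in the remark that follows (linear rotations about the $z$-axis inducing a homotopy equivalence $\Circle\hookrightarrow\Stabilizer{\func}$). Your outline is consistent with that strategy — analyse the Kronrod--Reeb graph for the classification, then linearise each model and invoke the flow-preservation machinery behind Theorem~\ref{th:stabilizer} to identify the $\sdo$-sequences — so it tracks the intended approach rather than diverging from it. Two small points of hygiene. First, in the circle case, Lemma~\ref{lm:p_H1M_H1G_surj} by itself does not exclude closed surfaces of negative Euler characteristic (its conclusion is stated only for $\Sphere,\Disk,\Circle\times[0,1],\PrjPlane,\MobiusBand,\Torus,\KleinBottle$); the clean statement is that a submersion $\func:\Mman\to\Circle$ on a closed surface is a fibration with fibre a finite disjoint union of circles, whence $\chi(\Mman)=0$, and the classification of closed surfaces then pins down $\Torus$ and $\KleinBottle$. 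Your fibration remark already contains this, so the appeal to the lemma is superfluous rather than wrong. Second, the last sentence of the theorem — that the weak homotopy equivalences are honest homotopy equivalences — should be justified by noting that all the spaces in $\sdoseq{\func,\XSman}$ have CW homotopy type (Theorem~\ref{th:SDO_lt_fibr}\ref{enum:th:SDO_lt_fibr:3} for the orbit, classical results for the diffeomorphism and stabilizer groups), so Whitehead's theorem upgrades weak equivalences to equivalences; your sketch omits this step. Beyond these, the proposed degree-one-vertex counting for $\KRGraphf$, the exclusion of $\PrjPlane$ and $\MobiusBand$, the identification of $m$, and the linearised models for each $\sdo$-sequence are all sound and match the route the paper attributes to \cite{Maksymenko:AGAG:2006}.
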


\begin{subremark}
To clarify the statement notice that the first weak homotopy equivalence $\sdoseq{\func} \ \whomeq \ [ \Circle \equiv  \Circle \monoArrow{} SO(3) \epiArrow{} S^2 ]$ in~\ref{enum:specfunc:sphere} means that $\OrbitPathComp{\func}{\func}$ is homotopy equivalent to $2$-sphere $S^2$, and $\StabilizerIsotId{\func}$ is homotopy equivalent to the circle $\Circle$.
Since it is path connected, we also have that $\StabilizerIsotId{\func} = \FolStabilizerIsotId{\func} = \StabilizerId{\func}$.

For example, regard $S^2=\{x^2+y^2+z^2=1\}$ as a unit sphere in $\bR^3$, and let $\func:S^2\to\bR$ be given by $\func(x,y,z) = z$.
Then $\func$ is a function of type~\ref{enum:specfunc:sphere}.
Let $H:S^2\times[0,1] \to S^2$ be the linear rotations given by the formula:
\[
    H(x,y,z,t) =
    \begin{pmatrix}
        \cos 2\pi t & \sin 2\pi t & 0 \\
        -\sin 2\pi t & \cos 2\pi t & 0 \\
        0 & 0 & 1
    \end{pmatrix}
    \begin{pmatrix}
        x \\ y \\ z
    \end{pmatrix}
\]
Evidently, each $H_t$ preserves coordinate $z$, i.e.~$\func\circ H_t = \func$.
Moreover, $H_0 = H_1 =\id_{S^2}$.
In other words, $H_t$ is a loop in $\Stabilizer{\func}$ at $\id_{S^2}$.
Then the inclusion of this loop as a map $\nu:\Circle\to\Stabilizer{\func}$, $\nu(e^{2\pi i t}) = H_t$, which turns out to be a homotopy equivalence.

Similar consideration can be applied to other cases and we refer the reader to~\cite[Proof of Theorem~1.9]{Maksymenko:AGAG:2006}.
\end{subremark}

\begin{subcorollary}\label{cor:sed}
Suppose $\StabilizerId{\func,\XSman}$ is contractible.
Then the following statements hold.
\begin{enumerate}[leftmargin=*, label={\rm(\alph*)}]
\item\label{enum:cor:sed:pinOf}
$\pi_{\vn}\Diff(\Mman,\XSman) \cong \pi_{\vn}\OrbitPathComp{\func,\XSman}{\func}$ for $\vn\geq2$ and we get the following short exact sequence
\begin{equation}\label{equ:exact_seq_for_pi1OfX}
    \pi_1\DiffId(\Mman,\XSman)  \xmonoArrow{~\pj_{1}~}
    \pi_1\OrbitPathComp{\func,\XSman}{\func} \xepiArrow{~\partial_1~}
    \pi_{0}\StabilizerIsotId{\func,\XSman}.
\end{equation}

\item\label{enum:cor:sed:not_s2}
If $\Mman\not= S^2$ and $\PrjPlane$, then $\OrbitPathComp{\func,\XSman}{\func}$ is aspherical%
\footnote{A path connected topological space $Q$ is \myemph{aspherical}, if $\pi_{\vn} Q = 0$ for $\vn\geq2$.
In this case $Q$ is also called Eilenberg–MacLane space $K(\pi_1Q, 1)$.}, so its homotopy type is determined by the fundamental group $\pi_1\OrbitPathComp{\func,\XSman}{\func}$.
In particular, $\pi_1\OrbitPathComp{\func,\XSman}{\func}$ is torsion free.

\item\label{enum:cor:sed:chi_leq_X}
If $\chi(\Mman) < |\XSman|$, then $\DiffId(\Mman,\XSman)$ is contractible as well, and and Eq.~\eqref{equ:exact_seq_for_pi1OfX} yields an isomorphism
\begin{equation}\label{equ:pi1OfX_pi0SfX}
\partial_1: \pi_1\OrbitPathComp{\func,\XSman}{\func} \cong \pi_0\StabilizerIsotId{\func,\XSman}.
\end{equation}
Then $\pi_0\StabilizerIsotId{\func,\XSman}$ is also torsion free and this group completely determine the homotopy type of $\OrbitPathComp{\func,\XSman}{\func}$.
\end{enumerate}
\end{subcorollary}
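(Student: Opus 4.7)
The proof is a straightforward application of the long exact sequence of the locally trivial principal $\StabilizerIsotId{\func,\XSman}$-fibration $\pj\colon \DiffId(\Mman,\XSman) \to \OrbitPathComp{\func,\XSman}{\func}$ provided by Theorem \ref{th:SDO_lt_fibr}. Since $\StabilizerId{\func,\XSman}$ is the identity component of the fiber $\StabilizerIsotId{\func,\XSman}$, its contractibility forces $\pi_\vn\StabilizerIsotId{\func,\XSman} = 0$ for every $\vn \geq 1$. Feeding this into the long exact sequence of $\pj$ yields isomorphisms $\pi_\vn\DiffId(\Mman,\XSman) \cong \pi_\vn\OrbitPathComp{\func,\XSman}{\func}$ for $\vn \geq 2$, and at the low-dimensional tail the connecting map $\partial_1$ becomes surjective with kernel equal to the image of $\pj_1$, which is exactly the short exact sequence claimed in \ref{enum:cor:sed:pinOf}. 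The statement about $\pi_\vn\Diff(\Mman,\XSman)$ is subsumed because higher homotopy groups of a topological group coincide with those of its identity component.

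For \ref{enum:cor:sed:not_s2} I plan to invoke Table \ref{tbl:hom_type_DidMX}: when $\Mman \notin \{\Sphere, \PrjPlane\}$, every entry of the table gives $\pi_\vn\DiffId(\Mman,\XSman) = 0$ for all $\vn \geq 2$ (the group is either contractible, homotopy equivalent to $\Circle$, or to $\Torus$). Combined with \ref{enum:cor:sed:pinOf} this yields $\pi_\vn\OrbitPathComp{\func,\XSman}{\func} = 0$ for $\vn \geq 2$, i.e.\ asphericity. Since $\OrbitPathComp{\func,\XSman}{\func}$ has the homotopy type of a CW-complex by Theorem \ref{th:SDO_lt_fibr}\ref{enum:th:SDO_lt_fibr:3}, it is a $K(\pi,1)$ with $\pi = \pi_1\OrbitPathComp{\func,\XSman}{\func}$, whose homotopy type is uniquely determined by $\pi$ via the classification of Eilenberg--MacLane spaces.

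Torsion-freeness of $\pi$ will be read off from the short exact sequence in \ref{enum:cor:sed:pinOf}. Its kernel $\pi_1\DiffId(\Mman,\XSman)$ is $0$, $\bZ$, or $\bZ^2$ by Table \ref{tbl:hom_type_DidMX}, hence torsion-free. The quotient $\pi_0\StabilizerIsotId{\func,\XSman}$ is also torsion-free --- this is external input, established later in the paper via the wreath-product structure of the Biebarbach sequence \eqref{equ:bieberbach_sequence} (cf.\ the torsion-free clause of Lemma \ref{lm:center_comm}). Once both are known torsion-free, the middle group inherits the property by a standard argument: any element of finite order in $\pi$ projects to the identity of the torsion-free quotient, hence lies in the kernel, hence equals the identity.

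Finally, \ref{enum:cor:sed:chi_leq_X} is immediate. The hypothesis $\chi(\Mman) < \ptnum{\XSman}$ makes $\DiffId(\Mman,\XSman)$ contractible by part (1) of Lemma \ref{lm:long_ex_seq_pj}; substituting $\pi_1\DiffId(\Mman,\XSman) = 0$ into the short exact sequence of \ref{enum:cor:sed:pinOf} turns $\partial_1$ into an isomorphism, and torsion-freeness of $\pi_0\StabilizerIsotId{\func,\XSman}$ transfers along it to $\pi_1\OrbitPathComp{\func,\XSman}{\func}$. The sole genuine obstacle in the whole proof is the torsion-freeness of $\pi_0\StabilizerIsotId{\func,\XSman}$, which cannot be established at this level of generality and must be borrowed from the detailed algebraic analysis of the Biebarbach sequence performed in later sections of the paper.
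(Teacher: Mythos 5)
Parts \ref{enum:cor:sed:pinOf} and \ref{enum:cor:sed:chi_leq_X} are handled correctly, and the asphericity portion of \ref{enum:cor:sed:not_s2} is also fine. But the torsion-freeness argument you give for \ref{enum:cor:sed:not_s2} has a genuine gap: it is simply not true that $\pi_0\StabilizerIsotId{\func,\XSman}$ is torsion-free under the hypotheses of part \ref{enum:cor:sed:not_s2}. Take $\Mman=\Disk$, $\XSman=\varnothing$, and $\func$ with a single degenerate local extreme of symmetry index $m\geq2$; then $\StabilizerId{\func}$ is contractible, $\Disk\neq S^2,\PrjPlane$, yet Theorem~\ref{th:stab:disk:one_crpt}\ref{enum:2disk:1crpt:deg} gives $\pi_0\StabilizerIsotId{\func}\cong\bZ_m$. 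The short exact sequence of \ref{enum:cor:sed:pinOf} here reads $\bZ\monoArrow\pi_1\OrbitPathComp{\func}{\func}\epiArrow\bZ_m$, and the middle group \emph{is} torsion-free (it is $\bZ$), but this cannot be read off from kernel and quotient alone, since the quotient has torsion. The results you cite for torsion-freeness of $\pi_0\StabilizerIsotId{\func,\XSman}$ (Theorem~\ref{th:solvable_groups}, Lemma~\ref{lm:center_comm}, the wreath-product analysis) are all conditioned on $\chi(\Mman)<\ptnum{\XSman}$, which exactly excludes cases like this one; so your ``external input'' does not cover the generality of \ref{enum:cor:sed:not_s2}.

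There are two ways to repair the argument. The first is the route the paper evidently intends: having shown $\OrbitPathComp{\func,\XSman}{\func}$ is an aspherical space of the homotopy type of a CW complex, one appeals to the standard fact that a group admitting a finite-dimensional $K(\pi,1)$ is torsion-free (a finite cyclic subgroup $\bZ_p$ would yield a finite-dimensional $K(\bZ_p,1)$ as a covering space, contradicting the unbounded mod-$p$ cohomology of $\bZ_p$); this derives torsion-freeness of $\pi_1\OrbitPathComp{\func,\XSman}{\func}$ \emph{first}, and only then transfers it to $\pi_0\StabilizerIsotId{\func,\XSman}$ via \eqref{equ:pi1OfX_pi0SfX} --- note the paper's \ref{enum:cor:sed:chi_leq_X} says ``Then $\pi_0\StabilizerIsotId{\func,\XSman}$ is \emph{also} torsion free,'' signalling that the logical direction is opposite to the one you take. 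The second repair, closer in spirit to your proposal, is to exploit Theorem~\ref{th:SDO_lt_fibr}\ref{enum:th:SDO_lt_fibr:2.1}: since $\OrbitPathComp{\func,\XSman}{\func}=\OrbitPathComp{\func,\XSman\cup\partial\Mman}{\func}$, one may enlarge $\XSman$ to $\XSman\cup\partial\Mman$, which has $\ptnum{\XSman\cup\partial\Mman}=\infty>\chi(\Mman)$ as soon as $\partial\Mman\neq\varnothing$; then $\partial_1$ becomes an isomorphism onto $\pi_0\StabilizerIsotId{\func,\XSman\cup\partial\Mman}$, which \emph{does} lie in the class $\ccB$ by Theorem~\ref{th:solvable_groups} and is torsion-free. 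This leaves only the closed surfaces $\Mman=\Torus$ (and the non-orientable ones), where Theorems~\ref{th:bib_seq_torus:tree} and \ref{th:bib_seq_torus} supply the analogous wreath-product structure. In either repair the conclusion is recovered, but the step ``read torsion-freeness from the short exact sequence for the given $(\func,\XSman)$'' must be replaced.
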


\subsection{Structure of $\pi_{1}\OrbitPathComp{\func,\XSman}{\func}$}
We will describe here the general structure of fundamental groups of orbits, see Theorem~\ref{th:cases:Bib:prop} and Remark~\ref{rem:prop:bib}.

Suppose that $\StabilizerId{\func,\XSman}$ is \myemph{contractible}.
Then we have the following commutative diagram:
{\small%
\begin{equation}\label{equ:diagram_3x3_sdo_b}
\aligned
\xymatrix@R=3ex{
 \ \pi_1\Diff(\Mman,\XSman) \ar@{^(->}[r]^-{\pj_1} \ \ar@{=}[d] &
 \ \partial_1^{-1}\bigl(\pi_0\FolStabilizerIsotId{\func,\XSman} \bigr) \ \ar@{^(->}[d]^{} \ar@{->>}[r]^-{\partial_1} &
 \ \pi_0\FolStabilizerIsotId{\func,\XSman} \  \ar@{^(->}[d]^{} \\
%%%%%%%%%%%%%%%%%%%%
 \ \pi_1\Diff(\Mman,\XSman) \ar@{^(->}[r]^-{\pj_1} \  &
 \ \pi_1\OrbitPathComp{\func,\XSman}{\func} \ \ar@{->>}[r]^-{\partial_1} \ar@{->>}[d]_-{\rho\circ\partial_1} &
 \ \pi_0\StabilizerIsotId{\func,\XSman} \ \ar@{->>}[d]^{\rho} \\
%%%%%%%%%%%%%%%%%%%%
 &
 \ \GrpKRIsotId{\func,\XSman} \ \ar@{=}[r] &
 \ \GrpKRIsotId{\func,\XSman} \
}
\endaligned
\end{equation}}%
in which all horizontal and vertical sequences are exact.
In fact, the middle horizontal sequence is the last part of the above long exact sequence of $\pj$, while the right vertical is $\DSG$-sequence for $(\func,\XSman)$.

Denote $\Aman = \partial_1^{-1}\bigl(\pi_0\FolStabilizerIsotId{\func,\XSman} \bigr)$ and consider the upper horizontal sequence:
\begin{equation}\label{equ:seq:DJDelta}
    \pi_1\Diff(\Mman,\XSman) \ \xmonoArrow{~\pj_1~} \ \Aman \ \xepiArrow{~\partial_1~} \ \pi_0\FolStabilizerIsotId{\func,\XSman}.
\end{equation}
We know that $\pi_0\FolStabilizerIsotId{\func,\XSman}$ is a free abelian, $\pi_1\Diff(\Mman,\XSman)$ is abelian ($0$, $\bZ_2$, $\bZ$, or $\bZ^2$) and its image is contained in the center of $\pi_1\OrbitPathComp{\func,\XSman}{\func}$, and therefore in the center of $\Aman$.
One might expect that this sequence splits, i.e.~$\partial_1$ in~\eqref{equ:seq:DJDelta} admits a section $q:\pi_0\FolStabilizerIsotId{\func,\XSman} \to \Aman$, which gave an isomorphism
$\phi:\pi_1\DiffId(\Mman,\XSman)\oplus \pi_0\FolStabilizerIsotId{\func,\XSman} \cong \Aman$, $\phi(a,b) = \pj_1(a)\cdot q(b)$.
This was initially stated in~\cite[Theorem~1.5]{Maksymenko:AGAG:2006} for $\XSman=\varnothing$, however the arguments contained a gap and were completed in~\cite[Theorem~6]{Maksymenko:ProcIM:ENG:2010}.

The following theorem describes a more general effect.
Let $\torus{k}$ be the $k$-dimensional torus, i.e. a product of $k$ circles.

\begin{subtheorem}\label{th:sect_partial1}{\rm(cf.~\cite[Theorem~6]{Maksymenko:ProcIM:ENG:2010})}
Suppose $\StabilizerId{\func,\XSman}$ is contractible and that 
\[ \pi_0\FolStabilizerIsotId{\func,\XSman} \cong \bZ^k\]
for some $k\geq0$.
Let $\omega:\pi_1\torus{k}\to \pi_0\FolStabilizerIsotId{\func,\XSman}$ be any isomorphism.
Then there exists a continuous map $\eta: \torus{k} \to \OrbitPathComp{\func,\XSman}{\func}$ such that we have the following commutative diagram:
\begin{equation}\label{equ:map_of_torus}
\aligned
\xymatrix@R=3ex{
\pi_1\torus{k} \ar[r]^-{\omega}_-{\cong} \ar[d]_-{\eta} &  \pi_0\FolStabilizerIsotId{\func,\XSman} \ar@{^(->}[d] \\
\pi_1 \OrbitPathComp{\func,\XSman}{\func}  \ar[r]^-{\partial_1} &  \pi_0\StabilizerIsotId{\func,\XSman}
}
\endaligned
\end{equation}
Hence for the map
\[
\psi: \DiffId(\Mman,\XSman) \times \torus{k} \to \OrbitPathComp{\func,\XSman}{\func},
\qquad
\psi(\dif, \ps) = \eta(\ps)\circ \dif
\]
the induced homomorphisms $\psi_{\vn}: \pi_1\bigl(\DiffId(\Mman,\XSman) \times \torus{k}\bigr) \to \pi_1\OrbitPathComp{\func,\XSman}{\func}$ are \myemph{isomorphisms} for $\vn\geq2$ and a monomorphism for $\vn=1$, so we will have the following commutative diagram:
\begin{equation}\label{equ:diagram_3x3_sdo_b:reduced}
    \aligned
    \xymatrix@R=3ex{
        \ \pi_1\Diff(\Mman,\XSman)  \oplus \pi_0\FolStabilizerIsotId{\func,\XSman} \ \ar@{->>}[rr]^-{\prj_2} \ar@{^(->}[rd]^-{\prj_1 \oplus \psi_1}  \ar@{=}[d] & &
        % \ \partial_1^{-1}\bigl(\pi_0\FolStabilizerIsotId{\func,\XSman} \bigr) \ \ar@{^(->}[d]^{} \ar@{->>}[r]^-{\partial_1} &
        \ \pi_0\FolStabilizerIsotId{\func,\XSman} \ \ar@{^(->}[d]^{} \\
    %%%%%%%%%%%%%%%%%%%%
        \ \pi_1\Diff(\Mman,\XSman) \ar@{^(->}[r]^-{\pj_1} \  &
        \ \pi_1\OrbitPathComp{\func,\XSman}{\func} \ \ar@{->>}[r]^-{\partial_1} \ar@{^(->}[rd]_-{\rho\circ\partial_1} &
        \ \pi_0\StabilizerIsotId{\func,\XSman} \ \ar@{->>}[d]^{\rho} \\
    %%%%%%%%%%%%%%%%%%%%
        &
        % \ \GrpKRIsotId{\func,\XSman} \ \ar@{=}[r]
        &
        \ \GrpKRIsotId{\func,\XSman} \
    }
    \endaligned
\end{equation}
in which main diagonal, middle row and right column are exact.

Therefore, if $\sigma: \tilde{\Orb} \to \OrbitPathComp{\func,\XSman}{\func}$ is a covering map corresponding to the subgroup 
\[ \psi_1\bigl( \pi_1\bigl(\DiffId(\Mman,\XSman) \times \torus{k}\bigr) \bigr) \]
of $\pi_1 \OrbitPathComp{\func,\XSman}{\func}$, then $\psi$ lifts to the map 
\[ \hat{\psi}:\DiffId(\Mman,\XSman) \times \torus{k} \to \tilde{\Orb}\]
such that $\psi = \sigma\circ\hat{\psi}$ and $\hat{\psi}$ induces an isomorphism of all homotopy groups, i.e.~$\hat{\psi}$ is a weak homotopy equivalence.

In particular, if $\GrpKRIsotId{\func,\XSman}$ is trivial (which holds e.g.~when $\func$ is a generic Morse map), then $\psi_1$ is an isomorphism as well, whence $\psi$ is a weak homotopy equivalence (cf.~\cite[Theorem~1.5(3)]{Maksymenko:AGAG:2006}).
\end{subtheorem}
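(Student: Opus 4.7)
The plan is first to construct $\eta$ as a torus of commuting loops in the orbit realising the generators of $K:=\pi_0\FolStabilizerIsotId{\func,\XSman}\cong\bZ^k$ via $\partial_1$, and then to read off the homotopy behaviour of $\psi$ from the long exact sequence of the principal fibration $\pj$ together with Corollary~\ref{cor:sed}.

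\emph{Construction of $\eta$.}
By Lemma~\ref{lm:pi0_Delta}\,\ref{enum:lm:pi0_Delta:4}, $\pi_0\FolStabilizer{\func,\XSman}\cong\bZ^l$ is freely generated by the classes $[\dtw{e_1}],\ldots,[\dtw{e_l}]$ of Dehn twists around the internal edges of $\KRGraphf$, and $K$ is the kernel of the natural map $\pi_0\FolStabilizer{\func,\XSman}\to\pi_0\Diff(\Mman,\XSman)$. Write $\xi_j:=\omega(e_j)\in K$ for the image of the $j$-th standard generator of $\pi_1\torus{k}$; by definition $\xi_j$ is represented by a product $\prod_i\dtw{e_i}^{a_{ji}}$ that lies in $\DiffId(\Mman,\XSman)$, so there exists an isotopy $\Psi^j\colon[0,1]\to\DiffId(\Mman,\XSman)$ from $\id$ to that representative. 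Then $\gamma_j(t):=\func\circ\Psi^j_t$ is a loop in $\OrbitPathComp{\func,\XSman}{\func}$, and Remark~\ref{rem:construction_partial_1} gives $\partial_1[\gamma_j]=\xi_j$. By picking pairwise disjoint $\func$-regular neighborhoods of the leaves $e_1,\ldots,e_l$ and organising each $\Psi^j$ as a composition of one-parameter families supported in these neighborhoods, the isotopies $\Psi^1,\ldots,\Psi^k$ commute pairwise inside $\Diff(\Mman,\XSman)$, which forces the loops $\gamma_1,\ldots,\gamma_k$ to pairwise commute in $\pi_1\OrbitPathComp{\func,\XSman}{\func}$. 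The rule $(t_1,\ldots,t_k)\mapsto\func\circ\Psi^1_{t_1}\circ\cdots\circ\Psi^k_{t_k}$ then descends from the cube $[0,1]^k$ to a continuous map $\eta\colon\torus{k}\to\OrbitPathComp{\func,\XSman}{\func}$ fitting the square~\eqref{equ:map_of_torus}.

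\emph{Homotopy properties of $\psi$.}
Since $\StabilizerId{\func,\XSman}$ is contractible, Corollary~\ref{cor:sed}\,\ref{enum:cor:sed:pinOf} gives that $\pj_n$ is an isomorphism for $n\ge 2$ and a monomorphism for $n=1$. Since $\pi_n\torus{k}=0$ for $n\ge 2$, the map $\psi_n$ coincides with $\pj_n$ on $\pi_n\bigl(\DiffId(\Mman,\XSman)\times\torus{k}\bigr)$ and is therefore an isomorphism for $n\ge 2$. For $n=1$, suppose $\psi_1(\alpha,\beta)=\pj_1(\alpha)\cdot\eta_*(\beta)$ is trivial. Applying $\partial_1$ and using that $\partial_1\circ\pj_1$ is trivial by exactness of the fibration long exact sequence, we obtain that $\partial_1\eta_*(\beta)$ is trivial; by~\eqref{equ:map_of_torus} this equals the image of $\omega(\beta)$ in $\pi_0\StabilizerIsotId{\func,\XSman}$, so $\beta=0$ because $\omega$ and the inclusion $\pi_0\FolStabilizerIsotId{\func,\XSman}\hookrightarrow\pi_0\StabilizerIsotId{\func,\XSman}$ are injective; then $\pj_1(\alpha)$ is trivial, giving $\alpha=0$ by injectivity of $\pj_1$. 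This establishes diagram~\eqref{equ:diagram_3x3_sdo_b:reduced}; its right column identifies $\pi_1\OrbitPathComp{\func,\XSman}{\func}\big/\psi_1\pi_1\bigl(\DiffId(\Mman,\XSman)\times\torus{k}\bigr)$ with $\GrpKRIsotId{\func,\XSman}$. The covering $\sigma\colon\tilde{\Orb}\to\OrbitPathComp{\func,\XSman}{\func}$ corresponding to this subgroup accepts a unique lift $\hat{\psi}$ of $\psi$; by the universal property of covers, $\hat{\psi}_1=\sigma_*^{-1}\circ\psi_1$ is an isomorphism onto $\pi_1\tilde{\Orb}$, and for $n\ge 2$ the map $\hat{\psi}_n=\sigma_*^{-1}\circ\psi_n$ is an isomorphism since $\sigma_*$ is. Hence $\hat{\psi}$ is a weak homotopy equivalence.

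\emph{Main obstacle.}
The decisive step is making the isotopies $\Psi^j$ commute strictly inside $\Diff(\Mman,\XSman)$, so that the cube-to-torus descent is well-defined: it is not enough to observe that the classes $\xi_j$ commute in $\pi_0$, since their commutator loops in $\OrbitPathComp{\func,\XSman}{\func}$ live a priori only in the central subgroup $\pj_1(\pi_1\DiffId(\Mman,\XSman))$ (Lemma~\ref{lm:long_ex_seq_pj}) and need not be null-homotopic there. The geometric fix is to build each $\Psi^j$ out of local isotopies supported in the pairwise disjoint $\func$-regular neighborhoods of the internal leaves $e_1,\ldots,e_l$, so that the global isotopies commute on the nose. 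This is exactly the issue that left a gap in~\cite[Theorem~1.5]{Maksymenko:AGAG:2006}, subsequently closed in~\cite[Theorem~6]{Maksymenko:ProcIM:ENG:2010}.
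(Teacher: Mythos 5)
Your construction of $\eta$ has a gap precisely at the step you flag as the ``main obstacle'', and the proposed geometric fix cannot work. You want an isotopy $\Psi^j$ from $\id$ to $\tau_j=\prod_i\dtw{e_i}^{a_{ji}}$ supported in the union $N=\bigcup_i N_i$ of pairwise disjoint $\func$-regular annular neighborhoods of the internal leaves. But $\pi_0\Diff(N,\partial N)\cong\bZ^l$ is free on the Dehn-twist classes $[\dtw{e_i}]$, and $\tau_j$ represents $\sum_i a_{ji}[\dtw{e_i}]$ there, which is nonzero whenever $\xi_j\neq 0$: indeed $\pi_0\FolStabilizerIsotId{\func,\XSman}$ injects into $\pi_0\FolStabilizer{\func,\XSman}$, which by Lemma~\ref{lm:pi0_Delta} is free abelian on those same Dehn-twist classes. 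So there is no isotopy supported in $N$ from $\id$ to $\tau_j$. Any isotopy trivializing $\tau_j$ in $\DiffId(\Mman,\XSman)$ must move points outside $N$, and two such isotopies need not commute, so the formula $(t_1,\ldots,t_k)\mapsto\func\circ\Psi^1_{t_1}\circ\cdots\circ\Psi^k_{t_k}$ has no reason to descend to $\torus{k}$.

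The paper gets around this without any strict commutation of isotopies. It fixes a finite set $\Qman$ with $|\Qman|>\chi(\Mman)$, so that $\DiffId(\Mman,\XSman\cup\Qman)$ is contractible, and builds $\zeta:I^k\to\DiffId(\Mman,\XSman\cup\Qman)$ by induction on the skeleta of the cube: on vertices $\zeta$ is a product of the commuting generators $\tau_j$; on an edge with active coordinate $i$, and with $C$ the set of frozen coordinates equal to $1$, it is $(\prod_{j\in C}\tau_j)\circ H^i_{t_i}$, where $H^i$ is an arbitrary isotopy from $\id$ to $\tau_i$ relative $\Qman$; and over each higher-dimensional face the already-defined boundary map is filled in using contractibility of $\DiffId(\Mman,\XSman\cup\Qman)$, then carried to the parallel faces by pre-composing with the prefix $\prod_{j\in C}\tau_j$. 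Because each such prefix lies in $\Stabilizer{\func}$, it is killed by post-composition with $\func$; hence $\func\circ\zeta$ on a face depends only on the active coordinates, and it is this observation together with contractibility, not commutation of the $\Psi^j$'s, that makes $\func\circ\zeta$ factor through $\torus{k}$. Your treatment of $\partial_1$ and of the homotopy groups of $\psi$ once $\eta$ exists, via Corollary~\ref{cor:sed} and the long exact sequence of $\pj$, is correct; the gap lies entirely in the construction of $\eta$.
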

\begin{proof}
Let $I^k = [0,1]^k$ be a $k$-dimensional cube.
Then there is a natural quotient map
\[
\nu: I^k \to \torus{k},
\qquad
\nu(\pt_1,\ldots,\pt_k) = \bigl( e^{2\pi i t_1}, \ldots,  e^{2\pi i t_k}\bigr).
\]
We will construct a certain continuous map $\zeta:I^k \to \DiffId(\Mman,\XSman\cup\Qman)$ such that for all $i=1,\ldots,k$, and $t_1,\ldots,t_{i-1},t_{i+1}, \ldots, t_k \in [0,1]$ we will have
\[
\func\circ \zeta(t_1,\ldots,t_{i-1}, 0, t_{i}, \ldots, t_k) = \func\circ \zeta(t_1,\ldots,t_{i-1}, 1, t_{i+1}, \ldots, t_k).
\]

This will imply existence of a unique continuous map $\eta:\torus{k} \to \OrbitPathComp{\func,\XSman}{\func}$ with $\pj\circ \zeta = \eta\circ\nu$.
Moreover, $\zeta$ will be constructed so that $\eta$ will make commutative the diagram~\eqref{equ:map_of_torus}.

{\em Notation.}
Notice that $[0,1]$ can be regarded as a CW-complex with two $0$-cells $\{0\}$ and $\{1\}$ and a $1$-cell $(0,1)$.
Then $I^k = [0,1]^k$ inherits a CW-structure, whose cells are product of cells of copies of $[0,1]$.
It will be convenient to introduce uniform notation for such faces.

\newcommand\eface[3]{E_{#1,\, #2,\, #3}}
Let $\Kman = \{1,\ldots,k\}$.
Then each $i$-th face $E$ of $I^k$ is determined by partition of $\Kman$ into three mutually disjoint subsets $\Aman$, $\Bman$, $\Cman$ such that
\begin{itemize}[leftmargin=5ex]
\item for each $s\in \Aman$ there is $(\pt_1,\ldots,\pt_k)\in E$ with $0 < \pt_s < 1$.
\item for each $s\in\Bman$ and $(\pt_1,\ldots,\pt_k)\in E$ we always have $\pt_s=0$;
\item for each $s\in\Cman$ and $(\pt_1,\ldots,\pt_k)\in E$ we always have $\pt_s=1$;
\end{itemize}
Denote such face by $\eface{\Aman}{\Bman}{\Cman}$.
Evidently, the dimension of $\eface{\Aman}{\Bman}{\Cman}$ equals to the number of elements of $\Aman$.
Moreover, when writing points of $\eface{\Aman}{\Bman}{\Cman}$ we will sometimes indicate only coordinates belonging to $\Aman$.

\smallskip

{\em Construction of $\zeta$.}
One can easily find
\begin{enumerate}[label={\rm(\alph*)}]
\item\label{enum:ex_sect:Qinf} a subset $\Qman\subset\Mman$ such that $|\Qman|>\chi(\Mman)$;
\item\label{enum:ex_sect:taui} pairwise commuting diffeomorphisms $\tau_1,\ldots,\tau_k \in \FolStabilizerIsotId{\func,\XSman}$ whose isotopy classes $[\tau_i]$ constitute a basis for $\pi_0\FolStabilizerIsotId{\func} \cong \bZ^k$;
\item\label{enum:ex_sect:isot} isotopies $H^i:\Mman\times[0,1]\to\Mman$ relatively $\Qman$ with $H^i=\id_{\Mman}$ and $H^i_1 = \tau_i$ for all $i=1,\ldots,k$.
\end{enumerate}
Indeed, for each edge $e$ of $\KRGraphf$ choose a Dehn twist $\tau_e$ along $e$.
Then one can assume that each $\tau_i$ is a product of some $\tau_e$.
This will guarantee condition~\ref{enum:ex_sect:taui}.
Existence of $\Qman$ and isotopies $H^i$ as in~\ref{enum:ex_sect:Qinf} and~\ref{enum:ex_sect:isot} is proved in~\cite[Theorem~6]{Maksymenko:ProcIM:ENG:2010} for $\XSman=\varnothing$.
That sets also suffice if $\XSman\not=\varnothing$ is finite but $|\XSman| < \chi(\Mman)$.
E.g. if $\Mman=\Disk$, then one can take $\Qman=\partial\Disk$.
If $|\XSman|>\chi(\Mman)$, then one can put $\Qman=\XSman$.

\smallskip

Now $\zeta$ will be constructed by induction on $d$-dimensional faces of $I^k$.

\underline{Let $d=0$}.
Put
\[ \zeta( \eps_1, \ldots, \eps_{k}) = \tau_1^{\eps_1} \circ \cdots \circ \tau_k^{\eps_k} = \prod_{i=1}^{k}\tau_i, \qquad \eps_i \in \{0,1\}. \]
Since $\tau_i$ pairwise commute, one can write the product of $\tau_i$ in any order, and therefore the notation $\prod_{i=1}^{k}\tau_i$ is well-defined.
Then
\[
\func \circ \zeta( \eps_1, \ldots, \eps_{k}) = \func \circ \tau_1^{\eps_1} \circ \cdots \circ \tau_k^{\eps_k} = \func.
\]

\underline{Let $d=1$}.
Every $1$-dimensional cell has the form $\eface{\{i\}}{\Bman}{\Cman}$, and so it is ``parametrized'' by its $i$-th coordinate $t_i$.
Then we define $\zeta$ on $\eface{\{i\}}{\Bman}{\Cman}$ by
\[
    \zeta(t_i) = \bigl( \prod_{j \in \Cman}\tau_j \bigr) \circ H^i_{t_i}.
\]
In other words, we first apply an isotopy $H^i$, and if $j$-th coordinate of the face is $1$ (for $j\not=i$), then we additionally make a composition with $\tau_j$.
Since all $\tau_i$ pairwise commute, $\zeta$ is correctly defined on $1$-skeleton of $I^k$.

It follows that
\[
\func \circ \zeta(t_i) = \func\circ H^i_{t_i}, \qquad t_i \in \eface{\Aman}{\Bman}{\Cman}.
\]

For example, if $k=2$, then
\begin{align*}
    \zeta(t_1,0) &= H^1_{t}, &
    \zeta(t_1,1) &= \tau_2 \circ H^1_{t}, &
    \zeta(0,t_2) &= H^2_{t}, &
    \zeta(1,t_2) &= \tau_1\circ H^2_{t}.
\end{align*}

\underline{Let $d=2$}.
Let $i_1<i_2$ be two distinct indices and
\[
    E = \eface{\{i_1,i_2\}}{\Kman \setminus \{i_1,i_2\}}{\varnothing}
\]
be a $2$-face of $I^k$ in which all coordinates except for $i_1$ and $i_2$ are zeros.

By the construction $\zeta(\partial E) \in \DiffId(\Mman,\XSman\cup\Qman)$, and the latter space is contractible.
Hence $\zeta$ extends to some continuous map
\[ H^{i_1,i_2}: E \to \DiffId(\Mman,\XSman\cup\Qman).\]

Extend $\zeta$ to each other $2$-face $\eface{\{i_1,i_2\}}{\Bman}{\Cman}$ spanned by coordinates $i_1,i_2$ in a similar way:
\[
    \zeta(t_{i_1}, t_{i_2}) =  \bigl( \prod_{j \in \Cman}\tau_j \bigr) \circ H^{i_1,i_2}(t_{i_1}, t_{i_2}).
\]
Then again
\[
\func\circ \zeta(t_{i_1}, t_{i_2}) = \func\circ H^{i_1,i_2}(t_{i_1}, t_{i_2})
\]
for any $2$-face $\eface{\{i_1,i_2\}}{\Bman}{\Cman}$.

\underline{Let $d=3$}.
The construction is similar to the case $d=2$.
Let $i_1<i_2<i_3$ be three distinct indices and
\[
    E = \eface{\{i_1,i_2,i_3\}}{\Kman \setminus \{i_1,i_2,i_3\}}{\varnothing}
\]
be a $3$-face of $I^k$ in which all coordinates except for $i_1$, $i_2$, and $i_3$ are zeros.

By the construction $\zeta(\partial E) \in \DiffId(\Mman,\XSman\cup\Qman)$, and the latter space is contractible.
Hence $\zeta$ extends to some continuous map
\[ H^{i_1,i_2,i_3}: E \to \DiffId(\Mman,\XSman\cup\Qman)\]
and we extend $\zeta$ to each other $3$-face $\eface{\{i_1,i_2,i_3\}}{\Bman}{\Cman}$ spanned by coordinates $i_1,i_2,i_3$ in a similar way:
\[
    \zeta(t_{i_1}, t_{i_2}, t_{i_3}) =
    \bigl( \prod_{j \in \Cman}\tau_j \bigr) \circ H^{i_1,i_2,i_3}(t_{i_1}, t_{i_2}, t_{i_3}).
\]

Applying the same arguments so on, we extend $\zeta$ to all of $I^k$.

{\em Verification of properties of $\eta$.}
By construction the restriction of $\zeta$ to each $1$-face spanned by coordinate $t_i$, is given by the loop $\gamma_i :=\func\circ H^i$.
But due to description of $\partial_1$ (see Remark~\ref{rem:construction_partial_1}), $\partial_1([\gamma_i]) = H^i_1 = \tau_i$.
This means commutativity of~\eqref{equ:map_of_torus}.
\end{proof}

It is convenient to formulate the general observation about the homotopy types of $\Orbit{\func,\XSman}$ in the form of the following property~\ref{prop:bib}.

\begin{definition}
Let $\Mman$ be a compact surface, $\func\in\FSP{\Mman}{\Pman}$, $\XSman$ be an $\func$-adapted submanifold, and $\GrpKRIsotId{\func,\XSman}$ be the group of automorphisms of the graph of $\func$ induced by elements of $\StabilizerIsotId{\func,\XSman}$, see~\eqref{equ:GKRfX}.
Say that a pair $(\func,\XSman)$ has property~\ref{prop:bib} whenever
\begin{enumerate}[leftmargin=*, label={\rm(Bib)}]
\item\label{prop:bib}
there exists a \myemph{free} action of the (finite) group $\GrpKRIsotId{\func,\XSman}$ on a $k$-torus $\torus{k}$ for some $k\geq0$ such that we have the following homotopy equivalence:
\[
    \OrbitPathComp{\func,\XSman}{\func} \,\homeq\, \DiffId(\Mman,\XSman) \times \bigl( \torus{k}/\GrpKRIsotId{\func,\XSman} \bigr),
\]
where $\torus{k}/\GrpKRIsotId{\func,\XSman}$ is the corresponding quotient space.
\end{enumerate}
\end{definition}

\begin{subremark}\label{rem:prop:bib}
Evidently, Property~\ref{prop:bib} generalizes the effect described in Theorem~\ref{th:sect_partial1} for the case when $\GrpKRIsotId{\func,\XSman}$ is trivial.
That property was discovered by E.~Kudryavtseva in a series of papers~\cite{Kudryavtseva:SpecMF:VMU:2012, Kudryavtseva:MathNotes:2012, Kudryavtseva:MatSb:ENG:2013}, in which she described the homotopy structure of the spaces $\mathcal{F}_{p,q,r}$ of Morse functions on orientable compact surfaces equipped with a numeration of their critical points (such spaces are finite covering spaces of the spaces of all Morse functions).
She presented a ``finite-dimensional'' model for the stratifications of those spaces into orbits of Morse functions, and demonstrated Property~\ref{prop:bib} for them.
\end{subremark}

\begin{subremark}
Notice that not all pairs $(\func,\XSman)$ have property~\ref{prop:bib}.
For instance, if $\func:S^2\to\bR$ is a Morse function with two only critical points as in Theorem~\ref{th:stabilizer}\ref{enum:specfunc:sphere}, then $\pi_1\OrbitPathComp{\func}{\func} = \pi_1 S^2 = 0$.
On the other hand, the graph of $\func$ is a closed segment, whence $\GrpKRIsotId{\func}$ is trivial, and therefore
\[ \pi_1\bigl( \DiffId(S^2) \times (\torus{k}/\GrpKRIsotId{\func} ) \bigr) \cong \bZ_2 \times \bZ^k \not = 0 \ \text{for any $k\geq0$}. \]
\end{subremark}

Nevertheless this property seems to be very typical.
The following theorem describes several cases when~\ref{prop:bib} holds.
\begin{theorem}\label{th:cases:Bib:prop}
Let $\Mman$ be a compact connected surface, $\func\in\FSP{\Mman}{\Pman}$, and $\XSman$ be an $\func$-adapted submanifold.
Then the pair $(\func,\XSman)$ has property~\ref{prop:bib} in the following cases.
\begin{enumerate}[leftmargin=*, label={\rm(\arabic*)}, itemsep=0.8ex]
\item\label{enum:th:prop:TG:generic}
$\StabilizerId{\func,\XSman}$ is contractible, and $\GrpKRIsotId{\func,\XSman}$ is trivial {\rm(Theorem~\ref{th:sect_partial1})}.

\item\label{enum:th:prop:TG:Kudr}
$\Mman$ is orientable, $\XSman=\varnothing$, and $\func\in\Mrs{\Mman}{\bR}$ is such that the Euler characteristic $\chi(\Mman)$ of $\Mman$ is \myemph{less than} the total number of fixed critical points of $\StabilizerIsotId{\func}$ {\rm(E.~Kudryavtseva~\cite{Kudryavtseva:SpecMF:VMU:2012, Kudryavtseva:MathNotes:2012, Kudryavtseva:MatSb:ENG:2013})}.

\end{enumerate}
\end{theorem}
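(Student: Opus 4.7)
Part~\ref{enum:th:prop:TG:generic} is a direct consequence of Theorem~\ref{th:sect_partial1}. When $\GrpKRIsotId{\func,\XSman}$ is trivial, the rightmost column of diagram~\eqref{equ:diagram_3x3_sdo_b:reduced} collapses, which forces the monomorphism $\psi_1$ to be surjective and hence an isomorphism; together with the higher-homotopy isomorphisms from Theorem~\ref{th:sect_partial1}, this makes
\[
  \psi:\DiffId(\Mman,\XSman)\times\torus{k}\longrightarrow\OrbitPathComp{\func,\XSman}{\func}
\]
a weak homotopy equivalence. Since the target has the homotopy type of a CW-complex by Theorem~\ref{th:SDO_lt_fibr}\ref{enum:th:SDO_lt_fibr:3} and the source obviously does too, Whitehead's theorem upgrades $\psi$ to a genuine homotopy equivalence, and letting the trivial group $\GrpKRIsotId{\func,\XSman}$ act trivially on $\torus{k}$ verifies~\ref{prop:bib}.

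For part~\ref{enum:th:prop:TG:Kudr} the plan is, following Kudryavtseva, to pass to a finite cover of the orbit on which part~\ref{enum:th:prop:TG:generic} already applies, and then descend. Let $\Xman_0\subset\fSing$ denote the set of critical points that are fixed individually by every element of $\StabilizerIsotId{\func}$; by hypothesis $|\Xman_0|>\chi(\Mman)$. First I would construct, by numerating the critical points of $\func$, a finite regular cover
\[
  q:\widetilde{\OrbitPathComp{\func}{\func}}\longrightarrow\OrbitPathComp{\func}{\func}
\]
whose deck group is identified via~\eqref{equ:GfX_quotient} and Lemma~\ref{lm:GfX} with $\GrpKRIsotId{\func}$. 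On the total space the analogue of the Reeb graph action is trivial by construction, and $\DiffId(\Mman,\Xman_0)$ is contractible by Lemma~\ref{lm:long_ex_seq_pj}(1) since $|\Xman_0|>\chi(\Mman)$. Applying part~\ref{enum:th:prop:TG:generic} to the numerated pair then yields a homotopy equivalence
\[
  \widetilde{\OrbitPathComp{\func}{\func}}\,\homeq\,\DiffId(\Mman)\times\torus{k},
  \qquad k=\rank\pi_0\FolStabilizerIsotId{\func,\Xman_0}.
\]

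The next step is to transport the deck action of $\GrpKRIsotId{\func}$ through this equivalence. By the very description of $\pi_0\FolStabilizerIsotId{\func,\Xman_0}$ in Lemma~\ref{lm:pi0_Delta}\ref{enum:lm:pi0_Delta:4} as a free abelian group generated by Dehn twists along edges of $\KRGraphf$, any graph automorphism realized by a lift $\dif\in\StabilizerIsotId{\func}$ acts on these generators by permutation, and this permutation action descends to the $\torus{k}$-factor (up to homotopy, the $\DiffId(\Mman)$-factor may be absorbed into the lift ambiguity). Passing to the quotient by this action and invoking Theorem~\ref{th:SDO_lt_fibr}\ref{enum:th:SDO_lt_fibr:2} yields the desired
\[
  \OrbitPathComp{\func}{\func}\,\homeq\,\DiffId(\Mman)\times\bigl(\torus{k}/\GrpKRIsotId{\func}\bigr),
\]
which is property~\ref{prop:bib}.

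The principal obstacle is to verify that the induced $\GrpKRIsotId{\func}$-action on $\torus{k}$ is \emph{free}, as demanded by~\ref{prop:bib}. This amounts to a rigidity statement: no non-trivial element of $\GrpKRIsotId{\func}$ can simultaneously fix every edge of $\KRGraphf$, equivalently every canonical Dehn-twist generator. Establishing this rigorously is precisely the technical core of Kudryavtseva's analysis: the finite-dimensional combinatorial model for the space of numerated Morse functions constructed in~\cite{Kudryavtseva:SpecMF:VMU:2012, Kudryavtseva:MathNotes:2012, Kudryavtseva:MatSb:ENG:2013} uses orientability of $\Mman$ together with the abundance of fixed critical points ($|\Xman_0|>\chi(\Mman)$) to rule out parasitic symmetries that would obstruct freeness.
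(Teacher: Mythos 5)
Part~\ref{enum:th:prop:TG:generic} of your proposal is correct and matches the paper's (implicit) argument: the paper itself simply points to Theorem~\ref{th:sect_partial1}, whose final sentence already states that when $\GrpKRIsotId{\func,\XSman}$ is trivial, $\psi$ is a weak homotopy equivalence. Your upgrade to a genuine homotopy equivalence via Whitehead and Theorem~\ref{th:SDO_lt_fibr}\ref{enum:th:SDO_lt_fibr:3} is a clean, valid elaboration that the paper leaves tacit.

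For part~\ref{enum:th:prop:TG:Kudr}, you should be aware that the paper itself gives \emph{no} proof of this item — it is purely a citation to Kudryavtseva's three papers. Your sketch goes further than the paper does, and it has the right shape (pass to a finite cover of the orbit obtained by numerating the critical points, apply part~\ref{enum:th:prop:TG:generic} there, then descend through the deck action), which is indeed the spirit of Kudryavtseva's construction. However, two steps in your sketch are looser than you acknowledge. First, the claim ``on the total space the analogue of the Reeb graph action is trivial by construction'' is not automatic: numerating the critical points fixes vertices of $\KRGraphf$ corresponding to critical leaves, but it does not a priori prevent a diffeomorphism from permuting parallel (multi-)edges of $\KRGraphf$ joining the same two vertices; showing this cannot happen (or numerating more data) is part of the work. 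Second, and more seriously, you correctly flag freeness of the induced $\GrpKRIsotId{\func}$-action on $\torus{k}$ as ``the technical core'' and then defer it entirely to Kudryavtseva. That is honest, but it means your proposal is a roadmap, not a proof. It is worth noting that the ``Warning'' in the paper directly after this theorem explains precisely why the freeness issue is delicate: the realization of finite groups acting freely on tori with prescribed fundamental-group extension fails in general (the inner free abelian group must be maximal abelian), which is exactly the kind of obstruction you are pointing at. So your instinct about where the difficulty lies is accurate, and consistent with the paper; but the step you defer is not a small detail — it is the substance of part~\ref{enum:th:prop:TG:Kudr}.
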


\begin{smallproof}{Warning}
In~\cite[Corollary~1.3 \& Theorem~5.10]{Maksymenko:TA:2020} it was claimed that Property~\ref{prop:bib} holds for pairs $(\func,\XSman)$ admitting a diagonal short exact sequence from~\eqref{equ:diagram_3x3_sdo_b:reduced}.
However, such a statement is not in fact proved.
The reason is that the above claims were just direct applications of \cite[Theorem~2.5]{Maksymenko:TA:2020} being in turn an \myemph{incorrect} formulation of a realization theorem for crystallographic groups.
That \cite[Theorem~2.5]{Maksymenko:TA:2020} states that for \myemph{every} short exact sequence $\bZ^k \monoArrow \Gamma \epiArrow G$ with finite $G$ and torsion free $\Gamma$, there exists a free action of $G$ on $\torus{k}$ such that $\pi_1(\torus{k}/G) \cong \Gamma$.
In fact, in general one should additionally require that $\bZ^k$ is a \myemph{maximal abelian subgroup of $\Gamma$} (this is a theorem by H.~Zassenhaus, see e.g.~\cite[Theorem~2.2]{Szczepanski:ADM:2012}).

Nevertheless this does not mean that \cite[Corollary~1.3 \& Theorem~5.10]{Maksymenko:TA:2020} are wrong.
Indeed, consider the following short exact sequence $\seqZ{n}: n\bZ \monoArrow \bZ \epiArrow \bZ_n$.
Then $n\bZ$ is not a maximal abelian subgroup of $\bZ$, however there exists an action of $\bZ_n$ on $\torus{1} = \Circle$ such that $\pi_1(\torus{1}/\bZ_3) \cong \bZ$.

Therefore to complete the proofs of \cite[Corollary~1.3 \& Theorem~5.10]{Maksymenko:TA:2020} one needs to thoroughly check conditions on $(\func,\XSman)$ when such actions of $\GrpKRIsotId{\func,\XSman}$ on some tori exist.
This will be done in another paper.
\end{smallproof}

In next sections we will describe algebraic structure of $\pi_1\OrbitPathComp{\func,\XSman}{\func}$ for the case when $\Mman$ is orientable and distinct from $S^2$.

\section{Maps on surfaces with $\chi(\Mman)<0$}\label{sect:chi_neg}
Let $\Mman$ be compact surface, $\func\in\FSP{\Mman}{\Pman}$, and $\XSman$ an $\func$-adapted submanifold.
In this section we give a complete description of algebraic structure of Bieberbach sequence~\eqref{equ:DSG_sequence_fX}:
\[
    \seqStab{\func,\XSman}: \quad \pi_0\FolStabilizer{\func,\XSman} \monoArrow \pi_0\Stabilizer{\func,\XSman} \epiArrow \pi_0\GrpKR{\func,\XSman}.
\]
for the case when $\Mman$ is orientable and differs from $S^2$.

\subsection{Reduction to the case $\chi(\Mman)\geq0$}
Theorem~\ref{th:stab:chi_neg} below shows that computations of $\seqStabIsotId{\func,\XSman}$ completely reduces to the case when $\chi(\Mman)\geq0$ (i.e. is one of the following surfaces: $\Sphere$, $\Torus$, $\Disk$, $\Circle\times[0,1]$, $\PrjPlane$, M\"obius band and Klein bottle) and $\XSman$ consists of critical points of $\func$ and boundary components of $\Mman$.

Let $\regN{\XSman}$ be a $\func$-regular neighborhood of $\XSman$ and $D_1,\ldots,D_q$ all the connected components of the closure $\overline{\Mman\setminus\regN{\XSman}}$ which are diffeomorphic with a $2$-disk.
Then the union
\[ \canN{\XSman}: = \regN{\XSman} \cup D_1\cup\ldots\cup D_q \]
is called a \myemph{canonical neighborhood} $\canN{\XSman}$ of $\XSman$ (corresponding to $\regN{\XSman}$), see~\cite{JacoShalen:Topology:1977}.

\begin{subtheorem}\label{th:stab:chi_neg}{\rm(\cite{Maksymenko:MFAT:2010}, \cite[Theorem~5.4]{Maksymenko:TA:2020})}
Suppose $\Mman$ is connected with $\chi(\Mman)<0$.
Let also $\crLev$ be the union of all non-extremal critical leaves of $\func$ \myemph{whose canonical neighborhoods have negative Euler characteristic}, $\regN{\crLev}$ an $\func$-regular neighborhood of $\crLev$, $\Bman_1,\ldots,\Bman_{\cnt}$ all the connected components of $\overline{\Mman\setminus\regN{\crLev}}$, and $\hXman_i := \Bman_i \cap (\XSman\cup\regN{\crLev})$, $i=1,\ldots,\cnt$.
Then
\begin{enumerate}[leftmargin=5ex, topsep=0.8ex, itemsep=0.8ex, label=$(\arabic*)$]
\item\label{enum:pi0SfX:decomp:1}
each $\Bman_i$ is diffeomorphic either with a $2$-disk or a cylinder or a M\"obius band;
\item\label{enum:pi0SfX:decomp:2}
the inclusion $\bigl( \StabilizerIsotId{\func,\XSman\cup\regN{\crLev}}, \FolStabilizerIsotId{\func,\XSman\cup\regN{\crLev}} \bigr) \subset \bigr(\StabilizerIsotId{\func, \XSman}, \FolStabilizerIsotId{\func, \XSman}\bigr)$ is a homotopy equivalence, inducing therefore an isomorphism
\[
    \seqStabIsotId{\func,\XSman} \cong \seqStabIsotId{\func,\XSman\cup\regN{\crLev}} \cong \myprod\limits_{i=1}^{\cnt}\seqStabIsotId{\func|_{\Bman_i},\hXman_i}.
\]
\end{enumerate}
\end{subtheorem}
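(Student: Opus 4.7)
The plan is to split the proof according to the two claims. I would first verify the topological classification~\ref{enum:pi0SfX:decomp:1} by examining what critical leaves can survive in each complementary piece $\Bman_i$, and then derive the splitting~\ref{enum:pi0SfX:decomp:2} by concatenating Lemmas~\ref{lm:reduct:reg_nbh} and~\ref{lm:reduction_Mconn_V_dM} after first enlarging the fixed set from $\XSman$ to $\XSman\cup\crLev$.

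For~\ref{enum:pi0SfX:decomp:1}, by definition $\crLev$ is the union of those non-extremal critical leaves whose canonical neighborhoods have negative Euler characteristic. Using the local description from~\S\ref{sect:homo_poly} and Figure~\ref{fig:isol_crit_pt}, every critical leaf of $\func$ falls into exactly one of the following classes: a single local extremum (canonical neighborhood a disk, $\chi=1$); a circle consisting solely of quasi-saddles (canonical neighborhood a cylinder or a M\"obius band, $\chi=0$); or a graph containing at least one genuine saddle ($p\geq 2$), whose canonical neighborhood one computes to have $\chi<0$. Thus $\crLev$ is precisely the union of the third type. Writing $\Mman=\regN{\crLev}\cup\Bman_1\cup\cdots\cup\Bman_{\cnt}$ with intersections along regular leaves (circles) yields $\chi(\Mman)=\chi(\regN{\crLev})+\sum_i\chi(\Bman_i)$; since $\func|_{\Bman_i}$ has only local extrema or cycles of quasi-saddles as critical set, a direct surface-classification argument (capping off every extremum to a disk and cutting along every quasi-saddle circle) forces each $\Bman_i$ to be a $2$-disk, a cylinder, or a M\"obius band.

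For~\ref{enum:pi0SfX:decomp:2} I would proceed in three steps. Step~A: establish a homotopy equivalence of pairs
\[
  (\StabilizerIsotId{\func,\XSman\cup\crLev},\FolStabilizerIsotId{\func,\XSman\cup\crLev}) \ \subset \ (\StabilizerIsotId{\func,\XSman},\FolStabilizerIsotId{\func,\XSman}).
\]
Step~B: apply Lemma~\ref{lm:reduct:reg_nbh} to replace $\XSman\cup\crLev$ by $\XSman\cup\regN{\crLev}$ up to homotopy equivalence, giving $\seqStabIsotId{\func,\XSman\cup\crLev}\cong\seqStabIsotId{\func,\XSman\cup\regN{\crLev}}$. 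Step~C: apply Lemma~\ref{lm:reduction_Mconn_V_dM} with the $\func$-adapted set $\XSman\cup\regN{\crLev}$, whose complement in $\Mman$ is $\Bman_1\sqcup\cdots\sqcup\Bman_{\cnt}$, to obtain the product decomposition $\seqStabIsotId{\func,\XSman\cup\regN{\crLev}}\cong\myprod_{i=1}^{\cnt}\seqStabIsotId{\func|_{\Bman_i},\hXman_i}$.

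The main obstacle is Step~A: showing that any continuous family $\dif_t$ in $\StabilizerIsotId{\func,\XSman}$ can be deformed, through a suitable family in the same space, to one whose endpoint pointwise fixes $\crLev$. My strategy is leaf-by-leaf and vertex-by-vertex: since the family is continuous and can be normalized so that $\dif_0=\id_{\Mman}$, each $\dif_t$ fixes every saddle of $\crLev$ setwise, and the linear rigidity from Lemma~\ref{lm:LStabf} forces the tangent directions along every saddle separatrix to be fixed as well. One then straightens $\dif_t$ along each edge of the leaf using the product structure on an $\func$-regular neighborhood --- this is the flow-box/straightening technique that runs through the author's earlier papers. Combined with Lemma~\ref{lm:DeltaIdfX}, which identifies $\StabilizerId{\func,\XSman}$ with the identity component of $\FolStabilizerIsotId{\func,\XSman}$, this yields the required deformation. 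Once Step~A is settled, Steps~B and~C are formal applications of the already-established lemmas.
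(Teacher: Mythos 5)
The paper itself does not prove Theorem~\ref{th:stab:chi_neg} --- it cites \cite{Maksymenko:MFAT:2010} and \cite[Theorem~5.4]{Maksymenko:TA:2020} --- so your proposal can only be judged on its own terms, and unfortunately it contains two concrete gaps. First, for part~\ref{enum:pi0SfX:decomp:1} your classification of critical leaves is incorrect: you assert that \emph{every} critical leaf containing a genuine saddle ($p\geq 2$) has a canonical neighborhood with negative Euler characteristic, and hence that $\crLev$ is exactly the set of saddle-containing leaves. This is false. A critical leaf containing a single non-degenerate saddle has $\chi(\regNK)=-1$, but if two of the three complementary regions of $\regNK$ are disks (for instance, if two of the three regions each contain only one local extremum), then the canonical neighborhood $\canNK$ acquires two disk caps and becomes a disk with $\chi(\canNK)=1$, or with one cap becomes a cylinder with $\chi=0$. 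Such a leaf is excluded from $\crLev$ by definition, and it will sit inside one of the pieces $\Bman_i$. Your subsequent assertion ``$\func|_{\Bman_i}$ has only local extrema or cycles of quasi-saddles as critical set'' is therefore wrong, and the surface-classification argument you invoke for $\Bman_i$ does not go through as stated. The statement~\ref{enum:pi0SfX:decomp:1} is still true, but it requires understanding precisely which saddle leaves can survive inside a $\Bman_i$ --- namely those whose canonical neighborhoods are disks, cylinders or M\"obius bands --- and then arguing inductively or via incompressibility (the actual mechanism in \cite{Maksymenko:MFAT:2010}).

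Second, your Step~A is the technical heart and its sketch does not hold up. You invoke ``linear rigidity from Lemma~\ref{lm:LStabf}'' to fix tangent directions along saddle separatrices, but Lemma~\ref{lm:LStabf}\ref{enum:lm:LStabf:nondeg_saddle} explicitly states that for a \emph{non-degenerate} saddle the orientation-preserving linear stabilizer is the full one-parameter group $\{\bigl(\begin{smallmatrix} t^{-1}&0\\0&t\end{smallmatrix}\bigr):t\neq 0\}$ --- there is no finite rigidity there, and a continuous family of germs preserving $xy$ may scale the separatrices arbitrarily. The rigidity phenomenon in Lemma~\ref{lm:LStabf} applies only to degenerate critical points (part~\ref{enum:lm:LStabf:degen}), whereas $\crLev$ will typically consist of leaves through non-degenerate saddles. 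Moreover, Step~A also silently assumes that every $\dif\in\StabilizerIsotId{\func,\XSman}$ preserves each leaf of $\crLev$ setwise; since $\GrpKRIsotId{\func,\XSman}$ need not be trivial for non-generic $\func$ (Lemma~\ref{lm:GfX}), some argument (using that leaves of $\crLev$ are incompressible, hence not permutable by diffeomorphisms isotopic to the identity) is required before one can even begin the leaf-by-leaf straightening. Steps~B and~C are formal applications of Lemmas~\ref{lm:reduct:reg_nbh} and~\ref{lm:reduction_Mconn_V_dM} and are sound once Step~A is repaired, but as written the proposal does not establish the homotopy equivalence of pairs.
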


\subsection{Relation between sequences $\seqStabIsotId{\func,\XSman}$ and $\seqStab{\func}$}
Denote by
\[ \jInclZ:\pi_0\StabilizerIsotId{\func,\XSman} \to \pi_0\StabilizerIsotId{\func}\]
the homomorphism induced by the inclusion $\jIncl:\StabilizerIsotId{\func,\XSman} \subset \StabilizerIsotId{\func}$.

\begin{sublemma}[\!\!{\cite[Lemma~5.1]{Maksymenko:TA:2020}}]\label{lm:incl_SprfX_Sprf}
Suppose $\Mman$ is connected and $\XSman$ is a \myemph{non-empty} union of several boundary components of $\Mman$.
Suppose also that $\StabilizerId{\func}$ is contractible.
\begin{enumerate}[wide, label={\rm(\roman*)}, itemsep=0.8ex]
\item\label{enum:xx:DOS__jSdS}
Then we have the following commutative diagram:
{\small\begin{equation}\label{equ:DOS__jSdS}
\begin{aligned}
\xymatrix{
    \pi_1\DiffId(\Mman) \ar@{^(->}[r]^-{p_1} \ar@/_2.8pc/[ddrrr]_-{\alpha}  &
    \pi_1\OrbitPathComp{\func}{\func}  \ar@{->>}[rr]^-{\partial} \ar[drr]^-{\beta} &&
    \pi_0\StabilizerIsotId{\func}   \\
%%%%%%
    &
    \pi_1\OrbitPathComp{\func,\XSman}{\func}
    \ar[u]_-{\cong}^{\text{\rm Theorem~\ref{th:SDO_lt_fibr}\ref{enum:th:SDO_lt_fibr:2.1}}}
    \ar@{->>}[rr]^-{\cong}_-{\eqref{equ:pi1OfX_pi0SfX}} &&
    \pi_0\StabilizerIsotId{\func,\XSman} \ar@{->>}[u]_-{\jInclZ} \\
%%%%%
    & && \ker(\jInclZ) \ar@{^(->}[u]
}
\end{aligned}
\end{equation}}%
In particular, it induces an isomorphism between the right column and the upper row which coincides with the sequence~\eqref{equ:exact_seq_for_pi1OfX}.

\item\label{enum:xx:3x3_diagram}
We also have the following exact $(3\times3)$-diagram:
{\small\begin{equation}\label{equ:3x3_diagram:ker_j}
\begin{gathered}
\xymatrix@R=3ex{
% & & \ker(\jInclZ) \ \ar@{^{(}->}[d] \ar@{=}[r]     &
& & \ker(\jInclZ) \ \ar@{^{(}->}[d]^(.25){} \ar@{=}[r]     &
\ \ker(\jInclZ) \ \ar@{^{(}->}[d] \ar[r]                   &
\ \{1\} \ \ar@{^{(}->}[d] \\
%%%%%%%%%%%%%%%%%%
 \seqStabIsotId{\func,\XSman}& \hspace{-1cm}: &
\ \pi_0\FolStabilizerIsotId{\func,\XSman} \ \ar@{->>}[d] \ar@{^{(}->}[r] &
\ \pi_0\StabilizerIsotId{\func,\XSman} \ \ar@{->>}[d]^{\jInclZ} \ar@{->>}[r]^-{\fc_\XSman} &
\ \GrpKRIsotId{\func,\XSman} \ar[d]_-{\cong}^-{\GHomIsotId{\XSman,\varnothing}} \\
%%%%%%%%%%%%%%%%%%%%%%
 \seqStabIsotId{\func}& \hspace{-1cm}: &
% \ \pi_0\FolStabilizerIsotId{\func} \ \ar@{^{(}->}[r] \ar[u]^{\xi}            &
\ \pi_0\FolStabilizerIsotId{\func} \ \ar@{^{(}->}[r] \ar@/^5pt/[u]^{\xi}            &
\ \pi_0\StabilizerIsotId{\func} \ \ar@{->>}[r]^-{\fc} &
\ \GrpKRIsotId{\func}
}
\end{gathered}
\end{equation}}%
Since the groups $\pi_0\FolStabilizerIsotId{\func,\XSman}$ and $\pi_0\FolStabilizerIsotId{\func}$ are free abelian, the left column splits, so there is a section
$\xi:\pi_0\FolStabilizerIsotId{\func} \to \pi_0\FolStabilizerIsotId{\func,\XSman}$.

\item\label{enum:xx:chiM_neg}
If $\chi(\Mman)<0$, and thus $\DiffId(\Mman)$ is contractible, then by~\eqref{equ:DOS__jSdS}, $\jInclZ$ is an isomorphism, and~\eqref{equ:3x3_diagram:ker_j} yields an isomorphism  $\seqStabIsotId{\func,\XSman}\cong \seqStabIsotId{\func}$.

\item\label{enum:xx:chiM_nonneg}
If $\chi(\Mman)\geq0$, so $\Mman$ is either a $2$-disk, or a cylinder, or a M\"obius band (since $\partial\Mman\not=\varnothing$), then $\ker(\jInclZ) \cong \pi_1\DiffId(\Mman)\cong\bZ$ due to~\eqref{equ:DOS__jSdS}, and the diagram~\eqref{equ:3x3_diagram:ker_j} is a short exact sequence $\seqZ{1} \monoArrow \seqStabIsotId{\func,\XSman} \epiArrow \seqStabIsotId{\func}$.

In fact, for $\Mman$ being a $2$-disk of a M\"obius band, then $\ker(\jInclZ)$ is generated by a Dehn twist along boundary component of $\Mman$, and for $\Mman=\Circle\times[0,1]$ the kernel $\ker(\jInclZ)$ is generated by a pair of Dehn twists in different directions along boundary components of $\Mman$.
\end{enumerate}
\end{sublemma}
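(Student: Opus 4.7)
The overall plan is to compare the principal fibrations of Theorem~\ref{th:SDO_lt_fibr}\ref{enum:th:SDO_lt_fibr:2} for the two pairs $(\func,\varnothing)$ and $(\func,\XSman)$, exploit the fact that both fibrations share the same base $\OrbitPathComp{\func}{\func}$ (Theorem~\ref{th:SDO_lt_fibr}\ref{enum:th:SDO_lt_fibr:2.1}), and then read off parts~\ref{enum:xx:DOS__jSdS} and~\ref{enum:xx:3x3_diagram} from the induced morphism of long exact sequences. Since $\XSman$ contains a boundary component, $|\XSman|=\infty>\chi(\Mman)$, so Corollary~\ref{cor:sed}\ref{enum:cor:sed:chi_leq_X} gives contractibility of $\DiffId(\Mman,\XSman)$ and thereby the isomorphism $\partial_1:\pi_1\OrbitPathComp{\func,\XSman}{\func}\xrightarrow{\cong}\pi_0\StabilizerIsotId{\func,\XSman}$ in the lower row of~\eqref{equ:DOS__jSdS}. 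The hypothesis that $\StabilizerId{\func}$ is contractible combined with Corollary~\ref{cor:sed}\ref{enum:cor:sed:pinOf} produces the upper row of~\eqref{equ:DOS__jSdS}, and commutativity of the whole diagram is functoriality of $\pi_*$ applied to the inclusion $\DiffId(\Mman,\XSman)\subset\DiffId(\Mman)$; this proves~\ref{enum:xx:DOS__jSdS}.

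For~\ref{enum:xx:3x3_diagram}, the $(3\times 3)$-diagram~\eqref{equ:3x3_diagram:ker_j} is assembled from the two Bieberbach sequences $\seqStabIsotId{\func,\XSman}$ and $\seqStabIsotId{\func}$ connected vertically by the restriction maps induced by the inclusion $\DiffId(\Mman,\XSman)\subset\DiffId(\Mman)$. The decisive verification is that the right vertical $\GHomIsotId{\XSman,\varnothing}$ is an isomorphism: injectivity is automatic because both groups sit in the same $\Homeo(\EKRGraphf)$, and surjectivity amounts to showing that any $\dif\in\StabilizerIsotId{\func}$ can be modified by an element of $\FolStabilizer{\func}$ (hence without altering its action on $\EKRGraphf$) into one that is stationary on a regular neighborhood of $\XSman$; such a modification is possible by axiom~\ref{axiom:bd}, which guarantees that $\func$ has a trivial collar fibration structure near each boundary component. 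A short chase then shows that the middle vertical $\jInclZ$ is surjective and that the kernel of the left vertical agrees with $\ker(\jInclZ)$, producing the top row. The splitting $\xi$ exists because, by Lemma~\ref{lm:pi0_Delta}, both $\pi_0\FolStabilizerIsotId{\func,\XSman}$ and $\pi_0\FolStabilizerIsotId{\func}$ are free abelian, and every epimorphism onto a free abelian group admits a section.

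Parts~\ref{enum:xx:chiM_neg} and~\ref{enum:xx:chiM_nonneg} are immediate specializations. When $\chi(\Mman)<0$, Lemma~\ref{lm:long_ex_seq_pj} yields $\pi_1\DiffId(\Mman)=0$, so~\eqref{equ:DOS__jSdS} forces $\ker(\jInclZ)=0$, and~\eqref{equ:3x3_diagram:ker_j} collapses to the isomorphism $\seqStabIsotId{\func,\XSman}\cong\seqStabIsotId{\func}$. When $\chi(\Mman)\geq 0$ with $\partial\Mman\neq\varnothing$, the surface must be $\Disk$, $\Circle\times\UInt$, or $\MobiusBand$, and Table~\ref{tbl:hom_type_DidMX} supplies $\pi_1\DiffIdM\cong\bZ$ generated by an isotopy rotating a boundary collar once. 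Lifting this loop through $\partial_1$ as described in Remark~\ref{rem:construction_partial_1} recovers the claimed boundary Dehn twist (or pair of opposite twists in the annulus case) as generator of $\ker(\jInclZ)$, whence~\eqref{equ:3x3_diagram:ker_j} becomes the short exact sequence $\seqZ{1}\monoArrow\seqStabIsotId{\func,\XSman}\epiArrow\seqStabIsotId{\func}$.

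The main obstacle is the surjectivity of $\GHomIsotId{\XSman,\varnothing}$, equivalently of $\jInclZ$. It rests on a collar-flow argument exploiting axiom~\ref{axiom:bd}: since $\func$ is constant on each boundary component and has no critical points there, an $\func$-regular neighborhood of each component of $\XSman$ is foliated by regular level curves, and one can push an isotopy along this foliation to make any prescribed element of $\StabilizerIsotId{\func}$ coincide with the identity on that neighborhood while remaining in $\Stabilizer{\func}$ throughout. Once this ingredient is available, the remainder of the proof reduces to diagram chasing and the splitting property of free abelian groups recorded above.
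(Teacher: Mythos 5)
Your overall strategy — compare the two principal fibrations over the common orbit path-component (via Theorem~\ref{th:SDO_lt_fibr}\ref{enum:th:SDO_lt_fibr:2.1}), apply Corollary~\ref{cor:sed} to produce the two short exact sequences, read off diagram~\eqref{equ:DOS__jSdS} from functoriality, and then derive~\eqref{equ:3x3_diagram:ker_j} by a snake-lemma chase before specializing for~\ref{enum:xx:chiM_neg} and~\ref{enum:xx:chiM_nonneg} — is the natural route, and it matches the structure of the lemma; the paper itself only cites \cite[Lemma~5.1]{Maksymenko:TA:2020} without reproducing the argument, so I am checking your outline against the available tools.

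There is, however, a genuine gap in the collar-flow argument you use for surjectivity of $\GHomIsotId{\XSman,\varnothing}$. You modify $\dif\in\StabilizerIsotId{\func}$ by an element of $\FolStabilizer{\func}$ supported in a boundary collar to produce a $\dif'$ that is stationary near $\XSman$; but this only places $\dif'$ in $\Stabilizer{\func}\cap\Diff(\Mman,\XSman)$, not in $\StabilizerIsotId{\func,\XSman} = \Stabilizer{\func}\cap\DiffId(\Mman,\XSman)$. The issue is exactly the boundary Dehn-twist obstruction: a diffeomorphism that is the identity on $\XSman$ and isotopic to the identity in $\Diff(\Mman)$ need not be isotopic to the identity rel $\XSman$, and indeed that phenomenon is precisely what makes $\ker(\jInclZ)$ nontrivial in part~\ref{enum:xx:chiM_nonneg}. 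The argument can be patched — one can further compose $\dif'$ with a suitable power of a boundary-parallel Dehn twist lying in $\FolStabilizer{\func}$, which does not change $\rho(\dif')$, to land in $\DiffId(\Mman,\XSman)$ — but this step is non-trivial and you did not address it. The cleaner and, I believe, intended path is to notice that surjectivity of $\jInclZ$ is already forced by the commutativity of your diagram~\eqref{equ:DOS__jSdS}: since $\partial$ is surjective and $\partial = \jInclZ\circ\beta$ with $\beta$ an isomorphism, $\jInclZ$ is onto with no additional geometric input. Once $\jInclZ$ is known to be surjective, the isomorphism for $\GHomIsotId{\XSman,\varnothing}$ (injectivity being the automatic inclusion of subgroups of $\Homeo(\EKRGraphf)$ that you noted) and the exactness of~\eqref{equ:3x3_diagram:ker_j} follow by a pure diagram chase, so the collar argument can be dropped altogether. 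The rest of your proposal — contractibility of $\DiffId(\Mman,\XSman)$ from $|\XSman|=\infty$, the splitting $\xi$ from free abelianness of the $\pi_0\Delta'$-groups, and the specializations in~\ref{enum:xx:chiM_neg},~\ref{enum:xx:chiM_nonneg} via Table~\ref{tbl:hom_type_DidMX} and Remark~\ref{rem:construction_partial_1} — is correct.
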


In the next two sections we will describe the algebraic structure of $\seqStab{\func,\XSman}$ for the cases when $\Mman$ is a $2$-disk, cylinder and torus.
Together with Theorem~\ref{th:stab:chi_neg} this will give complete information on $\seqStab{\func,\XSman}$ for all orientable surfaces distinct from $\Sphere$.

\section{Maps on $2$-disk and cylinder}\label{sect:maps_disk_cyl}
Suppose $(\Mman,\XSman)$ is one of the pairs $(\Disk,\partial\Disk)$ or $(\Circle\times\UInt, \Circle\times 0)$. % and $\func\in\FF(\Mman,\Pman)$.
Then the group $\Diff(\Mman, \XSman)$ is connected (in fact even contractible, see~Table~\ref{tbl:hom_type_DidMX}).
Hence $\Diff(\Mman, \XSman) = \DiffId(\Mman, \XSman)$ and
\begin{equation}\label{equ:StabIsotIdD2}
 \StabilizerIsotId{\func,\XSman} = \Stabilizer{\func,\XSman} \cap\DiffId(\Mman, \XSman)= \Stabilizer{\func,\XSman} \cap\Diff(\Mman, \XSman) = \Stabilizer{\func,\XSman}.
\end{equation}
Similarly, $\FolStabilizerIsotId{\func,\XSman} = \FolStabilizer{\func,\XSman}$.
This implies that $\seqStab{\func,\XSman} = \seqStabIsotId{\func,\XSman}$.

Recall that if $\StabilizerId{\func}$ is contractible, then by Lemma~\ref{lm:incl_SprfX_Sprf}\ref{enum:xx:chiM_nonneg} we also have the following short exact sequence:
\begin{equation}\label{equ:z1_bfdM_bf}
    \seqZ{1} \monoArrow \seqStabIsotId{\func,\XSman} \epiArrow \seqStabIsotId{\func}
\end{equation}
which is the same as the diagram~\eqref{equ:3x3_diagram:ker_j}.

\subsection{Maps with minimal number of critical points}

\begin{subtheorem}\label{th:stab:cylinder}{\rm(\cite[Theorem~5.5]{Maksymenko:TA:2020})}
Let $(\Mman,\XSman) = (\Circle\times\UInt, \Circle\times0)$.
\begin{enumerate}[leftmargin=*, label={\rm(\arabic*)}, itemsep=0.8ex]
\item\label{eqnu:th:cyl:func:incl}
For each $\func\in\FF(\Mman,\Pman)$ the inclusion of pairs
\begin{multline*}
\bigl(
\StabilizerIsotId{\func, \partial\Mman},\,  \FolStabilizerIsotId{\func, \partial\Mman}
\bigr)
 \subset
\bigl( \StabilizerIsotId{\func, \XSman},\, \FolStabilizerIsotId{\func, \XSman} \bigr)
 \stackrel{\eqref{equ:StabIsotIdD2}}{\equiv}
\bigl( \Stabilizer{\func, \XSman},\, \FolStabilizer{\func, \XSman} \bigr)
\end{multline*}
is a homotopy equivalence, whence $\seqStabIsotId{\func,\partial\Mman} \cong \seqStabIsotId{\func,\XSman} \equiv \seqStab{\func,\XSman}$.

\item\label{eqnu:th:cyl:func:no_cr_pt}
If $\func\in\FF(\Mman,\Pman)$ has \myemph{no critical points}, then
\begin{align*}
    \pi_0\StabilizerIsotId{\func,\XSman}  &= 0, &
    \pi_0\Stabilizer{\func,\partial\Mman} &=\bZ.
\end{align*}
In particular, $\sDSG{\func,\XSman} \cong \seqTriv: \{1\} \monoArrow \{1\} \epiArrow \{1\}$.
\end{enumerate}
\end{subtheorem}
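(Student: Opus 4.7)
The plan is to prove both claims by localizing near the free boundary component $\Circle\times 1$ in a straightened $\func$-regular collar. By axiom~\ref{axiom:bd} $\Circle\times 1$ is a regular leaf of $\func$, and since critical points of $\func$ are isolated (\S\ref{sect:isolated_cr_pt}), there exists $\epsilon>0$ such that $U:=\Circle\times[1-\epsilon,1]$ contains no critical points and no critical leaves. Integrating a gradient-like vector field transverse to the leaves in $U$ lets me change coordinates so that $\func|_U(z,t)=\phi(t)$ with $\phi'(t)\neq 0$; then every $h\in\Stabilizer{\func,\XSman}$ acts on $U$ as $h(z,t)=(H_t(z),t)$ with $t\mapsto H_t$ a smooth path in $\Diff^+(\Circle)$ (orientation preservation follows because $h\in\Diff(\Mman,\XSman)=\DiffId(\Mman,\XSman)$, the latter being contractible by Table~\ref{tbl:hom_type_DidMX}).

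For part~\ref{eqnu:th:cyl:func:incl}, I construct a strong deformation retraction of the pair $(\Stabilizer{\func,\XSman},\FolStabilizer{\func,\XSman})$ onto $(\StabilizerIsotId{\func,\partial\Mman},\FolStabilizerIsotId{\func,\partial\Mman})$. Choose a smooth bump $\lambda:[0,1]\to[0,1]$ supported in $[1-\epsilon/2,1]$ with $\lambda(1)=1$, and a continuous selection $R\mapsto \gamma_R$ of smooth paths $\gamma_R:[0,1]\to\Diff^+(\Circle)$ with $\gamma_R(0)=\id$, $\gamma_R(1)=R^{-1}$, and $\gamma_{\id}\equiv\id$. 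Define
\[
g_s(h)(z,t):=\begin{cases}\bigl(\gamma_{H_1}(s\lambda(t))(z),\,t\bigr),&(z,t)\in U,\\(z,t),&\text{otherwise,}\end{cases}
\]
which is a $\func$-preserving diffeomorphism of $\Mman$ because $\func|_U$ is independent of $z$. The path $h_s:=g_s(h)\circ h$ lies in $\Stabilizer{\func,\XSman}$ with $h_0=h$ and $h_1|_{\Circle\times 1}=R^{-1}\circ R=\id$, so $h_1\in\Stabilizer{\func,\partial\Mman}$. Since $g_s(h)$ is supported in $U$, away from all critical leaves and degenerate extremes, it belongs to $\FolStabilizer{\func}$, so the deformation restricts to $\FolStabilizer{\func,\XSman}$. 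The retraction $r(h):=h_1$ is continuous with $r(\id)=\id$, hence sends each path-component of $\Stabilizer{\func,\XSman}$ meeting $\id$ into the identity component of $\Stabilizer{\func,\partial\Mman}$, which is contained in $\StabilizerIsotId{\func,\partial\Mman}$; the choice $\gamma_{\id}\equiv\id$ ensures $r|_{\StabilizerIsotId{\func,\partial\Mman}}=\id$. Combined with $h_s$, this supplies the required strong deformation retraction and the claimed isomorphism.

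Part~\ref{eqnu:th:cyl:func:no_cr_pt} is then a direct computation. Since $\func$ has no critical points, Ehresmann's theorem together with axiom~\ref{axiom:bd} gives a locally trivial circle fibration $\func:\Circle\times\UInt\to\func(\Circle\times\UInt)$ over a contractible arc, trivializable so that, up to diffeomorphism, $\func(z,t)=t$. Then $\Stabilizer{\func,\XSman}$ is homeomorphic to the contractible space of smooth paths $(H_t)_{t\in[0,1]}$ in $\Diff^+(\Circle)$ with $H_0=\id$, so $\pi_0\StabilizerIsotId{\func,\XSman}=0$, while $\Stabilizer{\func,\partial\Mman}$ is the based loop space of $\Diff^+(\Circle)\homeq SO(2)$, whose $\pi_0$ equals $\pi_1 SO(2)=\bZ$. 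The graph $\KRGraphf$ is a single edge whose endpoints represent the two boundary components; since $\XSman=\Circle\times 0$ pins down one endpoint, every $h\in\Stabilizer{\func,\XSman}$ acts trivially on $\KRGraphf$, giving $\FolStabilizer{\func,\XSman}=\Stabilizer{\func,\XSman}$ and trivial $\GrpKR{\func,\XSman}$, whence $\sDSG{\func,\XSman}\cong\seqTriv$.

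The main obstacle is the continuous selection $R\mapsto\gamma_R$ used in part~\ref{eqnu:th:cyl:func:incl}. Because $\Diff^+(\Circle)\homeq SO(2)$ is not simply connected, no canonical global path from $\id$ to $R^{-1}$ exists; one resolves this by composing the canonical deformation retraction $\Diff^+(\Circle)\to SO(2)$ with a smooth selection of shortest-rotation paths in $SO(2)$ from $\id$ to the image, handling the antipode via a local angle-valued lift to the universal cover $\bR$. Any residual discontinuity introduces only an ambiguous Dehn-twist coefficient, which is absorbed upon passage to the identity component of $\Stabilizer{\func,\partial\Mman}$ required by the target $\StabilizerIsotId{\func,\partial\Mman}$.
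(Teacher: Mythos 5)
Your computation in part~(2) is essentially correct, modulo one small inaccuracy: when $\Pman=\Circle$ the image $\func(\Mman)$ may be all of $\Circle$ (e.g.\ $\func(z,t)=e^{4\pi i t}$), so $\func$ need not be a fibration over a ``contractible arc'' and Ehresmann does not apply as stated; the correct normal form, obtained by straightening the foliation by leaves, is $\func(z,t)=\phi(t)$ for a submersion $\phi\colon[0,1]\to\Pman$ which need not be injective, and your identification of $\Stabilizer{\func,\XSman}$ with the based path space of $\Diff^{+}(\Circle)$ and of $\Stabilizer{\func,\partial\Mman}$ with its based loop space only uses this.

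Part~(1), however, has a genuine gap which the offered fix does not close. You correctly note that a continuous selection $R\mapsto\gamma_R$ of paths from $\id$ to $R^{-1}$ in $\Diff^{+}(\Circle)$ cannot exist globally (it would be a section of the endpoint-evaluation fibration from the based path space, impossible since $\pi_1\Diff^{+}(\Circle)\cong\bZ\neq 0$). But the ``shortest rotation in $SO(2)$'' recipe is irreparably discontinuous at the rotation by $\pi$, and the claim that the resulting discontinuity in $r$ is ``absorbed upon passage to the identity component'' is not an argument: a discontinuous $r$ is simply not a retraction, and no after-the-fact quotient restores continuity. There is a second, independent gap: you only verify that components of $\Stabilizer{\func,\XSman}$ meeting $\id$ are sent into $\StabilizerIsotId{\func,\partial\Mman}$, yet $\pi_0\Stabilizer{\func,\XSman}$ is in general nontrivial and nothing in the construction controls whether $r(h)$ on the remaining components is isotopic to $\id$ relative to $\partial\Mman$ rather than to a nontrivial power of the boundary Dehn twist. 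The missing ingredient is the contractibility of $\DiffId(\Mman,\XSman)=\Diff(\Mman,\XSman)$ from Table~\ref{tbl:hom_type_DidMX}: the map $r\colon\Stabilizer{\func,\XSman}\to\Diff^{+}(\Circle)$, $h\mapsto h|_{\Circle\times 1}$, factors through this contractible group and is therefore null-homotopic; more precisely, since each path component of $\Stabilizer{\func,\XSman}$ is contractible by Theorem~\ref{th:stabilizer}, one can lift $r$ against the path fibration on each component separately and adjust the integer ambiguity of the lift component-by-component so that $r(h)\in\StabilizerIsotId{\func,\partial\Mman}$ (each component does meet $\StabilizerIsotId{\func,\partial\Mman}$, since the boundary Dehn twist at $\Circle\times1$ is connected to $\id$ inside $\Stabilizer{\func,\XSman}$). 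Alternatively, drop the retraction entirely and deduce the homotopy equivalence from the fibration $\Stabilizer{\func,\partial\Mman}\monoArrow\Stabilizer{\func,\XSman}\xrightarrow{r}\Diff^{+}(\Circle)$, the vanishing of $\pi_1\Stabilizer{\func,\XSman}\to\pi_1\Diff^{+}(\Circle)$, the identification of the boundary map image with the Dehn-twist class, and Whitehead's theorem; the same reasoning applies verbatim to the $\Delta$-groups.
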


\begin{subtheorem}\label{th:stab:disk:one_crpt}{\rm(\cite[Theorem~5.6]{Maksymenko:TA:2020})}
Suppose $\func\in\FF(\Disk,\Pman)$ has a \myemph{unique critical point} $z$ being therefore a \myemph{local extreme}.
\begin{enumerate}[label={\rm(\arabic*)}, itemsep=0.8ex, topsep=0.8ex]
\item\label{enum:2disk:1crpt:nondeg}
If $z$ is \myemph{non-degenerate}, then
$\sDSG{\func,\partial\Disk} \cong \seqTriv: \{1\} \monoArrow \{1\} \epiArrow \{1\}$.
\item\label{enum:2disk:1crpt:deg}
If $z$ is \myemph{degenerate} of symmetry index $m$, then $\StabilizerId{\func}$ is contractible, and the sequence~\eqref{equ:z1_bfdM_bf} is isomorphic with $(3\times3)$-diagram $\qwSeq{\seqZ{m}}$, see~\eqref{equ:u_div_u}:
\[
 \xymatrix@C=2.2ex@R=2ex{
 \seqZ{1}\ar@{^(->}[d]                                          & \hspace{-1cm}: & m\bZ \ar@{^(->}[d] \ar@{=}[r]    & m\bZ \ar@{^(->}[d] \ar@{->>}[r] & 0 \ar@{^(->}[d] \\
 \seqStabIsotId{\func,\partial\Disk}\cong \seqZ{m} \ar@{->>}[d] & \hspace{-1cm}: & m\bZ \ar@{->>}[d]  \ar@{^(->}[r] & \bZ  \ar@{->>}[d]  \ar@{->>}[r] & \bZ_m \ar@{->>}[d] \\
 \seqStabIsotId{\func}                                          & \hspace{-1cm}: & 0                  \ar@{^(->}[r] & \bZ_m              \ar@{=}[r]   & \bZ_m
 }
\]
\end{enumerate}
\end{subtheorem}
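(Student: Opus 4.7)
The proof splits along the dichotomy of case (1) (non-degenerate extreme) versus case (2) (degenerate extreme), and the strategy for each case is different.

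\emph{Case (1).} Since $\func$ has a unique non-degenerate local extreme, it is of type~\ref{enum:specfunc:disk} in Theorem~\ref{th:sdo:except_cases}, whence $\OrbitPathComp{\func}{\func}$ is contractible. By Theorem~\ref{th:SDO_lt_fibr}\ref{enum:th:SDO_lt_fibr:2.1}, $\OrbitPathComp{\func,\partial\Disk}{\func}=\OrbitPathComp{\func}{\func}$, and from Table~\ref{tbl:hom_type_DidMX} the group $\DiffId(\Disk,\partial\Disk)$ is also contractible. Hence the long exact sequence of the fibration $\pj:\DiffId(\Disk,\partial\Disk)\to\OrbitPathComp{\func,\partial\Disk}{\func}$ in Theorem~\ref{th:SDO_lt_fibr}\ref{enum:th:SDO_lt_fibr:2} forces $\StabilizerIsotId{\func,\partial\Disk}$ to be weakly contractible, so $\pi_0\StabilizerIsotId{\func,\partial\Disk}=1$. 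Combined with~\eqref{equ:StabIsotIdD2} this trivializes all three terms of $\sDSG{\func,\partial\Disk}$, yielding $\seqTriv$.

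\emph{Case (2).} Because $\func$ has a degenerate local extreme, Theorem~\ref{th:stabilizer}(i) gives contractibility of $\StabilizerId{\func}$, so Lemma~\ref{lm:incl_SprfX_Sprf}\ref{enum:xx:chiM_nonneg} applies (here $\Mman=\Disk$ has $\chi=1\geq0$) and delivers the short exact sequence of short exact sequences
\[
    \seqZ{1}\monoArrow \seqStabIsotId{\func,\partial\Disk}\epiArrow \seqStabIsotId{\func},
\]
with $\ker(\jInclZ)\cong\bZ$ generated by the boundary Dehn twist. The next task is to identify $\seqStabIsotId{\func}$. The graph $\KRGraphf$ is a single edge $e$ joining the vertex $v_{0}$ corresponding to the degenerate extreme to the vertex $v_{1}$ corresponding to $\partial\Disk$. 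Since $v_{1}$ is a boundary leaf and $\XSman=\varnothing$, the edge $e$ is \emph{external}, so by Lemma~\ref{lm:pi0_Delta}\ref{enum:lm:pi0_Delta:1},\ref{enum:lm:pi0_Delta:4} we get $\pi_0\FolStabilizerIsotId{\func}=0$. The enhanced graph $\EKRGraphf$ consists of this interval together with $m$ framing edges attached at $v_{0}$; any element of $\StabilizerIsotId{\func}$ must preserve $v_{0},v_{1}$ and permute the framings by a cyclic rotation, giving $\GrpKRIsotId{\func}\subseteq\bZ_{m}$, and equality holds because the generator of $\LStabPl(\gfunc)\cong\bZ_{m}$ in Lemma~\ref{lm:LStabf}\ref{enum:lm:LStabf:degen} may be realized by a diffeomorphism of $\Disk$ fixed on a collar of $\partial\Disk$ and preserving $\func$. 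Hence $\seqStabIsotId{\func}$ is the bottom row $0\monoArrow\bZ_{m}=\bZ_{m}$.

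\emph{Assembly and the main obstacle.} With the outer rows of the $(3\times3)$-diagram fixed, the left and right columns are forced to be $m\bZ=m\bZ\epiArrow0$ and $0\monoArrow\bZ_{m}=\bZ_{m}$, i.e.\ $\ssSeq{m\bZ}$ and $\ddSeq{\bZ_{m}}$. The middle column is an extension $m\bZ\monoArrow\pi_0\StabilizerIsotId{\func,\partial\Disk}\epiArrow\bZ_{m}$. The crux is to rule out the split extension $\bZ\oplus\bZ_{m}$ and identify the middle group with $\bZ$. For this I would invoke Corollary~\ref{cor:sed}\ref{enum:cor:sed:chi_leq_X}: since $|\partial\Disk|=\infty>1=\chi(\Disk)$, the group $\pi_0\StabilizerIsotId{\func,\partial\Disk}$ is isomorphic to $\pi_1\OrbitPathComp{\func,\partial\Disk}{\func}$ and is \emph{torsion free}, forcing the extension to be $\bZ$ with the quotient map sending the generator to the generator of $\bZ_{m}$. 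By~\eqref{equ:u_div_u} the resulting $(3\times3)$-diagram is exactly $\qwSeq{\seqZ{m}}$. The main obstacle is the verification of $\GrpKRIsotId{\func}=\bZ_{m}$ in full, i.e.\ constructing an element of $\StabilizerIsotId{\func}$ whose induced homeomorphism of $\EKRGraphf$ realizes the generator of the cyclic rotation on the framings; this requires using the normal form near the critical point from~\S\ref{sect:isolated_cr_pt} and a suitable radial interpolation to the identity near $\partial\Disk$, which is standard but is the one place where genuine work beyond formal diagram chasing is needed.
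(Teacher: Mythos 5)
Your proof is correct in outline and is careful to trace through the survey's machinery; in particular Case~(1) via Theorem~\ref{th:sdo:except_cases}\ref{enum:specfunc:disk} together with the fibration long exact sequence and contractibility of $\DiffId(\Disk,\partial\Disk)$ is exactly right. For Case~(2), your route through Corollary~\ref{cor:sed}\ref{enum:cor:sed:chi_leq_X} --- torsion-freeness of $\pi_0\StabilizerIsotId{\func,\partial\Disk}$ forces the middle term of the extension $\ker(\jInclZ)\monoArrow \pi_0\StabilizerIsotId{\func,\partial\Disk}\epiArrow \bZ_m$ to be $\bZ$, ruling out $\bZ\oplus\bZ_m$ --- is a clean, soft argument. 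The source proof (as the constructive flavour of the surrounding machinery, e.g.\ Lemma~\ref{lm:charact_seq_wrm}, suggests) more likely proceeds directly: one exhibits $\dif\in\StabilizerIsotId{\func}$ realizing the rotation by $2\pi/m$ at $z$ and then checks the fractional Dehn twist relation, namely that $\dif^m$ is isotopic rel $\partial\Disk$ to the boundary Dehn twist generating $\ker(\jInclZ)$; this identifies $\pi_0\StabilizerIsotId{\func,\partial\Disk}=\langle\dif\rangle\cong\bZ$ with $\ker(\jInclZ)=\langle\dif^m\rangle=m\bZ$ and gives $\seqZ{m}$ without invoking the global torsion-freeness result. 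Your way buys brevity at the group-theoretic level; the constructive way is self-contained and makes the commutativity of the resulting $(3\times3)$-diagram transparent rather than a consequence of abstract rigidity. Either way you correctly pinpoint the essential non-formal content: one must actually \emph{realize} $\GrpKRIsotId{\func}=\bZ_m$, i.e.\ show that the generator of $\LStabPl(\gfunc)\cong\bZ_m$ from Lemma~\ref{lm:LStabf}\ref{enum:lm:LStabf:degen} is induced by a leaf-preserving diffeomorphism of the whole disk (orientation preserving, hence automatically in $\DiffId(\Disk)$). That construction --- take the linear rotation in the normal coordinates of~\S\ref{sect:isolated_cr_pt} near $z$, use the product structure of the regular part $\Disk\setminus\{z\}$ by level circles to damp it to the identity near $\partial\Disk$ --- is exactly the step you flag; it is standard given the flow-theoretic tools cited, but it is the place the argument would collapse if omitted. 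One small inaccuracy: when realizing the generator you only need $\dif\in\StabilizerIsotId{\func}$, not a diffeomorphism fixed on a collar of $\partial\Disk$; the latter is a harmless over-constraint but is not required for computing $\GrpKRIsotId{\func}$ with $\XSman=\varnothing$.
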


\subsection{General case}\label{sect:disk_cyl:gen_case}
It follows from these theorems that
\begin{equation}\label{equ:bfMdM}
\seqStab{\func,\XSman}
\stackrel{\eqref{equ:StabIsotIdD2}}{\cong}
\seqStabIsotId{\func,\XSman}
\cong
\seqStabIsotId{\func,\partial\Mman},
\end{equation}
where the second and third sequences coincide if $\Mman=\Disk$ and are isomorphic for $\Mman=\Circle\times[0,1]$ by Theorem~\ref{th:stab:cylinder}\ref{eqnu:th:cyl:func:incl}.
Our aim is to compute the sequence~\eqref{equ:bfMdM}.

Let $\crLev$ be a unique critical leaf of $\func$ such that the connected component of $\Mman\setminus\crLev$ containing $\XSman$ includes no critical points of $\func$. %$, see~Figure~\ref{fig:critical_comp}.
Equivalently, let $v$ be the vertex of $\KRGraphf$ of $\func$ corresponding to $\XSman$.
Then $v$ belongs to a unique edge $e$ of $\KRGraphf$, and $\crLev$ is the leaf of $\func$ corresponding to another vertex of $e$.

It follows from uniqueness of $\crLev$ that
\begin{equation}\label{equ:hK_K}
\dif(\crLev) =\crLev, \qquad \forall\ \dif\in\Stabilizer{\func,\XSman}.
\end{equation}

Let $c=\func(\crLev) \in \Pman$ and $\eps>0$.
Denote by $\regNK$ the connected component of $\func^{-1}[c-\eps,c+\eps]$ containing $\crLev$.
Decreasing $\eps$ we can assume that $\regNK \setminus \crLev$ contains no critical points of $\func$ and $\regNK \cap \partial\Mman \subset\partial\regNK$ (this intersection may be empty).
In particular, $\regNK$ is an $\func$-regular neighborhood of $\crLev$.

Let $\bZman$ be the collection of all connected components of $\overline{\Mman\setminus\regNK}$.
Since by~\eqref{equ:hK_K} $\crLev$ is $\Stabilizer{\func,\XSman}$-invariant, we have that so are $\regNK$ and $\overline{\Mman\setminus\regNK}$.
In other words, we get a natural action of $\Stabilizer{\func,\XSman}$ on $\bZman$.
Let
\begin{equation}\label{equ:StabZ}
 \Stabilizer{\bZman} = \{ \dif\in\Stabilizer{\func,\XSman} \mid \dif(\Zman)=\Zman \ \text{for each} \ \Zman\in\bZman\}
\end{equation}
be its kernel of non-effectiveness.
Then the quotient $\Stabilizer{\func,\XSman}/\Stabilizer{\bZman}$ \myemph{effectively} acts on $\bZman$.

\begin{itemize}[label=$\bullet$, wide, topsep=0.8ex, itemsep=0.8ex]
\item
Let $\bZmanX=\{ \XFixA, \Xman_1,\ldots,\Xman_a\}$ be all the elements of $\bZman$ invariant under all diffeomorphisms from $\Stabilizer{\func,\XSman}$, i.e. fixed points of $\Stabilizer{\func,\XSman}/\Stabilizer{\bZman}$. %, see Figure~\ref{fig:critical_comp}a).
Enumerate them so that $\XSman\subset \XFixA$, and in particular, $\XFixA$ is always a cylinder.
Evidently, if $\Mman=\Disk$, them all $\Xman_i$, $i\geq1$, are $2$-disks.
On the other hand, if $\Mman=\Circle\times\UInt$, then the element of $\bZman$ containing another boundary component $\Circle\times1$ is invariant under $\Stabilizer{\func,\XSman}$ and we will always denote it by $\XFixB$.
In this case $\XFixB$ is a cylinder, and all others $\Xman_i$, $i\geq2$, must be $2$-disks.

\item
Let also $\bZmanY:=\bZman\setminus\bZmanX = \{ \Yman_1,\ldots,\Yman_b \}$ be all other $2$-disks of $\bZman$.
Thus each $\Yman_i$ is not invariant under some element of $\Stabilizer{\func,\XSman}$.
\end{itemize}

\smallskip

The following theorem expresses $\seqStabIsotId{\func,\partial\Mman}$ via the corresponding sequences $\{ \seqStabIsotId{\func|_{\Zman},\partial\Zman} \}_{\Zman\in\bZman}$.
It is essentially based in Lemma~\ref{lm:charact_seq_wrm},
\begin{subtheorem}\label{th:stab:disk_ann:gen_case}
\begin{enumerate}[wide, label={\rm(\Alph*)}, topsep=0.8ex, itemsep=0.8ex]
\item\label{enum:SS:A}
Suppose that all elements of $\bZman$ are invariant under $\Stabilizer{\func,\XSman}$, which is equivalent to either of the following three conditions:
\begin{align*}
    \bZmanY&=\varnothing, &
    \bZman&=\bZmanX=\{\XFixA,\Xman_1,\ldots,\Xman_a\}, &
    \Stabilizer{\func,\XSman}&=\Stabilizer{\bZman}.
\end{align*}
Then $\seqStabIsotId{\func} \cong \mprod\limits_{i=1}^{a}\seqStabIsotId{\func|_{\Xman_i},\partial\Xman_i}:$
\begin{equation}\label{equ:DOS__jSdS_case_A:no_bd}
\mprod\limits_{i=1}^{a}\FolStabilizerIsotId{\func|_{\Xman_i},\partial\Xman_i}
    \ \monoArrow \
    \mprod\limits_{i=1}^{a}\StabilizerIsotId{\func|_{\Xman_i},\partial\Xman_i}
    \ \epiArrow \
    \mprod\limits_{i=1}^{a}\GrpKRIsotId{\func|_{\Xman_i},\partial\Xman_i},
\end{equation}
and the sequence~\eqref{equ:z1_bfdM_bf}: $\seqZ{1} \monoArrow \seqStabIsotId{\func,\XSman} \epiArrow \seqStabIsotId{\func}$ is isomorphic with a split sequence~\eqref{equ:split_seq}:
$\seqZ{1} \monoArrow \seqZ{1} \times \seqStabIsotId{\func}  \epiArrow \seqStabIsotId{\func}$.
In particular,
\begin{multline*}
    \seqStabIsotId{\func,\XSman}\cong \seqZ{1} \times \seqStabIsotId{\func} \cong \seqStabIsotId{\func}  \times \seqZ{1} \cong \seqWrm{\seqStabIsotId{\func}}{1}: \\
    \phantom{AAA} \bigl(\mprod\limits_{i=1}^{a}\FolStabilizerIsotId{\func|_{\Xman_i},\partial\Xman_i}\bigr) \times \bZ
     \monoArrow
    \bigl( \mprod\limits_{i=1}^{a}\StabilizerIsotId{\func|_{\Xman_i},\partial\Xman_i}\bigr)  \times \bZ
     \epiArrow
    \mprod\limits_{i=1}^{a}\GrpKRIsotId{\func|_{\Xman_i},\partial\Xman_i},
\end{multline*}

\item\label{enum:SS:B}
Assume that $\bZmanY=\{\Yman_i\}_{i=1}^{b}\not=\varnothing$.
\begin{enumerate}[wide, label={\rm(\alph*)}, topsep=0.8ex, itemsep=0.8ex]
\item\label{enum::SS:B:SZ_Zm}
Then $\Stabilizer{\func,\XSman}/\Stabilizer{\bZman} \cong \bZ_m$ for some $m\geq2$;
\item\label{enum::SS:B:semifree}
The \myemph{effective} action of $\Stabilizer{\func,\XSman}/\Stabilizer{\bZman}$ on $\bZman$ is \myemph{semifree}, i.e.\! free on the set $\bZmanY$ of non-fixed elements, and
has exactly either \myemph{one} or \myemph{two} fixed elements.
\end{enumerate}

In particular, $m$ divides $b$ and that action has exactly $c:=b/m$ orbits.
Fix any ($2$-disks) $\Yman_1,\ldots,\Yman_c\in\bZmanY$ belonging to mutually distinct non-fixed $\bZ_m$-orbits and define the following short exact sequence:
\begin{align}\label{equ:DOS__jSdS_case_B}
\aSeq &:= \seqWrm{ \bigl( \myprod\limits_{j=1}^{c} \seqStabIsotId{\func|_{\Yman_i}, \partial\Yman_i} \bigr) }{m}: \\
 &
 \MResize{0.95\textwidth}{\Bigl(\mprod\limits_{i=1}^{a}\FolStabilizerIsotId{\func|_{\Xman_i},\partial\Xman_i}  \Bigr)^m \times \bZ^m
    \monoArrow
   \Bigl(\mprod\limits_{i=1}^{a}\StabilizerIsotId{\func|_{\Xman_i},\partial\Xman_i}  \Bigr)  \wrm{m} \bZ
    \epiArrow
   \mprod\limits_{i=1}^{a}\GrpKRIsotId{\func|_{\Xman_i},\partial\Xman_i}}. \nonumber
\end{align}
Then the following statements hold.
\begin{enumerate}[wide, label={\rm(\alph*)}, topsep=0.8ex, itemsep=0.8ex, resume]
\item\label{enum:SS:B:B0}
If $\Stabilizer{\func,\XSman}/\Stabilizer{\bZman}$ has a unique fixed element, i.e.~$\bZmanX=\{\XFixA\}$, then
\[ \seqStabIsotId{\func,\partial\Mman} \cong \aSeq. \]

\item\label{enum:SS:B:B0_X1}
Otherwise, due to~\ref{enum::SS:B:semifree}, $\Stabilizer{\func,\XSman}/\Stabilizer{\bZman}$ has exactly two fixed elements, so $\bZmanX=\{\XFixA, \XFixB\}$.
Let $\Cyli{1}=\overline{\Mman\setminus\XFixB}$.
Then
\[
    \bZmanX_{\Cyli{1}} = \bZmanX \setminus\{\XFixB\} = \{\XFixA\},
\]
whence the restriction $\func|_{\Cyli{1}}$ satisfies condition~\ref{enum:SS:B}\ref{enum:SS:B:B0}.
Therefore
\begin{align*}
    \seqStabIsotId{\func|_{\Cyli{1}},\partial\Cyli{1}} &\cong \aSeq, &
    \seqStabIsotId{\func,\partial\Mman} &\stackrel{\eqref{equ:bseq_two_cyl}}{\cong}\seqStabIsotId{\func|_{\Xman_1},\partial\Xman_1} \times \aSeq.
\end{align*}
\end{enumerate}
\end{enumerate}
\end{subtheorem}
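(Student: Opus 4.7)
The plan is to first reduce to the full boundary condition $\XSman=\partial\Mman$, then exploit the setwise-invariance of $\crLev$ to obtain a finite permutation action on $\bZman$, and finally split into Case~(A), handled by a product decomposition via Lemma~\ref{lm:reduction_Mconn_V_dM}, and Case~(B), handled by the wreath-product characterization Lemma~\ref{lm:charact_seq_wrm}.

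First I would appeal to Theorem~\ref{th:stab:cylinder}\ref{eqnu:th:cyl:func:incl} (trivial when $\Mman=\Disk$) to obtain the identifications in~\eqref{equ:bfMdM}, so that it suffices to compute $\seqStabIsotId{\func,\partial\Mman}$. Equation~\eqref{equ:hK_K} shows every $\dif\in\Stabilizer{\func,\partial\Mman}$ fixes $\crLev$ setwise, hence preserves its $\func$-regular neighborhood $\regNK$; therefore $\bZman$ is a finite $\Stabilizer{\func,\partial\Mman}$-set, and the quotient $\Stabilizer{\func,\partial\Mman}/\Stabilizer{\bZman}$ acts effectively on it.

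In Case~(A) every component of $\bZman$ is invariant, so I would apply Lemma~\ref{lm:reduction_Mconn_V_dM} along $\partial\regNK$ to obtain
\[\seqStabIsotId{\func,\partial\Mman}\ \cong\ \seqStabIsotId{\func|_{\regNK},\partial\regNK}\times\seqStabIsotId{\func|_{\XFixA},\partial\XFixA}\times\mprod_{i=1}^{a}\seqStabIsotId{\func|_{\Xman_i},\partial\Xman_i}.\]
The $\regNK$-factor is trivial by direct inspection of the standard local model at $\crLev$ with full boundary condition, and the $\XFixA$-factor is trivial by Theorem~\ref{th:stab:cylinder}\ref{eqnu:th:cyl:func:no_cr_pt}; the remaining product equals $\seqStabIsotId{\func}$ after identifying via Lemma~\ref{lm:incl_SprfX_Sprf}\ref{enum:xx:chiM_nonneg}. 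The boundary Dehn twist generating $\ker(\jInclZ)$ has support disjoint from every surviving factor, so it commutes with all of $\pi_0\Stabilizer{\func,\partial\Mman}$ and provides a section of~\eqref{equ:z1_bfdM_bf}, yielding the split $\seqStabIsotId{\func,\XSman}\cong\seqZ{1}\times\seqStabIsotId{\func}$ asserted in~\ref{enum:SS:A}.

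In Case~(B), assume $\bZmanY\neq\varnothing$ and let $\GG$ denote the effective quotient $\Stabilizer{\func,\partial\Mman}/\Stabilizer{\bZman}$ acting on $\bZman$. The boundary circles of $\regNK$ inherit a cyclic order from the planar embedding $\regNK\subset\Mman$, with $\XSman$ (and $\XFixB$ when $\Mman=\Circle\times\UInt$) marking distinguished positions; any orientation-preserving diffeomorphism of $\Mman$ fixed on $\partial\Mman$ preserves this order, forcing $\GG\isom\bZ_m$ cyclic of some order $m\geq 2$ with the action on $\bZmanY$ free. This gives~\ref{enum::SS:B:SZ_Zm} and~\ref{enum::SS:B:semifree}. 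For~\ref{enum:SS:B:B0}, I would pick $g\in\Stabilizer{\func,\partial\Mman}$ whose class generates $\GG$, set $\zB{0}:=\pi_0\Stabilizer{\bZman}$ and $\zB{i}:=g^{-i}\zB{0}g^{i}$, and verify pairwise commutativity and trivial pairwise intersections from disjoint supports, together with the joint generation of $\pi_0\Stabilizer{\bZman}$ by the $\zB{i}$ from the free orbit action. Lifting $\GG\isom\bZ_m$ to $\bZ$ through powers of $g$, so that $g^m$ is identified with a Dehn twist along $\crLev$ that is central in $\pi_0\Stabilizer{\bZman}$, verifies the hypotheses of Lemma~\ref{lm:charact_seq_wrm} and produces $\seqStabIsotId{\func,\partial\Mman}\cong\aSeq$. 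Statement~\ref{enum:SS:B:B0_X1} then follows from~\ref{enum:SS:B:B0} after cutting along the separating regular leaf $\partial\XFixB\cap\partial\regNK$ via Lemma~\ref{lm:cut_the_collar}, specifically formula~\eqref{equ:bseq_two_cyl}. The hardest part will be the cyclicity-and-freeness argument for $\GG$ combined with the construction of the integer lift aligning $g^m$ on the nose with the outer $\bZ$-generator of the wreath; the remaining steps are organized bookkeeping around Lemmas~\ref{lm:reduction_Mconn_V_dM} and~\ref{lm:charact_seq_wrm}.
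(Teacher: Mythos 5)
Your treatment of Case~(A) has a genuine gap. Lemma~\ref{lm:reduction_Mconn_V_dM} decomposes $\seqStabIsotId{\func,\XSman}$ as a product over the components of $\overline{\Mman\setminus\regN{\XSman}}$ with boundary conditions $\hXman_i=\Xman_i\cap\regN{\XSman}$, \emph{with no separate factor for $\regN{\XSman}$ itself}, and it computes $\seqStabIsotId{\func,\XSman}$ only for the $\XSman$ being cut along. Hence ``applying it along $\partial\regNK$'' would compute $\seqStabIsotId{\func,\partial\Mman\cup\crLev}$, not $\seqStabIsotId{\func,\partial\Mman}$, and should not produce a $\regNK$-summand. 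As written the decomposition is false: take $\func\colon\Disk\to\bR$ with one non-degenerate saddle and two non-degenerate minima at distinct critical values. All of your summands are then the trivial sequence (as you correctly observe), so your formula gives $\seqStabIsotId{\func,\partial\Disk}\cong\seqTriv$; but the edge $e_0$ of $\KRGraphf$ joining $\partial\Disk$ to $\crLev$ is \emph{internal} for $\XSman=\partial\Disk$, and since $\pi_0\Diff(\Disk,\partial\Disk)=0$ the corresponding Dehn twist generates $\pi_0\FolStabilizerIsotId{\func,\partial\Disk}\cong\bZ$ by Lemma~\ref{lm:pi0_Delta}\ref{enum:lm:pi0_Delta:4}, so $\seqStabIsotId{\func,\partial\Disk}\cong\seqZ{1}$. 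The lost $\bZ$ is precisely $\ker(\jInclZ)$: that twist fixes $\crLev$ pointwise but is non-trivial in $\pi_0\Diff(\Disk,\partial\Disk\cup\crLev)$, so it disappears under the cut-and-paste. The structure the theorem actually encodes is different: first establish $\seqStabIsotId{\func}\cong\prod_i\seqStabIsotId{\func|_{\Xman_i},\partial\Xman_i}$ with \emph{no} boundary constraint --- there $e_0$ becomes external by Lemma~\ref{lm:pi0_Delta}\ref{enum:lm:pi0_Delta:1}, so its twist dies --- and then show that the exact sequence $\seqZ{1}\monoArrow\seqStabIsotId{\func,\XSman}\epiArrow\seqStabIsotId{\func}$ from Lemma~\ref{lm:incl_SprfX_Sprf}\ref{enum:xx:chiM_nonneg} splits, using a section built from twists with support disjoint from $\partial\Mman$.

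In Case~(B) your skeleton --- identify $\Stabilizer{\func,\partial\Mman}/\Stabilizer{\bZman}$ as finite cyclic, verify hypotheses (a)--(c) of Lemma~\ref{lm:charact_seq_wrm}, and reduce~\ref{enum:SS:B:B0_X1} to~\ref{enum:SS:B:B0} by Lemma~\ref{lm:cut_the_collar} --- is the right one. But the claims~\ref{enum::SS:B:SZ_Zm} and~\ref{enum::SS:B:semifree} (cyclicity and semi-freeness) are derived from ``a cyclic order inherited from the planar embedding'' without specifying which cyclic order on the possibly many boundary circles of $\regNK$ is meant when $\crLev$ is a critical leaf with several saddle vertices, and without deducing that a non-trivial element of the quotient must move every $\Yman\in\bZmanY$; effectiveness plus preservation of a cyclic order gives cyclicity, but freeness on $\bZmanY$ needs a separate argument (e.g.\ that the image in $\Homeo(\EKRGraphf)$ acts freely away from fixed edges). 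Those two points are where the substance lies; the remaining bookkeeping you describe is fine.
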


\begin{subcorollary}[Decomposition into annuli and possibly one disk]\label{cor:stab:disk_ann:ann_decomp}
Let $\Mman$ be either a $2$-disk or a cylinder and $\func\in\FF(\Mman,\Pman)$.
Then there are $\func$-adapted subsurfaces $\Cyli{1},\ldots,\Cyli{n} \subset \Mman$ having the following properties.
\begin{enumerate}[label={\rm(\alph*)}, wide]
\item\label{enum:cor:split_into_cyls:1}
For $i=1,\ldots,n-1$ the surface $\Cyli{i}$ is a \myemph{cylinder}, while $\Cyli{n}$ is either a $2$-disk or a cylinder.
Moreover, the intersection $\Cyli{i}\cap \Cyli{j}$ for $i<j$ is non-empty only for $j=i+1$ and in this case it is a common boundary component of these subsurfaces.
Also $\Cyli{1}$ contains some boundary component of $\Mman$.

\item\label{enum:cor:split_into_cyls:2}
For each $i=1,\ldots,n-1$ there exist $m_i,c_i\geq1$ and certain $\func$-adapted mutually disjoint \myemph{$2$-disks} $\Yman_{i,1},\ldots,\Yman_{i,c_i} \subset \Cyli{i}$, such that we have an isomorphism
\begin{equation}\label{equ:seq:SDG:Cyl_i}
\seqStabIsotId{\func|_{\Cyli{i}}, \partial\Cyli{i}} \cong
\seqWrm{\biggl(\myprod\limits_{j=1}^{c_i} \seqStabIsotId{\func|_{\Yman_{i,j}}, \partial\Yman_{i,j}}\biggr)}{m_i},
\end{equation}
while the last sequence $\seqStabIsotId{\func|_{\Cyli{n}}, \partial\Cyli{n}}$ is isomorphic either with a sequence of type~\eqref{equ:seq:SDG:Cyl_i} or with $\seqTriv$ or with $\seqZ{m_n}$ for some $m_n\geq1$.

\item\label{enum:cor:split_into_cyls:3}
$\seqStabIsotId{\func, \partial\Mman}\cong \prod\limits_{i=1}^{n}\seqStabIsotId{\func|_{\Cyli{i}}, \partial\Cyli{i}}$.

\item\label{enum:cor:split_into_cyls:4}
The sequence~\eqref{equ:z1_bfdM_bf}: $\seqZ{1} \monoArrow \seqStabIsotId{\func,\partial\Mman} \epiArrow \seqStabIsotId{\func}$ is isomorphic with diagonal Garside sequence~\eqref{equ:diag_Garside_seq} for the sequences $\{\seqStabIsotId{\func|_{\Cyli{i}}, \partial\Cyli{i}}\}_{i=1}^{n}$.
\end{enumerate}
\end{subcorollary}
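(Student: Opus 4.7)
The plan is to prove (a)--(d) by induction on the number of critical leaves of $\func$, with Theorem~\ref{th:stab:disk_ann:gen_case} as the main engine applied iteratively. Base cases: if $\func$ has no critical points, then $\Mman=\Circle\times\UInt$ and Theorem~\ref{th:stab:cylinder}\ref{eqnu:th:cyl:func:no_cr_pt} gives $\seqStabIsotId{\func,\partial\Mman}\cong\seqTriv$; if $\func$ has a single critical point, then $\Mman=\Disk$ and Theorem~\ref{th:stab:disk:one_crpt} gives either $\seqTriv$ (non-degenerate extreme) or $\seqZ{m_1}$ (degenerate of symmetry index~$m_1$). In both base cases we set $n=1$ and $\Cyli{1}=\Mman$, matching the ``$\seqTriv$ or $\seqZ{m_n}$'' alternative in~(b).

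For the inductive step, take $\XSman$ to be one boundary component of $\Mman$, let $\crLev$ be the critical leaf adjacent to $\XSman$ as in~\S\ref{sect:disk_cyl:gen_case}, and apply Theorem~\ref{th:stab:disk_ann:gen_case}. In the case where $\Stabilizer{\func,\XSman}$ has two invariant components $\XFixA,\XFixB$ (Case~(B0-X1), the typical inductive situation), set $\Cyli{1}:=\overline{\Mman\setminus\XFixB}$; the theorem then provides $\seqStabIsotId{\func,\partial\Mman}\cong\seqStabIsotId{\func|_{\XFixB},\partial\XFixB}\times\aSeq$ together with $\seqStabIsotId{\func|_{\Cyli{1}},\partial\Cyli{1}}\cong\aSeq$ of the wreath-product shape~\eqref{equ:seq:SDG:Cyl_i}. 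Since $\XFixB$ has strictly fewer critical leaves than $\Mman$ and is itself a cylinder or $2$-disk, the inductive hypothesis applied to $\func|_{\XFixB}$, with the shared boundary $\partial\XFixB\cap\partial\Cyli{1}$ playing the role of $\XSman$, produces the chain $\Cyli{2},\ldots,\Cyli{n}$, and concatenation yields (a), (b), and~(c). Case~(B0) with only $\XFixA$ invariant corresponds to $n=1$ and $\Cyli{1}=\Mman$. Case~(A) is treated similarly: the formula $\seqZ{1}\times\seqStabIsotId{\func}\cong\seqWrm{\seqStabIsotId{\func}}{1}$ is interpreted as a trivial wreath product ($m_1=1$), and any cylindrical invariant piece is peeled off to form the next link $\Cyli{2}$ of the chain.

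For part~(d), the generator of $\ker(\jInclZ)$ in the sequence~\eqref{equ:z1_bfdM_bf} is the isotopy class of a single Dehn twist $\dtw{}$ supported in a collar of $\partial\Mman$, by Lemma~\ref{lm:incl_SprfX_Sprf}\ref{enum:xx:chiM_nonneg}. I will show inductively that under the product isomorphism of~(c), $\dtw{}$ maps to the diagonal element $\widehat{\grs}=(\grsi{1},\ldots,\grsi{n})$, where $\grsi{i}$ is the Garside element of the wreath product $\seqStabIsotId{\func|_{\Cyli{i}},\partial\Cyli{i}}$. The decisive observation is that at each application of Lemma~\ref{lm:charact_seq_wrm}, the element~$g$ realizing the wreath-product isomorphism can be chosen so that $g^{m_i}$ is isotopic (relative~$\partial\Mman$) to the Dehn twist along the regular leaf $\partial\Cyli{i}\cap\partial\Cyli{i+1}$; but this leaf is parallel in $\Mman$ to $\partial\Mman$, so the associated twist coincides with $\dtw{}$ in $\pi_0\Diff(\Mman,\partial\Mman)$. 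Hence $\dtw{}$ has image exactly $\grsi{i}$ in each factor, giving $\dtw{}\mapsto\widehat{\grs}$ and identifying~\eqref{equ:z1_bfdM_bf} with~\eqref{equ:diag_Garside_seq}. The main obstacle will be this normalization across the recursion: by Lemma~\ref{lm:charact_seq_wrm} the element $g$ is only determined modulo $\Stabilizer{\bZman}$, and one must exploit the freeness of $\pi_0\FolStabilizer{\func,\partial\Mman}\cong\bZ^k$ from Lemma~\ref{lm:pi0_Delta}\ref{enum:lm:pi0_Delta:4} to coordinate the choices of $g$ at different recursion levels so that all powers $g^{m_i}$ represent the same twist class $\dtw{}$, rather than $\dtw{}$ composed with unwanted internal Dehn twists that would ruin the diagonal structure.
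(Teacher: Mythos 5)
Your overall strategy --- inducting on the number of critical leaves, bootstrapping from Theorems~\ref{th:stab:cylinder} and \ref{th:stab:disk:one_crpt}, and peeling off one cylindrical link at a time via Theorem~\ref{th:stab:disk_ann:gen_case} together with Lemma~\ref{lm:cut_the_collar} --- is the intended route, and parts (a)--(c) come out correctly by this reduction. The treatment of case~(A) is slightly too terse, though: for $\Mman=\Circle\times\UInt$ one of the invariant pieces $\Xman_i$ is the cylinder $\XFixB$ rather than a disk, so you must still cut it off as the next link $\Cyli{2}$ before the wreath formula~\eqref{equ:seq:SDG:Cyl_i}, which requires only disks inside $\Cyli{1}$, becomes available.

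Part~(d) is where the argument breaks down. A minor slip first: for $\Mman=\Circle\times\UInt$ the generator of $\ker(\jInclZ)$ is, by Lemma~\ref{lm:incl_SprfX_Sprf}\ref{enum:xx:chiM_nonneg}, a pair of Dehn twists in opposite directions along the two boundary circles, not a single twist supported in one collar. The substantive gap is the ``decisive observation.'' You argue that since the regular leaf $\partial\Cyli{i}\cap\partial\Cyli{i+1}$ is boundary-parallel in $\Mman$, the twist $g^{m_i}$ ``coincides with $\dtw{}$ in $\pi_0\Diff(\Mman,\partial\Mman)$,'' and you conclude that $\dtw{}$ has image exactly $\grsi{i}$ in each factor. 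That inference does not hold: for $\Mman=\Disk$ the group $\pi_0\Diff(\Disk,\partial\Disk)$ is trivial (Smale), so coincidence there is vacuous, and the group that actually matters, $\pi_0\StabilizerIsotId{\func,\partial\Mman}$, is a strictly finer invariant in which twists along boundary-parallel regular leaves lying on opposite sides of a critical level are generally distinct (they differ by products of internal twists). Furthermore, under the gluing isomorphism behind~(c), the class $\dtw{}$ is supported in a collar inside $\Cyli{1}$ and therefore appears trivially in every factor past the first, so some different identification --- or a direct computation showing that the generator of $\ker(\jInclZ)$ equals $\prod_i g_i^{m_i}$ in $\pi_0\StabilizerIsotId{\func,\partial\Mman}$ for suitably coordinated choices of the $g_i$ --- is genuinely required. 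You name this obstacle yourself and say one ``must exploit the freeness of $\pi_0\FolStabilizer{\func,\partial\Mman}$,'' but you never supply the mechanism, so~(d) is not proved.
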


In order to formulate the general result about the structure of $\seqStabIsotId{\func,\partial\Mman}$ it will be convenient to introduce a series of classes of groups and short exact sequences, see Theorem~\ref{th:solvable_groups}.

\subsection{Classes of groups and short exact sequences}
A group $B$ is called \myemph{crystallographic} if it admits a short exact sequence: $A\monoArrow B\epiArrow C$ in which $C$ is finite and $A$ is a maximal abelian subgroup of $B$ being also free abelian.
A crystallographic group $B$ is \myemph{Bieberbach} if it is torsion free (i.e.~has no elements of finite order).

A short exact sequence $\uSeq: A\monoArrow B\epiArrow C$ will be called \myemph{nearly crystallographic} if $A\in\ccZ$, i.e.~it is a free abelian and $C$ is finite.
In this case we also say that $\uSeq$ is \myemph{crystallographic}, if $A$ is a maximal subgroup of $B$.
Moreover, a (nearly) crystallographic sequence $\uSeq: A\monoArrow B\epiArrow C$ will be called \myemph{(nearly) Bieberbach} if $B$ is torsion free.

For example, for $m\geq2$ the sequence $\seqZ{m}: m\bZ \monoArrow \bZ \epiArrow \bZ_m$ is \myemph{nearly Bieberbach} but \myemph{not Bieberbach}, since $m\bZ$ is not a maximal subgroup of $\bZ$, though the group $\bZ$ is Bieberbach.

A group $G$ is \myemph{solvable} if it has a finite increasing sequence of subgroups $1 = G_0 \vartriangleleft G_1 \vartriangleleft\cdots \vartriangleleft G_n = G$ such that $G_i$ is normal in $G_{i+1}$ and each quotient $G_{i+1}/G_i$ is abelian.
It is well known and is easy to check that in a short exact sequence of groups $A \monoArrow B\epiArrow C$ if any two groups are solvable, then so is the third one.

\subsubsection{Classes of groups}\label{sect:classes_M}
Consider the following classes of groups.
\begin{enumerate}[leftmargin=*, topsep=0.8ex, itemsep=1ex, label={$\bullet$}]
\item
Let $\ccZ = \{ \bZ^n \mid n\geq0\}$ be the set of all finitely generated free abelian groups.

\item
Let $\ccB$ be the \myemph{minimal} set of isomorphism classes of groups such that $\{1\} \in \ccB$ and if $A,B\in\ccB$ and $m\geq 1$, then $A\times B$ and $A \wrm{m} \bZ \in \ccB$ as well.

\item
Let $\clsBt$ be the \myemph{minimal} set of isomorphism classes of groups such that $\{1\} \in \clsBt$ and if $A,B\in\clsBt$ then $A\times B$ and $A \wrm{2} \bZ \in \ccB$.

\item
Let also $\ccP$ be the \myemph{minimal} set of isomorphism classes of groups such that $\{1\} \in \ccP$, and if $A,B\in\ccP$ and $m\geq 1$ then $A\times B$, $A \wr\bZ_m \in \ccP$ as well.

\item
Let $\clsGt$ be the \myemph{minimal} set of isomorphism classes of groups such that $\{1\} \in \clsGt$, and if $A,B\in\ccP$ then $A\times B$, $A \wr\bZ_2 \in \clsGt$.
\end{enumerate}

Evidently, $\clsBt \subset \ccB$ and $\clsGt \subset \ccP$.

It is easy to check that a group $G\in \ccB$ (resp.\ $\clsBt$, $\ccP$, $\clsGt$) if $G$ is obtained from the unit group by finitely many operations of direct products and wreath products of the form $\cdot\wrm{m}\bZ$ for some $m\geq1$ (resp.\ $\cdot\wrm{2}\bZ$ only, $\cdot\wr\bZ_{m}$ for some $m\geq1$, $\cdot\wr\bZ_{2}$ only).
For instance,
\begin{align*}
    \ccB  \ &\ni  \{1\}, \quad \bZ \cong \{1\}\wrm{m}\bZ, \quad \bZ^m, \quad ((\bZ^3\wrm{2}\bZ) \wrm{m}\bZ)\times (\bZ\wrm{18}\bZ), \\
    \clsBt \ &\ni  \{1\}, \quad \bZ^m, \quad ((\bZ^3\wrm{2}\bZ) \wrm{2}\bZ)\times (\bZ\wrm{2}\bZ), \\
    \ccP   \ &\ni  \{1\}, \quad \bZ_{m}, \quad ((\bZ_2^6\wr\bZ_8) \wr\bZ_{11})\times (\bZ_{3}\wr\bZ_{15}), \\
    \clsGt  \ &\ni  \{1\}, \quad \bZ_{2}, \quad ((\bZ_2^6\wr\bZ_2) \wr\bZ_{2})\times (\bZ_{2}\wr\bZ_{2}).
\end{align*}

\subsubsection{Classes of short exact sequences}
\begin{enumerate}[leftmargin=*, topsep=0.8ex, itemsep=1ex, label={$\bullet$}]
\item
Let $\gssZBP$ be the set of isomorphism classes of all \myemph{nearly crystallographic sequences $A\monoArrow B\epiArrow C$}, so $A\in\ccZ$, $B\in\ccB$, and $C\in\ccP$.

\item
Let $\ZZI = \{ (\seqZ{1})^m : \bZ^m = \bZ^m \epiArrow 1 \}_{m\geq0}$.

\item
Let also $\ssZBP$ be the \myemph{minimal} set of isomorphism classes of \myemph{short exact sequences} such that
$\seqTriv \in \ssZBP$, and if $\aSeq,\aSeq'\in\ssZBP$ and $m\in\bN$ then $\aSeq\times\aSeq'$ and $\seqWrm{\aSeq}{m} \in \ssZBP$ as well.

\item
Let also $\ssZBtPt$ be the \myemph{minimal} set of isomorphism classes of \myemph{short exact sequences} such that
$\seqTriv \in \ssZBtPt$, and if $\aSeq,\aSeq'\in\ssZBtPt$ then $\aSeq\times\aSeq'$ and $\seqWrm{\aSeq}{2} \in \ssZBtPt$ as well.
\end{enumerate}
Similarly, a short exact sequence $\aSeq$ belongs to $\ssZBP$ (resp.\ $\ssZBtPt$) iff it can be obtained from $\seqTriv$ by finitely many operations of products of sequences and wreath products of the form $\seqWrm{\cdot}{m}$, $m\in\bN$, (resp $m=2$ only).

The following lemma describes properties of the above classes.
\begin{sublemma}\label{lm:classes_SG}{\rm(\cite[Lemma~2.6]{Maksymenko:TA:2020})}
\begin{enumerate}[leftmargin=*, label={\rm(\arabic*)}, itemsep=0.8ex, topsep=0.8ex]
\item\label{enum:lm:classes_SG:solv_bib}
Every $B\in\ccB$ is solvable and nearly Bieberbach, and every $C\in\ccP$ is solvable and finite.

\item\label{enum:lm:classes_SG:seq_op:1}
$\seqZ{m} \in \gssZBP$ for all $m\geq0$.
\item\label{enum:lm:classes_SG:seq_op:2}
If $\aSeq_i:  \bZ^{k_i} \monoArrow B_i \epiArrow C_i$, $i=1,2$, are (nearly) crystallographic (resp.\ (nearly) Bieberbach) and $m\geq1$, then so are
\begin{align*}
\aSeq_1\times\aSeq_2 &: \bZ^{k_1+k_2} \monoArrow B_1\!\times\!B_2 \epiArrow C_1\!\times\!C_2, \\
\seqWrm{\aSeq_1}{m} &: \bZ^{k_1 m}\!\times\! m\bZ \monoArrow B_1\wrm{m}\bZ \epiArrow C_1\wr\bZ_m.
\end{align*}

\item\label{enum:lm:classes_SG:ZBP}
$\ssZBtPt \subset \ssZBP \subset \gssZBP$.
\end{enumerate}
\end{sublemma}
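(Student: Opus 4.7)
The plan is to prove the four assertions by a joint induction on the recursive definitions of the classes $\ccB$, $\ccP$, $\ssZBP$, $\ssZBtPt$, with the non-trivial torsion-free property of wreath products supplied by the last sentence of Lemma~\ref{lm:center_comm}.

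First I would dispatch statement~(2), which is immediate: in $\seqZ{m}: m\bZ \monoArrow \bZ \epiArrow \bZ_m$, the kernel $m\bZ \cong \bZ$ is free abelian, the middle term is $\bZ \cong \{1\}\wrm{m}\bZ \in \ccB$, and the quotient is $\bZ_m \cong \{1\}\wr\bZ_m \in \ccP$; hence $\seqZ{m}\in\gssZBP$.

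Next I would verify statement~(3) directly. For products $\aSeq_1\times\aSeq_2$, the kernel $\bZ^{k_1+k_2}$ stays free abelian, the quotient $C_1\times C_2$ stays finite, torsion-freeness of $B_1\times B_2$ is inherited from the factors, and maximality of $\bZ^{k_1+k_2}$ inside $B_1\times B_2$ reduces factorwise (an element commuting with all of $\bZ^{k_1}\times\bZ^{k_2}$ projects to a centralizing element in each $B_i$). For $\seqWrm{\aSeq_1}{m}$, the kernel $\bZ^{k_1 m}\times m\bZ$ is free abelian of the appropriate rank, the quotient $C_1\wr\bZ_m$ is finite, torsion-freeness of $B_1\wrm{m}\bZ$ is precisely the final sentence of Lemma~\ref{lm:center_comm}, and the claim about maximality of the kernel as an abelian subgroup follows from the explicit description of $Z(\wrGmZ)$ given in the same lemma.

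For statement~(1), I would induct on the construction of $\ccB$ and $\ccP$. The trivial group is solvable, finite, and torsion-free, and it sits in the trivial nearly Bieberbach sequence. Products preserve solvability, finiteness, and torsion-freeness; nearly Bieberbach sequences for $A\times B$ arise as products of such sequences for $A$ and $B$ via statement~(3). For $A\wrm{m}\bZ$ with $A \in \ccB$, solvability follows since it is a semidirect product of the solvable $A^m$ with $\bZ$; a nearly Bieberbach sequence for it is obtained by applying $\seqWrm{\cdot}{m}$ to a nearly Bieberbach sequence of $A$ (which exists by induction) and again invoking statement~(3). Analogously $A\wr\bZ_m$ is a semidirect product of $A^m$ with $\bZ_m$, hence finite and solvable whenever $A$ is. Statement~(4) then follows by a one-line induction on the recursive definitions of $\ssZBtPt$ and $\ssZBP$: the base $\seqTriv$ belongs to $\gssZBP$, and both operations $\aSeq\times\aSeq'$ and $\seqWrm{\aSeq}{m}$ preserve membership in $\gssZBP$ by statement~(3), giving $\ssZBtPt\subset\ssZBP\subset\gssZBP$.

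The only non-formal input is the torsion-freeness of $B\wrm{m}\bZ$: finite-order elements could a priori appear because of the non-trivial conjugation action of the $\bZ$-factor on $B^m$, so this does require an argument. Luckily that argument is exactly Lemma~\ref{lm:center_comm}, so the remainder of the proof is routine bookkeeping around the wreath-product formulas.
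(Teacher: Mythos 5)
Your overall strategy — a joint induction on the recursive definitions of $\ccB$, $\ccP$, $\ssZBP$, $\ssZBtPt$, with Lemma~\ref{lm:center_comm} supplying torsion-freeness of $B\wrm{m}\bZ$ — is sound and is essentially the right way to prove this lemma. Parts (2), (4), and the solvability/finiteness/torsion-freeness bookkeeping in (1) and (3) are all fine, modulo the trivial remark needed in (4) that $\ccB$ and $\ccP$ are closed under $\times$ and $\wrm{m}\bZ$ (resp. $\wr\bZ_m$) by their very definition, which is what lets you conclude the \emph{middle} and \emph{right} terms of the new sequence stay in the right classes; statement~(3) alone only gives you ``nearly crystallographic''.

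The one genuine gap is your treatment of maximality in the wreath-product half of~(3). You assert that maximality of the kernel $\bZ^{k_1 m}\times m\bZ$ as an abelian subgroup of $B_1\wrm{m}\bZ$ ``follows from the explicit description of $Z(\wrGmZ)$''. It does not: knowing the center of $\wrGmZ$ is not the same as knowing the centralizer of $\bZ^{k_1}{}^m\times m\bZ$, which is the quantity that decides maximality. You do need a (structurally similar, but separate) centralizer computation. And when you actually carry it out, you find that it breaks exactly in the degenerate case $k_1=0$, i.e.\ $\aSeq_1=\seqTriv$ and $m\geq2$: then $\seqWrm{\aSeq_1}{m}=\seqZ{m}$, whose kernel $m\bZ$ is \emph{not} a maximal abelian subgroup of $\bZ$ — this is precisely the example the paper itself gives just above the lemma when it says $\seqZ{m}$ is nearly Bieberbach but not Bieberbach. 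So ``crystallographic $\Rightarrow$ crystallographic'' for $\seqWrm{\cdot}{m}$ is false in that corner case, and your argument, as written, would prove it. Either the statement should be read as concerning only the ``nearly'' variants (in which case all the maximality discussion is superfluous and should be dropped), or the wreath half of~(3) needs the extra hypothesis $k_1\geq1$; the centralizer computation then goes through because a nontrivial coordinate of $\bZ^{k_1}{}^m$ can be varied independently to force the $\bZ$-component of a centralizing element into $m\bZ$, after which maximality of $\bZ^{k_1}$ in $B_1$ forces each $B_1$-coordinate into $\bZ^{k_1}$. You should spell this out and flag the degenerate case.
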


\begin{subtheorem}\label{th:solvable_groups}{\rm(\cite[Theorem~5.10]{Maksymenko:TA:2020})}
Let $\Mman$ be a connected orientable compact surface distinct from $S^2$ and $T^2$, $\func\in\FF(\Mman,\Pman)$, and $\XSman \subset\Mman$ an $\func$-adapted submanifold such that
\begin{enumerate}[leftmargin=*, label={\rm(\alph*)}]
\item
each connected component of $\XSman$ is of dimension $\geq1$, i.e.~it is not a critical point of $\func$;

\item
$\XSman\not=\varnothing$ whenever $\Mman=\Disk$ or $\Circle\times\UInt$, so $\chi(\Mman)<\ptnum{\XSman}$, whence  $\DiffId(\Mman,\XSman)$ is always contractible, and we have an isomorphism
\[
    \pi_1\OrbitPathComp{\func,\XSman}{\func} \xrightarrow[\cong]{~\partial_1~} \pi_0\StabilizerIsotId{\func,\XSman}.
\]
\end{enumerate}
Then the sequence
\begin{equation}\label{equ:final_SDG}
    \sDSG{\func,\XSman}:\pi_0\FolStabilizerIsotId{\func,\XSman}\monoArrow\pi_0\StabilizerIsotId{\func,\XSman}\epiArrow\GrpKRIsotId{\func,\XSman}
\end{equation}
belongs to the class $\ssZBP$.

In particular, $\pi_0\FolStabilizerIsotId{\func,\XSman}$ is a finitely generated free abelian group (which is already stated in Lemma~\ref{lm:pi0_Delta}\ref{enum:lm:pi0_Delta:4}), $\pi_1\OrbitPathComp{\func,\XSman}{\func}\cong \pi_0\StabilizerIsotId{\func,\XSman}$ are solvable nearly Bieberbach group, and $\GrpKRIsotId{\func,\XSman}$ is solvable and finite.

If $\func\in\MrsSmp{\Mman}{\Pman}$ is a simple Morse map, then $\sDSG{\func,\XSman} \in \ssZBtPt$.
\end{subtheorem}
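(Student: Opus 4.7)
The plan is a top-down decomposition that reduces $\sDSG{\func,\XSman}$ to elementary building blocks and then exploits the closure of the classes $\ssZBP$ and $\ssZBtPt$ under products and wreath products.

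First I would apply Theorem~\ref{th:stab:chi_neg}. Cutting along critical leaves whose canonical neighborhoods have negative Euler characteristic yields an isomorphism
$$\seqStabIsotId{\func,\XSman} \cong \prod_{i=1}^{\cnt}\seqStabIsotId{\func|_{\Bman_i},\hXman_i},$$
in which each $\Bman_i$ satisfies $\chi(\Bman_i) \geq 0$. Since $\Mman$ is orientable and distinct from $\Sphere$ and $\Torus$, no M\"obius band or Klein bottle appears, so each $\Bman_i$ is a $2$-disk or a cylinder. Because $\ssZBP$ and $\ssZBtPt$ are closed under finite direct products by construction, it suffices to verify the claim separately on each piece $(\Bman_i,\hXman_i)$.

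Next, for each disk or cylinder piece I would invoke Corollary~\ref{cor:stab:disk_ann:ann_decomp}, which identifies
$$\seqStabIsotId{\func|_{\Bman_i},\partial\Bman_i} \cong \prod_{j=1}^{n} \seqStabIsotId{\func|_{\Cyli{j}},\partial\Cyli{j}},$$
where each factor is either $\seqTriv$, or $\seqZ{m_j}$, or has the explicit wreath form $\seqWrm{\bigl(\prod_{k=1}^{c_j} \seqStabIsotId{\func|_{\Yman_{j,k}}, \partial\Yman_{j,k}}\bigr)}{m_j}$. Now proceed by strong induction on the number of critical points of $\func|_{\Bman_i}$. The base cases, covered by Theorems~\ref{th:stab:cylinder} and~\ref{th:stab:disk:one_crpt}, give $\seqTriv$ for cylinders without critical points and disks with one non-degenerate extremum, and $\seqZ{m}$ for a disk with one degenerate extremum of symmetry $m$. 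Observe that $\seqZ{m} \cong \seqWrm{\seqTriv}{m}$, whence both base cases lie in $\ssZBP$. The induction step assembles smaller sequences via a product and a wreath product $\seqWrm{\cdot}{m_j}$, both of which preserve $\ssZBP$ by the minimality definition in Section~\ref{sect:classes_M}.

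For the refined statement about simple Morse maps, exactly the same decomposition is used, but I would argue that every $m_j$ that occurs must equal $2$. Since $\func$ is Morse, degenerate extrema are absent, so the base case $\seqZ{m}$ with $m > 2$ never arises. The cyclic quotient $\Stabilizer{\func,\XSman}/\Stabilizer{\bZman} \cong \bZ_{m_j}$ from Theorem~\ref{th:stab:disk_ann:gen_case}\ref{enum:SS:B}\ref{enum::SS:B:SZ_Zm} acts on the components of $\overline{\Mman \setminus \regNK}$, where $\crLev$ is a critical leaf of $\func$. Simplicity forces $\crLev$ to contain at most one saddle, whose local model $xy$ admits only a $\bZ_2$-symmetry compatible with the level structure (globally the symmetric action across a single saddle can at best swap the two pairs of emanating components); this confines $m_j$ to $\{1,2\}$ at every step. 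All wreath products then have the form $\seqWrm{\cdot}{2}$, so the sequence belongs to $\ssZBtPt$.

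The main obstacle is this last point: rigorously excluding $m_j \geq 3$ at every wreath step in the simple Morse case. One must trace the cyclic action of $\Stabilizer{\func,\XSman}/\Stabilizer{\bZman}$ through the global combinatorics of the critical leaf and rule out higher-order rotations -- which do occur for non-simple leaves carrying several cyclically arranged saddles. Everything else (reduction to $\chi(\Mman) \geq 0$, decomposition into annuli, and closure of $\ssZBP$) amounts to book-keeping once the prior decomposition theorems are in hand.
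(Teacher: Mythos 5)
Your proposal follows the paper's own proof almost step for step: it reduces to disk and cylinder pieces via Theorem~\ref{th:stab:chi_neg}, inducts on the number of critical points through Theorem~\ref{th:stab:disk_ann:gen_case} (or equivalently Corollary~\ref{cor:stab:disk_ann:ann_decomp}) with base cases from Theorems~\ref{th:stab:cylinder} and~\ref{th:stab:disk:one_crpt}, and closes using the minimality of $\ssZBP$ and $\ssZBtPt$ under products and $\seqWrm{\cdot}{m}$. Your reasoning that $m=2$ in the simple Morse case matches the paper's observation that each critical leaf then has a pair-of-pants regular neighborhood, and both accounts leave that last point at a similarly brief, heuristic level of detail.
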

\begin{smallproof}{Notes to the proof}
Assume that $\func\in\FSP{\Mman}{\Pman}$ has exactly $n$ critical points.

1) Suppose $\Mman$ is a $2$-disk or a cylinder.
If $\func$ has no saddle critical points, then by Theorems~\ref{th:stab:cylinder} and \ref{th:stab:disk:one_crpt} the sequence is either $\seqZ{0}$ or $\seqWrm{\seqZ{0}}{1}$.
On the other hand, if $\func$ has critical point, then Theorem~\ref{th:stab:disk_ann:gen_case} shows that $\seqStabIsotId{\func,\partial\Mman}$ expresses via analogous sequences $\seqStabIsotId{\func|_{\Xman_i},\partial\Xman_i}$ and $\seqStabIsotId{\func|_{\Yman_i},\partial\Yman_i}$ by operations of direct products and wreath product $\seqWrm{\cdot}{m}$ for some $m\geq1$.
Since each $\Xman_i$ and $\Yman_i$ is also a $2$-disk and cylinder and contain less number of critical points than $\func$, one can apply the induction on $n$, and obtain that $\seqStabIsotId{\func,\partial\Mman}$ is obtained from a sequence $\seqZ{0}$ by finitely many operations of direct products of short exact sequences and wreath products $\seqWrm{\cdot}{m}$ for some $m$.

2) If $\chi(\Mman)<0$, then by Theorem~\ref{th:stab:chi_neg} $\seqStabIsotId{\func,\partial\Mman}$ is again a product of sequences of the form $\seqStabIsotId{\func|_{\Bman},\partial\Bman}$, where $\Bman$ is either a $2$-disk or a cylinder, and therefore $\seqStabIsotId{\func,\partial\Mman}$ is also obtained from the sequence $\seqZ{0}$ by finitely many operations of direct products of short exact sequences and wreath products $\seqWrm{\cdot}{m}$ for some $m$.

If $\func$ is a generic Morse map, then an $\func$-regular neighborhood of each critical leaf of $\func$ is a disk with two holes, which leads to the observation that in Theorem~\ref{th:stab:disk_ann:gen_case} we always have that $m=2$.
This implies the last statement of Theorem~\ref{th:solvable_groups}.
\end{smallproof}

\newcommand\bdf[1]{\varepsilon_{#1}}
\subsection{Realization theorems for $\sDSG{\func,\XSman}$}
We discuss here the question when an abstract group $G$ can be realized as one of the groups~\eqref{equ:final_SDG} for some $(\Mman,\func,\Xman)$.
The presented results are obtained in the papers S.~Maksymenko and A.~Kravchenko~\cite{KravchenkoMaksymenko:EJM:2020}, B.~Feshchenko and A.~Kravchenko~\cite{KravchenkoFeshchenko:MFAT:2020}, I.~Kuznietsova and Yu.~Soroka~\cite{KuznietsovaSoroka:UMJ:2021}.

\subsubsection{Signs on the boundary}\label{sect:sing_on_bd}
Let $\Mman$ be a compact surface.
Then by definition each $\func\in\FSP{\Mman}{\Pman}$ takes constant values at connected components of $\partial\Mman$.
Fix an orientation on $\Pman$ (which is $\bR$ or $\Circle$).
Then for each boundary component $\Wman$ of $\Mman$ one can say whether $\func$ takes a local maximum or local minimum with respect to the orientation of $\Pman$.
Hence one can associate to $\func$ a function $\bdf{\func}: \pi_0\partial\Mman \to \{ \pm 1\}$ such that $\bdf{\func}(\Wman) = -1$ (resp $+1$) if $\func$ takes on $\Wman$ a local minimum (resp.\ maximum) with respect to the orientation of $\Pman$.

For every function $\eps: \pi_0\partial\Mman \to \{ \pm 1\}$ define the following four spaces:
\begin{align*}
\FSPE{\eps}{\Mman}{\Pman}     &= \{ \func\in \FSP{\Mman}{\Pman}  \mid \bdf{\func} = \eps\}, \\
\MrsE{\eps}{\Mman}{\Pman}     &=  \Mrs{\Mman}{\Pman}    \cap \FSPE{\eps}{\Mman}{\Pman}, \\
\MrsSmpE{\eps}{\Mman}{\Pman}  &=  \MrsSmp{\Mman}{\Pman} \cap \FSPE{\eps}{\Mman}{\Pman}, \\
\MrsGenE{\eps}{\Mman}{\Pman}  &=  \MrsGen{\Mman}{\Pman} \cap \FSPE{\eps}{\Mman}{\Pman}.
\end{align*}
Also for every subspace $\mathcal{X} \subset \FSP{\Mman}{\Pman}$ consider the following sets of isomorphism classes of groups
\begin{align*}
    \CStab{\mathcal{X}}   &= \{ \pi_0\StabilizerIsotId{\func,\partial\Mman} \mid \func\in\mathcal{X}   \}, &
    \CGrp{\mathcal{X}}    &= \{ \pi_0\GrpKRIsotId{\func,\partial\Mman} \mid \func\in\mathcal{X}   \},
\end{align*}
and the following set of isomorphism classes of short exact sequences:
\[
    \BSeq{\mathcal{X}} := \{ \sDSG{\func,\partial\Mman} \mid \func\in\mathcal{X}  \}.
\]

Then we have the following relations between those classes:
\begin{align*}
    &  \CStabMrsGen{\eps}{\Mman}{\Pman}    \subset \ccZ,
    && \CGrpKRMrsGen{\eps}{\Mman}{\Pman}   = \{1\}, \\
    &  \CStabMrsSmpE{\eps}{\Mman}{\Pman}  \subset \clsBt,
    && \CGrpKRMrsSmpE{\eps}{\Mman}{\Pman} \subset \clsGt, \\
    &  \CStabMrsE{\eps}{\Mman}{\Pman}   \subset \CStabFE{\eps}{\Mman}{\Pman} \subset \ccB,
    && \CGrpKRMrsE{\eps}{\Mman}{\Pman} \subset \CGrpKRFE{\eps}{\Mman}{\Pman} \subset \ccP
%%%%%%%%%%%%
\end{align*}
\begin{align*}
  & \BSeqMrsGen{\eps}{\Mman}{\Pman}  \subset \ZZI, \\
  & \BSeqMrsSmpE{\eps}{\Mman}{\Pman} \subset \ssZBtPt, \\
  & \BSeqMrsE{\eps}{\Mman}{\Pman}    \subset \BSeqFE{\eps}{\Mman}{\Pman} \subset \ssZBP.
%%%%%%%%%%%%
\end{align*}
The inclusions for generic Morse maps follow from Lemma~\ref{lm:GfX}, and all others from Theorem~\ref{th:solvable_groups}.

\begin{subtheorem}\label{th:realization:disk_cyl}
Let $\Mman$ be an orientable compact surface distinct from $\Torus$ and $\Sphere$, and $\eps:\pi_0\partial\Mman\to\{\pm1\}$ be any function.
\begin{enumerate}[leftmargin=*, label={\rm(\arabic*)}, topsep=0.8ex, itemsep=0.8ex]
\item\label{enum:th:realization_classes:1}
If $\Mman=\Circle\times[0,1]$, and $\eps$ takes the same value on both connected components of $\partial\Mman$, then
\begin{align*}
&   \ \ \ \CStabMrsSmpE{\eps}{\Mman}{\Pman}  = \clsBt \setminus \{1\},
&&  \ \ \ \CStabMrsE{\eps}{\Mman}{\Pman}     = \CStabFE{\eps}{\Mman}{\Pman}  =  \ccB \setminus \{1\}, \\
&   \ \ \ \CGrpKRMrsSmpE{\eps}{\Mman}{\Pman} = \clsGt,
&&  \ \ \ \CGrpKRMrsE{\eps}{\Mman}{\Pman}    = \CGrpKRFE{\eps}{\Mman}{\Pman} =  \ccP, \\
&   \ \ \ \BSeqMrsSmpE{\eps}{\Mman}{\Pman}   = \ssZBtPt \setminus\{\seqZ{0}\},
&&  \ \ \ \BSeqMrsE{\eps}{\Mman}{\Pman}      = \BSeqFE{\eps}{\Mman}{\Pman} = \\ &&&    \    \quad\qquad \qquad \qquad \qquad = \ssZBP \setminus \{\seqZ{0}\}.
\end{align*}
In particular, for every $\func\in\FSPE{\eps}{\Mman}{\Pman}$, the group $\pi_0\StabilizerIsotId{\func,\partial\Mman}$ is always non-trivial.

\item\label{enum:th:realization_classes:2}
In all other cases of $\Mman$ and $\eps$ we have that
\begin{align*}
&   \ \ \CStabMrsSmpE{\eps}{\Mman}{\Pman}  = \clsBt,
&&  \ \ \CStabMrsE{\eps}{\Mman}{\Pman}     = \CStabFE{\eps}{\Mman}{\Pman}  =  \ccB, \\
&   \ \ \CGrpKRMrsSmpE{\eps}{\Mman}{\Pman} = \clsGt,
&&  \ \ \CGrpKRMrsE{\eps}{\Mman}{\Pman}    = \CGrpKRFE{\eps}{\Mman}{\Pman} =  \ccP, \\
&   \ \ \BSeqMrsSmpE{\eps}{\Mman}{\Pman}   = \ssZBtPt,
&&  \ \ \BSeqMrsE{\eps}{\Mman}{\Pman}      = \BSeqFE{\eps}{\Mman}{\Pman} = \ssZBP.
\end{align*}
\end{enumerate}
\end{subtheorem}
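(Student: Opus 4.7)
The inclusions ``$\subset$'' in both items are already immediate from the results cited in \S\ref{sect:sing_on_bd}: Theorem~\ref{th:solvable_groups} and Lemma~\ref{lm:classes_SG} yield $\BSeqFE{\eps}{\Mman}{\Pman}\subset\ssZBP$ and $\BSeqMrsSmpE{\eps}{\Mman}{\Pman}\subset\ssZBtPt$, while the corresponding inclusions for $\CStab{\cdot}$ and $\CGrp{\cdot}$ follow from these by extracting the kernel and cokernel. The content of the theorem is therefore the \emph{realization} of every element of $\ssZBP$ (resp.\ $\ssZBtPt$) as $\sDSG{\func,\partial\Mman}$ for some $\func$ with prescribed boundary signs $\eps$. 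The plan is to do this by induction along the inductive definition of the classes $\ssZBP$ and $\ssZBtPt$, realize everything first on $2$-disks and cylinders, and then transport to surfaces of higher genus via Theorem~\ref{th:stab:chi_neg}.

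\textbf{Step 1: Base and elementary cases on $\Disk$ and $\Circle\times\UInt$.} The sequence $\seqTriv$ is realized on $\Disk$ by a map with a single non-degenerate local extreme (Theorem~\ref{th:stab:disk:one_crpt}\ref{enum:2disk:1crpt:nondeg}); the sign $\eps$ at $\partial\Disk$ is freely prescribable by choosing between $\func$ and $-\func$. On $\Cylinder=\Circle\times\UInt$ with \emph{opposite} boundary signs, the map without critical points also realizes $\seqTriv$ by Theorem~\ref{th:stab:cylinder}\ref{eqnu:th:cyl:func:no_cr_pt}. In contrast, if $\eps$ agrees on both ends of $\Cylinder$, then $\func$ must possess a saddle critical point, so Theorem~\ref{th:stab:disk:one_crpt}\ref{enum:2disk:1crpt:deg} and Theorem~\ref{th:stab:disk_ann:gen_case} force $\sDSG{\func,\partial\Mman}\neq\seqTriv$; this explains and establishes the exclusion $\setminus\{\seqZ{0}\}$ in case~\ref{enum:th:realization_classes:1}.

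\textbf{Step 2: Realizing products and wreath products on disks/cylinders.} Suppose $\aSeq_1$ and $\aSeq_2$ are realized on disks $\Disk_1,\Disk_2$ by maps $\func_1,\func_2$ having the same (arbitrary) sign on $\partial\Disk_i$. To realize $\aSeq_1\times\aSeq_2$ on $\Disk$ (or on $\Cylinder$), I would glue $\Disk_1$ and $\Disk_2$ into a larger $2$-disk $\Disk$ via a ``pair of pants'' neck $\Pman$ carrying a Morse map with a single saddle between two prescribed regular level values; Lemma~\ref{lm:reduction_Mconn_V_dM} together with Theorem~\ref{th:stab:disk_ann:gen_case}\ref{enum:SS:A} then identifies $\sDSG{\func,\partial\Disk}$ with $\seqZ{1}\times\aSeq_1\times\aSeq_2$ (on the cylinder we get $\aSeq_1\times\aSeq_2$ directly). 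To realize $\seqWrm{\aSeq}{m}$, I would take $m$ identical copies of the realizing disk $(\Disk_0,\func_0)$ for $\aSeq$ and arrange them as a symmetric ``flower'' around a common saddle leaf in a larger disk $\Disk$, in such a way that there is an element of $\Stabilizer{\func,\partial\Disk}$ cyclically permuting these copies; Theorem~\ref{th:stab:disk_ann:gen_case}\ref{enum:SS:B}\ref{enum:SS:B:B0} then identifies the sequence exactly with $\seqWrm{\aSeq}{m}$. Choosing all saddles to be non-degenerate, and taking $m=2$ throughout, we stay inside the Morse and simple-Morse classes, which proves the claims for $\ssZBtPt$, $\clsBt$, $\clsGt$ simultaneously. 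General critical leaves with $m\geq 3$ petals are allowed for $\func\in\FSP{\Disk}{\Pman}$, yielding $\ssZBP$, $\ccB$, $\ccP$.

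\textbf{Step 3: Transporting to higher genus.} For an arbitrary orientable $\Mman$ with $\chi(\Mman)<0$ and any prescribed $\eps$ (and a given target sequence $\aSeq\in\ssZBP$), choose a separating regular leaf that cuts off a sub-cylinder $\Cman\cong\Circle\times\UInt$ neighborhood of each boundary component and a sub-disk or cylinder $\Bman$ into which we transplant the map constructed in Step~2 that realizes $\aSeq$; outside these pieces, put a generic Morse map on the ``skeleton'' of $\Mman$ whose non-extremal critical leaves have canonical neighborhoods of negative Euler characteristic and for which each local factor $\seqStabIsotId{\func|_{\Bman_i},\hXman_i}$ is $\seqTriv$. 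Theorem~\ref{th:stab:chi_neg} then identifies $\seqStabIsotId{\func,\partial\Mman}$ with the product of the factor coming from $\Bman$ (namely $\aSeq$, up to the $\seqZ{1}$ factors absorbed by the outermost cylinder pieces via Theorem~\ref{th:stab:cylinder}\ref{eqnu:th:cyl:func:incl}) and trivial factors. The same construction yields the statements for $\CStab{\cdot}$ and $\CGrp{\cdot}$, since realizing a sequence automatically realizes its kernel and cokernel. The main obstacle I anticipate is bookkeeping in Step~2 the boundary signs through the ``pair of pants'' and ``flower'' gluings and verifying that Theorem~\ref{th:stab:disk_ann:gen_case}\ref{enum:SS:B}\ref{enum:SS:B:B0} genuinely applies (i.e.\ that $\bZmanX=\{\XFixA\}$ in the glued map), which amounts to ensuring there is a single outer critical leaf enclosing the whole flower and forbidding any unintended extra fixed boundary component of $\bZman$.
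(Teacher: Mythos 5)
The paper's ``Notes to the proof'' is not a self-contained argument: it delegates the realization of $\CGrp{\cdot}$ and $\CStab{\cdot}$ to the papers by Kravchenko--Maksymenko~\cite{KravchenkoMaksymenko:EJM:2020} and Kuznietsova--Soroka~\cite{KuznietsovaSoroka:UMJ:2021}, remarks that those arguments actually realize the whole sequence, and only spells out why $\seqZ{0}$ must be excluded in case~(1). Your reconstruction therefore has to supply the constructive skeleton that the paper takes for granted, and the overall strategy --- base cases on $\Disk$ and $\Cylinder$ via Theorems~\ref{th:stab:cylinder} and~\ref{th:stab:disk:one_crpt}, inductive realization via Theorem~\ref{th:stab:disk_ann:gen_case}, transport to $\chi(\Mman)<0$ via Theorem~\ref{th:stab:chi_neg} --- is structurally right, as is your explanation of the $\setminus\{\seqZ{0}\}$ clause.

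However, Step~2 has two genuine soft spots. First, as you compute, gluing two disks into a disk via a ``pair of pants'' realizes $\seqZ{1}\times\aSeq_1\times\aSeq_2$ (Theorem~\ref{th:stab:disk_ann:gen_case}\ref{enum:SS:A}), not $\aSeq_1\times\aSeq_2$; and the parenthetical claim that the cylinder variant yields $\aSeq_1\times\aSeq_2$ ``directly'' is not backed by anything --- on a cylinder a separating critical leaf always produces two fixed pieces $\XFixA,\XFixB\in\bZmanX$, so case~\ref{enum:SS:B}\ref{enum:SS:B:B0_X1} (or~\ref{enum:SS:A}) again contributes a residual factor. The direct-product structure that Corollary~\ref{cor:stab:disk_ann:ann_decomp}\ref{enum:cor:split_into_cyls:3} gives you comes from \emph{stacking} annuli $\Cyli{1},\dots,\Cyli{n}$ each carrying a single \emph{outermost} wreath product $\seqWrm{\cdot}{m_i}$, while sub-products are to be placed in several independent petal-disks $\Yman_{i,j}$ (the $c\geq2$ in Theorem~\ref{th:stab:disk_ann:gen_case}\ref{enum:SS:B}). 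The induction should therefore run over the normal form ``every nontrivial element of $\ssZBP$ is a finite product of wreath products'' and realize each wreath factor on a separate annulus, not by a pair-of-pants join between two disks. Second, you suggest that petals with $m\geq3$ require $\func\in\FSP{\Disk}{\Pman}$; but the theorem asserts $\BSeqMrsE{\eps}{\Mman}{\Pman}=\ssZBP$, which needs every $m\geq3$ to be hit already within the (non-simple) Morse class. This is achieved by placing $m$ \emph{non-degenerate} saddles on a single $\bZ_m$-symmetric critical leaf; you should say so explicitly, since the whole difference between $\ssZBtPt$ (simple Morse: one saddle per critical leaf, hence $m=2$) and $\ssZBP$ (Morse or $\FSP$, arbitrary $m$) turns on exactly this point.
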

\begin{smallproof}{Notes to the proof}
For groups $\CGrp{\mathcal{\Xman}}$ the proof is given in S.~Maksymenko and A.~Krav\-chenko~\cite{KravchenkoMaksymenko:EJM:2020}, and for groups $\CStab{\mathcal{\Xman}}$ by I.~Kuznietsova and Yu.~Soroka~\cite{KuznietsovaSoroka:UMJ:2021}.
Notice that in the case~\ref{enum:th:realization_classes:1} any map $\func\in\FSPE{\eps}{\Mman}{\Pman}$ has local minimum (or local maximum) on both boundary components of $\Mman$.
Therefore $\func$ must have saddle critical points inside $\Mman$, and therefore by Theorem~\ref{th:stab:disk_ann:gen_case} the group $\pi_0\StabilizerIsotId{\func,\partial\Mman}$ is always non-trivial.
This explains why one should remove the unit group $\{1\}$ from the classes in the first line of~\ref{enum:th:realization_classes:1}.

A thorough analysis of arguments in~\cite{KuznietsovaSoroka:UMJ:2021} shows that they actually contain realization theorems for sequences $\sDSG{\func,\partial\Mman}$.
This gives the corresponding relations for $\BSeq{\mathcal{X}}$.

The case $\Mman=\Torus$ is described in Theorem~\ref{th:classes_SG} below.
\end{smallproof}

\section{Maps on $2$-torus}\label{sect:maps_torus}

In this section we will describe the results of S.~Maksymenko, B.~Feshchenko, and A.~Kravchenko
\cite{MaksymenkoFeshchenko:UMZ:ENG:2014, Feshchenko:Zb:2015, MaksymenkoFeshchenko:MS:2015, MaksymenkoFeshchenko:MFAT:2015, Feshchenko:MFAT:2016, Feshchenko:PIGC:2019, KravchenkoFeshchenko:MFAT:2020, Feshchenko:PIGC:2021}
about algebraic structure of $\pi_1\OrbitPathComp{\func}{\func}$ for the maps $\func\in\FSP{\Torus}{\Pman}$.

Let $\func\in\FSP{\Torus}{\Pman}$.
If $\func$ has no critical points, then $\func$ is a locally trivial fibration $\Torus\to\Circle$ and the homotopy types of stabilizers and orbits for the pair $(\func,\varnothing)$ is described by Theorem~\ref{th:sdo:except_cases}\ref{enum:specfunc:torus}.

Assume that $\func$ has at least one critical point.
Then we have the following diagram~\eqref{equ:diagram_3x3_sdo_b:reduced} relating all the groups which we are interested in:
\begin{equation}\label{equ:diagram_3x3_sdo_b:reduced:torus}
    \aligned
    \xymatrix@R=3ex{
        \ \pi_1\Diff(\Torus)  \oplus \pi_0\FolStabilizerIsotId{\func} \ \ar@{->>}[rr]^-{\prj_2} \ar@{^(->}[rd]^-{\pj_1 \oplus \psi_1}  \ar@{->>}[d]_{\prj_1} & &
        % \ \partial_1^{-1}\bigl(\pi_0\FolStabilizerIsotId{\func} \bigr) \ \ar@{^(->}[d]^{} \ar@{->>}[r]^-{\partial_1} &
        \ \pi_0\FolStabilizerIsotId{\func} \ \ar@{^(->}[d]^{} \\
    %%%%%%%%%%%%%%%%%%%%
        \ \pi_1\Diff(\Torus) \ar@{^(->}[r]^-{\pj_1} \  &
        \ \pi_1\OrbitPathComp{\func}{\func} \ \ar@{->>}[r]^-{\partial_1} \ar@{^(->}[rd]_-{\rho\circ\partial_1} &
        \ \pi_0\StabilizerIsotId{\func} \ \ar@{->>}[d]^{\rho} \\
    %%%%%%%%%%%%%%%%%%%%
        &
        &
        \ \GrpKRIsotId{\func} \
    }
    \endaligned
\end{equation}
Recall that by Lemma~\ref{lm:p_H1M_H1G_surj} the graph $\KRGraphf$ of $\func$ is either a tree or contains a unique cycle.
In each of those cases we will describe this diagram up to isomorphism.

\begin{lemma}{\rm(\cite[Lemma~5.4]{KravchenkoFeshchenko:MFAT:2020})}\label{lm:cond_torus_KG_tree}
Let $\func\in\FSP{\Torus}{\Pman}$.
If $\func$ is either simple, or $\Pman=\Circle$ and $\func$ is not null-homotopic, then $\KRGraphf$ contains a cycle.
\end{lemma}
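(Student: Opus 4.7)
The plan is to argue by contrapositive: assume $\KRGraphf$ is a tree and derive a contradiction from each of the two listed hypotheses. By Lemma~\ref{lm:p_H1M_H1G_surj}, for $\Mman=\Torus$ the graph $\KRGraphf$ is either a tree or has a unique cycle, so ruling out the tree case will suffice.

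Under the hypothesis ``$\Pman=\Circle$ and $\func$ is not null-homotopic'' I would use the factorization $\func = \widehat{\func}\circ\prj$ through the Reeb graph, which yields $\func_1 = \widehat{\func}_1\circ \prj_1$ on fundamental groups. Since $\Circle$ is a $K(\bZ,1)$-space, $\func$ is null-homotopic if and only if $\func_1:\pi_1(\Torus)\to\pi_1(\Circle)\cong\bZ$ is zero. Under the tree assumption $\pi_1(\KRGraphf)=1$, so $\prj_1 = 0$, hence $\func_1 = 0$, contradicting the non-null-homotopy hypothesis.

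Under the hypothesis ``$\func$ is simple'' (and hence Morse, since the paper defines ``simple'' only for Morse maps) I plan to compute $\chi(\KRGraphf)$ directly and show it equals $0$, contradicting $\chi(\mathrm{tree})=1$. Simplicity implies that the vertices of $\KRGraphf$ are in bijection with critical points of $\func$: each extreme contributes a degree-$1$ vertex (its critical leaf is a single point with disk regular neighborhood), and each non-degenerate saddle contributes a degree-$3$ vertex. The degree-$3$ claim is the main obstacle: the critical leaf of a simple saddle is a figure-$8$ with $\chi=-1$, so any compact orientable regular neighborhood $N_v$ satisfies $\chi(N_v) = 2-2g_v-b_v = -1$; moreover, locally the level sets just above and just below the saddle's critical value each split into two arcs, so each side contributes at least one boundary circle of $N_v$, forcing $b_v\geq 2$; the only integer solution with $g_v\geq 0$ is $(g_v,b_v)=(0,3)$, a pair of pants. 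Letting $a$ and $s$ denote the numbers of extremes and saddles, respectively, the identity $\chi(\Torus)=0$ gives $a=s$, whence $V = a+s = 2s$ and $2E = a + 3s = 4s$ yields $E = 2s$; hence $\chi(\KRGraphf) = 0$ and $\beta_1(\KRGraphf) = 1$, contradicting the tree hypothesis.
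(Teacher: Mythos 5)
Your proof is correct. The paper does not reproduce a proof of this lemma (it refers to \cite[Lemma~5.4]{KravchenkoFeshchenko:MFAT:2020}), so a line-by-line comparison is not possible, but both halves of your argument are sound and are almost certainly the intended ones.

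The circle-valued case is immediate from the factorization $\func = \widehat{\func}\circ\prj$: a tree has trivial $\pi_1$, so $\prj_1 = 0$, so $\func_1 = 0$, so $\func$ is null-homotopic. For the simple case, the Euler characteristic count works: simplicity forces each critical leaf to carry exactly one critical point, so it is a point (extreme) or a figure-eight (non-degenerate saddle). The regular neighborhood $N_v$ of a saddle has $\chi(N_v)=-1$; since a saddle has both super- and sublevel sets nearby, $N_v$ has boundary circles in both $\func^{-1}(c+\eps)$ and $\func^{-1}(c-\eps)$, giving $b_v\geq 2$, and then $2g_v + b_v = 3$ with $g_v\geq 0$ forces $(g_v,b_v)=(0,3)$, a pair of pants, hence a degree-$3$ vertex. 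With Poincar\'e--Hopf giving $a=s$ (valid also for $\Pman=\Circle$, via the gradient of a local lift), the handshaking lemma yields $\chi(\KRGraphf)=0\neq 1$.

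One small presentational point worth making explicit: the count $V=a+s=2s$ tacitly assumes $s\geq 1$. This is justified under the contrapositive hypothesis, since a tree has at least one vertex, hence (as $\partial\Torus=\varnothing$) $\func$ has at least one critical leaf, hence at least one critical point, and $a=s$ then forces $s\geq 1$. The degenerate possibility $s=a=0$ (a critical-point-free fibration $\Torus\to\Circle$, which is vacuously a simple Morse map) already produces a graph homeomorphic to $\Circle$, consistent with the lemma, but your $V-E$ bookkeeping would be ill-defined there, so a one-line remark dispatching that case would tighten the write-up.
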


\newcommand\cLeaf[1]{\Cman_{#1}}
\newcommand\cLeaves{\mathcal{C}}
\newcommand\qCyl[1]{\Qman_{#1}}

\subsection{Maps $\func\in\FSP{\Torus}{\Pman}$ whose graph $\KRGraphf$ contains a cycle}
Suppose $\func:\Mman\to\Pman$ be a map from $\FSP{\Torus}{\Pman}$ such that its graph $\KRGraphf$ contains a simple cycle $\gamma$.
Let $\px$ be a point belonging to some open edge of $\gamma$.
Then $\px$ corresponds to some regular leaf $\cLeaf{0}$ of $\func$.
Let
\[
\cLeaves = \{ \dif(\cLeaf{0}) \mid \dif\in\StabilizerIsotId{\func} \}
\]
be the set of images of $\cLeaf{0}$ under all maps of $\StabilizerIsotId{\func}$.
Then $\cLeaves$ consists of finitely many, say $n$, leaves of $\func$ which can be cyclically ordered along $\Torus$ and enumerated as follows:
\[
\cLeaf{0}, \ \cLeaf{1}, \ \cdots, \ \cLeaf{n-1}.
\]

If $n\geq2$, then for each $i$ the leaves $\cLeaf{i}$ and $\cLeaf{i+1}$ bound a cylinder $\qCyl{i}$ containing no other leaves $\cLeaf{j}$ and $\Int{\qCyl{j}} \cap \Int{\qCyl{i}} = \varnothing$ for $i\not=j$.
Every $\dif\in\Stabilizer{\func}$ cyclically shift those cylinders.
Denote by $\Qman$ one of them, e.g. put $\Qman = \qCyl{0}$.

Suppose $n=1$, so $\cLeaf{0}$ is invariant with respect to $\StabilizerIsotId{\func}$, then $\Torus\setminus\cLeaf{0}$ is an open cylinder ``bounded by $\cLeaf{0}$ from both sides''.
In this case we put $\Qman = \overline{\Torus\setminus\regN{\cLeaf{0}}}$, where $\regN{\cLeaf{0}}$ is any $\func$-regular neighborhood of $\cLeaf{0}$

Now we will define several constructions and homomorphisms.
\begin{enumerate}[wide, label={\rm\alph*)}]
\item
Recall that $\Torus = \Circle\times\Circle$ is a abelian group, and therefore for each $(a,b) \in\Torus$, the shift $\eta_{a,b}:(\px,\py) \mapsto (\px+a, \py+b)$ is a diffeomorphism of $\Torus$.
Then the correspondence $(a,b) \mapsto \eta_{a,b}$ is an embedding $\eta: \Torus \monoArrow \DiffId(\Torus)$.
It is well known that $\eta$ is a homotopy equivalence, see Table~\ref{tbl:hom_type_DidMX}.

\item
Let $\cLeaf{0}'$ be a simple closed curve which transversely intersects $\cLeaf{0}$ at a unique point $\px$.
It will be convenient to imagine $\cLeaf{0}'$ and $\cLeaf{0}$ as a parallel and a meridian of $\Torus$.
Let $\lambda = [\cLeaf{0}'], \mu=[\cLeaf{0}] \in \pi_1(\Torus,\px)$ be the corresponding elements of the fundamental group of $\Torus$.
Then one can assume that
\[
    \lambda = (1,0), \mu = (0,1) \in \bZ^2 \cong \pi_1\Torus \xrightarrow[\cong]{\eta} \pi_1\DiffId(\Torus).
\]

\item
Let $\XSman_0$ and $\XSman_1$ be the boundary components of $\Qman$.
By Corollary~\ref{cor:stab:disk_ann:ann_decomp} one can represent $\Qman$ as a ``chain'' of $n$ cylinders $\Cyli{1},\ldots,\Cyli{n}$ having the following properties.
\begin{itemize}
\item
$\XSman_0 \subset \partial\Cyli{1}$, $\XSman_1 \subset \partial\Cyli{n}$, and only consecutive pairs $\Cyli{i}$ and $\Cyli{i+1}$ intersects and $\Cyli{i} \cap\Cyli{i+1}$ is their common boundary circle;
\item
$\seqStabIsotId{\func|_{\Qman}, \partial\Qman}\cong \prod\limits_{i=1}^{n}\seqStabIsotId{\func|_{\Cyli{i}}, \partial\Cyli{i}}$;
\end{itemize}

\item
Let $\eps:\partial\Qman\to\{\pm1\}$ be the function defining the sings on the boundary of the restriction $\func|_{\Qman}$, see~Section~\ref{sect:sing_on_bd}.
Thus $\eps$ takes constant values on each connected component $\Vman$ of $\partial\Qman$ and equals $+1$ (resp $-1$) if $\func$ takes on $\Vman$ a local maximum (resp.\ local minimum).
Then it is easy to see if $\Pman=\bR$, then $\eps$ takes the same value on both boundary components of $\partial\Qman$, whence by Theorem~\ref{th:realization:disk_cyl}\ref{enum:th:realization_classes:1}, $\pi_0\Stabilizer{\func|_{\Qman},\partial\Qman}$ is a non-trivial group.
On the other hand, for the case $\Pman=\Circle$, the function $\eps$ takes distinct values on boundary components of $\partial\Qman$, whence in this case $\pi_0\Stabilizer{\func|_{\Qman},\partial\Qman}$ can be trivial, which holds, e.g. when $\func$ has no critical points, see Theorem~\ref{th:sdo:except_cases}\ref{enum:specfunc:torus}.

\item
To simplify notations define the following groups $\mathcal{A}_{i} := \pi_0\StabilizerIsotId{\func|_{\Cyli{i}}, \partial\Cyli{i}}$,
\begin{align*}
    \Delta_{\partial} &:= \pi_0\FolStabilizerIsotId{\func|_{\Qman},\partial\Qman},  &
    \Delta            &:= \pi_0\FolStabilizerIsotId{\func|_{\Qman}}, \\
    \Stab_{\partial}  &:= \pi_0\StabilizerIsotId{\func|_{\Qman},\partial\Qman},  &
    \Stab             &:= \pi_0\StabilizerIsotId{\func|_{\Qman}}.
\end{align*}
Then the latter isomorphism for the sequence $\seqStabIsotId{\func|_{\Qman}, \partial\Qman}$ contains an isomorphism
\[
    \Stab_{\partial} \cong \mprod_{i=1}^{n}\mathcal{A}_{i}.
\]

\item
Let $\widehat{\lambda} = ([\id_{\Qman}], \ldots, [\id_{\Qman}], m) \in \Stab_{\partial}\wrm{m}\bZ$ be the Garside element of the group $\Stab_{\partial}\wrm{m}\bZ$, see \S\ref{sect:garside_elem}.
Then $\widehat{\lambda}$ contained in the center of $\Stab_{\partial}\wrm{m}\bZ$.

\item
Let $\widehat{\mu}_i$ be the Garside element of $\mathcal{A}_i$, and $\widehat{\mu}=(\widehat{\mu}_1\ldots,\widehat{\mu}_n)$ be the \myemph{diagonal Garside element} of $\Stab_{\partial}$, see~\eqref{equ:diag_Garside_seq}.
Notice $\widehat{\mu}$ is the generator of the kernel $\jInclZ: \Stab_{\partial} \to \Stab$, and corresponds to a pair of Dehn twists along boundary components of $\Qman$ produces in different directions.

\item
Recall that by Lemma~\ref{lm:incl_SprfX_Sprf}\ref{enum:xx:3x3_diagram} there exists a section
\[  \xi: \Delta \to \Delta_{\partial} \subset \Stab_{\partial}. \]

\item
Then one can define the following homomorphism:
\begin{gather*}
    \alpha: \pi_1\DiffId(\Torus) \times  \Delta^m \to \Stab_{\partial}\wrm{m}\bZ, \\
    \alpha(\lambda^a \mu^b, q_1,\ldots,q_n) =
    \bigl( \xi(q_1) \widehat{\mu}^b , \ldots, \xi(q_n)  \widehat{\mu}^b,  am \bigr).
\end{gather*}
Since $\alpha(\lambda) = \widehat{\lambda}$ and $\alpha(\mu) = (\widehat{\mu},\ldots,\widehat{\mu},0)$ belongs to the center of $\Stab_{\partial}\wrm{m}\bZ$, one easily checks that $\alpha$ is in fact a well-defined homomorphism.
Moreover, it is also easy to see that $\alpha$ is injective.
\end{enumerate}

\begin{subtheorem}\label{th:bib_seq_torus:tree}
{\rm(\cite{MaksymenkoFeshchenko:MFAT:2015, MaksymenkoFeshchenko:MS:2015, Feshchenko:PIGC:2019})}
Diagram~\eqref{equ:diagram_3x3_sdo_b:reduced} is isomorphic to the following one:
\begin{equation*}
\MResize{\textwidth}{
\xymatrix@R=4ex{
    \ \pi_1\Diff(\Torus) \oplus \bigl(\pi_0\FolStabilizerIsotId{\func|_{\Qman}}\bigr)^m \
            \ar@{->>}[rr]^-{\prj_2} \ar@{^(->}[rd]^-{\alpha} \ar@{->>}[d]_{\prj_1} & &
    \ \bigl(\pi_0\FolStabilizerIsotId{\func|_{\Qman_0}}\bigr)^m \ \ar@{^(->}[d]^{}  \\
%%%%%%%%%%%%%%%%%%%%
    \ \pi_1\Diff(\Torus) \ar@{^(->}[r]^-{\alpha} \  &
    \ \pi_0\StabilizerIsotId{\func|_{\Qman},\partial\Qman} \wrm{m} \bZ \ \ar@{->>}[r]^-{\delta} \ar@{^(->}[rd] &
    \ \pi_0\StabilizerIsotId{\func|_{\Qman}} \wr \bZ_m \ \ar@{->>}[d] \\
%%%%%%%%%%%%%%%%%%%%
    &
    &
    \ \GrpKRIsotId{\func|_{\Qman}} \wr \bZ_m \
}}
\end{equation*}
where $\delta(s_1,\ldots,s_m,k) = (\jInclZ(s_1),\ldots,\jInclZ(s_m), k \bmod m)$, and other arrows are obvious homomorphisms.

In particular, for the right columns of the corresponding diagrams we have the isomorphism, see~\eqref{equ:seqWrZm_finite}:
\[ \seqStabIsotId{\func} \cong \seqWrm{\seqStabIsotId{\func|_{\Qman}}}{m}.\]
\end{subtheorem}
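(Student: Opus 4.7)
The strategy is to verify the hypotheses of Lemma~\ref{lm:charact_seq_wrm} for the short exact sequence of short exact sequences produced by the cyclic action of $\pi_0\StabilizerIsotId{\func}$ on the orbit $\cLeaves = \{\cLeaf{0},\ldots,\cLeaf{m-1}\}$. Once the required isomorphism of middle rows is established, the full $(3\times3)$-diagram will follow by identifying the side rows and columns and passing to Garside quotients as in~\eqref{equ:seqWrZm_finite}.

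First I would construct a preferred \emph{shift diffeomorphism} $g\in \StabilizerIsotId{\func}$ satisfying $g(\qCyl{i})=\qCyl{i+1}$ for every $i\in\bZ/m$. Since the cylinders $\qCyl{0},\ldots,\qCyl{m-1}$ tile $\Torus$ as parallel bands separated by the leaves of $\cLeaves$, one identifies the quotient $\Torus/\!\!\sim$ (collapsing each band) with a circle, and chooses $g$ so as to project to rotation by $2\pi/m$ on it. The crucial geometric observation is that $g^m$ preserves each $\qCyl{i}$ setwise, and as a self-diffeomorphism of $\Torus$ it is isotopic to the translation $\eta_{\lambda}$ given by the parallel $\lambda\in\pi_1\Torus\cong\pi_1\Diff(\Torus)$. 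Hence $[g^m]$ lies in the image of $\pj_1\colon\pi_1\Diff(\Torus)\to\pi_1\OrbitPathComp{\func}{\func}$ and is therefore central (Lemma~\ref{lm:long_ex_seq_pj}).

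Next I would set up the required $(3\times3)$-diagram $\kSeq\monoArrow\lSeq\epiArrow\seqZ{m}$. Take $\xB:=\pi_1\OrbitPathComp{\func}{\func}$, with $\hb\colon\xB\to\bZ$ being the lift to a \emph{winding number} of the cyclic action of $\partial_1(\xB)\subset\pi_0\StabilizerIsotId{\func}$ on $\cLeaves$; set $\xA=\hb^{-1}(m\bZ)$, $\kB=\ker\hb$, and $\kA=\xA\cap\kB$. For the subgroup $\zB{0}\subset\kB$ take the image of $\pi_0\StabilizerIsotId{\func|_{\Qman},\partial\Qman}$ under extension by the identity on $\Torus\setminus\Qman$; its conjugates $\zB{i}=g^{-i}\zB{0}g^{i}$ are the analogous subgroups supported on $\qCyl{i}$. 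Conditions of Lemma~\ref{lm:charact_seq_wrm} are then checked as follows: condition~(a) is precisely the centrality of $g^m$ just established; condition~(b), that $\kB$ splits as the product of the $\zB{i}$, is obtained by iterated application of Lemma~\ref{lm:cut_the_collar} to the $m$ regular separating leaves in $\cLeaves$ together with the fact that isotopy classes supported on disjoint cylinders commute; and condition~(c) follows from the analogous decomposition of $\pi_0\FolStabilizerIsotId{\func}$ using Lemma~\ref{lm:pi0_Delta}\ref{enum:lm:pi0_Delta:4}, which in particular identifies $\pi_0\FolStabilizerIsotId{\func}\cong\bigl(\pi_0\FolStabilizerIsotId{\func|_{\Qman}}\bigr)^m$.

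Applying Lemma~\ref{lm:charact_seq_wrm} then yields the required isomorphism $\pi_1\OrbitPathComp{\func}{\func}\cong\pi_0\StabilizerIsotId{\func|_{\Qman},\partial\Qman}\wrm{m}\bZ$ of middle columns, together with the full compatible isomorphism of the $(3\times3)$-diagram; the right column $\seqStabIsotId{\func}\cong\seqWrm{\seqStabIsotId{\func|_{\Qman}}}{m}$ drops out by quotienting the middle column by the Garside element $\widehat{\lambda}$ and the diagonal boundary twist $\widehat{\mu}$ — that is, by the image of $\pi_1\Diff(\Torus)\hookrightarrow\pi_1\OrbitPathComp{\func}{\func}$ via $\pj_1$ combined with Lemma~\ref{lm:incl_SprfX_Sprf}. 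The main obstacle is the first step: producing $g$ and rigorously identifying $[g^m]=[\eta_{\lambda}]$, which is a covering-space computation on the branched cover $\Torus\to\Torus/\!\!\sim\,\cong\Circle$ and requires the shift to be constructed compatibly with the critical leaves of $\func$ separating consecutive $\qCyl{i}$.
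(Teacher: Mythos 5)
Your overall plan---build a shift $g$, verify the three hypotheses of Lemma~\ref{lm:charact_seq_wrm}, then pass to Garside quotients to recover the ``In particular'' statement---is aligned with what the paper indicates. The theorem itself is not proved here but cited to \cite{MaksymenkoFeshchenko:MFAT:2015, MaksymenkoFeshchenko:MS:2015, Feshchenko:PIGC:2019}; however the paragraphs preceding it (the section $\xi$, the Garside elements $\widehat\lambda$, $\widehat\mu$, the explicit homomorphism $\alpha$ with $\alpha(\lambda)=\widehat\lambda$, $\alpha(\mu)=(\widehat\mu,\dots,\widehat\mu,0)$) are precisely the data needed to instantiate Lemma~\ref{lm:charact_seq_wrm}, and the remark after Theorem~\ref{th:bib_seq_torus} that ``the last identity can also be proved using Lemma~\ref{lm:charact_seq_wrmn}'' confirms this is the intended route.

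There are nonetheless two concrete soft spots. First, your appeal to Lemma~\ref{lm:cut_the_collar} to verify condition~\ref{enum:3x3:prod:m} of Lemma~\ref{lm:charact_seq_wrm} (the product decomposition of $\kB$ into the conjugates $\zB{i}$) does not apply as stated: that lemma requires the annulus $\Cylinder$ to be invariant under \emph{all} of $\Stabilizer{\func,\Xman\cup\XSman}$, whereas here the cylinders $\qCyl{i}$ are cyclically permuted rather than preserved. The correct mechanism is to restrict first to the subgroup preserving each $\qCyl{i}$ (the analogue of~\eqref{equ:StabZ:torus}), identify its $\pi_0$ with $\prod_i \pi_0\StabilizerIsotId{\func|_{\qCyl{i}},\partial\qCyl{i}}$ via Lemma~\ref{lm:reduction_Mconn_V_dM} and Lemma~\ref{lm:reduct:reg_nbh}, and only then transport this along $\partial_1$ (using~\eqref{equ:pi1OfX_pi0SfX} for each $(\func|_{\qCyl{i}},\partial\qCyl{i})$) to get the desired splitting of $\kB\subset\pi_1\OrbitPathComp{\func}{\func}$. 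Second, you move rather freely between the shift as a diffeomorphism in $\StabilizerIsotId{\func}$ and the required element $g\in\xB=\pi_1\OrbitPathComp{\func}{\func}$ with $\hb(g)=1$; the lift is only defined up to $\pj_1(\pi_1\Diff(\Torus))$, and making $g^m$ equal exactly to $\pj_1(\lambda)$ (rather than $\pj_1(\lambda)$ times an unknown Dehn-twist correction in $\Stab_\partial^m$) is precisely the normalization that the section $\xi$ and the choice of $\widehat\mu$ in the paper's construction of $\alpha$ are designed to encode. You flag this as ``the main obstacle,'' correctly, but that acknowledgement is carrying most of the weight of the proof: without it Lemma~\ref{lm:charact_seq_wrm} cannot be launched, and the first gap about the splitting of $\kB$ is independent and not acknowledged at all.
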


\subsection{Maps $\func\in\FSP{\Torus}{\Pman}$ whose graph $\KRGraphf$ is a tree}
\begin{sublemma}{\rm(\cite[Proposition~1]{MaksymenkoFeshchenko:UMZ:ENG:2014})}\label{lm:exist_spec_level}
Suppose $\KRGraphf$ is a tree.
Then there exists a unique critical leaf $\crLev$ of $\func$ such that $\Torus\setminus\crLev$ is a union of open $2$-disks.
\end{sublemma}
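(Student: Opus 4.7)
The plan is to establish existence and uniqueness of $\crLev$ by combining a dichotomy for regular leaves with a directed-tree argument on $\KRGraphf$.

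\emph{Preliminary dichotomy.} First I would show that when $\KRGraphf$ is a tree, every regular leaf $\gamma$ of $\func$ separates $\Torus$ into exactly two components, one an open $2$-disk and the other an open torus-with-hole. The map $\prj\colon\Torus\to\KRGraphf$ is a quotient map with connected fibers (each fiber is one leaf), so preimages of connected open subsets of $\KRGraphf$ are connected; removing an interior point $p=\prj(\gamma)$ from the tree disconnects it into two subtrees, so $\Torus\setminus\gamma$ has exactly two components. Their closures are compact orientable surfaces with boundary $\gamma\cong S^1$, so their Euler characteristics lie in $\{1,-1,-3,\dots\}$; since they sum to $\chi(\Torus)=0$, one equals $1$ and the other $-1$, yielding a disk and a torus-with-hole.

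\emph{Existence of $\crLev$.} Orient each edge $e$ of $\KRGraphf$ toward the endpoint whose side of any regular leaf $\gamma_e$ with $\prj(\gamma_e)\in\mathrm{int}(e)$ is the torus-with-hole side; this is well defined by continuity along $e$. The resulting finite oriented tree is acyclic, so it admits a sink $v$, that is, every edge incident to $v$ is directed into $v$. Put $\crLev:=\prj^{-1}(v)$. Since $\prj$ has connected fibers, the components of $\Torus\setminus\crLev$ are precisely the sets $\prj^{-1}(T_e)$, one for each edge $e$ at $v$, where $T_e$ denotes the component of $\KRGraphf\setminus\{v\}$ adjacent to $e$. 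The sink condition means that for every regular leaf $\gamma_p$ with $p\in\mathrm{int}(e)$, the disk component $D_{\gamma_p}\subset\Torus\setminus\gamma_p$ lies on the side of $\gamma_p$ disjoint from $v$, so $D_{\gamma_p}\subset\prj^{-1}(T_e)$. As $p\to v$ along $e$, the disks $D_{\gamma_p}$ are nested and exhaust $\prj^{-1}(T_e)$. Since a monotone union of open $2$-cells in a surface is again an open $2$-cell (Brown's theorem), each component $\prj^{-1}(T_e)$ is an open disk, proving existence.

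\emph{Uniqueness.} Suppose $\crLev_1\neq\crLev_2$ both have disk complements. Pick an interior point $p$ of an edge on the unique path in $\KRGraphf$ from $v_1=\prj(\crLev_1)$ to $v_2=\prj(\crLev_2)$. By the dichotomy, the regular leaf $\gamma_p=\prj^{-1}(p)$ splits $\Torus$ as $U_1\sqcup U_2$ with $U_1$ an open disk and $U_2$ an open torus-with-hole, and $\crLev_1,\crLev_2$ lie on opposite sides. At least one of them, say $\crLev_i$, lies in the disk side $U_1$; then $U_2\subset\Torus\setminus\crLev_i$ is a connected open set, hence contained in some component of $\Torus\setminus\crLev_i$, which is by hypothesis an open disk. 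But an open torus-with-hole has genus $1$ and admits two simple closed curves meeting transversally in exactly one point, which is impossible for an open subset of a planar region by the Jordan curve theorem. This contradiction yields uniqueness.

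The main delicate points are (i)\ identifying components of $\Torus\setminus\crLev$ with subtrees of $\KRGraphf\setminus\{v\}$ via the quotient/connected-fiber property of $\prj$, and (ii)\ invoking Brown's theorem correctly so that the nested exhaustion by disks produces an actual open disk rather than an annulus or the whole torus.
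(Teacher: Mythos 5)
Your argument is correct, and since the paper only cites Proposition~1 of \cite{MaksymenkoFeshchenko:UMZ:ENG:2014} without reproducing its proof, there is no in-text argument to compare against; what you have written is a complete, self-contained proof. The orientation trick (directing each edge towards the genus-one side of any regular leaf over its interior, which is well defined because moving the leaf within the interior of an edge only inserts or removes an annular collar, preserving genus) cleanly singles out a sink vertex $v$, and the two delicate points you flag are handled correctly: since $\prj$ is a quotient map with connected fibers, preimages of connected open subsets of $\KRGraphf$ are connected, which simultaneously gives the dichotomy for regular leaves and the identification of the components of $\Torus\setminus\crLev$ with the preimages $\prj^{-1}(T_e)$; and Brown's monotone-union theorem applies since the nested disks $D_{\gamma_{p_n}}$ are open in $\Torus$. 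As a small remark, Brown's theorem can be avoided: fixing one regular $p\in\Int(e)$, the set $\prj^{-1}(T_e)$ is exactly the closed disk $\overline{\prj^{-1}(T_e^{(p)})}$ (closed because the dichotomy gives the closure as a compact genus-zero surface with boundary $\gamma_p$) with the open annulus $\prj^{-1}\bigl((p,v)\bigr)$ attached along $\gamma_p$, and a closed $2$-disk with an open exterior collar glued to its boundary circle is already an open $2$-disk. Your uniqueness step via the mod-$2$ intersection form (two simple closed curves meeting once cannot sit inside a planar open set) is also correct.
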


It follows from uniqueness of such critical leaf, that $\dif(\crLev) = \crLev$ for all $\dif\in\Stabilizer{\func}$.

Let $c = \func(\crLev) \in \Pman$.
Take $\eps>0$ and consider the connected component $\regN{\crLev}$ of $\func^{-1}\bigl([c-\eps,c+\eps]\bigr)$ containing $\crLev$.
Decreasing $\eps$ one can assume that $\regN{\crLev} \cap \fSing = \crLev\cap\fSing$.
Then $\regN{\crLev}$ is an $\func$-regular neighborhood of $\crLev$, and it is also invariant with respect to $\Stabilizer{\func}$.

Similarly to~\S\ref{sect:disk_cyl:gen_case} let $\bZman$ be the collection of all connected components of $\overline{\Torus\setminus\regN{\crLev}}$.
Then by Lemma~\ref{lm:exist_spec_level} each element of $\bZman$ is a $2$-disk, and $\Stabilizer{\func}$ interchanges those disks.
Let also
\begin{equation}\label{equ:StabZ:torus}
 \Stabilizer{\bZman} = \{ \dif\in\StabilizerIsotId{\func} \mid \dif(\Zman)=\Zman \ \text{for each} \ \Zman\in\bZman\}
\end{equation}
be the kernel of non-effectiveness of the action of $\StabilizerIsotId{\func}$ on $\bZman$.
Then the quotient $\StabilizerIsotId{\func}/\Stabilizer{\bZman}$ \myemph{effectively} acts on $\bZman$.

\begin{subtheorem}\label{th:bib_seq_torus}
{\rm(\cite{MaksymenkoFeshchenko:UMZ:ENG:2014, Feshchenko:Zb:2015, Feshchenko:MFAT:2016, Feshchenko:PIGC:2019, KravchenkoFeshchenko:MFAT:2020})}
% MaksymenkoFeshchenko:UMZ:ENG:2014, % tree
% Feshchenko:Zb:2015,                % tree
% MaksymenkoFeshchenko:MS:2015,      % cycle
% MaksymenkoFeshchenko:MFAT:2015,    % cycle
% Feshchenko:MFAT:2016,              % tree + actions
% Feshchenko:PIGC:2019,              % tree + cycle
% KravchenkoFeshchenko:MFAT:2020     % KR-graphs + tree + cycle
\begin{enumerate}[wide, label={\rm(\Alph*)}, topsep=0.8ex, itemsep=0.8ex]
\item
Suppose all elements of $\bZman$ are invariant with respect to $\StabilizerIsotId{\func}$.
To simplify notation put
\[
\MResize{\textwidth}{
\aligned
    \ \Delta       &:= \mprod\limits_{\Zman\in\bZman} \pi_0\FolStabilizerIsotId{\func|_{\Zman}, \partial\Zman}, &
    \ \Stab        &:= \mprod\limits_{\Zman\in\bZman} \pi_0\StabilizerIsotId{\func|_{\Zman}, \partial\Zman},    &
    \ \mathcal{G}  &:= \mprod\limits_{\Zman\in\bZman} \GrpKRIsotId{\func|_{\Zman}, \partial\Zman}.
\endaligned
}
\]
Then diagram~\eqref{equ:diagram_3x3_sdo_b:reduced} is isomorphic to the following one:
\begin{equation}\label{equ:diagram_3x3_sdo_b:reduced:torus:triv_act}
\aligned
    % \MResize{\textwidth}{
        \xymatrix@R=4ex{
            \ \pi_1\Diff(\Torus) \oplus \Delta \
                    \ar@{->>}[rr]^-{\prj_2} \ar@{^(->}[rd] \ar@{->>}[d]_{\prj_1} & &
            \ \Delta \ \ar@{^(->}[d]^{}  \\
        %%%%%%%%%%%%%%%%%%%%
            \ \pi_1\Diff(\Torus) \ar@{^(->}[r] \  &
            \  \pi_1\Diff(\Torus) \oplus \Stab \ \ar@{->>}[r]^-{\delta} \ar@{^(->}[rd] &
            \ \Stab\ \ar@{->>}[d] \\
        %%%%%%%%%%%%%%%%%%%%
            &
            &
            \ \mathcal{G} \
        }
\endaligned
\end{equation}
In particular, for the right column we have an isomorphism
\[ \seqStabIsotId{\func} \cong \prod_{\Zman\in\bZman}\seqStabIsotId{\func|_{\Zman},\partial\Zman}.\]

\item
Otherwise, the following statements hold.
\begin{enumerate}[label={\rm\alph*)}, topsep=0.8ex, itemsep=0.8ex]
\item
$\StabilizerIsotId{\func} / \Stabilizer{\bZman} \cong \bZ_m \oplus \bZ_n$ for some $n,m\geq1$;
\item
The action of $\StabilizerIsotId{\func} / \Stabilizer{\bZman}$ on $\bZman$ is \myemph{free}.
For instance $\bZman$ contains $mnc$ disks, where $c\geq1$ is the number of orbits of that action.

\item
Choose any collection $\Zman_1,\ldots,\Zman_c$ of elements of $\bZman$ belonging to mutually distinct orbits and for simplicity denote
\[
 \quad \MResize{0.99\textwidth}{
\aligned
     \Delta       &:= \mprod\limits_{i=1}^{c}\pi_0\FolStabilizerIsotId{\func|_{\Zman_i}, \partial\Zman_i}, &
    \ \Stab        &:= \mprod\limits_{i=1}^{c}\pi_0\StabilizerIsotId{\func|_{\Zman_i}, \partial\Zman_i},    &
    \ \mathcal{G}  &:= \mprod\limits_{i=1}^{c}\GrpKRIsotId{\func|_{\Zman_i}, \partial\Zman_i}.
\endaligned}
\]
Then diagram~\eqref{equ:diagram_3x3_sdo_b:reduced} is isomorphic to the following one:
\begin{equation}\label{equ:diagram_3x3_sdo_b:reduced:torus:gen_case}
\aligned
    % \MResize{\textwidth}{
        \xymatrix@R=3ex{
            \ \pi_1\Diff(\Torus) \oplus \Delta^{mn} \
                    \ar@{->>}[rr]^-{\prj_2} \ar@{^(->}[rd] \ar@{->>}[d]_{\prj_1} & &
            \ \Delta^{mn} \ \ar@{^(->}[d]^{}  \\
        %%%%%%%%%%%%%%%%%%%%
            \ \pi_1\Diff(\Torus) \ar@{^(->}[r]^{\alpha} \  &
            \ \Stab \wrm{m,n} \bZ^2 \ \ar@{->>}[r]^-{\delta} \ar@{^(->}[rd] &
            \ \Stab \wr (\bZ_{m}\times\bZ_{n})\ \ar@{->>}[d] \\
        %%%%%%%%%%%%%%%%%%%%
            &
            &
            \ \mathcal{G} \wr (\bZ_{m}\times\bZ_{n}) \
        }
\endaligned
\end{equation}
where $\alpha:\pi_1\Torus \to \Stab \wrm{m,n} \bZ$ is given by
\[ \alpha(\lambda^a \mu^b) = (e,\ldots,e, \,a \bmod m, \, b \bmod n) \] and $e$ is the unit of $\Stab$.
\end{enumerate}
In particular, for the right column we have an isomorphism, see~\eqref{equ:seqWrZmn_finite}:
\[ \seqStabIsotId{\func} \cong \seqWrmn{\Bigl( \prod_{\Zman\in\bZman}\seqStabIsotId{\func|_{\Zman},\partial\Zman} \Bigr)} {m}{n}.\]
\end{enumerate}
\end{subtheorem}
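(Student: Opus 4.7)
The plan is to analyze how $\StabilizerIsotId{\func}$ permutes the disks of $\bZman$ and then assemble the resulting structure via the characterization lemmas of Section~\ref{sect:algebraic}. First, by Lemma~\ref{lm:exist_spec_level} I would fix the unique critical leaf $\crLev$ whose complement decomposes into open $2$-disks, together with an $\func$-regular neighborhood $\regN{\crLev}$. Uniqueness of $\crLev$ forces $\dif(\crLev)=\crLev$ for every $\dif\in\Stabilizer{\func}$, so $\StabilizerIsotId{\func}$ acts on the finite set $\bZman$ of closures of components of $\Torus\setminus\regN{\crLev}$. Since $\GrpKRIsotId{\func}$ is finite by Lemma~\ref{lm:GfX}, the quotient $Q:=\StabilizerIsotId{\func}/\Stabilizer{\bZman}$ is a finite effective transformation group of $\bZman$.

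In Case~(A), where $Q=\{1\}$, every $\dif\in\StabilizerIsotId{\func}$ preserves each $\Zman\in\bZman$ individually. Applying Lemma~\ref{lm:reduct:reg_nbh} with $\XSman:=\crLev$ and then Lemma~\ref{lm:reduction_Mconn_V_dM} yields the product decomposition $\seqStabIsotId{\func}\cong\prod_{\Zman\in\bZman}\seqStabIsotId{\func|_{\Zman},\partial\Zman}$. Diagram~\eqref{equ:diagram_3x3_sdo_b:reduced:torus:triv_act} then follows by combining the injection $\pj_1:\pi_1\Diff(\Torus)\hookrightarrow\pi_1\OrbitPathComp{\func}{\func}$ with the section of $\partial_1$ over $\pi_0\FolStabilizerIsotId{\func}$ furnished by Theorem~\ref{th:sect_partial1}, which together identify $\pi_1\OrbitPathComp{\func}{\func}$ with $\pi_1\Diff(\Torus)\oplus\pi_0\StabilizerIsotId{\func}$.

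Case~(B) is the heart of the argument. Given $\dif\in\StabilizerIsotId{\func}$ and an isotopy $H_t$ from $\id_{\Torus}$ to $\dif$, I would lift $H_t$ to the universal cover $\bR^2\to\Torus$ to obtain a canonical lift $\widetilde{\dif}$; its translation class modulo the lattice $\bZ^2$ depends only on the class of $\dif$ in $Q$ and defines an injective homomorphism $Q\hookrightarrow\bZ^2/\Lambda$ into a finite quotient of $\bZ^2$. Writing this quotient as $\bZ_m\oplus\bZ_n$ establishes claim~(a), while freeness (b) is automatic because a nontrivial translation of $\Torus$ has no fixed points and so cannot preserve any open proper subset such as a disk. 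Next, I would pick $g,h\in\StabilizerIsotId{\func}$ whose classes in $Q$ are the cyclic generators $(1,0),(0,1)$, together with orbit representatives $\Zman_1,\ldots,\Zman_c\in\bZman$, and set $\Zman^{\alpha}_{i,j}:=g^{i}h^{j}\Zman_\alpha$. The three hypotheses of Lemma~\ref{lm:charact_seq_wrmn} then hold by construction: $g,h$ commute modulo $\Stabilizer{\bZman}$ with $g^m,h^n\in\Stabilizer{\bZman}$; the subgroups $\pi_0\StabilizerIsotId{\func|_{\Zman^{\alpha}_{i,j}},\partial\Zman^{\alpha}_{i,j}}$ have disjoint supports, pairwise commute and jointly generate $\pi_0\Stabilizer{\bZman}$; and likewise for the corresponding $\FolStab$-subgroups. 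The lemma then delivers the wreath product isomorphism~\eqref{equ:diagram_3x3_sdo_b:reduced:torus:gen_case} together with the explicit formula for $\alpha:\pi_1\Torus\to\bigl(\prod_{\alpha=1}^{c}\pi_0\StabilizerIsotId{\func|_{\Zman_\alpha},\partial\Zman_\alpha}\bigr)\wrm{m,n}\bZ^2$.

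The main obstacle will be the translation-class construction in Case~(B): one must verify not only that the class descends to a well-defined function on $Q$ independent of the chosen isotopy, but also that the resulting quotient of $\bZ^2$ has genuinely the full two-factor form $\bZ_m\oplus\bZ_n$ rather than collapsing onto a single cyclic group. This is precisely where the two-dimensional nature of $\pi_1\Torus$ becomes essential, and it is what forces the doubly-indexed wreath product $\wrm{m,n}\bZ^2$ rather than the singly-indexed $\wrm{m}\bZ$ appearing in the cycle case of Theorem~\ref{th:bib_seq_torus:tree}.
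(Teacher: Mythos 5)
Your high-level skeleton is the right one — fix the special critical leaf $\crLev$ from Lemma~\ref{lm:exist_spec_level}, study the finite effective action of $Q:=\StabilizerIsotId{\func}/\Stabilizer{\bZman}$ on $\bZman$, and assemble the diagram with Lemma~\ref{lm:charact_seq_wrmn}. Case~(A) is also broadly right, though you should note that Lemmas~\ref{lm:reduct:reg_nbh} and~\ref{lm:reduction_Mconn_V_dM} applied with $\XSman=\crLev$ give $\seqStabIsotId{\func,\crLev}\cong\prod_{\Zman}\seqStabIsotId{\func|_\Zman,\partial\Zman}$, whereas the theorem concerns $\seqStabIsotId{\func}$ with $\XSman=\varnothing$; passing from the former to the latter needs its own justification.

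The genuine gap is in the translation-class construction underpinning claims (a) and (b) of Case~(B). First, there is no canonical lift $\widetilde{\dif}$: two isotopies from $\id_{\Torus}$ to $\dif$ yield lifts differing by a deck transformation, because the loop they form represents a class in $\pi_1\DiffId(\Torus)\cong\bZ^2$ which acts on the set of $\bZ^2$-commuting lifts precisely by deck transformations. Hence any $\bZ^2$-valued translation invariant of $\widetilde{\dif}$ is only well-defined modulo $\bZ^2$, i.e.\ it carries no information and cannot produce a homomorphism $Q\hookrightarrow\bZ^2/\Lambda$. Second, the well-defined $\Torus$-valued invariant $\dif\mapsto\widetilde{\dif}(0)\bmod\bZ^2$ is \emph{not} a homomorphism: $\widetilde{\dif_1\dif_2}(0)=\widetilde{\dif_1}(\widetilde{\dif_2}(0))$, and this equals $\widetilde{\dif_1}(0)+\widetilde{\dif_2}(0)$ modulo $\bZ^2$ only when $y\mapsto\widetilde{\dif_1}(y)-y$ is constant modulo $\bZ^2$, i.e.\ when $\dif_1$ is a genuine translation. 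Third, your freeness argument conflates a class in $Q$ with a translation of $\Torus$: representatives are diffeomorphisms, not translations, and even the auxiliary statement that a nontrivial translation of $\Torus$ cannot preserve a proper open subset is false (a translation in one factor preserves annuli). The assertions $Q\cong\bZ_m\oplus\bZ_n$ and that the $Q$-action on $\bZman$ is free therefore remain unproved; in the cited sources they follow from a careful study of the $\bZ^2$-equivariant combinatorics of the lifted cell decomposition of $\bR^2$ by $\widetilde{\crLev}$ (a crystallographic-type action on a periodic planar graph), and this is exactly where the two-dimensionality that produces $\wrm{m,n}\bZ^2$ rather than $\wrm{m}\bZ$ is extracted. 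Without (a) and (b), the hypotheses of Lemma~\ref{lm:charact_seq_wrmn} — in particular the exact commutation $gh=hg$ in the ambient group, not merely modulo $\Stabilizer{\bZman}$, and the splitting of the kernel into pairwise-commuting conjugates of one subgroup — cannot be checked, so the final wreath-product step does not go through.
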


Evidently, \eqref{equ:diagram_3x3_sdo_b:reduced:torus:triv_act} is a particular case of~\eqref{equ:diagram_3x3_sdo_b:reduced:torus:gen_case} for $m=n=1$.
The last identity can also be proved using Lemma~\ref{lm:charact_seq_wrmn}.

We will now formulate the results about the structure of $\seqStabIsotId{\func}$ sequences for maps on $\Torus$ similar to
Theorems~\ref{th:solvable_groups} and~\ref{th:realization:disk_cyl}.

Define the following spaces of maps:
\begin{align*}
    \FSPTorusT{\Pman} &= \{ \func\in \FSP{\Torus}{\Pman}  \mid \text{$\KRGraphf$ is a tree} \}, \\[1mm]
    \FSPTorusO{\Pman} &= \{ \func\in \FSP{\Torus}{\Pman}  \mid \text{$\KRGraphf$ contains a cycle} \}, \\[1mm]
    \MrsTorusT{\Pman} &= \Mrs{\Torus}{\Pman} \cap \FSPTorusT{\Pman}, \\[1mm]
    \MrsTorusO{\Pman}  &= \Mrs{\Torus}{\Pman} \cap \FSPTorusO{\Pman}.
\end{align*}
Then Lemma~\ref{lm:cond_torus_KG_tree} implies that we have the following inclusions:
\begin{gather*}
    \MrsGen{\Torus}{\Pman} \ \subset \ \MrsSmp{\Torus}{\Pman} \ \subset \ \MrsTorusO{\Pman} \ \subset \ \FSPTorusO{\Pman}, \\
    \MrsTorusT{\Pman} \ \subset \ \FSPTorusT{\Pman}.
\end{gather*}
As a consequence of Theorems~\ref{th:realization:disk_cyl}, \ref{th:bib_seq_torus:tree}, \ref{th:bib_seq_torus} we get the following:
\begin{subtheorem}\label{th:classes_SG}{\rm(\cite{KravchenkoFeshchenko:MFAT:2020, KuznietsovaSoroka:UMJ:2021})}
The following identities hold:
\begin{align*}
    \CStab{\MrsGen{\Torus}{\Pman}}   &= \ccZ = \{ \bZ^n \mid n = 0,1,\ldots, \},  \\
    \CStab{\MrsSmp{\Torus}{\bR}}     &= \{ (A\times B) \wrm{m} \bZ \mid A,B\in\clsBt\setminus\{1\}, m\geq1 \}, \\
    \CStab{\MrsSmp{\Torus}{\Circle}} &= \{ A \wrm{m} \bZ \mid A\in\clsBt, m\geq1 \}, \\
    \CStab{\MrsTorusO{\bR}}        &= \CStab{\FSPTorusO{\bR}} =  \{ (A\times B) \wrm{m} \bZ \mid A,B\in\ccB\setminus\{1\}, m\geq1 \}, \\
    \CStab{\MrsTorusO{\Circle}}    &= \CStab{\FSPTorusO{\Circle}} = \ccB, \\ %\{ A \wrm{m} \bZ \mid A\in\ccB, m\geq1 \}, \\
    \CStab{\MrsTorusT{\Pman}}      &= \CStab{\FSPTorusT{\Pman}} = \{ A \wrm{m,n} \bZ^2 \mid A\in\ccB, m,n\geq1 \}, \\
    \CGrp{\MrsGen{\Torus}{\Pman}}   &= \{1\},\\
    \CGrp{\MrsSmp{\Torus}{\Pman}}   &= \{ G\wr\bZ_m \mid \text{for some $G\in\clsGt$ and $m\geq1$} \}, \\
    \CGrp{\MrsTorusO{\Pman}}        & = \CGrp{\FSPTorusO{\Pman}}  = \ccP, \\
    \CGrp{\MrsTorusT{\Pman}}        & = \CGrp{\FSPTorusT{\Pman}} = \{ G\wr(\bZ_m\times\bZ_n) \mid G\in\ccP, m,n\geq1 \}, \\
\BSeq{\MrsGen{\Torus}{\Pman}}    &= \ZZI, \\
\BSeq{\MrsSmp{\Torus}{\bR}}      &= \{ \seqWrm{(\uSeq\times\vSeq)}{m} \mid \uSeq,\vSeq\in \ssZBtPt\setminus\{\seqTriv\}, m\geq1 \}, \\
\BSeq{\MrsSmp{\Torus}{\Circle}}  &= \{ \seqWrm{\uSeq}{m} \mid \uSeq\in\ssZBP, m\geq1 \}, \\
\BSeq{\MrsTorusO{\Pman}}         &= \BSeq{\FSPTorusO{\Pman}} = \\
        & \qquad = \{ \seqWrm{(\uSeq\times\vSeq)}{m} \mid \uSeq,\vSeq\in \ssZBP\setminus\{\seqTriv\}, m\geq1 \}, \\
\BSeq{\MrsTorusT{\Pman}}         & = \BSeq{\FSPTorusT{\Pman}} = \\
        & \qquad = \{ \seqWrmn{\uSeq}{m}{n} \mid \uSeq\in\ssZBP, m,n\geq1 \}.
\end{align*}
\end{subtheorem}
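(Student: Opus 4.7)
The plan is to establish each identity in two directions, the structural inclusion ``$\subset$'' and the realization inclusion ``$\supset$'', using the structural theorems for the torus together with the realization theorems for $2$-disks and cylinders. For the inclusion ``$\subset$'', given any $\func\in\FSP{\Torus}{\Pman}$, Lemma~\ref{lm:p_H1M_H1G_surj} forces $\KRGraphf$ either to be a tree or to contain exactly one cycle. In the cyclic case, Theorem~\ref{th:bib_seq_torus:tree} yields
\[
	\seqStabIsotId{\func} \ \cong \ \seqWrm{\seqStabIsotId{\func|_{\Qman},\partial\Qman}}{m}
\]
for a suitable cylinder $\Qman\subset\Torus$ and integer $m\geq1$; in the tree case, Theorem~\ref{th:bib_seq_torus} similarly reduces $\seqStabIsotId{\func}$ to a $\wrm{m,n}\bZ^2$-wreath of the sequences $\seqStabIsotId{\func|_{\Zman_i},\partial\Zman_i}$ for the $2$-disks $\Zman_i$. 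Applying Theorem~\ref{th:realization:disk_cyl} to each piece then identifies the piece-sequence with the appropriate class in $\ssZBP$, $\ssZBtPt$, $\ccB$, $\clsBt$, $\ccP$ or $\clsGt$, and the asserted class for $\func$ follows from the definition of these classes as closed under direct and wreath products.

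The delicate point in the structural direction is the dichotomy between $\Pman=\bR$ and $\Pman=\Circle$ for simple Morse maps, which by Lemma~\ref{lm:cond_torus_KG_tree} always have cyclic graph. Since the leaves $\cLeaf{0},\ldots,\cLeaf{n-1}$ of the $\StabilizerIsotId{\func}$-orbit share the common $\func$-value $c$, both boundary circles of $\Qman$ lie at level $c$. When $\Pman=\bR$ the function $\func|_{\Qman}$ must therefore attain a strict extremum in $\Int\Qman$, which forces at least one saddle, and the sign function $\eps$ on $\partial\Qman$ is constant; this places us in Theorem~\ref{th:realization:disk_cyl}\ref{enum:th:realization_classes:1}, so $\pi_0\StabilizerIsotId{\func|_{\Qman},\partial\Qman}\in\ccB\setminus\{1\}$, and a further decomposition of $\Qman$ by a regular leaf just above that extremum splits this group as a non-trivial product $A\times B$, explaining the $A,B\in\clsBt\setminus\{1\}$ and $A,B\in\ccB\setminus\{1\}$ restrictions. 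When $\Pman=\Circle$ the target circle allows $\eps$ to take different values on the two components of $\partial\Qman$, Theorem~\ref{th:realization:disk_cyl}\ref{enum:th:realization_classes:2} applies with no lower bound, and $\pi_0\StabilizerIsotId{\func|_{\Qman},\partial\Qman}$ can be any element of $\ccB$ (or $\clsBt$ for simple maps), including $\{1\}$.

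For the ``$\supset$'' direction I would realize each element of the right-hand side by an explicit construction on $\Torus$. Using the realization theorems on a $2$-disk or cylinder from \cite{KravchenkoMaksymenko:EJM:2020, KuznietsovaSoroka:UMJ:2021, KravchenkoFeshchenko:MFAT:2020}, one can produce a local piece whose Bieberbach sequence is any prescribed $\uSeq\in\ssZBP$ (resp.\ $\ssZBtPt$). To realize an outer $\wrm{m}\bZ$ factor in the cyclic case, one glues $m$ rotational copies of that piece around $\Torus$ along a common regular leaf, so that the induced $\bZ_m$-action on the set of copies is free; Theorem~\ref{th:bib_seq_torus:tree} then identifies the resulting sequence with $\seqWrm{\uSeq}{m}$. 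For an outer $\wrm{m,n}\bZ^2$ factor in the tree case, one instead glues an $m\times n$-grid of $2$-disk pieces along the single critical leaf $\crLev$ of Lemma~\ref{lm:exist_spec_level} whose complement is the grid of disks. Simple Morse pieces force figure-eight-neighborhood local models and hence all inner wreath factors to be $\wrm{2}\bZ$, which accounts for $\clsBt,\clsGt,\ssZBtPt$; generic Morse maps additionally have trivial $\GrpKRIsotId{\func}$ by Lemma~\ref{lm:GfX}, yielding $\ccZ,\{1\},\ZZI$.

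The principal obstacle will be ensuring that the constructed map realizes \emph{exactly} the prescribed wreath product, with no accidental additional symmetries. Concretely one must verify that $\GrpKRIsotId{\func}$ is precisely $G\wr\bZ_m$ or $G\wr(\bZ_m\times\bZ_n)$ and that the action of $\StabilizerIsotId{\func}/\Stabilizer{\bZman}$ on $\bZman$ is free, as required for the hypotheses of Theorems~\ref{th:bib_seq_torus:tree} and~\ref{th:bib_seq_torus} to apply. This amounts to choosing the building-block pieces rigidly enough—e.g.\ with distinct local behaviour on distinct parts of the grid when needed—so that the only self-homeomorphisms of $\KRGraphf$ preserving $\func$ are the intended cyclic rotations. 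An analogous rigidity technique is used in the realization arguments of \cite{KuznietsovaSoroka:UMJ:2021} on $2$-disks and cylinders, and I would adapt it to the global setting on $\Torus$ to complete the ``$\supset$'' inclusions.
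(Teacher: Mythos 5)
Your proposal follows the same high-level route as the paper and its cited sources: split on the tree/cycle dichotomy (Lemma~\ref{lm:cond_torus_KG_tree}), reduce $\seqStabIsotId{\func}$ via Theorems~\ref{th:bib_seq_torus:tree}--\ref{th:bib_seq_torus} to the pieces $\seqStabIsotId{\func|_{\Qman},\partial\Qman}$ or $\seqStabIsotId{\func|_{\Zman_i},\partial\Zman_i}$, plug in Theorem~\ref{th:realization:disk_cyl}, and realize the converse by gluing rotationally symmetric building blocks along a cycle leaf. Your translation of the torus wreath structure into the language of $\ssZBP$, $\clsBt$, etc.\ and the treatment of the generic case via Lemma~\ref{lm:GfX} are correct in outline. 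But there are two places where the argument, as written, does not go through.

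First, the central dichotomy step is asserted rather than proved, and the specific claim you give is false as stated. You write that because all leaves $\cLeaf{0},\ldots,\cLeaf{n-1}$ lie at the common level $c$, ``both boundary circles of $\Qman$ lie at level $c$'' and then that for $\Pman=\bR$ the existence of an interior extremum of $\func|_{\Qman}$ forces $\eps$ to be constant on $\partial\Qman$. This breaks down exactly in the case $n=1$, which the paper treats explicitly: there $\Qman=\overline{\Torus\setminus\regN{\cLeaf{0}}}$ and its two boundary circles lie at levels $c\pm\varepsilon$, not at $c$, so the premise of your argument is wrong. More seriously, even for $n\geq 2$ the existence of an interior extremum of $\func|_{\Qman}$ by itself does \emph{not} imply $\eps_{\func}$ is constant on $\partial\Qman$ --- one must use the fact that the non-separating leaf $\cLeaf{0}$ is on the cycle, that $\StabilizerIsotId{\func}$ acts on the cycle preserving $\PF{\func}$, and how up- and down-crossings of $\PF{\func}$ are distributed among the $\cLeaf{i}$ and the interior of each $\Qman_i$. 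The constancy of $\eps$ (and hence the applicability of Theorem~\ref{th:realization:disk_cyl}\ref{enum:th:realization_classes:1}) is precisely the kind of pointwise topological statement that requires a separate argument, and your one sentence does not supply it.

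Second, you conclude the $A\times B$ splitting with $A,B\neq\{1\}$ by cutting $\Qman$ ``by a regular leaf just above that extremum''. Cutting a cylinder along a regular level set produces, in general, a collar cylinder plus a piece of more complicated topology (e.g.\ a disk around a local maximum, or a pair of pants), and the isomorphism $\seqStabIsotId{\func|_{\Qman},\partial\Qman}\cong\prod_i\seqStabIsotId{\func|_{\Cyli{i}},\partial\Cyli{i}}$ needed here comes from the specific chain decomposition of Corollary~\ref{cor:stab:disk_ann:ann_decomp}, not from an arbitrary cut. To get $A\times B$ with both factors non-trivial, you have to exhibit two consecutive chain pieces $\Cyli{i}$ each carrying a non-trivial sequence, which is a structural fact about the cycle in $\KRGraphf$ for $\Pman=\bR$ (it has to pass through both a local maximum and a local minimum vertex) and not an immediate consequence of the existence of a single interior saddle. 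Both of these points are exactly the content that the cited works \cite{KravchenkoFeshchenko:MFAT:2020, KuznietsovaSoroka:UMJ:2021} supply; your sketch should either import those arguments explicitly or fill them in.
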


The proof for classes $\CGrp{\mathcal{X}}$ is given in~\cite{KravchenkoFeshchenko:MFAT:2020} and for $\CStab{\mathcal{X}}$ in~\cite{KuznietsovaSoroka:UMJ:2021}.
Those proofs imply realization theorems for $\BSeq{\mathcal{X}}$.

Partial computation of sequences $\seqStabIsotId{\func}$ were obtained in~\cite{KravchenkoMaksymenko:PIGC:2018, KravchenkoMaksymenko:JMFAG:2020} and in~\cite{MaksymenkoKuznietsova:PIGC:2019} for the cases when $\Mman$ is $2$-sphere and M\"obius band respectively.
They are not complete, and therefore we do not present them here.

\section{First homology groups of orbits}
Let $\Mman$ be a connected compact orientable surface distinct from $\Sphere$, and $\func\in\FSP{\Mman}{\Pman}$.
Let also
\begin{align*}
    \ccB' = \CStab{\FSPTorusT{\Pman}} = \{ A \wrm{m,n} \bZ^2 \mid A\in\ccB, m,n\geq1 \}.
\end{align*}
Then by Theorems~\ref{th:classes_SG} and~\ref{th:realization:disk_cyl} the fundamental group $G = \pi_1\OrbitPathComp{\func}{\func}$ belongs either to $\ccB$ or to $\ccB'$.
Moreover, as mentioned after definitions class $\ccB$ in Sections~\ref{sect:classes_M}, $G$ is obtained from the unit group by finitely many operations of direct products and wreath products of the form $\cdot\wrm{m}\bZ$, and (for the case when $\Mman=\Torus$ and $\KRGraphf$ is a tree) possibly \myemph{a unique and the last} operation of wreath product of the form $\cdot\wrm{a,b}\bZ^2$ for some $a,b\geq1$.
One can formalize this observation as follows.

\newcommand\AlphM{\mathcal{A}}
\newcommand\AlphT{\AlphM'}
\newcommand\AdmAlphM{\mathcal{W}}
\newcommand\AdmAlphT{\AdmAlphM'}

Consider the following two alphabets:
\begin{align*}
\AlphM &= \bigl\{ \ 1, \, \bZ, \, (, \, ), \, \times \ \bigr\} \ \cup \
          \bigl\{ \ \wrm{a}\bZ \ \bigr\}_{a\geq1} \\
\AlphT &= \AlphM \ \cup  \ \bigl\{ \ \wrm{a,b}\bZ^2 \ \bigr\}_{a,b\geq1},
\end{align*}
so $\AlphM$ it consists of the unit group $1$, group on integers $\bZ$, brackets ``('' and ``)'', a product sign ``$\times$'', and \myemph{wreath product with $\bZ$} symbols  ``$\wrm{a}\bZ$'' for all $a\geq1$, while $\AlphT$ additionally contains \myemph{wreath products with $\bZ^2$} symbols ``$\wrm{a,b}\bZ^2$'' for all $a,b\geq1$.

Then every group $G \in \ccB$ (resp.\ $\ccB'$) is written (though not in a unique way) as a word $w$ in the alphabet $\AlphM$ (resp.\ $\AlphT$).
For example, $\bZ^2$ can be written by the following words:
\begin{align*}
    &\bZ \times \bZ, &
    &\bZ \times (1\wrm{a} \bZ), &
    &1 \times (1\wrm{a} \bZ) \times (1\wrm{b} \bZ),
    & 1\wrm{a,b} (\bZ\times \bZ).
\end{align*}
for any $a,b\geq1$.
Of course, there are words which do not define a group, e.g. $)\bZ\wrm{4}$.

% More precisely, let $\AdmAlphM$ (resp.\ $\AdmAlphT$) be the minimal set of words in the alphabet $\AlphM$ (resp.\ $\AlphT$) satisfying the following properties~\ref{enum:adm_words:1} and~\ref{enum:adm_words:2} (resp.\ satisfying all properties~\ref{enum:adm_words:1}-\ref{enum:adm_words:3}):
% \begin{enumerate}[label={\rm(\roman*)}]
% \item\label{enum:adm_words:1} $1 \in \AdmAlphM$;
% \item\label{enum:adm_words:2} if $u,v \in \AdmAlphM$ and $a\geq 1$, then $(u)$, $(u) \times (v)$ and $(u)\wrm{a}\bZ$ belong to $\AdmAlphM$.
% \item\label{enum:adm_words:3} if $u\in \AdmAlphM$ and $a,b\geq 1$, then $(u)\wrm{a,b}\bZ^2\in\AdmAlphM$.
% \end{enumerate}
% We will call words from $\AdmAlphM$ and $\AdmAlphT$ \myemph{admissible}.
% Then every $w \in \AdmAlphM$ (resp.\ $\AdmAlphT$) uniqely determines a group $G_w \in \ccB$ (resp.\ $\ccB'$) such that $w$ is a realization of $G_w$ in the corresponding alphabet.
% Moreover, the correspondence $w \mapsto G_w$ is a surjective map $\AdmAlphM\to\ccB$ (resp.\ $\AdmAlphT\to\ccB'$).

Given a group $G \in \ccB$ (resp.\ $\ccB'$) every word $w$ which correctly defines $G$ will be called a \myemph{realization of $G$ in the alphabet $\AlphM$ (resp.\ $\AlphT$)}.
Denote by $\beta_1(w)$ the number of symbols $\bZ$ in the word $w$.

The following result is obtained by I.~Kuznietsova and Yu.~Soroka as a consequence of Lemma~\ref{lm:center_comm}.
\begin{theorem}[\cite{KuznietsovaSoroka:UMJ:2021}]
Let $G \in \ccB$ (resp.\ $\ccB'$), and $w$ be any realization of $G$ in the alphabet $\AlphM$ (resp.\ $\AlphT$).
Then the center and the abelianization of $G$ are free abelian groups of the same rank $\beta_1(w)$:
\[
Z(G) \cong G / [G,G] \cong \bZ^{\beta_1(w)}.
\]
% Moreover, the abelianization homomorphism $\ab:G \to G / [G,G]$ isomorphically maps $Z(G)$ onto subgroup of $G / [G,G]$ of finite index.
In particular, $\beta_1(w)$ does not depend on a concrete realization $w$ of $G$ in the alphabet $\AlphM$ (resp.\ $\AlphT$).
\end{theorem}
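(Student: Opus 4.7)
The plan is to argue by structural induction on a realization $w$, verifying the stronger statement that for every word $w$ in $\AdmAlphM$ (resp.\ in the extended alphabet allowing the outermost letter $\wrm{a,b}\bZ^2$) which correctly defines a group $G$, both $Z(G)$ and $G/[G,G]$ are free abelian of rank $\beta_1(w)$. Once this is shown, independence of the realization follows formally: the center $Z(G)$ and the abelianization $G/[G,G]$ are intrinsic invariants of $G$, so their rank (an invariant of free abelian groups) must coincide for every choice of $w$.

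For the base cases, when $w = 1$ we have $G = \{e\}$ and $\beta_1(w)=0$, while $Z(G) = G/[G,G] = \{e\}$. When $w = \bZ$ we have $G = \bZ$, $\beta_1(w)=1$, and both $Z(\bZ)$ and $\bZ/[\bZ,\bZ]$ equal $\bZ$, as required. For the inductive step there are three cases corresponding to the building operations:
\begin{itemize}
\item If $w = (w_A)\times (w_B)$, then $G = A\times B$ and the standard identities
\[
Z(A\times B) \cong Z(A)\times Z(B), \qquad (A\times B)/[A\times B,A\times B] \cong A/[A,A]\times B/[B,B],
\]
combined with the inductive hypothesis give both groups isomorphic to $\bZ^{\beta_1(w_A)+\beta_1(w_B)}=\bZ^{\beta_1(w)}$.
\item If $w = (w_A)\wrm{m}\bZ$, Lemma~\ref{lm:center_comm} yields
\[
Z(A\wrm{m}\bZ) \cong Z(A)\times\bZ, \qquad (A\wrm{m}\bZ)/(A\wrm{m}\bZ)' \cong A/A'\times \bZ,
\]
and since the letter $\wrm{m}\bZ$ contributes exactly one $\bZ$ symbol to $w$, induction gives the desired rank $\beta_1(w_A)+1=\beta_1(w)$.
\item If $w = (w_A)\wrm{m,n}\bZ^2$ (the only outer letter available for groups in $\ccB'$), the analogous second formula of Lemma~\ref{lm:center_comm} gives
\[
Z(A\wrm{m,n}\bZ^2) \cong Z(A)\times\bZ^2, \qquad (A\wrm{m,n}\bZ^2)/(A\wrm{m,n}\bZ^2)' \cong A/A'\times\bZ^2,
\]
which matches the convention that $\wrm{m,n}\bZ^2$ contributes two $\bZ$ symbols to $\beta_1(w)$.
\end{itemize}

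In each inductive case the isomorphisms between the center and abelianization are forced to be of the \emph{same} rank, so at the end of the induction we automatically obtain $Z(G)\cong G/[G,G]\cong\bZ^{\beta_1(w)}$. Consequently the integer $\beta_1(w)$, initially defined as a syntactic count of $\bZ$ symbols, is computed by the intrinsic rank $\mathrm{rank}_{\bZ}Z(G)=\mathrm{rank}_{\bZ}(G/[G,G])$, and therefore depends only on $G$.

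The one genuinely delicate point, and the place where care is needed, is the bookkeeping of $\beta_1$ under the outer letter $\wrm{m,n}\bZ^2$: one must fix the convention that this symbol contributes $2$ to the count (so that it matches the $\bZ^2$ factor produced by Lemma~\ref{lm:center_comm}), and verify that with this convention the formulas in the three inductive cases add up correctly. Apart from this accounting, the argument is a straightforward application of Lemma~\ref{lm:center_comm} together with the fact that direct products commute with taking centers and abelianizations.
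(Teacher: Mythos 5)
Your proof is correct and follows essentially the same route the paper attributes to Kuznietsova--Soroka: a structural induction on the realization word, applying the standard compatibility of centers and abelianizations with direct products together with the two formulas $Z(G\wrm{m}\bZ)\cong Z(G)\times\bZ$, $(G\wrm{m}\bZ)/(G\wrm{m}\bZ)'\cong (G/G')\times\bZ$ (and their $\wrm{m,n}\bZ^2$ analogues) from Lemma~\ref{lm:center_comm}, after which independence of $\beta_1(w)$ is the formal consequence that the rank of a free abelian group is an invariant. The only blemishes are notational: you wrote $\AdmAlphM$ where the paper's alphabet is $\AlphM$, and the parenthetical remark that $\wrm{a,b}\bZ^2$ is necessarily the outermost letter for $G\in\ccB'$ is not needed by (and not exploited in) the induction, which correctly treats all three building operations wherever they occur.
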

Recall that by Hurewicz theorem, e.g.~\cite[Theorem~2A.1]{Hatcher:AT:2002}, for any path connected topological space $X$ with the fundamental group $G = \pi_1 X$ we have an isomorphism $H_1(X,\bZ) \cong G /[G,G]$.
Hence we get the following
\begin{corollary}[\cite{KuznietsovaSoroka:UMJ:2021}]
Let $\Mman$ be a connected compact orientable surface distinct from $\Sphere$, $\func\in\FSP{\Mman}{\Pman}$, $G = \pi_1\OrbitPathComp{\func}{\func}$, and $\beta_1$ be the number of symbols $\bZ$ in any realization of $G$ in the alphabet $\AlphM$ (or $\AlphT$).
Then the first homology group $H_1(\OrbitPathComp{\func}{\func}, \bZ) \cong G /[G,G]$ is a free abelian group of rank $\beta_1$, that is $\beta_1$ is the first Betti number of the orbit $\OrbitPathComp{\func}{\func}$.
\end{corollary}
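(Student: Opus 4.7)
The plan is to combine the preceding theorem on abelianizations in $\ccB$ and $\ccB'$ with the Hurewicz theorem, after first identifying $G=\pi_1\OrbitPathComp{\func}{\func}$ as a group in one of these classes. First, since $\OrbitPathComp{\func}{\func}$ is path connected by definition, Hurewicz's theorem (see~\cite[Theorem~2A.1]{Hatcher:AT:2002}) gives a natural isomorphism
\[
H_1(\OrbitPathComp{\func}{\func},\bZ)\ \cong\ G/[G,G].
\]
So it suffices to prove $G/[G,G]\cong\bZ^{\beta_1}$ for any realization of $G$.

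Second, I would verify that $G\in\ccB$ if $\Mman\ne\Torus$, and $G\in\ccB\cup\ccB'$ if $\Mman=\Torus$. For $\Mman\ne\Torus$, Theorem~\ref{th:SDO_lt_fibr} gives the fibration $\DiffId(\Mman)\xepiArrow{\pj}\OrbitPathComp{\func}{\func}$ with fiber $\StabilizerIsotId{\func}$, and by Theorem~\ref{th:stabilizer} $\StabilizerId{\func}$ is either contractible or homotopy equivalent to $\Circle$. When it is contractible, I combine the exact sequence~\eqref{equ:exact_seq_for_pi1OfX} with Table~\ref{tbl:hom_type_DidMX} and Theorem~\ref{th:solvable_groups} (together with Theorem~\ref{th:realization:disk_cyl}) to get $G\in\ccB$. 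In the exceptional circle case, Theorem~\ref{th:sdo:except_cases} determines $G$ explicitly, and it is a direct product of factors from $\ccB$. For $\Mman=\Torus$, I use Theorems~\ref{th:bib_seq_torus:tree} and~\ref{th:bib_seq_torus} to read off $G$ from the diagrams~\eqref{equ:diagram_3x3_sdo_b:reduced:torus} and~\eqref{equ:diagram_3x3_sdo_b:reduced:torus:gen_case}, so that $G\in\ccB$ when $\KRGraphf$ contains a cycle and $G\in\ccB'$ otherwise (the extra last operation $\wrm{m,n}\bZ^2$ corresponding to the $\bZ^2$-action on the collection of disks).

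Third, once $G$ is placed in the appropriate class, the preceding theorem of I.~Kuznietsova and Yu.~Soroka directly yields $G/[G,G]\cong\bZ^{\beta_1(w)}$ for every realization $w$; in particular $\beta_1(w)$ is independent of $w$. Combined with the Hurewicz isomorphism this gives $H_1(\OrbitPathComp{\func}{\func},\bZ)\cong\bZ^{\beta_1}$, whence $\beta_1$ is the first Betti number, as claimed.

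The only non-routine step is the second one: systematically matching the diagrams describing $\pi_1\OrbitPathComp{\func}{\func}$ with words in $\AlphM$ or $\AlphT$, and in particular checking the orientable-$\Sphere$-free hypothesis to exclude the one situation (Theorem~\ref{th:sdo:except_cases}\ref{enum:specfunc:sphere}) where $\pi_1\OrbitPathComp{\func}{\func}=0$ could introduce a clash between the Hurewicz conclusion and the shape of a realization. Everything else is direct application of results already proved in the paper.
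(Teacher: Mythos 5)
Your proof is correct and takes the same route as the paper: the paper's own argument for this corollary is precisely Hurewicz's theorem ($H_1\cong G/[G,G]$) combined with the preceding Kuznietsova--Soroka theorem on abelianizations of groups in $\ccB\cup\ccB'$, with the membership claim $G=\pi_1\OrbitPathComp{\func}{\func}\in\ccB\cup\ccB'$ (which you unpack in your step 2) already asserted in the paragraph immediately before the corollary. One small inaccuracy in your closing remark: the exclusion of $\Sphere$ is not needed to avoid a ``clash'' --- for the sphere function of Theorem~\ref{th:sdo:except_cases}\ref{enum:specfunc:sphere} one has $\pi_1=\{1\}\in\ccB$ with $\beta_1=0$ and $H_1(S^2)=0$, which is perfectly consistent --- but rather because the structure theorems (Theorem~\ref{th:solvable_groups}, Theorem~\ref{th:realization:disk_cyl}) that place $G$ in $\ccB\cup\ccB'$ are not established for $\Sphere$, so the membership claim itself does not apply there.
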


% \bibliographystyle{imnanu}
% \bibliography{biblio}

\begin{thebibliography}{10}

\bibitem{AdelsonWelskyKronrod:DANSSSR:1945}
G.~M. Adelson-Welsky, A.~S. Kronrode.
\newblock Sur les lignes de niveau des fonctions continues poss\'edant des
  d\'eriv\'ees partielles.
\newblock {\em C. R. (Doklady) Acad. Sci. URSS (N.S.)}, 49:235--237, 1945.

\bibitem{BolsinovFomenko:ENG:1997}
A.~V. Bolsinov, A.~T. Fomenko.
\newblock {\em Introduction to the topology of integrable Hamiltonian systems}.
\newblock ``Nauka'', Moscow, 1997 (in Russian).

\bibitem{ChurchTimourian:PJM:1973}
P.~T. Church, J.~G. Timourian.
\newblock Differentiable open maps of {$(p+1)$}-manifold to {$p$}-manifold.
\newblock {\em Pacific J. Math.}, 48:35--45, 1973,
  \url{http://projecteuclid.org/euclid.pjm/1102945698}.

\bibitem{Dancer:2:JRAM:1987}
E.~N. Dancer.
\newblock Degenerate critical points, homotopy indices and {M}orse
  inequalities. {II}.
\newblock {\em J. Reine Angew. Math.}, 382:145--164, 1987.

\bibitem{RezendeFranzosa:TrAMS:1993}
K.~A. de~Rezende, R.~D. Franzosa.
\newblock Lyapunov graphs and flows on surfaces.
\newblock {\em Trans. Amer. Math. Soc.}, 340(2):767--784, 1993,
  doi:~\href{http://dx.doi.org/10.2307/2154676}{10.2307/2154676}.

\bibitem{RezendeLedesmaManzoli-NetoVago:TA:2018}
Ketty~A. de~Rezende, Guido G.~E. Ledesma, Oziride Manzoli-Neto, Gioia~M. Vago.
\newblock Lyapunov graphs for circle valued functions.
\newblock {\em Topology Appl.}, 245:62--91, 2018,
  doi:~\href{http://dx.doi.org/10.1016/j.topol.2018.06.008}{10.1016/j.topol.2018.06.008}.

\bibitem{EarleEells:BAMS:1967}
C.~J. Earle, J.~Eells.
\newblock The diffeomorphism group of a compact {R}iemann surface.
\newblock {\em Bull. Amer. Math. Soc.}, 73:557--559, 1967.

\bibitem{EarleSchatz:DG:1970}
C.~J. Earle, A.~Schatz.
\newblock Teichm\"uller theory for surfaces with boundary.
\newblock {\em J. Differential Geometry}, 4:169--185, 1970.

\bibitem{Feshchenko:Zb:2015}
B.~Feshchenko.
\newblock Deformation of smooth functions on $2$-torus whose {K}ronrod-{R}eeb
  graphs is a tree.
\newblock In {\em Topology of maps of low-dimensional manifolds}, volume~12 of
  {\em Pr. Inst. Mat. Nats. Akad. Nauk Ukr. Mat. Zastos.}, pages 204--219.
  Nats\=\i onal. Akad. Nauk Ukra\"\i ni, \=Inst. Mat., Kiev, 2015.

\bibitem{Feshchenko:MFAT:2016}
B.~Feshchenko.
\newblock Actions of finite groups and smooth functions on surfaces.
\newblock {\em Methods Funct. Anal. Topology}, 22(3):210--219, 2016.

\bibitem{Feshchenko:PIGC:2019}
B.~Feshchenko.
\newblock Deformations of smooth functions on 2-torus.
\newblock {\em Proc. Int. Geom. Cent.}, 12(3):30--50, 2019,
  doi:~\href{http://dx.doi.org/10.15673/tmgc.v12i3.1528}{10.15673/tmgc.v12i3.1528}.

\bibitem{Feshchenko:PIGC:2021}
B.~Feshchenko.
\newblock Deformations of circle-valued {M}orse functions on $2$-torus.
\newblock 2021, \url{https://arxiv.org/abs/2104.06151}.

\bibitem{Franks:Top:1985}
John Franks.
\newblock Nonsingular {S}male flows on {$S^3$}.
\newblock {\em Topology}, 24(3):265--282, 1985,
  doi:~\href{http://dx.doi.org/10.1016/0040-9383(85)90002-3}{10.1016/0040-9383(85)90002-3}.

\bibitem{Gramain:ASENS:1973}
Andr{\'e} Gramain.
\newblock Le type d'homotopie du groupe des diff\'eomorphismes d'une surface
  compacte.
\newblock {\em Ann. Sci. \'Ecole Norm. Sup. (4)}, 6:53--66, 1973.

\bibitem{Hatcher:AT:2002}
Allen Hatcher.
\newblock {\em Algebraic topology}.
\newblock Cambridge University Press, Cambridge, 2002.

\bibitem{Hirsch:DiffTop}
Morris~W. Hirsch.
\newblock {\em Differential topology}, volume~33 of {\em Graduate Texts in
  Mathematics}.
\newblock Springer-Verlag, New York, 1994.
\newblock Corrected reprint of the 1976 original.

\bibitem{JacoShalen:Topology:1977}
William Jaco, Peter~B. Shalen.
\newblock Surface homeomorphisms and periodicity.
\newblock {\em Topology}, 16(4):347--367, 1977.

\bibitem{KravchenkoFeshchenko:MFAT:2020}
A.~Kravchenko, B.~Feshchenko.
\newblock Automorphisms of {K}ronrob-{R}eeb graphs of {M}orse functions on
  2-torus.
\newblock {\em Methods Funct. Anal. Topology}, 26(1):88--96, 2020.

\bibitem{KravchenkoMaksymenko:PIGC:2018}
A.~Kravchenko, S.~Maksymenko.
\newblock Automorphisms of {K}ronrod-{R}eeb graphs of {M}orse functions on
  2-sphere.
\newblock {\em Proc. Int. Geom. Cent.}, 11(4):72--79, 2018,
  doi:~\href{http://dx.doi.org/10.15673/tmgc.v11i4.1306}{10.15673/tmgc.v11i4.1306}.

\bibitem{KravchenkoMaksymenko:JMFAG:2020}
A.~Kravchenko, S.~Maksymenko.
\newblock Automorphisms of cellular divisions of 2-sphere induced by functions
  with isolated critical points.
\newblock {\em Zh. Mat. Fiz. Anal. Geom.}, 16(2):138--160, 2020,
  doi:~\href{http://dx.doi.org/10.15407/mag16.02.138}{10.15407/mag16.02.138}.

\bibitem{KravchenkoMaksymenko:EJM:2020}
A.~Kravchenko, S.~Maksymenko.
\newblock Automorphisms of {K}ronrod-{R}eeb graphs of {M}orse functions on
  compact surfaces.
\newblock {\em Eur. J. Math.}, 6(1):114--131, 2020,
  doi:~\href{http://dx.doi.org/10.1007/s40879-019-00379-8}{10.1007/s40879-019-00379-8}.

\bibitem{Kronrod:UMN:1950}
A.~S. Kronrod.
\newblock On functions of two variables.
\newblock {\em Uspehi Matem. Nauk (N.S.)}, 5(1(35)):24--134, 1950.

\bibitem{Kudryavtseva:SpecMF:VMU:2012}
E.~A. Kudryavtseva.
\newblock Special framed {M}orse functions on surfaces.
\newblock {\em Vestnik Moskov. Univ. Ser. I Mat. Mekh.}, (4):14--20, 2012,
  doi:~\href{http://dx.doi.org/10.3103/S0027132212040031}{10.3103/S0027132212040031}.

\bibitem{Kudryavtseva:MathNotes:2012}
E.~A. Kudryavtseva.
\newblock The topology of spaces of {M}orse functions on surfaces.
\newblock {\em Math. Notes}, 92(1-2):219--236, 2012,
  doi:~\href{http://dx.doi.org/10.1134/S0001434612070243}{10.1134/S0001434612070243}.
\newblock Translation of Mat. Zametki {{\bf{9}}2} (2012), no. 2, 241--261.

\bibitem{Kudryavtseva:MatSb:2013}
E.~A. Kudryavtseva.
\newblock On the homotopy type of spaces of {M}orse functions on surfaces.
\newblock {\em Mat. Sb.}, 204(1):79--118, 2013,
  doi:~\href{http://dx.doi.org/10.1070/SM2013v204n01ABEH004292}{10.1070/SM2013v204n01ABEH004292}.

\bibitem{Kudryavtseva:MatSb:ENG:2013}
E.~A. Kudryavtseva.
\newblock On the homotopy type of spaces of {M}orse functions on surfaces.
\newblock {\em Sb. Math.}, 204(1):75--113, 2013.

\bibitem{Kudryavtseva:ENG:DAN2016}
E.~A. Kudryavtseva.
\newblock Topology of spaces of functions with prescribed singularities on the
  surfaces.
\newblock {\em Dokl. Akad. Nauk}, 93(3):264--266, 2016.

\bibitem{KudryavtsevaPermyakov:MatSb:2010}
E.~A. Kudryavtseva, D.~A. Permyakov.
\newblock Framed {M}orse functions on surfaces.
\newblock {\em Mat. Sb.}, 201(4):33--98, 2010,
  doi:~\href{http://dx.doi.org/10.1070/SM2010v201n04ABEH004081}{10.1070/SM2010v201n04ABEH004081}.

\bibitem{MaksymenkoKuznietsova:PIGC:2019}
I.~Kuznietsova, S.~Maksymenko.
\newblock Homotopy properties of smooth functions on the {M}\"{o}bius band.
\newblock {\em Proc. Int. Geom. Cent.}, 12(3):1--29, 2019,
  doi:~\href{http://dx.doi.org/10.15673/tmgc.v12i3.1488}{10.15673/tmgc.v12i3.1488}.

\bibitem{KuznietsovaMaksymenko:Mob:2020}
I.~Kuznietsova, S.~Maksymenko.
\newblock Deformations of smooth functions on a {M}\"obius band.
\newblock in preparation, 2020.

\bibitem{KuznietsovaSoroka:UMJ:2021}
I.~Kuznietsova, Yu. Soroka.
\newblock First {B}etti numbers of orbits of {M}orse functions on surfaces.
\newblock {\em Ukra\"\i n. Mat. Zh.}, 73(2):179--200, 2021,
  doi:~\href{http://dx.doi.org/10.37863/umzh.v73i2.2383}{10.37863/umzh.v73i2.2383}.

\bibitem{Maksymenko:TA:2003}
S.~Maksymenko.
\newblock Smooth shifts along trajectories of flows.
\newblock {\em Topology Appl.}, 130(2):183--204, 2003,
  doi:~\href{http://dx.doi.org/10.1016/S0166-8641(02)00363-2}{10.1016/S0166-8641(02)00363-2}.

\bibitem{Maksymenko:hamv2}
S.~Maksymenko.
\newblock Hamiltonian vector fields of homogeneous polynomials in two
  variables.
\newblock {\em Pr. Inst. Mat. Nats. Akad. Nauk Ukr. Mat. Zastos.},
  3(3):269--308, arXiv:math/0709.2511, 2006.

\bibitem{Maksymenko:AGAG:2006}
S.~Maksymenko.
\newblock Homotopy types of stabilizers and orbits of {M}orse functions on
  surfaces.
\newblock {\em Ann. Global Anal. Geom.}, 29(3):241--285, 2006,
  doi:~\href{http://dx.doi.org/10.1007/s10455-005-9012-6}{10.1007/s10455-005-9012-6}.

\bibitem{Maksymenko:BSM:2006}
S.~Maksymenko.
\newblock Stabilizers and orbits of smooth functions.
\newblock {\em Bull. Sci. Math.}, 130(4):279--311, 2006,
  doi:~\href{http://dx.doi.org/10.1016/j.bulsci.2005.11.001}{10.1016/j.bulsci.2005.11.001}.

\bibitem{Maksymenko:TrMath:2008}
S.~Maksymenko.
\newblock Homotopy dimension of orbits of {M}orse functions on surfaces.
\newblock In {\em Travaux math\'{e}matiques. {V}ol. {XVIII}}, volume~18 of {\em
  Trav. Math.}, pages 39--44. Fac. Sci. Technol. Commun. Univ. Luxemb.,
  Luxembourg, 2008.

\bibitem{Maksymenko:MFAT:2009}
S.~Maksymenko.
\newblock Connected components of partition preserving diffeomorphisms.
\newblock {\em Methods Funct. Anal. Topology}, 15(3):264--279, 2009.

\bibitem{Maksymenko:CEJM:2009}
S.~Maksymenko.
\newblock $\infty$-jets of diffeomorphisms preserving orbits of vector fields.
\newblock {\em Cent. Eur. J. Math.}, 7(2):272--298, 2009.

\bibitem{Maksymenko:MFAT:2010}
S.~Maksymenko.
\newblock Functions on surfaces and incompressible subsurfaces.
\newblock {\em Methods Funct. Anal. Topology}, 16(2):167--182, 2010.

\bibitem{Maksymenko:ProcIM:ENG:2010}
S.~Maksymenko.
\newblock Functions with isolated singularities on surfaces.
\newblock {\em Geometry and topology of functions on manifolds. Pr. Inst. Mat.
  Nats. Akad. Nauk Ukr. Mat. Zastos.}, 7(4):7--66, 2010.

\bibitem{Maksymenko:IUMJ:2010}
S.~Maksymenko.
\newblock Image of a shift map along the oribts of a flow.
\newblock {\em Indiana University Mathematics Journal}, 59(5):1587--1628, 2010.

\bibitem{Maksymenko:OsakaJM:2011}
S.~Maksymenko.
\newblock Local inverses of shift maps along orbits of flows.
\newblock {\em Osaka Journal of Mathematics}, 48(2):415--455, 2011.

\bibitem{Maksymenko:UMZ:ENG:2012}
S.~Maksymenko.
\newblock Homotopy types of right stabilizers and orbits of smooth functions on
  surfaces.
\newblock {\em Ukrainian Math. Journal}, 64(9):1186--1203, 2012 (in Russian).

\bibitem{Maksymenko:TA:2020}
S.~Maksymenko.
\newblock Deformations of functions on surfaces by isotopic to the identity
  diffeomorphisms.
\newblock {\em Topology Appl.}, 282:107312, 48, 2020,
  doi:~\href{http://dx.doi.org/10.1016/j.topol.2020.107312}{10.1016/j.topol.2020.107312}.

\bibitem{MaksymenkoFeshchenko:UMZ:ENG:2014}
S.~Maksymenko, B.~Feshchenko.
\newblock Homotopic properties of the spaces of smooth functions on a 2-torus.
\newblock {\em Ukrainian Math. J.}, 66(9):1346--1353, 2015,
  doi:~\href{http://dx.doi.org/10.1007/s11253-015-1014-3}{10.1007/s11253-015-1014-3}.
\newblock Translation of Ukra\"{\i}n. Mat. Zh. {{\bf{6}}6} (2014), no. 9,
  1205--1212.

\bibitem{MaksymenkoFeshchenko:MS:2015}
S.~Maksymenko, B.~Feshchenko.
\newblock Orbits of smooth functions on 2-torus and their homotopy types.
\newblock {\em Mat. Stud.}, 44(1):67--83, 2015,
  doi:~\href{http://dx.doi.org/10.15330/ms.44.1.67-83}{10.15330/ms.44.1.67-83}.

\bibitem{MaksymenkoFeshchenko:MFAT:2015}
S.~Maksymenko, B.~Feshchenko.
\newblock Smooth functions on 2-torus whose {K}ronrod-{R}eeb graph contains a
  cycle.
\newblock {\em Methods Funct. Anal. Topology}, 21(1):22--40, 2015.

\bibitem{Palais:Top:1966}
Richard~S. Palais.
\newblock Homotopy theory of infinite dimensional manifolds.
\newblock {\em Topology}, 5:1--16, 1966.

\bibitem{Prishlyak:TA:2002}
A.~O. Prishlyak.
\newblock Topological equivalence of smooth functions with isolated critical
  points on a closed surface.
\newblock {\em Topology Appl.}, 119(3):257--267, 2002,
  doi:~\href{http://dx.doi.org/10.1016/S0166-8641(01)00077-3}{10.1016/S0166-8641(01)00077-3}.

\bibitem{Reeb:CR:1946}
Georges Reeb.
\newblock Sur les points singuliers d'une forme de {P}faff compl\`etement
  int\'egrable ou d'une fonction num\'erique.
\newblock {\em C. R. Acad. Sci. Paris}, 222:847--849, 1946.

\bibitem{Sergeraert:ASENS:1972}
Francis Sergeraert.
\newblock Un th\'eor\`eme de fonctions implicites sur certains espaces de
  {F}r\'echet et quelques applications.
\newblock {\em Ann. Sci. \'Ecole Norm. Sup. (4)}, 5:599--660, 1972.

\bibitem{Smale:ProcAMS:1959}
S.~Smale.
\newblock Diffeomorphisms of the {$2$}-sphere.
\newblock {\em Proc. Amer. Math. Soc.}, 10:621--626, 1959.

\bibitem{Szczepanski:ADM:2012}
Andrzej Szczepa\'{n}ski.
\newblock {\em Geometry of crystallographic groups}, volume~4 of {\em Algebra
  and Discrete Mathematics}.
\newblock World Scientific Publishing Co. Pte. Ltd., Hackensack, NJ, 2012,
  doi:~\href{http://dx.doi.org/10.1142/8519}{10.1142/8519}.

\bibitem{Yu:TrAMS:2013}
Bin Yu.
\newblock Lyapunov graphs of nonsingular {S}male flows on {$S^1\times S^2$}.
\newblock {\em Trans. Amer. Math. Soc.}, 365(2):767--783, 2013,
  doi:~\href{http://dx.doi.org/10.1090/S0002-9947-2012-05636-4}{10.1090/S0002-9947-2012-05636-4}.

\end{thebibliography}

\def\cprime{$'$}

\end{document}